\documentclass[nosumlimits,twoside]{amsart}
%%%%%%%%%%%%%%%%%%%%%%%%%%%%%%%%%%%%%%%%%%%%%%%%%%%%%%%%%%%%%%%%%%%%%%%%%%%%%%%%%%%%%%%%%%%%%%%%%%%%%%%%%%%%%%%%%%%%%%%%%%%%%%%%%%%%%%%%%%%%%%%%%%%%%%%%%%%%%%%%%%%%%%%%%%%%%%%%%%%%%%%%%%%%%%%%%%%%%%%%%%%%%%%%%%%%%%%%%%%%%%%%%%%%%%%%%%%%%%%%%%%%%%%%%%%%
\usepackage{amsfonts, amsmath, amssymb}
\usepackage[bookmarksnumbered,plainpages,hypertex]{hyperref}
\usepackage{graphicx}
\usepackage{float}
\usepackage{srcltx}
\usepackage[all]{xy}
\usepackage{comment}

\setcounter{MaxMatrixCols}{10}
%TCIDATA{OutputFilter=LATEX.DLL}
%TCIDATA{Version=5.50.0.2890}
%TCIDATA{<META NAME="SaveForMode" CONTENT="1">}
%TCIDATA{BibliographyScheme=Manual}
%TCIDATA{LastRevised=Wednesday, April 03, 2013 12:48:09}
%TCIDATA{<META NAME="GraphicsSave" CONTENT="32">}
%TCIDATA{Language=American English}

\newtheorem{theorem}{\sc Theorem}[section]
\newtheorem{proposition}[theorem]{\sc Proposition}
\newtheorem{notation}[theorem]{\sc Notation}

\newtheorem{lemma}[theorem]{\sc Lemma}

\theoremstyle{definition}
\newtheorem{definition}[theorem]{\sc Definition}

\theoremstyle{remark}
\newtheorem{remark}[theorem]{\sc Remark}

\newtheorem{claim}[theorem]{}

\setlength{\textheight}{225mm} \setlength{\topmargin}{0.46cm}
\setlength{\textwidth}{150mm} \setlength{\oddsidemargin}{0.46cm}
\setlength{\evensidemargin}{0.46cm}

\allowdisplaybreaks
\IfFileExists{tcilatex.tex}{\input{tcilatex}}{}
 \allowdisplaybreaks
 \excludecomment{proof?}

\begin{document}
\title{Adjunctions and Braided Objects}
\author{Alessandro Ardizzoni}
\address{University of Turin, Department of Mathematics "G. Peano", via
Carlo Alberto 10, I-10123 Torino, Italy}
\email{alessandro.ardizzoni@unito.it}
\urladdr{www.unito.it/persone/alessandro.ardizzoni}
\author{Claudia Menini}
\address{University of Ferrara, Department of Mathematics and Computer Science, Via Machiavelli
35, Ferrara, I-44121, Italy}
\email{men@unife.it}
\urladdr{http://www.unife.it/utenti/claudia.menini}
\subjclass[2010]{Primary 18D10; Secondary 18A40}
\thanks{This paper was written while both authors were members of GNSAGA.
The first author was partially supported by the research grant ``Progetti di
Eccellenza 2011/2012'' from the ``Fondazione Cassa di Risparmio di Padova e
Rovigo''.}

\begin{abstract}
In this paper we investigate the categories of braided objects, algebras and bialgebras in a given monoidal category, some pairs of adjoint functors between them and their relations. In particular we construct a braided primitive functor and its left adjoint, the braided tensor bialgebra functor, from the category of braided objects to the one of braided bialgebras. The latter is obtained by a specific elaborated construction introducing a braided tensor algebra functor as a left adjoint of the forgetful functor from the category of braided algebras to the one of braided objects.
The behaviour of these functors in the case when the base category is braided is also considered.
\end{abstract}

\keywords{Monoidal Categories, Braided Objects, Braided Categories}
\maketitle
\tableofcontents

\section*{Introduction}

Let $B$ be a braided bialgebra over a field $\Bbbk$. This means that $B$ is both an algebra and a coalgebra and these structures are suitably compatible with a braiding $c:B\otimes B\rightarrow B\otimes B$ of $B$. It is well-known that $c$ induces a braiding $c_P$ on the space $P=P(B)$ of primitive elements of $B$, see e.g. \cite[page 4]{Kh} in the connected case. It is natural to wonder whether this result remains true for braided bialgebras in a monoidal category $\mathcal{M}$. Note that $\mathcal{M}$ needs not to be braided, a priori exactly as the above braiding $c$ needs not to be the evaluation of a braiding defined on the whole category of vector spaces. On the other hand it is also well-known that, under mild assumptions, the forgetful functor $\Omega :\mathrm{Alg}_{\mathcal{M}}\rightarrow \mathcal{M}$ from the category of algebras into $\mathcal{M}$ has a left adjoint $T:\mathcal{M}\rightarrow \mathrm{Alg}_{\mathcal{M}}$ given by the tensor algebra functor, see Remark \ref{cl: AlgMon}.

In this paper we prove that, under mild assumptions, the forgetful functor $\Omega_\mathrm{Br}:\mathrm{BrAlg}_{\mathcal{M}}\rightarrow \mathrm{Br}_{\mathcal{M}}$ from the category $\mathrm{BrAlg}_{\mathcal{M}}$ of braided algebras in $\mathcal{M}$ to the category $\mathrm{Br}_{\mathcal{M}}$ of braided objects in $\mathcal{M}$ has a left adjoint $T_\mathrm{Br}$, see Proposition \ref{pro:TbrStrict}, which is induced by $T$. This is achieved by a rather technical tool which makes use of suitable morphisms $c_{T}^{m,n}$ constructed in Proposition \ref{pro:CT} by means of Lemma \ref{lem:Kassel}, where the Braid Category plays a central role. We also introduce a braided primitive functor $P_\mathrm{Br}:\mathrm{BrBialg}_{\mathcal{M}}\rightarrow \mathrm{Br}_{\mathcal{M}}$ where $\mathrm{BrBialg}_{\mathcal{M}}$ denotes the category of braided bialgebras in $\mathcal{M}$, see Lemma \ref{lem:primitive}. We prove that this functor $P_\mathrm{Br}$ has also a left adjoint, namely the functor $\overline{T}_\mathrm{Br}$ which is induced by the functor $T_\mathrm{Br}$.

Another problem is to investigate the case when the monoidal category $\mathcal{M}$ is braided.
In this case one can also consider  the category $\mathrm{Bialg}_{\mathcal{M}}$ of bialgebras  in $\mathcal{M}$. Moreover, the categories $\mathcal{M}$, $\mathrm{Alg}_{\mathcal{M}}$ and $\mathrm{Bialg}_{\mathcal{M}}$ are related to their braided analogues by means of the functors \begin{equation*}
J:\mathcal{M}\rightarrow \mathrm{Br}_{\mathcal{M}},\quad J_{\mathrm{Alg}}:%
\mathrm{Alg}_{\mathcal{M}}\rightarrow \mathrm{BrAlg}_{\mathcal{M}}\quad
\text{and}\quad J_{\mathrm{Bialg}}:\mathrm{Bialg}_{\mathcal{M}}\rightarrow
\mathrm{BrBialg}_{\mathcal{M}},
\end{equation*}
see Proposition \ref{coro:BrBialg}. Using these functors we investigate the relation between $T$ and $T_\mathrm{Br}$ (Proposition \ref{pro:JAlg-J}). We also show that $P_\mathrm{Br}$ gives rise to a primitive functor $P:\mathrm{Bialg}_{\mathcal{M}}\rightarrow \mathcal{M}$, see Proposition \ref{pro:J}. We prove that even this functor $P$ has a left adjoint $\overline{T}$ (Theorem \ref{teo:Tbar}) which is related to the functor $\overline{T}_\mathrm{Br}$ as in \eqref{form:JPbar}.

In this paper we also investigate the behaviour of the primitive functors mentioned above when the base category changes through an arbitrary monoidal functors $F:\mathcal{M}\rightarrow \mathcal{M}^\prime$. This is done in Proposition \ref{pro:BrBialg}, Proposition \ref{pro:PrimFunct} and Proposition \ref{pro:BialgF}.\medskip\newline
The adjunctions considered above will be studied in connection with monadic decomposition of functors in a forthcoming paper where the particular cases when $\mathcal{M}$ or $\mathcal{M}^\prime$ are the category of vector spaces or the category of (co)modules over a not necessarily finite-dimensional (dual) quasi-bialgebra will be investigated.

\section{Preliminaries}

\label{preliminares}

In this section, we shall fix some basic notation and terminology.

\begin{notation}
Throughout this paper $\Bbbk $ will denote a field. All vector spaces will
be defined over $\Bbbk $. The unadorned tensor product $\otimes $ will
denote the tensor product over $\Bbbk $ if not stated otherwise.
\end{notation}

\begin{claim}
\textbf{Monoidal Categories.} Recall that (see \cite[Chap. XI]{Kassel}) a
\emph{monoidal category}\textbf{\ }is a category $\mathcal{M}$ endowed with
an object $\mathbf{1}\in \mathcal{M}$ (called \emph{unit}), a functor $%
\otimes :\mathcal{M}\times \mathcal{M}\rightarrow \mathcal{M}$ (called \emph{%
tensor product}), and functorial isomorphisms $a_{X,Y,Z}:(X\otimes Y)\otimes
Z\rightarrow $ $X\otimes (Y\otimes Z)$, $l_{X}:\mathbf{1}\otimes
X\rightarrow X,$ $r_{X}:X\otimes \mathbf{1}\rightarrow X,$ for every $X,Y,Z$
in $\mathcal{M}$. The functorial morphism $a$ is called the \emph{%
associativity constraint }and\emph{\ }satisfies the \emph{Pentagon Axiom, }%
that is the equality
\begin{equation*}
(U\otimes a_{V,W,X})\circ a_{U,V\otimes W,X}\circ (a_{U,V,W}\otimes
X)=a_{U,V,W\otimes X}\circ a_{U\otimes V,W,X}
\end{equation*}%
holds true, for every $U,V,W,X$ in $\mathcal{M}.$ The morphisms $l$ and $r$
are called the \emph{unit constraints} and they obey the \emph{Triangle
Axiom, }that is $(V\otimes l_{W})\circ a_{V,\mathbf{1},W}=r_{V}\otimes W$,
for every $V,W$ in $\mathcal{M}$.

A \emph{monoidal functor}\label{MonFun} (also called strong monoidal in the
literature)
\begin{equation*}
(F,\phi _{0},\phi _{2}):(\mathcal{M},\otimes ,\mathbf{1},a,l,r\mathbf{%
)\rightarrow (}\mathcal{M}^{\prime }\mathfrak{,}\otimes ^{\prime },\mathbf{1}%
^{\prime },a^{\prime },l^{\prime },r^{\prime }\mathbf{)}
\end{equation*}%
between two monoidal categories consists of a functor $F:\mathcal{M}%
\rightarrow \mathcal{M}^{\prime },$ an isomorphism $\phi
_{2}(U,V):F(U)\otimes ^{\prime }F(V)\rightarrow F(U\otimes V),$ natural in $%
U,V\in \mathcal{M}$, and an isomorphism $\phi _{0}:\mathbf{1}^{\prime
}\rightarrow F(\mathbf{1})$ such that the diagram
\begin{equation*}
\xymatrixcolsep{63pt}\xymatrixrowsep{35pt}\xymatrix{ (F(U)\otimes'
F(V))\otimes' F(W) \ar[d]|{a'_{F(U),F(V),F(W)}} \ar[r]^-{\phi_2(U,V)\otimes'
F(W)} & F(U\otimes V)\otimes' F(W) \ar[r]^{\phi_2(U\otimes V,W)} &
F((U\otimes V)\otimes W) \ar[d]|{F(a_{ U,V, W})} \\ F(U)\otimes'
(F(V)\otimes' F(W)) \ar[r]^-{F(U)\otimes' \phi_2(V,W)} & F(U)\otimes'
F(V\otimes W) \ar[r]^{\phi_2(U,V\otimes W)} & F(U\otimes (V\otimes W)) }
\end{equation*}%
is commutative, and the following conditions are satisfied:
\begin{equation*}
{F(l_{U})}\circ {\phi _{2}(\mathbf{1},U)}\circ ({\phi _{0}\otimes }^{\prime }%
{F(U)})={l}^{\prime }{_{F(U)}},\text{\quad }{F(r_{U})}\circ {\phi _{2}(U,%
\mathbf{1})}\circ ({F(U)\otimes }^{\prime }{\phi _{0}})={r}^{\prime }{%
_{F(U)}.}
\end{equation*}%
The monoidal functor is called \emph{strict }if the isomorphisms $\phi
_{0},\phi _{2}$ are identities of $\mathcal{M}^{\prime }$.
\end{claim}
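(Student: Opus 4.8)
The final statement is a recollection of the classical definitions of \emph{monoidal category} and \emph{(strong) monoidal functor}, together with the sub-notion of \emph{strictness}, following \cite[Chap.\ XI]{Kassel}; as such it is a list of axioms and carries no proof obligation. The plan, therefore, is simply to adopt these definitions verbatim for the rest of the paper, and --- whenever it is convenient to compute with the associativity and unit constraints suppressed --- to appeal to the standard fact that every monoidal category is monoidally equivalent, through a strong monoidal functor, to a strict one, rather than to reprove it.

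Should one want to make that reduction self-contained, the approach I would take is the usual strictification construction: given $(\mathcal{M},\otimes,\mathbf{1},a,l,r)$ I would form the category $\mathcal{M}^{\mathrm{st}}$ whose objects are finite lists of objects of $\mathcal{M}$, with hom-sets taken in $\mathcal{M}$ between the left-bracketed tensor products of the lists and with list concatenation as a strictly associative, strictly unital tensor product; I would then check that composition and tensoring are well defined by inserting the appropriate instances of $a$, $l$, $r$, define the comparison functor $\mathcal{M}\to\mathcal{M}^{\mathrm{st}}$, $X\mapsto(X)$, supply its structure isomorphisms $\phi_0$, $\phi_2$ out of $l$, $r$ and iterated $a$, verify the hexagon and the two unit conditions displayed in the statement, and finally exhibit the inverse equivalence by evaluating a list to its iterated tensor product.

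The one genuine difficulty in that argument --- the reason coherence is a theorem and not a formality --- is proving that $\phi_2$ is independent of the chosen bracketing and of the particular chain of constraints used; this is exactly the Pentagon/Triangle bookkeeping and is dispatched by induction on the lengths of the lists, using naturality of $a$ together with the Pentagon and Triangle axioms recalled above. For the present paper none of this needs to be carried out explicitly: the definitions stated here are used as given, and strictness, when invoked, is justified by \cite[Chap.\ XI]{Kassel}.
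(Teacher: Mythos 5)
You are right that this ``claim'' is purely a recollection of the standard definitions of monoidal category, monoidal functor and strictness, cited to Kassel, and carries no proof obligation; the paper likewise states it without proof and only invokes strictification later (via \cite[Theorem XI.5.3]{Kassel} in Lemma \ref{lem:Kassel}) rather than reproving it. Your treatment matches the paper's approach.
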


The notions of algebra, module over an algebra, coalgebra and comodule over
a coalgebra can be introduced in the general setting of monoidal categories.

From now on we will omit the associativity and unity constraints unless
needed to clarify the context.\medskip

Let $V$ be an object in a monoidal category $\left( \mathcal{M},\otimes ,%
\mathbf{1}\right) $. Define iteratively $V^{\otimes n}$ for all $n\in
%TCIMACRO{\U{2115} }%
%BeginExpansion
\mathbb{N}
%EndExpansion
$ by setting $V^{\otimes 0}:=\mathbf{1}$ for $n=0$ and $V^{\otimes
n}:=V^{\otimes \left( n-1\right) }\otimes V$ for $n>0.$

\begin{remark}
\label{cl: AlgMon} Let $\mathcal{M}$ be a monoidal category. Denote by $%
\mathrm{Alg}_{\mathcal{M}}$ the category of algebras in $\mathcal{M}$ and
their morphisms. Assume that $\mathcal{M}$ has denumerable coproducts and
that the tensor products (i.e. $M\otimes \left( -\right) :\mathcal{M}%
\rightarrow \mathcal{M}$ and $\left( -\right) \otimes M:\mathcal{M}%
\rightarrow \mathcal{M}$, for every object $M$ in $\mathcal{M}$) preserve
such coproducts. By \cite[Theorem 2, page 172]{MacLane}, the forgetful
functor
\begin{equation*}
\Omega :\mathrm{Alg}_{\mathcal{M}}\rightarrow \mathcal{M}
\end{equation*}%
has a left adjoint $T:\mathcal{M}\rightarrow \mathrm{Alg}_{\mathcal{M}}$. By
construction $\Omega TV=\oplus _{n\in
%TCIMACRO{\U{2115} }%
%BeginExpansion
\mathbb{N}
%EndExpansion
}V^{\otimes n}$ for every $V\in \mathcal{M}$. For every $n\in \mathbb{N},$
we will denote by
\begin{equation*}
\alpha _{n}V:V^{\otimes n}\rightarrow \Omega TV
\end{equation*}
the canonical injection. Given a morphism $f:V\rightarrow W$ in $\mathcal{M}$%
, we have that $Tf$ is uniquely determined by the following equality%
\begin{equation}
\Omega Tf\circ \alpha _{n}V=\alpha _{n}W\circ f^{\otimes n},\text{ for every
}n\in \mathbb{N}\text{.}  \label{form:Tf}
\end{equation}%
The multiplication $m_{\Omega TV}$ and the unit $u_{\Omega TV}$ are uniquely
determined by
\begin{eqnarray}
m_{\Omega TV}\circ \left( \alpha _{m}V\otimes \alpha _{n}V\right) &=&\alpha
_{m+n}V,\text{ for every }m,n\in \mathbb{N}\text{,}  \label{form:TVm} \\
u_{\Omega TV} &=&\alpha _{0}V.  \label{form:TVu}
\end{eqnarray}
\end{remark}

\begin{remark}
The unit $\eta $ and the counit $\epsilon $ of the adjunction $\left(
T,\Omega \right) $ are uniquely determined, for all $V\in \mathcal{M}$ and $%
\left( A,m_{A},u_{A}\right) \in \mathrm{Alg}_{\mathcal{M}}$ by the following
equalities%
\begin{equation}
\eta V:=\alpha _{1}V\qquad \text{and}\qquad \Omega \epsilon \left(
A,m_{A},u_{A}\right) \circ \alpha _{n}A:=m_{A}^{n-1}\text{ for every }n\in
\mathbb{N}  \label{form:etaeps}
\end{equation}%
where $m_{A}^{n-1}:A^{\otimes n}\rightarrow A$ is the iterated
multiplication of $A$ defined by $m_{A}^{-1}:=u_{A},m_{A}^{0}:=\mathrm{Id}%
_{A}$ and, for $n\geq 2,$ $m_{A}^{n-1}=m_{A}(m_{A}^{n-2}\otimes A).$
\end{remark}

\section{Braided objects}

\begin{definition}
Let $\left( \mathcal{M},\otimes ,\mathbf{1}\right) $ be a monoidal category
(as usual we omit the brackets although we are not assuming the constraints
are trivial).

1) Let $V$ be an object in $\mathcal{M}$. A morphism $c=c_{V}:V\otimes
V\rightarrow V\otimes V$ is called a \emph{braiding }(see \cite[Definition
XIII.3.1]{Kassel} where it is called a Yang-Baxter operator) if it satisfies
the quantum Yang-Baxter equation%
\begin{equation}
\left( c\otimes V\right) \left( V\otimes c\right) \left( c\otimes V\right)
=\left( V\otimes c\right) \left( c\otimes V\right) \left( V\otimes c\right)
\label{ec: braided equation}
\end{equation}%
on $V\otimes V\otimes V.$ \textbf{We further assume that }$c$\textbf{\ is
invertible}. The pair $\left( V,c\right) $ will be called a \emph{braided
object in }$\mathcal{M}$. A morphism of braided objects $(V,c_{V})$ and $%
(W,c_{W})$ in $\mathcal{M}$ is a morphism $f:V\rightarrow W$ such that $%
c_{W}(f\otimes f)=(f\otimes f)c_{V}.$ This defines the category $\mathrm{Br}%
_{\mathcal{M}}$ of braided objects and their morphisms.

2) \cite{Ba} A quadruple $(A,m,u,c)$ is called a \emph{braided algebra} if

\begin{itemize}
\item $(A,m,u)$ is an algebra in $\mathcal{M}$;

\item $(A,c)$ is a braided object in $\mathcal{M}$;

\item $m$ and $u$ commute with $c$, that is the following conditions hold:
\begin{gather}
c(m\otimes A)=(A\otimes m)(c\otimes A)(A\otimes c),  \label{Br2} \\
c(A\otimes m)=(m\otimes A)\left( A\otimes c\right) (c\otimes A),  \label{Br3}
\\
c(u\otimes A)l_{A}^{-1}=\left( A\otimes u\right) r_{A}^{-1},\qquad
c(A\otimes u)r_{A}^{-1}=\left( u\otimes A\right) l_{A}^{-1}.  \label{Br4}
\end{gather}
\end{itemize}

A morphism of braided algebras is, by definition, a morphism of algebras
which, in addition, is a morphism of braided objects. This defines the
category $\mathrm{BrAlg}_{\mathcal{M}}$ of braided algebras and their
morphisms.

3) A quadruple $(C,\Delta ,\varepsilon ,c)$ is called a \emph{braided
coalgebra} if

\begin{itemize}
\item $(C,\Delta ,\varepsilon )$ is a coalgebra in $\mathcal{M}$;

\item $(C,c)$ is a braided object in $\mathcal{M}$;

\item $\Delta $ and $\varepsilon $ commute with $c$, that is the following
relations hold:
\begin{gather}
(\Delta \otimes C)c=(C\otimes c)(c\otimes C)(C\otimes \Delta ),  \label{Br5}
\\
(C\otimes \Delta )c=(c\otimes C)(C\otimes c)(\Delta \otimes C),  \label{Br6}
\\
l_{C}(\varepsilon \otimes C)c=r_{C}\left( C\otimes \varepsilon \right)
,\qquad r_{C}(C\otimes \varepsilon )c=l_{C}\left( \varepsilon \otimes
C\right) .  \label{Br7}
\end{gather}
\end{itemize}

A morphism of braided coalgebras is, by definition, a morphism of coalgebras
which, in addition, is a morphism of braided objects. This defines the
category $\mathrm{BrCoalg}_{\mathcal{M}}$ of braided coalgebras and their
morphisms.

4) \cite[Definition 5.1]{Ta} A sextuple $(B,m,u,\Delta ,\varepsilon ,c)$ is
a called a \emph{braided bialgebra} if

\begin{itemize}
\item $(B,m,u,c)$ is a braided algebra;

\item $(B,\Delta ,\varepsilon ,c)$ is a braided coalgebra;

\item the following relations hold:%
\begin{eqnarray}
\Delta m &=&(m\otimes m)(B\otimes c\otimes B)(\Delta \otimes \Delta ).
\label{Br1} \\
\Delta u &=&(u\otimes u)\Delta _{\mathbf{1}},  \label{Br8} \\
\varepsilon m &=&m_{\mathbf{1}}\left( \varepsilon \otimes \varepsilon
\right) ,  \label{Br9} \\
\varepsilon u &=&\mathrm{Id}_{\mathbf{1}}.  \label{Br10}
\end{eqnarray}
\end{itemize}

A morphism of braided bialgebras is both a morphism of braided algebras and
coalgebras. This defines the category $\mathrm{BrBialg}_{\mathcal{M}}$ of
braided bialgebras.
\end{definition}

\begin{proposition}
\label{pro:AotB}Let $\mathcal{M}$ be a monoidal category.

1) Consider a datum $(A_{1},A_{2},c_{2,1})$ where $A_{1}=(A_{1},m_{1},u_{1})$
and $A_{2}=(A_{2},m_{2},u_{2})$ are algebras in $\mathcal{M}$ and $%
c_{2,1}:A_{2}\otimes A_{1}\rightarrow A_{1}\otimes A_{2}$ is a morphism in $%
\mathcal{M}$ such that
\begin{gather}
c_{2,1}(m_{2}\otimes A_{1})=(A_{1}\otimes m_{2})(c_{2,1}\otimes
A_{2})(A_{2}\otimes c_{2,1}),  \label{c1} \\
c_{2,1}\left( A_{2}\otimes m_{1}\right) =(m_{1}\otimes A_{2})\left(
A_{1}\otimes c_{2,1}\right) (c_{2,1}\otimes A_{1}),  \label{c2} \\
c_{2,1}(u_{2}\otimes 1)l_{A_{1}}^{-1}=\left( A_{1}\otimes u_{2}\right)
r_{A_{1}}^{-1},\qquad c_{2,1}(A_{2}\otimes u_{1})r_{2}^{-1}=\left(
u_{1}\otimes A_{2}\right) l_{A_{2}}^{-1}.  \label{c3}
\end{gather}%
Then $\left( A_{1}\otimes A_{2},m_{A_{1}\otimes A_{2}},u_{A_{1}\otimes
A_{2}}\right) $ is an algebra in $\mathcal{M}$ where%
\begin{eqnarray}
m_{A_{1}\otimes A_{2}} &:&=\left( m_{1}\otimes m_{2}\right) \left(
A_{1}\otimes c_{2,1}\otimes A_{2}\right) ,  \label{m1} \\
u_{A_{1}\otimes A_{2}} &:&=\left( u_{1}\otimes u_{2}\right) \Delta _{\mathbf{%
1}}.  \label{m2}
\end{eqnarray}

2) Let $(A_{1},m_{1},u_{1}),A_{2}=(A_{2},m_{2},u_{2})\in \mathrm{Alg}_{%
\mathcal{M}}.$ Assume that, for all $i,j\in \left\{ 1,2\right\} ,$ there are
isomorphisms $c_{i,j}:A_{i}\otimes A_{j}\rightarrow A_{j}\otimes A_{i}$ such
that the following equalities are fulfilled%
\begin{gather}
c_{i,j}(m_{i}\otimes A_{j})=\left( A_{j}\otimes m_{i}\right) \left(
c_{i,j}\otimes A_{i}\right) \left( A_{i}\otimes c_{i,j}\right) ,  \label{c21}
\\
c_{i,j}\left( A_{i}\otimes m_{j}\right) =\left( m_{j}\otimes A_{i}\right)
\left( A_{j}\otimes c_{i,j}\right) \left( c_{i,j}\otimes A_{j}\right) ,
\label{c22} \\
c_{i,j}\left( u_{i}\otimes A_{j}\right) l_{A_{j}}^{-1}=\left( A_{j}\otimes
u_{i}\right) r_{A_{j}}^{-1},\qquad c_{i,j}(A_{i}\otimes
u_{j})r_{A_{i}}^{-1}=\left( u_{j}\otimes A_{i}\right) l_{A_{i}}^{-1},
\label{c31} \\
\left( A_{k}\otimes c_{i,j}\right) \left( c_{i,k}\otimes A_{j}\right) \left(
A_{i}\otimes c_{j,k}\right) =\left( c_{j,k}\otimes A_{i}\right) \left(
A_{j}\otimes c_{i,k}\right) \left( c_{i,j}\otimes A_{k}\right) ,  \label{cij}
\end{gather}%
for all $i,j,k\in \left\{ 1,2\right\} $.

Then $(A_{1},m_{1},u_{1},c_{1,1}),(A_{2},m_{2},u_{2},c_{2,2})\in \mathrm{%
BrAlg}_{\mathcal{M}}.$

Moreover, for all $i,j\in \left\{ 1,2\right\}$ , $\left( A_{i}\otimes
A_{j},m_{A_{i}\otimes A_{j}},u_{A_{i}\otimes A_{j}},c_{A_{i}\otimes
A_{j}}\right) \in \mathrm{BrAlg}_{\mathcal{M}}$ where $m_{A_{i}\otimes
A_{j}} $ and $u_{A_{i}\otimes A_{j}}$ are as in 1) and
\begin{equation*}
c_{A_{i}\otimes A_{j}}=\left( A_{i}\otimes c_{i,j}\otimes A_{j}\right)
\left( c_{i,i}\otimes c_{j,j}\right) \left( A_{i}\otimes c_{j,i}\otimes
A_{j}\right) .
\end{equation*}%
3) Let $A_{1},A_{2}$ (respectively $A_{1}^{\prime },A_{2}^{\prime }$) be
objects in $\mathrm{Alg}_{\mathcal{M}}$ that fulfil the requirements in 2).
Let $f_{1}:A_{1}\rightarrow A_{1}^{\prime }$ and $f_{2}:A_{2}\rightarrow
A_{2}^{\prime }$ be morphisms in $\mathrm{Alg}_{\mathcal{M}}$ such that
\begin{equation}
\left( f_{i}\otimes f_{j}\right) c_{j,i}=c_{j,i}^{\prime }\left(
f_{j}\otimes f_{i}\right) ,  \label{gotf}
\end{equation}%
for all $i,j\in \left\{ 1,2\right\} $. Then,for all $i,j\in \left\{
1,2\right\} ,$ $f_{i}$ and $f_{i}\otimes f_{j}$ are morphisms in $\mathrm{%
BrAlg}_{\mathcal{M}}.$
\end{proposition}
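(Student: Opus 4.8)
The plan is as follows. For 1), this is the (one–sided) twisted tensor product of algebras, so the task is to check directly that $m_{A_{1}\otimes A_{2}}=(m_{1}\otimes m_{2})(A_{1}\otimes c_{2,1}\otimes A_{2})$ is associative and that $u_{A_{1}\otimes A_{2}}=(u_{1}\otimes u_{2})\Delta_{\mathbf{1}}$ is a two–sided unit for it. In the associativity verification one carries the middle tensorand across using \eqref{c2} on one side of the diagram and \eqref{c1} on the other; the two resulting morphisms then coincide by associativity of $m_{1}$ and of $m_{2}$ together with naturality (the interchange law). The unit axioms reduce, after simplifying with the unit constraints $l,r$, to the two identities in \eqref{c3}. (Alternatively one may simply quote the known results on twisted tensor products of algebras.)

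For 2), I would first dispose of the claim that $(A_{k},m_{k},u_{k},c_{k,k})\in\mathrm{BrAlg}_{\mathcal{M}}$ for $k\in\{1,2\}$: $c_{k,k}$ is invertible by hypothesis; the quantum Yang--Baxter equation for $c_{k,k}$ is \eqref{cij} read with $i=j=k$; and the compatibility conditions \eqref{Br2}, \eqref{Br3}, \eqref{Br4} are exactly \eqref{c21}, \eqref{c22}, \eqref{c31} with $i=j=k$. Next, for fixed $i,j$, applying 1) to the algebras $A_{i},A_{j}$ and the morphism $c_{j,i}:A_{j}\otimes A_{i}\to A_{i}\otimes A_{j}$ (whose required properties \eqref{c1}, \eqref{c2}, \eqref{c3} are the corresponding instances of \eqref{c21}, \eqref{c22}, \eqref{c31}) produces the algebra $(A_{i}\otimes A_{j},m_{A_{i}\otimes A_{j}},u_{A_{i}\otimes A_{j}})$. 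It then remains to show that $c_{A_{i}\otimes A_{j}}=(A_{i}\otimes c_{i,j}\otimes A_{j})(c_{i,i}\otimes c_{j,j})(A_{i}\otimes c_{j,i}\otimes A_{j})$ makes it a braided algebra. Invertibility is immediate, being a composite of isomorphisms. For the remaining axioms the guiding idea is that this formula is precisely the self–braiding $c_{A_{i}\otimes A_{j},A_{i}\otimes A_{j}}$ one would get were the $A_{a}$ objects of a genuinely braided category with $c_{a,b}=c_{A_{a},A_{b}}$; so the plan is to reproduce those standard computations, substituting each hexagon move by the appropriate instance of \eqref{cij} and each use of compatibility of the braiding with a multiplication or unit by \eqref{c21}, \eqref{c22}, \eqref{c31}. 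Concretely, expanding the quantum Yang--Baxter equation for $c_{A_{i}\otimes A_{j}}$ on $(A_{i}\otimes A_{j})^{\otimes 3}$ into a word in the $c_{a,b}$ and applying \eqref{cij} and the interchange law repeatedly reduces it to a tautology; \eqref{Br2} and \eqref{Br3} for $(A_{i}\otimes A_{j},c_{A_{i}\otimes A_{j}})$ follow by pushing $m_{A_{i}\otimes A_{j}}$ through the three layers of $c_{A_{i}\otimes A_{j}}$ using \eqref{c21}, \eqref{c22} (and \eqref{cij} to reorder); and \eqref{Br4} follows from \eqref{c31} and the unit constraints.

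For 3), the statement about $f_{i}$ is immediate: it is a morphism of algebras by hypothesis and a morphism of braided objects $(A_{i},c_{i,i})\to(A_{i}',c_{i,i}')$ because this is \eqref{gotf} read with $i=j$. For $f_{i}\otimes f_{j}$ one first checks it is a morphism of algebras: using that $f_{i},f_{j}$ are algebra maps and the instance $(f_{i}\otimes f_{j})c_{j,i}=c_{j,i}'(f_{j}\otimes f_{i})$ of \eqref{gotf}, one transports $f_{i}\otimes f_{j}\otimes f_{i}\otimes f_{j}$ through $m_{A_{i}\otimes A_{j}}=(m_{i}\otimes m_{j})(A_{i}\otimes c_{j,i}\otimes A_{j})$, while compatibility with the unit is trivial. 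Finally $f_{i}\otimes f_{j}$ is a morphism of braided objects $(A_{i}\otimes A_{j},c_{A_{i}\otimes A_{j}})\to(A_{i}'\otimes A_{j}',c_{A_{i}'\otimes A_{j}'})$: one moves $f_{i}\otimes f_{j}\otimes f_{i}\otimes f_{j}$ across the three layers $A_{i}\otimes c_{i,j}\otimes A_{j}$, $c_{i,i}\otimes c_{j,j}$ and $A_{i}\otimes c_{j,i}\otimes A_{j}$ of $c_{A_{i}\otimes A_{j}}$ one at a time, each step being an instance of \eqref{gotf} together with functoriality of $\otimes$.

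The main obstacle is the second paragraph, namely the verification that $c_{A_{i}\otimes A_{j}}$ satisfies the quantum Yang--Baxter equation and is compatible with $m_{A_{i}\otimes A_{j}}$; these are the only genuinely lengthy computations, and carrying them out cleanly amounts, in effect, to writing out in explicit (string–diagram) form the identities that hold automatically in a braided monoidal category. Everything else is a direct translation of the hypotheses, as indicated above.
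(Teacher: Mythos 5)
The paper offers no argument for this proposition beyond ``It is straightforward,'' and your outline is exactly the direct verification the authors are leaving to the reader: part 1) is the standard twisted tensor product check, part 2) reduces the new braiding's axioms to the colored Yang--Baxter relations \eqref{cij} together with \eqref{c21}--\eqref{c31} via the cabling/interchange argument, and part 3) is an immediate translation of \eqref{gotf}. Your plan is correct and matches the intended (omitted) proof.
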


\begin{proof}
It is straightforward. %
\end{proof}

\begin{lemma}
\label{Lem:AotAotA}Let $(A,m_{A},u_{A},c_{A})\in \mathrm{BrAlg}_{\mathcal{M}%
}.$ Then $(A_{1},m_{1},u_{1}),(A_{2},m_{2},u_{2}),c_{i,j}$ fulfil the
requirements of Proposition \ref{pro:AotB}, where $(A_{1},m_{1},u_{1}):=%
\left( A,m_{A},u_{A}\right) ,c_{1,1}:=c_{A},$
\begin{eqnarray*}
A_{2} &=&A\otimes A,\quad m_{2}:=\left( m_{A}\otimes m_{A}\right) \left(
A\otimes c_{A}\otimes A\right) ,\quad u_{2}:=\left( u_{A}\otimes
u_{A}\right) \Delta _{\mathbf{1}}, \\
c_{2,2} &:=&\left( A\otimes c_{A}\otimes A\right) \left( c_{A}\otimes
c_{A}\right) \left( A\otimes c_{A}\otimes A\right) , \\
c_{2,1} &:=&\left( c_{A}\otimes A\right) \left( A\otimes c_{A}\right)
:A_{2}\otimes A_{1}\rightarrow A_{1}\otimes A_{2}, \\
c_{1,2} &:=&\left( A\otimes c_{A}\right) \left( c_{A}\otimes A\right)
:A_{1}\otimes A_{2}\rightarrow A_{2}\otimes A_{1}.
\end{eqnarray*}%
In particular $(E,m_{E},u_{E},c_{E})\in \mathrm{BrAlg}_{\mathcal{M}}$, where%
\begin{eqnarray*}
E &:=&A_{1}\otimes A_{2},\quad m_{E}:=\left( m_{1}\otimes m_{2}\right)
\left( A_{1}\otimes c_{2,1}\otimes A_{2}\right) ,\quad u_{E}:=\left(
u_{1}\otimes u_{2}\right) \Delta _{\mathbf{1}}, \\
c_{E} &:=&\left( A_{1}\otimes c_{1,2}\otimes A_{2}\right) \left(
c_{1,1}\otimes c_{2,2}\right) \left( A_{1}\otimes c_{2,1}\otimes
A_{2}\right) .
\end{eqnarray*}
\end{lemma}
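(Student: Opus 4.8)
The plan is to verify that the datum $\left(A_{1},A_{2},c_{i,j}\right)$ satisfies all the hypotheses of Proposition \ref{pro:AotB}: that each $c_{i,j}$ is an isomorphism, that \eqref{c21}, \eqref{c22} and \eqref{c31} hold for every $i,j\in\{1,2\}$, and that the hexagon relation \eqref{cij} holds for every $i,j,k\in\{1,2\}$. Once this is done, the ``in particular'' part is obtained by reading off Proposition \ref{pro:AotB}(2) for $i=1$, $j=2$: there $m_{A_{1}\otimes A_{2}}$, $u_{A_{1}\otimes A_{2}}$ and $c_{A_{1}\otimes A_{2}}$ are literally $m_{E},u_{E},c_{E}$, so $(E,m_{E},u_{E},c_{E})\in\mathrm{BrAlg}_{\mathcal{M}}$ follows at once.

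Invertibility of the $c_{i,j}$ is immediate, since $c_{1,1}=c_{A}$ is invertible by hypothesis and $c_{2,2},c_{1,2},c_{2,1}$ are composites of morphisms of the form $X\otimes c_{A}\otimes Y$. The diagonal instances split into two easy blocks. When $i=j=1$, equations \eqref{c21}, \eqref{c22} and \eqref{c31} are exactly the braided-algebra axioms \eqref{Br2}, \eqref{Br3} and \eqref{Br4} for $(A,m_{A},u_{A},c_{A})$, and \eqref{cij} with $i=j=k=1$ is the Yang--Baxter equation \eqref{ec: braided equation}. When $i=j=2$, apply Proposition \ref{pro:AotB}(2) to the two copies $A_{1}'=A_{2}'=A$ with all crossing morphisms equal to $c_{A}$; its hypotheses are again precisely \eqref{Br2}--\eqref{Br4} and \eqref{ec: braided equation}, and its conclusion yields that $(A\otimes A,m_{2},u_{2},c_{2,2})$ is a braided algebra, with $c_{2,2}$ being exactly the braiding there produced. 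This gives \eqref{c21}, \eqref{c22}, \eqref{c31} for $i=j=2$, and \eqref{cij} for $i=j=k=2$ is then the Yang--Baxter equation for the braiding $c_{2,2}$.

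What is left are the mixed instances, and here one only expands the definitions $m_{2}=\left(m_{A}\otimes m_{A}\right)\left(A\otimes c_{A}\otimes A\right)$, $c_{2,1}=\left(c_{A}\otimes A\right)\left(A\otimes c_{A}\right)$, $c_{1,2}=\left(A\otimes c_{A}\right)\left(c_{A}\otimes A\right)$ and computes. Each resulting equality is an identity between composites of $m_{A}$, $u_{A}$ and $c_{A}$ on $A^{\otimes n}$ with $n\le 6$; it follows by iterated use of the compatibilities \eqref{Br2}, \eqref{Br3}, \eqref{Br4} of $m_{A},u_{A}$ with $c_{A}$ together with the braid equation \eqref{ec: braided equation} for $c_{A}$ (the unit identities \eqref{c31} in addition using only the coherence of the constraints). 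For example \eqref{c22} with $(i,j)=(1,2)$ reads $c_{1,2}\left(A_{1}\otimes m_{2}\right)=\left(m_{2}\otimes A_{1}\right)\left(A_{2}\otimes c_{1,2}\right)\left(c_{1,2}\otimes A_{2}\right)$; after unfolding $c_{1,2}$ and $m_{2}$ this is an identity on $A^{\otimes 5}$ that follows from \eqref{Br3} and \eqref{ec: braided equation}. Drawing $c_{A}$ as a single crossing and $c_{1,2},c_{2,1}$ as the block crossings of one strand past two makes all of these transparent.

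The only real labour is therefore the bookkeeping in the hexagon relations \eqref{cij} for the six mixed triples $(i,j,k)\in\{(1,1,2),(1,2,1),(2,1,1),(1,2,2),(2,1,2),(2,2,1)\}$: each of them, once the block crossings are resolved into crossings of single strands, is just an instance of the braid relation on three, four or five strands and hence holds. No idea beyond \eqref{Br2}--\eqref{Br4} and \eqref{ec: braided equation} enters, which is why, exactly as for Proposition \ref{pro:AotB}, the verification is ``straightforward''.
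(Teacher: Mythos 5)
Your proposal is correct and follows exactly the route the paper intends: the authors dispose of this lemma with ``It is straightforward,'' meaning precisely the direct verification you outline (invertibility of the $c_{i,j}$, the diagonal cases reducing to the braided-algebra axioms for $(A,m_{A},u_{A},c_{A})$, the mixed compatibilities and block hexagons following from \eqref{Br2}--\eqref{Br4} and the Yang--Baxter equation, and the ``in particular'' clause read off from Proposition \ref{pro:AotB}). Your additional observation that the $i=j=2$ block can itself be obtained by applying Proposition \ref{pro:AotB}(2) to two copies of $A$ is a tidy way to organize part of the bookkeeping, but it does not change the substance of the argument.
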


\begin{proof}
It is straightforward.%
\end{proof}

\begin{definition}
\label{def:conservative}A functor is called \emph{conservative} if it
reflects isomorphisms.
\end{definition}

\begin{proposition}
\label{pro:BrBialg}Let $\mathcal{M}$ and $\mathcal{M}^{\prime }$ be monoidal
categories. Let $\left( F,\phi _{0},\phi _{2}\right) :\mathcal{M}\rightarrow
\mathcal{M}^{\prime }$ be a monoidal functor. Then $F$ induces functors
\begin{gather*}
\mathrm{Br}F:\mathrm{Br}_{\mathcal{M}}\rightarrow \mathrm{Br}_{\mathcal{M}%
^{\prime }},\qquad \mathrm{Alg}F:\mathrm{Alg}_{\mathcal{M}}\rightarrow
\mathrm{Alg}_{\mathcal{M}^{\prime }}, \\
\mathrm{BrAlg}F:\mathrm{BrAlg}_{\mathcal{M}}\rightarrow \mathrm{BrAlg}_{%
\mathcal{M}^{\prime }},\qquad \mathrm{BrBialg}F:\mathrm{BrBialg}_{\mathcal{M}%
}\rightarrow \mathrm{BrBialg}_{\mathcal{M}^{\prime }}
\end{gather*}%
which act as $F$ on morphisms and defined on objects by%
\begin{eqnarray*}
\left( \mathrm{Br}F\right) \left( V,c_{V}\right) &:&=\left( FV,c_{FV}\right)
, \\
\left( \mathrm{Alg}F\right) \left( A,m_{A},u_{A}\right) &:&=\left(
FA,m_{FA},u_{FA}\right) , \\
\left( \mathrm{BrAlg}F\right) \left( A,m_{A},u_{A},c_{A}\right) &:&=\left(
FA,m_{FA},u_{FA},c_{FA}\right) , \\
\left( \mathrm{BrBialg}F\right) \left( B,m_{B},u_{B},\Delta _{B},\varepsilon
_{B},c_{B}\right) &:&=\left( FB,m_{FB},u_{FB},\Delta _{FB},\varepsilon
_{FB},c_{FB}\right) ,
\end{eqnarray*}%
where%
\begin{eqnarray*}
c_{FV} &:&=\phi _{2}^{-1}\left( V,V\right) \circ Fc_{V}\circ \phi _{2}\left(
V,V\right) :FV\otimes FV\rightarrow FV\otimes FV \\
m_{FA} &:&=Fm_{A}\circ \phi _{2}\left( A,A\right) :FA\otimes FA\rightarrow
FA, \\
u_{FA} &:&=Fu_{A}\circ \phi _{0}:\mathbf{1}\rightarrow FA, \\
\Delta _{FB} &:&=\phi _{2}^{-1}\left( B,B\right) \circ F\Delta
_{B}:FB\rightarrow FB\otimes FB, \\
\varepsilon _{FB} &:&=\phi _{0}^{-1}\circ F\varepsilon _{B}:FB\rightarrow
\mathbf{1},
\end{eqnarray*}%
and the following diagrams commute, where the vertical arrows denote the
obvious forgetful functors.
\begin{gather*}
\xymatrixrowsep{15pt}\xymatrix{\mathrm{Br}_{\mathcal{M}}
\ar[r]^{\mathrm{Br}F}\ar[d]_H&\mathrm{Br}_{\mathcal{M}^{\prime }}\ar[d]^{H^\prime}\\
\mathcal{M} \ar[r]^F&\mathcal{M}^{\prime }}
\qquad
\xymatrixrowsep{15pt}\xymatrix{\mathrm{Alg}_{\mathcal{M}}
\ar[r]^{\mathrm{Alg}F}\ar[d]_\Omega&\mathrm{Alg}_{\mathcal{M}^{\prime }}\ar[d]^{\Omega^\prime}\\
\mathcal{M} \ar[r]^F&\mathcal{M}^{\prime }}
\qquad
\xymatrixrowsep{15pt} \xymatrixcolsep{25pt}\xymatrix{\mathrm{BrAlg}_{\mathcal{M}}
\ar[r]^{\mathrm{BrAlg}F}\ar[d]_{H_{\mathrm{Alg}}}&\mathrm{BrAlg}_{\mathcal{M}^{\prime }}\ar[d]^{{H^\prime_{\mathrm{Alg}}}}\\
\mathrm{Alg}_{\mathcal{M}} \ar[r]^{\mathrm{Alg}F}&\mathrm{Alg}_{\mathcal{M}^{\prime }}}\\
\xymatrixrowsep{15pt} \xymatrixcolsep{25pt}\xymatrix{\mathrm{BrAlg}_{\mathcal{M}}
\ar[r]^{\mathrm{BrAlg}F}\ar[d]_{\Omega_{\mathrm{Br}}}&\mathrm{BrAlg}_{\mathcal{M}^{\prime }}\ar[d]^{{\Omega^\prime_{\mathrm{Br}}}}\\
\mathrm{Br}_{\mathcal{M}} \ar[r]^{\mathrm{Br}F}&\mathrm{Br}_{\mathcal{M}^{\prime }}}
\qquad
\xymatrixrowsep{15pt} \xymatrixcolsep{35pt}\xymatrix{\mathrm{BrBialg}_{\mathcal{M}}
\ar[r]^{\mathrm{BrBialg}F}\ar[d]_{\mho_{\mathrm{Br}}}&\mathrm{BrBialg}_{\mathcal{M}^{\prime }}\ar[d]^{{\mho^\prime_{\mathrm{Br}}}}\\
\mathrm{BrAlg}_{\mathcal{M}} \ar[r]^{\mathrm{BrAlg}F}&\mathrm{BrAlg}_{\mathcal{M}^{\prime }}}.
\end{gather*}%
Moreover

1) The functors $H,\Omega ,H_{\mathrm{Alg}},\Omega _{\mathrm{Br}},\mho _{%
\mathrm{Br}}$ are conservative.

2) $\mathrm{Br}F,\mathrm{Alg}F,\mathrm{BrAlg}F$ and $\mathrm{BrBialg}F$ are
equivalences (resp. isomorphisms or conservative) whenever $F$ is.
\end{proposition}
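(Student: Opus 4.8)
The plan is to proceed in four stages: first check that the four induced functors are well defined on objects; second, that they are well defined on morphisms and functorial; third, dispose of the six commuting squares; and fourth, prove parts 1) and 2).

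\emph{Objects.} The invertibility of $c_{FV}$, and of $m_{FA},u_{FA},\Delta_{FB},\varepsilon_{FB}$ wherever invertibility is needed, is automatic: a functor preserves isomorphisms and $\phi_0,\phi_2$ are isomorphisms by hypothesis, so $c_{FV}=\phi_2^{-1}(V,V)\circ Fc_V\circ\phi_2(V,V)$ is invertible. That $c_{FV}$ satisfies the quantum Yang--Baxter equation \eqref{ec: braided equation} on $FV\otimes FV\otimes FV$ I would obtain by applying $F$ to the Yang--Baxter equation for $c_V$ and then repeatedly invoking the naturality of $\phi_2$ and the associativity coherence axiom for $(F,\phi_0,\phi_2)$, so as to rewrite $F$ of a bracketed three-fold tensor product of morphisms as the corresponding three-fold tensor product in $\mathcal{M}^{\prime}$ conjugated by a composite of instances of $\phi_2$; the conjugating isomorphisms then cancel in pairs. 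That $(FA,m_{FA},u_{FA})$ is an algebra in $\mathcal{M}^{\prime}$ is the classical transport of monoids by a strong monoidal functor, proved from the associativity and unit axioms of $A$, the associativity coherence axiom and the unit conditions on $\phi_0$. The axioms \eqref{Br2}--\eqref{Br4} for $\mathrm{BrAlg}F$, and \eqref{Br5}--\eqref{Br7}, \eqref{Br1} and \eqref{Br8}--\eqref{Br10} for $\mathrm{BrBialg}F$, are treated in exactly the same way: apply $F$ to the identity holding in $\mathcal{M}$ and rewrite both sides using naturality of $\phi_2$ and the coherence of $(F,\phi_0,\phi_2)$.

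\emph{Morphisms, functoriality, squares.} Each induced functor sends a morphism $f$ to $Ff$, so preservation of identities and composition is inherited from $F$; one only checks that $Ff$ again respects the relevant structures. For a morphism of braided objects $f\colon(V,c_V)\to(W,c_W)$, applying $F$ to $c_W(f\otimes f)=(f\otimes f)c_V$ and rewriting through naturality of $\phi_2$ yields exactly $c_{FW}(Ff\otimes Ff)=(Ff\otimes Ff)c_{FV}$; the algebra and coalgebra morphism conditions transport identically. The six squares commute on the nose: each forgetful functor discards part of the data, and both composites around a square act as $F$ (respectively $\mathrm{Alg}F$) on what remains, so equality is immediate from the definitions.

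\emph{Parts 1) and 2).} For 1), in each case take a morphism $f$ in the richer category whose image under the forgetful functor is an isomorphism, and observe that its inverse again lies in the richer category, so $f$ is an isomorphism there. For $H$ and $H_{\mathrm{Alg}}$ this is the remark that $c_W(f\otimes f)=(f\otimes f)c_V$ with $f$ invertible gives $c_V(f^{-1}\otimes f^{-1})=(f^{-1}\otimes f^{-1})c_W$; for $\Omega$ and $\Omega_{\mathrm{Br}}$ it is that the inverse of an algebra isomorphism is an algebra morphism, and for $\mho_{\mathrm{Br}}$ that the inverse of a coalgebra isomorphism is a coalgebra morphism. For 2), combine 1) with the commuting squares. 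If $F$ is conservative, then so is $F$ composed with any composite of the (conservative) forgetful functors, e.g. $F\circ\Omega\circ H_{\mathrm{Alg}}\circ\mho_{\mathrm{Br}}\colon\mathrm{BrBialg}_{\mathcal{M}}\to\mathcal{M}^{\prime}$; since by stacking squares this equals $\Omega^{\prime}\circ H^{\prime}_{\mathrm{Alg}}\circ\mho^{\prime}_{\mathrm{Br}}\circ\mathrm{BrBialg}F$ and functors preserve isomorphisms, a morphism sent to an isomorphism by $\mathrm{BrBialg}F$ is sent to one by that composite and is therefore already an isomorphism; the shorter chains handle $\mathrm{Br}F,\mathrm{Alg}F,\mathrm{BrAlg}F$. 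If $F$ is an equivalence, I show $\mathrm{Br}F$ is fully faithful and essentially surjective: faithfulness is inherited; fullness holds because a morphism $FV\to FW$ in $\mathcal{M}^{\prime}$ intertwining $c_{FV},c_{FW}$ lifts uniquely to $f\colon V\to W$ by fullness of $F$, and $f$ intertwines $c_V,c_W$ since $F$ is faithful and $F$ of the intertwining relation holds by the rewriting above; essential surjectivity follows by choosing, for $(W^{\prime},c^{\prime})$, an object $W$ with an isomorphism $g\colon FW\to W^{\prime}$, transporting $c^{\prime}$ to a Yang--Baxter operator on $FW$ and pulling it back through the fully faithful $F$ and along $\phi_2$ to a braiding $c_W$ on $W$ with $\mathrm{Br}F(W,c_W)\cong(W^{\prime},c^{\prime})$ via $g$; the $\mathrm{Alg}$, $\mathrm{BrAlg}$ and $\mathrm{BrBialg}$ cases are entirely analogous, transporting also multiplications and comultiplications. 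Finally, if $F$ is an isomorphism of categories it is bijective on objects and morphisms, and so are the induced functors: bijectivity on morphisms is inherited, and on objects one inverts the defining formulas using that $F$ is bijective on objects and morphisms and reflects the structure identities. The only mildly delicate point is the bookkeeping in the first stage --- tracking which instances of $\phi_2$ and of the coherence axiom put $F$ of a three-fold tensor of morphisms into a usable normal form --- but this is routine rather than conceptual.
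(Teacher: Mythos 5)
Your proposal is correct, and for most of the statement (well-definedness on objects via conjugation by $\phi_2$ and coherence, transport of morphisms, the commuting squares, part 1, and the conservativity clause of part 2) it follows essentially the same lines as the paper; your ``rewrite $F$ of a tensor of morphisms through naturality of $\phi_2$'' is exactly the paper's trick of introducing the isomorphism $\omega\colon FV\otimes FV\otimes FV\to F(V\otimes V\otimes V)$ and cancelling it. Where you genuinely diverge is in the equivalence and isomorphism clauses of part 2. The paper first establishes a piece of 2-functoriality: $\mathrm{Br}(F'F)=\mathrm{Br}F'\circ\mathrm{Br}F$, $\mathrm{Br}(\mathrm{Id}_{\mathcal{M}})=\mathrm{Id}$, and that every monoidal natural transformation $\xi\colon F\to F'$ induces a natural transformation $\mathrm{Br}\xi\colon\mathrm{Br}F\to\mathrm{Br}F'$ compatible with composition; it then feeds the quasi-inverse $G$ and the monoidal natural isomorphisms $\alpha,\beta$ through this machinery to get $\mathrm{Br}\alpha,\mathrm{Br}\beta$ exhibiting $\mathrm{Br}G$ as a quasi-inverse of $\mathrm{Br}F$ (and likewise $\mathrm{Br}F\circ\mathrm{Br}G=\mathrm{Id}$ in the strict case). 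You instead prove directly that $\mathrm{Br}F$ is fully faithful (lifting an intertwiner along the full and faithful $F$ and checking the intertwining relation by faithfulness) and essentially surjective (transporting the braiding along an isomorphism $g\colon FW\to W'$ and pulling it back through $\phi_2$ and the full and faithful $F$). Both arguments are sound. Yours is shorter and avoids introducing monoidal natural transformations at all, at the cost of a transport-of-structure step for essential surjectivity that gets more laborious in the $\mathrm{BrBialg}$ case; the paper's route is heavier up front but produces the induced natural transformations $\mathrm{Br}\xi$, $\mathrm{Alg}\xi$, etc.\ as reusable output and makes the isomorphism case a one-line consequence of $\mathrm{Br}F\circ\mathrm{Br}G=\mathrm{Br}(FG)$.
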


\begin{proof}
Let $\left( V,c_{V}\right) $ be a braided object in $\mathcal{M}$. Let us
check that $\left( FV,c_{FV}\right) $ is a braided object in $\mathcal{M}%
^{\prime }$. We have%
\begin{equation*}
\phi _{2}\left( V\otimes V,V\right) \circ \left( \phi _{2}\left( V,V\right)
\otimes FV\right) =\phi _{2}\left( V,V\otimes V\right) \circ \left(
FV\otimes \phi _{2}\left( V,V\right) \right) .
\end{equation*}%
Call $\omega :FV\otimes FV\otimes FV\rightarrow F\left( V\otimes V\otimes
V\right) $ this composition. Using the definition of $c_{FV}$ and the
naturality of $\phi _{2}$ one easily gets%
\begin{equation}
\omega \circ \left( c_{FV}\otimes FV\right) =F\left( c_{V}\otimes V\right)
\circ \omega ,\quad \omega \circ \left( FV\otimes c_{FV}\right) =F\left(
V\otimes c_{V}\right) \circ \omega .  \label{form:omega}
\end{equation}%
Thus we obtain%
\begin{eqnarray*}
&&\omega \circ \left( c_{FV}\otimes FV\right) \circ \left( FV\otimes
c_{FV}\right) \circ \left( c_{FV}\otimes FV\right) \\
&=&F\left[ \left( c_{V}\otimes V\right) \circ \left( V\otimes c_{V}\right)
\circ \left( c_{V}\otimes V\right) \right] \circ \omega \\
&=&F\left[ \left( V\otimes c_{V}\right) \circ \left( c_{V}\otimes V\right)
\circ \left( V\otimes c_{V}\right) \right] \circ \omega \\
&=&\omega \circ \left( FV\otimes c_{FV}\right) \circ \left( c_{FV}\otimes
FV\right) \circ \left( FV\otimes c_{FV}\right) .
\end{eqnarray*}%
Since $\omega $ is an isomorphism, we conclude that $c_{FV}$ is a braiding.
Thus $\left( FV,c_{FV}\right) $ is a braided object. Let $f:\left(
V,c_{V}\right) \rightarrow \left( V^{\prime },c_{V^{\prime }}\right) $ be a
morphism of braided objects in $\mathcal{M}$. Using the definition of $%
c_{FV^{\prime }},$ the naturality of $\phi _{2},$ that $f$ is compatible
with the braiding, one easily gets $c_{FV^{\prime }}\circ \left( Ff\otimes
Ff\right) =\left( Ff\otimes Ff\right) \circ c_{FV}$. Thus the functor $%
\mathrm{Br}F:\mathrm{Br}_{\mathcal{M}}\rightarrow \mathrm{Br}_{\mathcal{M}%
^{\prime }}$ of the statement is well-defined. By construction one easily
checks that $H^{\prime }\circ \mathrm{Br}F=F\circ H.$

Let $\left( A,m_{A},u_{A}\right) \in \mathrm{Alg}_{\mathcal{M}}.$ By \cite[%
Proposition 1.5]{AMS-splitting}, we have that $\left(
FA,m_{FA},u_{FA}\right) $ is in $\mathrm{Alg}_{\mathcal{M}^{\prime }}.$ Let $%
f:\left( A,m_{A},u_{A}\right) \rightarrow \left( A^{\prime },m_{A^{\prime
}},u_{A^{\prime }}\right) $ be a morphism of algebras in $\mathcal{M}$.
Using the definition of $m_{FA^{\prime }},$ the naturality of $\phi _{2}$
and the multiplicativity of $f$ one gets $m_{FA^{\prime }}\circ \left(
Ff\otimes Ff\right) =Ff\circ m_{FA}.$ Moreover, using the definition of $%
u_{FA}$ and the unitarity of $f$ one has $Ff\circ u_{FA}=u_{FA^{\prime }}$.
Thus the functor $\mathrm{Alg}F:\mathrm{Alg}_{\mathcal{M}}\rightarrow
\mathrm{Alg}_{\mathcal{M}^{\prime }}$ is well-defined. It is clear that $%
F\circ \Omega =\Omega ^{\prime }\circ \mathrm{Alg}F$.

Let $\left( A,m_{A},u_{A},c_{A}\right) $ be an object in $\mathrm{BrAlg}_{%
\mathcal{M}}.$ Then $\left( A,c_{A}\right) \in \mathrm{Br}_{\mathcal{M}}$
and $\left( A,m_{A},u_{A}\right) \in \mathrm{Alg}_{\mathcal{M}}$ so that, by
the foregoing, we get that $\left( FA,c_{FA}\right) \in \mathrm{Br}_{%
\mathcal{M}^{\prime }}$ and $\left( FA,m_{FA},u_{FA}\right) \in \mathrm{Alg}%
_{\mathcal{M}^{\prime }}$. We have%
\begin{gather*}
\phi _{2}\left( A,A\right) \circ (FA\otimes m_{FA})\circ (c_{FA}\otimes
FA)\circ (FA\otimes c_{FA}) \\
\overset{(\ast )}{=}F(A\otimes m_{A})\circ \omega \circ \left( c_{FA}\otimes
FA\right) \circ (FA\otimes c_{FA}) \\
\overset{(\ref{form:omega})}{=}F(A\otimes m_{A})\circ F\left( c_{A}\otimes
A\right) \circ F(A\otimes c_{A})\circ \omega \\
\overset{(\ref{Br2})}{=}F\left[ c_{A}\circ (m_{A}\otimes A)\right] \circ
\omega \overset{(\ast )}{=}Fc_{A}\circ \phi _{2}\left( A,A\right) \circ
(m_{FA}\otimes FA)=\phi _{2}\left( A,A\right) \circ c_{FA}\circ
(m_{FA}\otimes FA)
\end{gather*}

where in (*) we used the definition of $m_{FA},$ the naturality of $\phi
_{2}\left( A,A\right) $ and the definition of $\omega .$ Thus%
\begin{equation*}
(FA\otimes m_{FA})\circ (c_{FA}\otimes FA)\circ (FA\otimes
c_{FA})=c_{FA}\circ (m_{FA}\otimes FA).
\end{equation*}%
Similarly one proves that $(m_{FA}\otimes FA)\circ \left( FA\otimes
c_{FA}\right) \circ (c_{FA}\otimes FA)=c_{FA}\circ (FA\otimes m_{FA}).$
Moreover
\begin{eqnarray*}
&&\phi _{2}\left( A,A\right) \circ c_{FA}\circ (u_{FA}\otimes FA)\circ
l_{FA}^{-1}\overset{(\ast \ast )}{=}F\left[ c_{A}\circ (u_{A}\otimes A)\circ
l_{A}^{-1}\right] \\
&&\overset{(\ref{Br4})}{=}F\left[ \left( A\otimes u_{A}\right) \circ
r_{A}^{-1}\right] \overset{(\ast \ast )}{=}\phi _{2}\left( A,A\right) \circ
\left( FA\otimes u_{FA}\right) \circ r_{FA}^{-1}
\end{eqnarray*}

where in (**) we used the definitions of $c_{FA}$ and $u_{FA},$ the
naturality of $\phi _{2}$ and the definition of monoidal functor. Thus $%
c_{FA}\circ (u_{FA}\otimes FA)\circ l_{FA}^{-1}=\left( FA\otimes
u_{FA}\right) \circ r_{FA}^{-1}.$ Similarly one proves that $c_{FA}\circ
(FA\otimes u_{FA})\circ r_{FA}^{-1}=\left( u_{FA}\otimes FA\right) \circ
l_{FA}^{-1}.$ We have so proved that $\left( FA,m_{FA},u_{FA},c_{FA}\right) $
is a braided algebra in $\mathcal{M}^{\prime }$. Since, by definition, a
morphism of braided algebras is just a morphism of braided objects and of
algebras, it is clear, by the foregoing, that $F$ preserves morphisms of
braided algebras so that the functor $\mathrm{BrAlg}F:\mathrm{BrAlg}_{%
\mathcal{M}}\rightarrow \mathrm{BrAlg}_{\mathcal{M}^{\prime }}$ is
well-defined. It is clear that $\mathrm{Alg}F\circ H_{\mathrm{Alg}}=H_{%
\mathrm{Alg}}^{\prime }\circ \mathrm{BrAlg}F$ and $\mathrm{Br}F\circ \Omega
_{\mathrm{Br}}=\Omega _{\mathrm{Br}}^{\prime }\circ \mathrm{BrAlg}F.$

Let us define the functor $\mathrm{BrBialg}F.$ Let $\left(
B,m_{B},u_{B},\Delta _{B},\varepsilon _{B},c_{B}\right) $ be a braided
bialgebra in $\mathcal{M}$. Then $\left( B,m_{B},u_{B},c_{B}\right) $ is a
braided algebra in $\mathcal{M}$, so that, by the foregoing, $\left(
FB,m_{FB},u_{FB},c_{FB}\right) $ is a braided algebra in $\mathcal{M}%
^{\prime }$. A dual argument proves that $\left( FB,\Delta _{FB},\varepsilon
_{FB},c_{FB}\right) $ is a braided coalgebra in $\mathcal{M}^{\prime }$.

We compute
\begin{align*}
& \phi _{2}\left( B,B\right) \circ (m_{FB}\otimes m_{FB})\circ (FB\otimes
c_{FB}\otimes FB)\circ (\Delta _{FB}\otimes \Delta _{FB}) \\
& \overset{(\ast \ast \ast )}{=}F(m_{B}\otimes m_{B})\circ \phi _{2}\left(
B\otimes B\otimes B,B\right) \circ \left( \omega \otimes FB\right) \circ
(FB\otimes c_{FB}\otimes FB)\circ (\Delta _{FB}\otimes \Delta _{FB}) \\
& \overset{(\ref{form:omega})}{=}F(m_{B}\otimes m_{B})\circ \phi _{2}\left(
B\otimes B\otimes B,B\right) \circ (F\left( B\otimes c_{B}\right) \otimes
FB)\circ \left( \omega \otimes FB\right) \circ (\Delta _{FB}\otimes \Delta
_{FB}) \\
& \overset{(\ast \ast \ast )^{\prime }}{=}F\left[ (m_{B}\otimes m_{B})\circ
(B\otimes c_{B}\otimes B)\circ (\Delta _{B}\otimes \Delta _{B})\right] \circ
\phi _{2}\left( B,B\right) \\
\overset{(\ref{Br1})}{=}& F\left( \Delta _{B}\circ m_{B}\right) \circ \phi
_{2}\left( B,B\right) =F\Delta _{B}\circ m_{FB}=\phi _{2}\left( B,B\right)
\circ \Delta _{FB}\circ m_{FB}
\end{align*}%
where in (***) we used the definition of $m_{FB},$ the naturality of $\phi
_{2},$ the fact that $F$ is a monoidal functor and the definition of $\omega
$ while in (***)' we used the naturality of $\phi _{2},$ the definition of $%
\omega ,$ the definition of $\Delta _{FB}$ (the one on the left of the
tensor)$,$ the fact that $F$ is monoidal, again the definition of $\Delta
_{FB}$ (the one on the right of the tensor) and the naturality of $\phi
_{2}. $ Thus%
\begin{equation*}
(m_{FB}\otimes m_{FB})\circ (FB\otimes c_{FB}\otimes FB)\circ (\Delta
_{FB}\otimes \Delta _{FB})=\Delta _{FB}\circ m_{FB}.
\end{equation*}%
We calculate

definitions of $\Delta _{FB}$ and $u_{FB},$ the unitarity of $\Delta _{B},$
the equality $\Delta _{\mathbf{1}}=l_{\mathbf{1}}^{-1}$, the monoidality of $%
F,$ the naturality of the left unit constraint, the naturality of $\phi _{2}$
and again the definition of $u_{FB},$ one gets $\phi _{2}\left( B,B\right)
\circ \Delta _{FB}\circ u_{FB}=\phi _{2}\left( B,B\right) \circ
(u_{FB}\otimes u_{FB})\circ \Delta _{\mathbf{1}}$ so that $\Delta _{FB}\circ
u_{FB}=(u_{FB}\otimes u_{FB})\circ \Delta _{\mathbf{1}}$. Dually one gets $%
\varepsilon _{FB}\circ m_{FB}=m_{\mathbf{1}}\circ \left( \varepsilon
_{FB}\otimes \varepsilon _{FB}\right) .$ Finally we have%
\begin{equation*}
\varepsilon _{FB}\circ u_{FB}=\phi _{0}^{-1}\circ F\varepsilon _{B}\circ
Fu_{B}\circ \phi _{0}=\phi _{0}^{-1}\circ F\left( \varepsilon _{B}\circ
u_{B}\right) \circ \phi _{0}=\phi _{0}^{-1}\circ \phi _{0}=\mathrm{Id}_{%
\mathbf{1}}.
\end{equation*}%
We have so proved that $\left( FB,m_{FB},u_{FB},\Delta _{FB},\varepsilon
_{FB},c_{FB}\right) $ is a braided bialgebra. Let $f$ be a morphism of
braided bialgebras from $\left( B,m_{B},u_{B},\Delta _{B},\varepsilon
_{B},c_{B}\right) $ to $\left( B^{\prime },m_{B^{\prime }},u_{B^{\prime
}},\Delta _{B^{\prime }},\varepsilon _{B^{\prime }},c_{B^{\prime }}\right) $%
. Then $f:\left( B,m_{B},u_{B}\right) \rightarrow \left( B^{\prime
},m_{B^{\prime }},u_{B^{\prime }}\right) $ is a morphism of algebras and $%
f:\left( B,\Delta _{B},\varepsilon _{B}\right) \rightarrow \left( B^{\prime
},\Delta _{B^{\prime }},\varepsilon _{B^{\prime }}\right) $ is a morphism of
coalgebras. Thus $Ff:\left( FB,m_{FB},u_{FB}\right) \rightarrow \left(
FB^{\prime },m_{FB^{\prime }},u_{FB^{\prime }}\right) $ is a morphism of
algebras and $Ff:\left( FB,\Delta _{FB},\varepsilon _{FB}\right) \rightarrow
\left( FB^{\prime },\Delta _{FB^{\prime }},\varepsilon _{FB^{\prime
}}\right) $ is a morphism of coalgebras. Moreover we know that $Ff:\left(
FB,c_{FB}\right) \rightarrow \left( FB^{\prime },c_{FB^{\prime }}\right) $
is a morphism of braided objects. We have so proved that $Ff$ is a morphism
of braided bialgebras. Thus the functor $\mathrm{BrBialg}F:\mathrm{BrBialg}_{%
\mathcal{M}}\rightarrow \mathrm{BrBialg}_{\mathcal{M}^{\prime }}$ of the
statement is well-defined.

Let $\left( F,\phi _{0}^{F},\phi _{2}^{F}\right) :\mathcal{M}\rightarrow
\mathcal{M}^{\prime }$ and $\left( F^{\prime },\phi _{0}^{F^{\prime }},\phi
_{2}^{F^{\prime }}\right) :\mathcal{M}^{\prime }\rightarrow \mathcal{M}%
^{\prime \prime }$ be monoidal functors. Then $\left( F^{\prime }F,\phi
_{2}^{F^{\prime }F},\phi _{0}^{F^{\prime }F}\right) $is a monoidal functor
where $\phi _{2}^{F^{\prime }F}\left( U,V\right) :=F^{\prime }\left( \phi
_{2}^{F}\right) \left( U,V\right) \circ \phi _{2}^{F^{\prime }}\left(
FU,FV\right) $ and $\phi _{0}^{F^{\prime }F}:=F^{\prime }\left( \phi
_{0}^{F}\right) \circ \phi _{0}^{F^{\prime }}.$We compute%
\begin{equation*}
\left( \mathrm{Br}F^{\prime }\circ \mathrm{Br}F\right) \left( V,c_{V}\right)
=\mathrm{Br}F^{\prime }\left( FV,c_{FV}\right) =\left( F^{\prime
}FV,c_{F^{\prime }\left( FV\right) }\right) \overset{(\bullet )}{=}\left(
F^{\prime }FV,c_{\left( F^{\prime }F\right) V}\right) =\mathrm{Br}\left(
F^{\prime }F\right) \left( V,c_{V}\right) .
\end{equation*}%
where $(\bullet )$ follows from the following computation
\begin{eqnarray*}
c_{F^{\prime }\left( FV\right) } &=&\left( \phi _{2}^{F^{\prime }}\right)
^{-1}\left( FV,FV\right) \circ F^{\prime }c_{FV}\circ \phi _{2}^{F^{\prime
}}\left( FV,FV\right) \\
&=&\left( \phi _{2}^{F^{\prime }}\right) ^{-1}\left( FV,FV\right) \circ
F^{\prime }\left( \left( \phi _{2}^{F}\right) ^{-1}\left( V,V\right) \circ
Fc_{V}\circ \left( \phi _{2}^{F}\right) \left( V,V\right) \right) \circ \phi
_{2}^{F^{\prime }}\left( FV,FV\right) \\
&=&\left( \phi _{2}^{F^{\prime }}\right) ^{-1}\left( FV,FV\right) \circ
F^{\prime }\left( \phi _{2}^{F}\right) ^{-1}\left( V,V\right) \circ
F^{\prime }Fc_{V}\circ F^{\prime }\left( \phi _{2}^{F}\right) \left(
V,V\right) \circ \phi _{2}^{F^{\prime }}\left( FV,FV\right) \\
&=&\phi _{2}^{F^{\prime }F}\left( V,V\right) ^{-1}\circ F^{\prime
}Fc_{V}\circ \phi _{2}^{F^{\prime }F}\left( V,V\right) =c_{\left( F^{\prime
}F\right) V}.
\end{eqnarray*}%
Thus we have $\mathrm{Br}F^{\prime }\circ \mathrm{Br}F=\mathrm{Br}\left(
F^{\prime }F\right) .$ We compute%
\begin{eqnarray*}
\left( \mathrm{Alg}F^{\prime }\circ \mathrm{Alg}F\right) \left(
A,m_{A},u_{A}\right) &=&\mathrm{Alg}F^{\prime }\left(
FA,m_{FA},u_{FA}\right) =\left( F^{\prime }FA,m_{F^{\prime }\left( FA\right)
},u_{F^{\prime }\left( FA\right) }\right) \\
\overset{(\bullet \bullet )}{=}\left( F^{\prime }FA,m_{\left( F^{\prime
}F\right) A},u_{\left( F^{\prime }F\right) A}\right) &=&\mathrm{Alg}\left(
F^{\prime }F\right) \left( A,m_{A},u_{A}\right)
\end{eqnarray*}%
where $(\bullet \bullet )$ follows from the following computations%
\begin{eqnarray*}
m_{F^{\prime }\left( FA\right) } &=&F^{\prime }m_{FA}\circ \phi
_{2}^{F^{\prime }}\left( FA,FA\right) =F^{\prime }Fm_{A}\circ F^{\prime
}\phi _{2}^{F}\left( A,A\right) \circ \phi _{2}^{F^{\prime }}\left(
FA,FA\right) \\
&=&F^{\prime }Fm_{A}\circ \phi _{2}^{F^{\prime }F}\left( A,A\right)
=m_{\left( F^{\prime }F\right) A}, \\
u_{F^{\prime }\left( FA\right) } &=&F^{\prime }u_{FA}\circ \phi
_{0}^{F^{\prime }}=Fu_{A}\circ F^{\prime }\phi _{0}^{F}\circ \phi
_{0}^{F^{\prime }}=Fu_{A}\circ \phi _{0}^{F^{\prime }F}=u_{\left( F^{\prime
}F\right) A}.
\end{eqnarray*}%
Thus $\mathrm{Alg}F^{\prime }\circ \mathrm{Alg}F=\mathrm{Alg}\left(
F^{\prime }F\right) .$ By the foregoing it is clear that $\mathrm{BrAlg}%
F^{\prime }\circ \mathrm{BrAlg}F=\mathrm{BrAlg}\left( F^{\prime }F\right) .$

By the foregoing and a dual argument on the comultiplication and counit, one
also gets that $\mathrm{BrBialg}F^{\prime }\circ \mathrm{BrBialg}F=\mathrm{%
BrBialg}\left( F^{\prime }F\right) .$

Consider the strict monoidal functor $\mathrm{Id}_{\mathcal{M}}.$ A direct
computation shows
\begin{gather*}
\mathrm{Br}\left( \mathrm{Id}_{\mathcal{M}}\right) =\mathrm{Id}_{\mathrm{Br%
}_{\mathcal{M}}},\qquad \mathrm{Alg}\left( \mathrm{Id}_{\mathcal{M}}\right) =%
\mathrm{Id}_{\mathrm{Alg}_{\mathcal{M}}}, \\
\mathrm{BrAlg}\left( \mathrm{Id}_{\mathcal{M}}\right) =\mathrm{Id}_{%
\mathrm{BrAlg}_{\mathcal{M}}},\qquad \mathrm{BrBialg}\left( \mathrm{Id}_{%
\mathcal{M}}\right) =\mathrm{Id}_{\mathrm{BrBialg}_{\mathcal{M}}}.
\end{gather*}%
Let $\left( F,\phi _{0}^{F},\phi _{2}^{F}\right) :\mathcal{M}\rightarrow
\mathcal{M}^{\prime }$ and $\left( F^{\prime },\phi _{0}^{F^{\prime }},\phi
_{2}^{F^{\prime }}\right) :\mathcal{M}\rightarrow \mathcal{M}^{\prime }$ be
monoidal functors. Let $\xi :\left( F,\phi _{0}^{F},\phi _{2}^{F}\right)
\rightarrow \left( F^{\prime },\phi _{0}^{F^{\prime }},\phi _{2}^{F^{\prime
}}\right) $ be a morphism of monoidal functors i.e. a natural transformation
$\xi :F\rightarrow F^{\prime }$ such that $\xi \mathbf{1}\circ \phi
_{0}^{F}=\phi _{0}^{F^{\prime }}$ and $\xi \left( U\otimes V\right) \circ
\phi _{2}^{F}\left( U,V\right) =\phi _{2}^{F^{\prime }}\left( U,V\right)
\circ \left( \xi U\otimes \xi V\right) .$ Let us define a natural
transformation $\mathrm{Br}\xi :\mathrm{Br}F\rightarrow \mathrm{Br}F^{\prime
}.$ First we have to define a morphism $\mathrm{Br}\xi \left( V,c\right) :%
\mathrm{Br}F\left( V,c\right) \rightarrow \mathrm{Br}F^{\prime }\left(
V,c\right) .$ in $\mathrm{Br}_{\mathcal{M}^{\prime }}$. Now $\mathrm{Br}%
F\left( V,c\right) =\left( FV,c_{FV}\right) $ and $\mathrm{Br}F^{\prime
}\left( V,c\right) =\left( F^{\prime }V,c_{F^{\prime }V}\right) $ so that a
natural candidate is $\xi V.$ We have to check it is a morphism of braided
objects i.e. $\left( \xi V\otimes \xi V\right) \circ c_{FV}=c_{F^{\prime
}V}\circ \left( \xi U\otimes \xi V\right) $ but this is achieved by means of
the definition of $c_{FV},$ the fact that $\xi $ is a morphism of monoidal
functors and the naturality of $\xi $. Thus $\xi V$ really induces a unique
morphism $\mathrm{Br}\xi \left( V,c\right) :\mathrm{Br}F\left( V,c\right)
\rightarrow \mathrm{Br}F^{\prime }\left( V,c\right) $ such that $H^{\prime }%
\mathrm{Br}\xi \left( V,c\right) =\xi V.$ Let us check that $\mathrm{Br}\xi
\left( V,c\right) $ is natural in $\left( V,c\right) $. Let $f:\left(
V,c\right) \rightarrow \left( V^{\prime },c^{\prime }\right) $ be a morphism
of braided object in $\mathcal{M}$. Then%
\begin{gather*}
H^{\prime }\left( \mathrm{Br}\xi \left( V^{\prime },c^{\prime }\right) \circ
\mathrm{Br}F\left( f\right) \right) =H^{\prime }\mathrm{Br}\xi \left(
V^{\prime },c^{\prime }\right) \circ H^{\prime }\mathrm{Br}F\left( f\right)
\\
=\xi V^{\prime }\circ FH\left( f\right) =F^{\prime }H\left( f\right) \circ
\xi V=H^{\prime }\left( \mathrm{Br}F^{\prime }\left( f\right) \circ \mathrm{%
Br}\xi \left( V,c\right) \right)
\end{gather*}%
so that $\mathrm{Br}\xi \left( V^{\prime },c^{\prime }\right) \circ \mathrm{%
Br}F\left( f\right) =\mathrm{Br}F^{\prime }\left( f\right) \circ \mathrm{Br}%
\xi \left( V,c\right) $ and hence we get a natural transformation $\mathrm{Br%
}\xi :\mathrm{Br}F\rightarrow \mathrm{Br}F^{\prime }.$

We have to define a morphism $\mathrm{Alg}\xi \left( A,m_{A},u_{A}\right):\mathrm{Alg}F\left(
A,m_{A},u_{A}\right)\rightarrow  \mathrm{Alg}F^{\prime }\left(
A,m_{A},u_{A}\right) $
in $\mathrm{Alg}_{\mathcal{M}^{\prime }}$. Now $\mathrm{Alg}F\left( A,m_{A},u_{A}\right)
=\left( FA,m_{FA},u_{FA}\right) $ and $\mathrm{Alg}F^{\prime }\left(
A,m_{A},u_{A}\right) =\left( F^{\prime }A,m_{F^{\prime }A},u_{F^{\prime
}A}\right) $ so that a natural candidate is again $\xi A:FA\rightarrow
F^{\prime }A.$ We have to check it is a morphism of algebras in $\mathcal{M}%
^{\prime }$ i.e. that $\xi A\circ m_{FA}=m_{F^{\prime }A}\circ \left( \xi
A\otimes \xi A\right) $ and $\xi A\circ u_{FA}=u_{F^{\prime }A}$ but these
equalities follow by definition of $m_{FA}$ (resp. $u_{FA}$) the naturality
of $\xi $ and the fact that $\xi $ is a morphism of monoidal functors. Hence
there is a unique morphism $\mathrm{Alg}\xi \left( A,m_{A},u_{A}\right) $
such that $\Omega ^{\prime }\mathrm{Alg}\xi \left( A,m_{A},u_{A}\right) =\xi
A.$ We check it is natural in $\left( A,m_{A},u_{A}\right) .$  For an algebra morphism $f:\left( A,m_{A},u_{A}\right) \rightarrow \left( A^{\prime
},m_{A^{\prime }},u_{A^{\prime }}\right) ,$ we get%
\begin{gather*}
\Omega ^{\prime }\left[ \mathrm{Alg}\xi \left( A^{\prime },m_{A^{\prime
}},u_{A^{\prime }}\right) \circ \mathrm{Alg}F\left( f\right) \right] =\xi
A^{\prime }\circ F\Omega \left( f\right) \\
=F^{\prime }\Omega \left( f\right) \circ \xi A=\Omega ^{\prime }\left[
\mathrm{Alg}F^{\prime }\left( f\right) \circ \mathrm{Alg}\xi \left(
A,m_{A},u_{A}\right) \right]
\end{gather*}%
so that we get a natural transformation $\mathrm{Alg}\xi :\mathrm{Alg}%
F\rightarrow \mathrm{Alg}F^{\prime }.$ By the foregoing and the definition
of $\mathrm{BrAlg}F$ we can define a natural transformation $\mathrm{BrAlg}%
\xi :\mathrm{BrAlg}F\rightarrow \mathrm{BrAlg}F^{\prime }$ using again $\xi
A.$ Similarly one gets a natural transformation $\mathrm{BrBialg}\xi :%
\mathrm{BrBialg}F\rightarrow \mathrm{BrBialg}F^{\prime }$.

Let $\left( F,\phi _{0}^{F},\phi _{2}^{F}\right) :\mathcal{M}\rightarrow
\mathcal{M}^{\prime }$, $\left( F^{\prime },\phi _{0}^{F^{\prime }},\phi
_{2}^{F^{\prime }}\right) :\mathcal{M}\rightarrow \mathcal{M}^{\prime }$ and
$\left( F^{\prime \prime },\phi _{0}^{F^{\prime \prime }},\phi
_{2}^{F^{\prime \prime }}\right) :\mathcal{M}\rightarrow \mathcal{M}^{\prime
}$ be monoidal functors. Let $\xi :\left( F,\phi _{0}^{F},\phi
_{2}^{F}\right) \rightarrow \left( F^{\prime },\phi _{0}^{F^{\prime }},\phi
_{2}^{F^{\prime }}\right) $ and $\xi ^{\prime }:\left( F^{\prime },\phi
_{0}^{F^{\prime }},\phi _{2}^{F^{\prime }}\right) \rightarrow \left(
F^{\prime \prime },\phi _{0}^{F^{\prime \prime }},\phi _{2}^{F^{\prime
\prime }}\right) $ be morphisms of monoidal functors. Thus%
\begin{eqnarray*}
H^{\prime }\left( \mathrm{Br}\xi ^{\prime }\circ \mathrm{Br}\xi \right)
&=&H^{\prime }\mathrm{Br}\xi ^{\prime }\circ H^{\prime }\mathrm{Br}\xi =\xi
^{\prime }H^{\prime }\circ \xi H^{\prime }=\left( \xi ^{\prime }\xi \right)
H^{\prime }=H^{\prime }\left( \mathrm{Br}\left( \xi ^{\prime }\xi \right)
\right) , \\
\Omega ^{\prime }\left( \mathrm{Alg}\xi ^{\prime }\circ \mathrm{Alg}\xi
\right) &=&\Omega ^{\prime }\mathrm{Alg}\xi ^{\prime }\circ \Omega ^{\prime }%
\mathrm{Alg}\xi =\xi ^{\prime }\Omega ^{\prime }\circ \xi \Omega ^{\prime
}=\left( \xi ^{\prime }\xi \right) \Omega ^{\prime }=\Omega ^{\prime }\left(
\mathrm{Alg}\left( \xi ^{\prime }\xi \right) \right)
\end{eqnarray*}%
so that $\mathrm{Br}\xi ^{\prime }\circ \mathrm{Br}\xi =\mathrm{Br}\left(
\xi ^{\prime }\xi \right) $, $\mathrm{Alg}\xi ^{\prime }\circ \mathrm{Alg}%
\xi =\mathrm{Alg}\left( \xi ^{\prime }\xi \right) $ and hence $\mathrm{BrAlg}%
\xi ^{\prime }\circ \mathrm{BrAlg}\xi =\mathrm{BrAlg}\left( \xi ^{\prime
}\xi \right) $ and $\mathrm{BrBialg}\xi ^{\prime }\circ \mathrm{BrBialg}\xi =%
\mathrm{BrBialg}\left( \xi ^{\prime }\xi \right) .$ Moreover%
\begin{eqnarray*}
H^{\prime }\left( \mathrm{Br}\left( \mathrm{Id}_{F}\right) \right) &=&\left(
\mathrm{Id}_{F}\right) H=\mathrm{Id}_{FH}=H^{\prime }\mathrm{Id}_{\mathrm{Br}%
F}, \\
\Omega \left( \mathrm{Alg}\left( \mathrm{Id}_{F}\right) \right) &=&\left(
\mathrm{Id}_{F}\right) \Omega =\mathrm{Id}_{F\Omega }=\Omega \left( \mathrm{%
Id}_{\mathrm{Alg}F}\right) ,
\end{eqnarray*}%
so that $\mathrm{Br}\left( \mathrm{Id}_{F}\right) =\mathrm{Id}_{\mathrm{Br}%
F},\mathrm{Alg}\left( \mathrm{Id}_{F}\right) =\mathrm{Id}_{\mathrm{Alg}F}$
and hence $\mathrm{BrAlg}\left( \mathrm{Id}_{F}\right) =\mathrm{Id}_{\mathrm{%
BrAlg}F}$ and $\mathrm{BrBialg}\left( \mathrm{Id}_{F}\right) =\mathrm{Id}_{%
\mathrm{BrBialg}F}$. Now, if $\left( F,\phi _{0}^{F},\phi _{2}^{F}\right) :%
\mathcal{M}\rightarrow \mathcal{M}^{\prime }$ is a monoidal equivalence,
i.e. $F:\mathcal{M}\rightarrow \mathcal{M}^{\prime }$ is a monoidal functor
and there is a monoidal functor $\left( G,\phi _{0}^{G},\phi _{2}^{G}\right)
:\mathcal{M}^{\prime }\rightarrow \mathcal{M}$ and monoidal isomorphisms of
functors%
\begin{equation*}
\alpha :\mathrm{Id}_{\mathcal{M}^{\prime }}\rightarrow FG\qquad \beta
:GF\rightarrow \mathrm{Id}_{\mathcal{M}}.
\end{equation*}%
Then
\begin{eqnarray*}
\mathrm{Br}\alpha \circ \mathrm{Br}\left( \alpha ^{-1}\right) &=&\mathrm{Br}%
\left( \alpha \circ \alpha ^{-1}\right) =\mathrm{Br}\left( \mathrm{Id}%
_{FG}\right) =\mathrm{Id}_{\mathrm{Br}\left( FG\right) }=\mathrm{Id}_{%
\mathrm{Br}\left( F\right) \circ \mathrm{Br}\left( G\right) } \\
\mathrm{Br}\left( \alpha ^{-1}\right) \circ \mathrm{Br}\alpha &=&\mathrm{Br}%
\left( \alpha ^{-1}\circ \alpha \right) =\mathrm{Br}\left( \mathrm{Id}_{%
\mathrm{Id}_{\mathcal{M}^{\prime }}}\right) =\mathrm{Id}_{\mathrm{Br}\left(
\mathrm{Id}_{\mathcal{M}^{\prime }}\right) }=\mathrm{Id}_{\mathrm{Id}_{%
\mathrm{Br}\mathcal{M}^{\prime }}}
\end{eqnarray*}%
so that $\mathrm{Br}\alpha :\mathrm{Id}_{\mathrm{Br}\mathcal{M}^{\prime
}}\rightarrow \mathrm{Br}\left( F\right) \circ \mathrm{Br}\left( G\right) $
( and similarly $\mathrm{Br}\beta $) is a functorial isomorphism. This means
that $\mathrm{Br}F:\mathrm{Br}_{\mathcal{M}}\rightarrow \mathrm{Br}_{%
\mathcal{M}^{\prime }}$ is an equivalence. Analogously $\mathrm{Alg}F:%
\mathrm{Alg}_{\mathcal{M}}\rightarrow \mathrm{Alg}_{\mathcal{M}^{\prime }}$
is an equivalence and hence also $\mathrm{BrAlg}F:\mathrm{BrAlg}_{\mathcal{M}%
}\rightarrow \mathrm{BrAlg}_{\mathcal{M}^{\prime }}$ and $\mathrm{BrBialg}F:%
\mathrm{BrBialg}_{\mathcal{M}}\rightarrow \mathrm{BrBialg}_{\mathcal{M}%
^{\prime }}$ are equivalences.

If $F$ is a category isomorphism there is a monoidal functor $\left( G,\phi
_{0}^{G},\phi _{2}^{G}\right) :\mathcal{M}^{\prime }\rightarrow \mathcal{M}$
such that $FG=\mathrm{Id}_{\mathcal{M}^{\prime }}$ and $GF=\mathrm{Id}_{%
\mathcal{M}}.$ Hence $\mathrm{Br}\left( F\right) \circ \mathrm{Br}\left(
G\right) =\mathrm{Br}\left( FG\right) =\mathrm{Br}\left( \mathrm{Id}_{%
\mathcal{M}^{\prime }}\right) =\mathrm{Id}_{\mathrm{Br}_{\mathcal{M}^{\prime
}}}$ and similarly $\mathrm{Br}\left( G\right) \circ \mathrm{Br}\left(
F\right) =\mathrm{Id}_{\mathrm{Br}_{\mathcal{M}}}$ so that$\mathrm{\ Br}%
\left( F\right) $ is a category isomorphism. Analogously $\mathrm{Alg}F:%
\mathrm{Alg}_{\mathcal{M}}\rightarrow \mathrm{Alg}_{\mathcal{M}^{\prime }}$
is an isomorphism and hence also $\mathrm{BrAlg}F:\mathrm{BrAlg}_{\mathcal{M}%
}\rightarrow \mathrm{BrAlg}_{\mathcal{M}^{\prime }}$ and $\mathrm{BrBialg}F:%
\mathrm{BrBialg}_{\mathcal{M}}\rightarrow \mathrm{BrBialg}_{\mathcal{M}%
^{\prime }}$ are isomorphisms.

The proof of 1) is straightforward. If $F$ is conservative, using 1), one
easily check that so are $\mathrm{Br}F,\mathrm{Alg}F,\mathrm{BrAlg}F$ and $%
\mathrm{BrBialg}F$. For instance, $F$ and $H$ conservative implies $%
FH=H^{\prime }\left( \mathrm{Br}F\right) $ conservative and hence, since $%
H^{\prime }$, as any functor, preserves isomorphisms, we obtain that $%
\mathrm{Br}F$ is conservative.
\end{proof}

The following result is essentially \cite[Lemma XII.3.5, page 327]{Kassel}
in case the monoidal category is strict. We prove that it holds for any
monoidal category $\left( \mathcal{M},\otimes ,\mathbf{1}\right) $ using the
monoidal equivalence $\mathcal{M}^{\mathrm{str}}\rightarrow \mathcal{M}$
described in \cite[Theorem XI.5.3, page 291]{Kassel}, where $\mathcal{M}^{%
\mathrm{str}}$ is a strict monoidal category.

\begin{lemma}
\label{lem:Kassel} Let $\left( \mathcal{M},\otimes ,\mathbf{1}\right) $ be a
monoidal category and let $\left( V,c\right) \in \mathrm{Br}_{\mathcal{M}}$.
There there exists a unique monoidal functor $\left( F,\varphi _{2},\varphi
_{0}\right) :\mathcal{B}\rightarrow \mathcal{M}$ such that, for all $a,b\in
\mathbb{N},$
\begin{equation*}
F\left( 0\right) =\mathbf{1},\quad F\left( 1\right) =V,\quad F\left( a\otimes 1\right)
=F\left( a\right) \otimes V,\quad F\left( c_{1,1}\right) =c
\end{equation*}%
and%
\begin{eqnarray}
\varphi _{2}\left( 0,b\right) &=&l_{F\left( b\right) },  \label{form:ph1} \\
\varphi _{2}\left( a,0\right) &=&r_{F\left( a\right) },  \label{form:ph2} \\
\varphi _{2}\left( a,1\right) &=&\mathrm{Id}_{F\left( a\right) \otimes
V},\quad a\geq 1,  \label{form:ph3} \\
\varphi _{2}\left( a,b\otimes 1\right) &=&\left( \varphi _{2}\left(
a,b\right) \otimes V\right) \circ a_{F\left( a\right) ,F\left( b\right)
,V}^{-1},\quad a,b\geq 1,  \label{form:ph4} \\
\varphi _{0} &=&\mathrm{Id}_{\mathbf{1}}\text{.}  \label{form:ph5}
\end{eqnarray}%
Here $\mathcal{B}$ denotes the Braid Category, see \cite[page 321]{Kassel},
which is a strict monoidal category. Its braiding is denoted by $%
c_{m,n}:m\otimes n\rightarrow n\otimes m$. Moreover $F\left( n\right)
:=V^{\otimes n}$ for every $n\in
%TCIMACRO{\U{2115} }%
%BeginExpansion
\mathbb{N}
%EndExpansion
$.
\end{lemma}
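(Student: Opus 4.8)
The plan is to settle uniqueness by a formal argument and then to obtain existence by transporting, through the strictification equivalence of \cite[Theorem XI.5.3, page 291]{Kassel}, the known statement for strict monoidal categories, namely \cite[Lemma XII.3.5, page 327]{Kassel}. For the formal part, the prescriptions $F\left( 0\right) =\mathbf{1}$ and $F\left( a\otimes 1\right) =F\left( a\right) \otimes V$ force $F\left( n\right) =V^{\otimes n}$ by induction on $n$; $\varphi _{0}$ is fixed by \eqref{form:ph5}; and $\varphi _{2}\left( a,b\right) $ is completely determined for all $a,b\in \mathbb{N}$ by \eqref{form:ph1}, \eqref{form:ph2}, \eqref{form:ph3} and the recursion \eqref{form:ph4}, distinguishing the cases $a=0$, then $a\geq 1$ with $b=0$, with $b=1$, and with $b\geq 2$ (in which last case $b=b^{\prime }\otimes 1$ and $b^{\prime }\geq 1$). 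Since every object of $\mathcal{B}$ is a tensor power of $1$ and every morphism of $\mathcal{B}$ is a composite of tensor products of identities with copies of $c_{1,1}^{\pm 1}$ (the $\sigma _{i}$-presentation of the braid groups), a monoidal functor $\left( F,\varphi _{2},\varphi _{0}\right) $ with $F\left( c_{1,1}\right) =c$ and with $\varphi _{0},\varphi _{2}$ as above is determined on all of $\mathcal{B}$ by $F\left( g\circ f\right) =F\left( g\right) \circ F\left( f\right) $ and $F\left( f\otimes g\right) =\varphi _{2}\circ \left( Ff\otimes Fg\right) \circ \varphi _{2}^{-1}$. This proves uniqueness, so only existence remains.

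By \cite[Theorem XI.5.3, page 291]{Kassel} fix a strict monoidal category $\mathcal{M}^{\mathrm{str}}$ and a monoidal equivalence $\left( \mathcal{G},\gamma _{0},\gamma _{2}\right) :\mathcal{M}^{\mathrm{str}}\rightarrow \mathcal{M}$; recall that $\mathcal{G}$ may be taken to be the identity on hom-sets, to carry the one-term sequence $\left( X\right) $ to $X$ and the empty sequence to $\mathbf{1}$, with $\gamma _{0}=\mathrm{Id}_{\mathbf{1}}$ and with $\gamma _{2}$ the canonical rebracketing isomorphism assembled from the associativity and unit constraints of $\mathcal{M}$. In particular $\mathcal{G}$ sends the $n$-term sequence $\left( V,\dots ,V\right) $ to $V^{\otimes n}$ (in the convention of this paper) and $\gamma _{2}\left( Z,\left( V\right) \right) =\mathrm{Id}$ for every sequence $Z$. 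Then $V^{\prime }:=\left( V\right) $ and $c^{\prime }:=c$ define a braided object of $\mathcal{M}^{\mathrm{str}}$: $\mathcal{G}$ carries $V^{\prime }\otimes V^{\prime }\otimes V^{\prime }$ to $\left( V\otimes V\right) \otimes V$ and unwinds the strict Yang--Baxter identity for $c^{\prime }$ to \eqref{ec: braided equation}, and $c^{\prime }$ is invertible because $c$ is. (Alternatively, $\left( V^{\prime },c^{\prime }\right) $ is produced from part 2) of Proposition \ref{pro:BrBialg} by a transport of structure.) Since $\mathcal{M}^{\mathrm{str}}$ is strict, \cite[Lemma XII.3.5, page 327]{Kassel} yields the unique strict monoidal functor $F^{\prime }:\mathcal{B}\rightarrow \mathcal{M}^{\mathrm{str}}$ with $F^{\prime }\left( 1\right) =V^{\prime }$, $F^{\prime }\left( c_{1,1}\right) =c^{\prime }$ and $F^{\prime }\left( n\right) =\left( V,\dots ,V\right) $ ($n$ terms).

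Now put $F:=\mathcal{G}\circ F^{\prime }:\mathcal{B}\rightarrow \mathcal{M}$, a monoidal functor whose constraints, since $F^{\prime }$ is strict and $\mathcal{G}$ is the identity on morphisms, are $\varphi _{0}=\mathcal{G}\left( \varphi _{0}^{F^{\prime }}\right) \circ \gamma _{0}=\mathrm{Id}_{\mathbf{1}}$ and $\varphi _{2}\left( a,b\right) =\mathcal{G}\left( \varphi _{2}^{F^{\prime }}\left( a,b\right) \right) \circ \gamma _{2}\left( F^{\prime }a,F^{\prime }b\right) =\gamma _{2}\left( \left( V,\dots ,V\right) _{a},\left( V,\dots ,V\right) _{b}\right) $. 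By the previous paragraph $F\left( 0\right) =\mathbf{1}$, $F\left( 1\right) =V$, $F\left( a\otimes 1\right) =F\left( a\right) \otimes V$, $F\left( n\right) =V^{\otimes n}$ and $F\left( c_{1,1}\right) =\mathcal{G}c^{\prime }=c$. The remaining point is to match $\varphi _{2}\left( a,b\right) $ with the morphism prescribed by \eqref{form:ph1}--\eqref{form:ph4}; this is done by induction on $b$. The cases $a=0$ and $b=0$ are the two unit-constraint axioms of the monoidal functor $\mathcal{G}$, which reproduce $l_{F\left( b\right) }$ and $r_{F\left( a\right) }$ because $\gamma _{0}$ and the constraints of $\mathcal{M}^{\mathrm{str}}$ are trivial; the case $b=1$, $a\geq 1$ is $\gamma _{2}\left( \left( V,\dots ,V\right) _{a},\left( V\right) \right) =\mathrm{Id}$; and the inductive step is the instance $\gamma _{2}\left( X,Y\otimes \left( V\right) \right) =\left( \gamma _{2}\left( X,Y\right) \otimes V\right) \circ a_{\mathcal{G}X,\mathcal{G}Y,V}^{-1}$ of the hexagon axiom for $\mathcal{G}$ (using $\gamma _{2}\left( Y,\left( V\right) \right) =\gamma _{2}\left( X\otimes Y,\left( V\right) \right) =\mathrm{Id}$ and the strictness of $\mathcal{M}^{\mathrm{str}}$), which upon taking $X=\left( V,\dots ,V\right) _{a}$ and $Y=\left( V,\dots ,V\right) _{b}$ is exactly \eqref{form:ph4}. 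I expect this last step --- verifying that the coherence isomorphisms of the strictification reproduce verbatim the normal forms \eqref{form:ph1}--\eqref{form:ph5} --- to be the only genuine content; everything else is formal, imported from \cite{Kassel}, or a transport of structure. If one prefers not to rely on the explicit model of $\mathcal{M}^{\mathrm{str}}$, one instead transports $\mathcal{G}\circ F^{\prime }$ along the canonical isomorphisms $\mathcal{G}\left( \left( V,\dots ,V\right) _{n}\right) \cong V^{\otimes n}$, which for the bracketing used here are identities.
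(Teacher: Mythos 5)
Your proposal is correct and follows essentially the same route as the paper: reduce to the strict case via the strictification equivalence of \cite[Theorem XI.5.3]{Kassel}, apply \cite[Lemma XII.3.5]{Kassel} there, and define $F$ as the composite, checking that the coherence isomorphisms of the strictification reproduce \eqref{form:ph1}--\eqref{form:ph5}. Your uniqueness paragraph is in fact slightly more explicit than the paper's (which only notes that the constraints and the values $F(a)$ are forced, leaving implicit that morphisms are determined because $\mathcal{B}$ is monoidally generated by $c_{1,1}^{\pm 1}$), but the argument is the same in substance.
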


\begin{proof}
By \cite[Theorem XI.5.3, page 291]{Kassel}, there is a monoidal equivalence $%
F^{\prime }:\mathcal{M}^{\mathrm{str}}\rightarrow \mathcal{M}$, where $%
\mathcal{M}^{\mathrm{str}}$ is a suitable strict monoidal category. Recall
that objects in $\mathcal{M}^{\mathrm{str}}$ are all finite sequences $%
S=\left( V_{1},\ldots ,V_{k}\right) $ of objects of $\mathcal{M}$ including
the empty sequence $\emptyset $. The integer $k$ is by definition the length
of the sequence and is denoted by $l\left( S\right) $. This category is
strict monoidal with unit $\emptyset $ and tensor product given by%
\begin{eqnarray*}
\emptyset \otimes S &:&=S=:S\otimes \emptyset , \\
\left( V_{1},\ldots ,V_{k}\right) \otimes \left( V_{1}^{\prime },\ldots
,V_{s}^{\prime }\right) &:&=\left( V_{1},\ldots ,V_{k},V_{1}^{\prime
},\ldots ,V_{s}^{\prime }\right) .
\end{eqnarray*}%
To any sequence $S$ one assigns an object $F^{\prime }\left( S\right) $
defined recursively as follows: $F^{\prime }\left( \emptyset \right) :=%
\mathbf{1,}$ $F^{\prime }\left( \left( V_{1}\right) \right)
:=V_{1},F^{\prime }\left( \left( V_{1},\ldots ,V_{k},V_{k+1}\right) \right)
=F^{\prime }\left( \left( V_{1},\ldots ,V_{k}\right) \right) \otimes
V_{k+1}. $ The morphisms in $\mathcal{M}^{\mathrm{str}}$ are given by%
\begin{equation*}
\mathcal{M}^{\mathrm{str}}\left( S,S^{\prime }\right) :=\mathcal{M}\left(
F^{\prime }\left( S\right) ,F^{\prime }\left( S^{\prime }\right) \right) .
\end{equation*}%
Defining $F^{\prime }$ as the identity on morphisms, we get the functor $%
F^{\prime }:\mathcal{M}^{\mathrm{str}}\rightarrow \mathcal{M}$ . For
arbitrary objects $S,S^{\prime }\in \mathcal{M}^{\mathrm{str}}$ there is an
isomorphism $\varphi _{2}^{\prime }\left( S,S^{\prime }\right) :F^{\prime
}\left( S\right) \otimes F^{\prime }\left( S^{\prime }\right) \rightarrow
F^{\prime }\left( S\otimes S^{\prime }\right) $ defined iteratively as
follows%
\begin{eqnarray*}
\varphi _{2}^{\prime }\left( \emptyset ,S^{\prime }\right) &:&=l_{F^{\prime
}\left( S^{\prime }\right) },\quad \varphi _{2}^{\prime }\left( S,\emptyset
\right) :=r_{F^{\prime }\left( S\right) }, \\
\varphi _{2}^{\prime }\left( S,\left( Z\right) \right) &:&=\mathrm{Id}%
_{F^{\prime }\left( S\right) \otimes Z},l\left( S\right) \geq 1, \\
\varphi _{2}^{\prime }\left( S,S^{\prime }\otimes \left( Z\right) \right)
&:&=\left( \varphi _{2}^{\prime }\left( S,S^{\prime }\right) \otimes
Z\right) \circ a_{F^{\prime }\left( S\right) ,F^{\prime }\left( S^{\prime
}\right) ,Z}^{-1},l\left( S\right) \geq 1,l\left( S^{\prime }\right) \geq 1.
\end{eqnarray*}%
Define $\varphi _{0}^{\prime }=\mathrm{Id}_{\mathbf{1}}:\mathbf{1}%
\rightarrow F^{\prime }\left( \emptyset \right) .$ Then $\left( F^{\prime
},\varphi _{2}^{\prime },\varphi _{0}^{\prime }\right) $ is the claimed
monoidal functor. This comes out to be an equivalence. Its right adjoint $%
G^{\prime }:\mathcal{M}\rightarrow \mathcal{M}^{\mathrm{str}}$ is given by $%
G^{\prime }\left( Z\right) :=\left( Z\right) $ and is the identity on
morphisms too. Note that $F^{\prime }G^{\prime }=\mathrm{Id}_{\mathcal{M}}.$
The counit of this adjunction is the identity $\epsilon Z=\mathrm{Id}%
_{Z}:F^{\prime }G^{\prime }Z\rightarrow Z.$ The unit is $\eta S=\mathrm{Id}%
_{F^{\prime }S}:S\rightarrow G^{\prime }F^{\prime }S$. By \cite[Proposition
1.4.]{AMS-splitting}, we have that $\left( G^{\prime },\gamma _{2}^{\prime
},\gamma _{0}^{\prime }\right) $ is a monoidal functor where%
\begin{eqnarray*}
\gamma _{0}^{\prime } &:&=G^{\prime }\left( \left( \varphi _{0}^{\prime
}\right) ^{-1}\right) \circ \eta \emptyset :\emptyset \rightarrow G^{\prime }%
\mathbf{1.} \\
\gamma _{2}^{\prime }\left( X,Y\right) &:&=G^{\prime }\left( \epsilon
X\otimes \epsilon Y\right) \circ G^{\prime }\left( \left( \varphi
_{2}^{\prime }\right) ^{-1}\left( G^{\prime }X,G^{\prime }Y\right) \right)
\circ \eta \left( G^{\prime }X\otimes G^{\prime }Y\right) :G^{\prime
}X\otimes G^{\prime }Y\rightarrow G^{\prime }\left( X\otimes Y\right) .
\end{eqnarray*}%
Hence $\gamma _{0}^{\prime }=\mathrm{Id}_{\mathbf{1}}:\emptyset \rightarrow
\left( \mathbf{1}\right) $ and $\gamma _{2}^{\prime }\left( X,Y\right) =%
\mathrm{Id}_{X\otimes Y}:\left( X,Y\right) \rightarrow \left( X\otimes
Y\right) $ as%
\begin{equation*}
\left( \varphi _{2}^{\prime }\right) ^{-1}\left( G^{\prime }X,G^{\prime
}Y\right) =\left( \varphi _{2}^{\prime }\right) ^{-1}\left( \left( X\right)
,\left( Y\right) \right) =\mathrm{Id}_{F^{\prime }\left( \left( X\right)
\right) \otimes Y}=\mathrm{Id}_{X\otimes Y}.
\end{equation*}

Therefore we have a functor $\mathrm{Br}G^{\prime }:\mathrm{Br}_{\mathcal{M}%
}\rightarrow \mathrm{Br}_{\mathcal{M}^{\mathrm{str}}}.$ By construction%
\begin{equation*}
\mathrm{Br}G^{\prime }\left( V,c\right) =\left( G^{\prime }\left( V\right)
,\left( \gamma _{2}^{\prime }\right) ^{-1}\left( V,V\right) \circ G^{\prime
}c\circ \gamma _{2}^{\prime }\left( V,V\right) \right) =\left( \left(
V\right) ,c\right) .
\end{equation*}%
Thus $\left( \left( V\right) ,c\right) =\mathrm{Br}G^{\prime }\left(
V,c\right) \in \mathrm{Br}_{\mathcal{M}^{\mathrm{str}}}$. By \cite[Lemma
XII.3.5, page 327]{Kassel}, there is a unique strict monoidal functor $%
F^{\prime \prime }:\mathcal{B}\rightarrow \mathcal{M}^{\mathrm{str}}$ such
that $F^{\prime \prime }\left( 1\right) =\left( V\right) $ and $F^{\prime
\prime }\left( c_{1,1}\right) =c.$ Define $F:=F^{\prime }\circ F^{\prime
\prime }:\mathcal{B}\rightarrow \mathcal{M}$. Hence $F\left( 1\right)
=F^{\prime }F^{\prime \prime }\left( 1\right) =F^{\prime }\left( \left(
V\right) \right) =V$ and $F\left( c_{1,1}\right) =F^{\prime }F^{\prime
\prime }\left( c_{1,1}\right) =F^{\prime }\left( c\right) =c.$ Let us
compute the monoidal structure of $F$. By \cite[1.3]{AMS-splitting}, we have
that $\left( F,\varphi _{2},\varphi _{0}\right) $ is monoidal where%
\begin{eqnarray*}
\varphi _{2}\left( a,b\right) &:&=\varphi _{2}^{\prime }\left( F^{\prime
\prime }\left( a\right) ,F^{\prime \prime }\left( b\right) \right) :F\left(
a\right) \otimes F\left( b\right) \rightarrow F^{\prime }\left( F^{\prime
\prime }\left( a\right) \otimes F^{\prime \prime }\left( b\right) \right)
=F\left( a\otimes b\right) , \\
\varphi _{0} &=&\varphi _{0}^{\prime }:\mathbf{1}\rightarrow F^{\prime
}\left( \emptyset \right) =F\left( 0\right) .
\end{eqnarray*}

We get%
\begin{eqnarray*}
\varphi _{2}\left( 0,b\right) &=&\varphi _{2}^{\prime }\left( F^{\prime
\prime }\left( 0\right) ,F^{\prime \prime }\left( b\right) \right) =\varphi
_{2}^{\prime }\left( \emptyset ,F^{\prime \prime }\left( b\right) \right)
=l_{F\left( b\right) }, \\
\varphi _{2}\left( a,0\right) &=&\varphi _{2}^{\prime }\left( F^{\prime
\prime }\left( a\right) ,F^{\prime \prime }\left( 0\right) \right) =\varphi
_{2}^{\prime }\left( F^{\prime \prime }\left( a\right) ,\emptyset \right)
=r_{F\left( a\right) }, \\
\varphi _{2}\left( a,1\right) &=&\varphi _{2}^{\prime }\left( F^{\prime
\prime }\left( a\right) ,F^{\prime \prime }\left( 1\right) \right) =\varphi
_{2}^{\prime }\left( F^{\prime \prime }\left( a\right) ,\left( V\right)
\right) =\mathrm{Id}_{F\left( a\right) \otimes V}, \\
\varphi _{2}\left( a,b\otimes 1\right) &=&\varphi _{2}^{\prime }\left(
F^{\prime \prime }\left( a\right) ,F^{\prime \prime }\left( b\otimes
1\right) \right) =\varphi _{2}^{\prime }\left( F^{\prime \prime }\left(
a\right) ,F^{\prime \prime }\left( b\right) \otimes F^{\prime \prime }\left(
1\right) \right) \\
&=&\varphi _{2}^{\prime }\left( F^{\prime \prime }\left( a\right) ,F^{\prime
\prime }\left( b\right) \otimes \left( V\right) \right) =\left( \varphi
_{2}^{\prime }\left( F^{\prime \prime }\left( a\right) ,F^{\prime \prime
}\left( b\right) \right) \otimes V\right) \circ a_{F^{\prime }F^{\prime
\prime }\left( a\right) ,F^{\prime }F^{\prime \prime }\left( b\right)
,V}^{-1} \\
&=&\left( \varphi _{2}\left( a,b\right) \otimes V\right) \circ a_{F\left(
a\right) ,F\left( b\right) ,V}^{-1}.
\end{eqnarray*}

Thus (\ref{form:ph1}), (\ref{form:ph2}), (\ref{form:ph3}), (\ref{form:ph4})
and (\ref{form:ph5}) hold true for $\left( F,\varphi _{2},\varphi
_{0}\right) $.

Let $\left( \widetilde{F},\widetilde{\varphi }_{2},\widetilde{\varphi }%
_{0}\right) :\mathcal{B}\rightarrow \mathcal{M}$ be another monoidal functor
such that $\widetilde{F}\left( 0\right) =\mathbf{1},\widetilde{F}\left(
1\right) =V,\widetilde{F}\left( a\otimes 1\right) =\widetilde{F}\left(
a\right) \otimes V,$ $\widetilde{F}\left( c_{1,1}\right) =c$ and the
analogue equations (\ref{form:ph1}), (\ref{form:ph2}), (\ref{form:ph3}), (%
\ref{form:ph4}) and (\ref{form:ph5}) hold true.

In order to prove that $\left( F,\varphi _{2},\varphi _{0}\right) =\left(
\widetilde{F},\widetilde{\varphi }_{2},\widetilde{\varphi }_{0}\right) $ it
suffices to check that $F\left( a\right) =\widetilde{F}\left( a\right) $ for
every $a\in
%TCIMACRO{\U{2115} }%
%BeginExpansion
\mathbb{N}
%EndExpansion
$ (in fact the constraints are then uniquely determined by the equalities
they fulfil).

Let us check that $\widetilde{F}\left( n\right) =V^{\otimes n}$ for $n\geq 0$%
. We proceed by induction on $n$. For $n=0,$ by assumption we have $%
\widetilde{F}\left( 0\right) =\mathbf{1}$. Let $n>0$ and assume that $%
\widetilde{F}\left( n-1\right) :=V^{\otimes \left( n-1\right) }.$ Then%
\begin{equation*}
\widetilde{F}\left( n\right) =\widetilde{F}\left( \left( n-1\right) \otimes
1\right) =\widetilde{F}\left( n-1\right) \otimes V=V^{\otimes \left(
n-1\right) }\otimes V=V^{\otimes n}.
\end{equation*}%
Similarly $F\left( n\right) =V^{\otimes n}$ for $n\geq 0$.
\end{proof}

\begin{proposition}
\label{pro:CT}Let $\left( \mathcal{M},\otimes ,\mathbf{1}\right) $ be a
monoidal category and let $\left( V,c\right) \in \mathrm{Br}_{\mathcal{M}}$.
Then there is a unique morphism
\begin{equation*}
c_{T}^{a,b}:V^{\otimes a}\otimes V^{\otimes b}\rightarrow V^{\otimes
b}\otimes V^{\otimes a}
\end{equation*}%
such that for all $l,m,n\in \mathbb{N}$ {\small
\begin{gather}
\left( V^{\otimes n}\otimes c_{T}^{l,m}\right) \circ \left(
c_{T}^{l,n}\otimes V^{\otimes m}\right) \circ \left( V^{\otimes l}\otimes
c_{T}^{m,n}\right) =\left( c_{T}^{m,n}\otimes V^{\otimes l}\right) \circ
\left( V^{\otimes m}\otimes c_{T}^{l,n}\right) \circ \left(
c_{T}^{l,m}\otimes V^{\otimes n}\right) ,  \label{form:ct1} \\
\left( c_{T}^{l,n}\otimes V^{\otimes m}\right) \circ \left( V^{\otimes
l}\otimes c_{T}^{m,n}\right) =c_{T}^{l+m,n},\quad l\neq 0,m\neq 0,
\label{form:ct2} \\
\left( V^{\otimes m}\otimes c_{T}^{l,n}\right) \circ \left(
c_{T}^{l,m}\otimes V^{\otimes n}\right) =c_{T}^{l,m+n},\quad m\neq 0,n\neq 0,
\label{form:ct3} \\
c_{T}^{0,n}\circ l_{V^{\otimes n}}^{-1}=r_{V^{\otimes n}}^{-1},
\label{form:ct4} \\
c_{T}^{n,0}\circ r_{V^{\otimes n}}^{-1}=l_{V^{\otimes n}}^{-1},
\label{form:ct5} \\
c_{T}^{1,1}=c.  \label{form:ct6}
\end{gather}%
}
\end{proposition}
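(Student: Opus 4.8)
The plan is to realise $c_T^{a,b}$ as a conjugate of the image of the braiding $c_{a,b}$ of the Braid Category $\mathcal{B}$ under the monoidal functor produced by Lemma \ref{lem:Kassel}. Let $(F,\varphi _2,\varphi _0):\mathcal{B}\to\mathcal{M}$ be the monoidal functor attached to $(V,c)$ in Lemma \ref{lem:Kassel}, so that $F(n)=V^{\otimes n}$ for all $n$, $F(c_{1,1})=c$, and $\varphi _2$ obeys \eqref{form:ph1}--\eqref{form:ph5}. Since $F(c_{a,b}):F(a\otimes b)\to F(b\otimes a)$ and $a\otimes b=b\otimes a$ in $\mathcal{B}$, I would set
\[
c_T^{a,b}:=\varphi _2(b,a)^{-1}\circ F(c_{a,b})\circ \varphi _2(a,b):V^{\otimes a}\otimes V^{\otimes b}\to V^{\otimes b}\otimes V^{\otimes a}.
\]
With this definition, \eqref{form:ct6} is immediate from $F(c_{1,1})=c$ and $\varphi _2(1,1)=\mathrm{Id}_{V\otimes V}$, which is the case $a=1$ of \eqref{form:ph3}; and \eqref{form:ct4}, \eqref{form:ct5} follow because $c_{0,n}$ and $c_{n,0}$ are identities in $\mathcal{B}$ (these are the braidings involving the monoidal unit $0$), so that $c_T^{0,n}=\varphi _2(n,0)^{-1}\circ \varphi _2(0,n)=r_{V^{\otimes n}}^{-1}\circ l_{V^{\otimes n}}$ by \eqref{form:ph1}--\eqref{form:ph2}, and symmetrically for $c_T^{n,0}$. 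The remaining relations are, after stripping off the constraints, standard identities in the braided category $\mathcal{B}$ applied to the objects $l,m,n$: \eqref{form:ct2} and \eqref{form:ct3} are the two hexagon axioms $c_{l+m,n}=(c_{l,n}\otimes m)\circ (l\otimes c_{m,n})$ and $c_{l,m+n}=(m\otimes c_{l,n})\circ (c_{l,m}\otimes n)$, and \eqref{form:ct1} is the resulting compatibility of the three braidings $c_{l,m},c_{l,n},c_{m,n}$.

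For the three remaining verifications I would first record the identity that the coherence hexagon for the monoidal functor $F$ yields when combined with the strictness of $\mathcal{B}$: for all $a,b,c\in\mathbb{N}$,
\[
\varphi _2(a+b,c)\circ \bigl(\varphi _2(a,b)\otimes V^{\otimes c}\bigr)=\varphi _2(a,b+c)\circ \bigl(V^{\otimes a}\otimes \varphi _2(b,c)\bigr)\circ a_{V^{\otimes a},V^{\otimes b},V^{\otimes c}}.
\]
Then, applying $F$ to the corresponding equality in $\mathcal{B}$, inserting copies of $\varphi _2$ and $\varphi _2^{-1}$ between consecutive tensor factors, and cancelling by naturality of $\varphi _2$ together with the displayed identity, one obtains \eqref{form:ct1}--\eqref{form:ct3}. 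The hypotheses $l,m,n\neq 0$ in \eqref{form:ct2} and \eqref{form:ct3} are exactly what puts the relevant values of $\varphi _2$ in the ``non-unit'' clauses \eqref{form:ph3}--\eqref{form:ph4} rather than \eqref{form:ph1}--\eqref{form:ph2}, so that the rewriting produces precisely the stated form.

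Uniqueness is then proved by induction on $a+b$. The values $c_T^{0,n}$, $c_T^{n,0}$ and $c_T^{1,1}$ are forced by \eqref{form:ct4}, \eqref{form:ct5} and \eqref{form:ct6}. If $a,b\ge 1$ and $a+b\ge 3$, then for $a\ge 2$ equation \eqref{form:ct2} with $(l,m,n)=(a-1,1,b)$ expresses $c_T^{a,b}$ through $c_T^{a-1,b}$ and $c_T^{1,b}$, while for $a=1$ (hence $b\ge 2$) equation \eqref{form:ct3} with $(l,m,n)=(1,b-1,1)$ expresses $c_T^{1,b}$ through $c_T^{1,b-1}$ and $c_T^{1,1}$; in either case the total degree decreases, so any family satisfying \eqref{form:ct1}--\eqref{form:ct6} must agree with the one defined above. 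The main obstacle is the constraint bookkeeping in the middle step: one must unwind the recursive definition of $\varphi _2$ and track the associativity and unit isomorphisms of $\mathcal{M}$. It is probably cleanest to treat first the case where $\mathcal{M}$ is strict, in which $F$ is a strict monoidal functor and \eqref{form:ct1}--\eqref{form:ct6} hold on the nose in $\mathcal{M}$ simply because the corresponding identities hold in $\mathcal{B}$, and then deduce the general case via the strictification $\mathcal{M}^{\mathrm{str}}\to\mathcal{M}$ employed in the proof of Lemma \ref{lem:Kassel}.
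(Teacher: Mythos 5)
Your proposal is correct and follows essentially the same route as the paper: defining $c_{T}^{a,b}:=\varphi _{2}\left( b,a\right) ^{-1}\circ F\left( c_{a,b}\right) \circ \varphi _{2}\left( a,b\right) $ via the monoidal functor of Lemma \ref{lem:Kassel}, transporting the braiding identities of $\mathcal{B}$ through the coherence of $\left( F,\varphi _{2}\right) $, and proving uniqueness by induction reducing to the cases $a,b\in \left\{ 0,1\right\} $. The only cosmetic difference is that the paper carries the constraints of $\mathcal{M}$ explicitly throughout rather than reducing to the strict case at this stage, the strictification having already been absorbed into Lemma \ref{lem:Kassel}.
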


\begin{proof}
Consider the monoidal functor $\left( F,\varphi _{2},\varphi _{0}\right) :%
\mathcal{B}\rightarrow \mathcal{M}$ of Lemma \ref{lem:Kassel}. Consider for $%
a,b\in
%TCIMACRO{\U{2115} }%
%BeginExpansion
\mathbb{N}
%EndExpansion
,$ the isomorphism $\varphi _{2}\left( a,b\right) :F\left( a\right) \otimes
F\left( b\right) \rightarrow F\left( a+b\right) $ where $F\left( n\right)
=V^{\otimes n}$ for every $n\in
%TCIMACRO{\U{2115} }%
%BeginExpansion
\mathbb{N}
%EndExpansion
$. Set%
\begin{equation*}
c_{T}^{a,b}:=\varphi _{2}\left( b,a\right) ^{-1}\circ F\left( c_{a,b}\right)
\circ \varphi _{2}\left( a,b\right) :V^{\otimes a}\otimes V^{\otimes
b}\rightarrow V^{\otimes b}\otimes V^{\otimes a}.
\end{equation*}%
Thus%
\begin{equation}
\varphi _{2}\left( b,a\right) \circ c_{T}^{a,b}=F\left( c_{a,b}\right) \circ
\varphi _{2}\left( a,b\right) .  \label{form:cTdef}
\end{equation}%
Note that, since $\varphi _{2}\left( 1,1\right) =\mathrm{Id}_{V\otimes V},$
we get $c_{T}^{1,1}=F\left( c_{1,1}\right) =c$ so that (\ref{form:ct6})
holds. Since $c_{m,n}:m\otimes n\rightarrow n\otimes m$ is the braiding in $%
\mathcal{B},$ we have that
\begin{equation}
\left( n\otimes c_{l,m}\right) \circ \left( c_{l,n}\otimes m\right) \circ
\left( l\otimes c_{m,n}\right) =\left( c_{m,n}\otimes l\right) \circ \left(
m\otimes c_{l,n}\right) \circ \left( c_{l,m}\otimes n\right) .
\label{form:braid1}
\end{equation}%
Thus, {\small
\begin{eqnarray*}
&&\left[
\begin{array}{c}
\varphi _{2}\left( n,m\otimes l\right) \circ \left( F\left( n\right) \otimes
\varphi _{2}\left( m,l\right) \right) \circ \left( F\left( n\right) \otimes
c_{T}^{l,m}\right) \circ a_{F\left( n\right) ,F\left( l\right) ,F\left(
m\right) } \\
\circ \left( c_{T}^{l,n}\otimes F\left( m\right) \right) \circ a_{F\left(
l\right) ,F\left( n\right) ,F\left( m\right) }^{-1}\circ \left( F\left(
l\right) \otimes c_{T}^{m,n}\right)
\end{array}%
\right]  \\
&&\overset{(\ast )}{=}F\left( n\otimes c_{l,m}\right) \circ F\left(
c_{l,n}\otimes m\right) \circ F\left( l\otimes c_{m,n}\right) \circ \varphi
_{2}\left( l,m\otimes n\right) \circ \left( F\left( l\right) \otimes \varphi
_{2}\left( m,n\right) \right)  \\
&&\overset{(\ref{form:braid1})}{=}F\left( c_{m,n}\otimes l\right) \circ
F\left( m\otimes c_{l,n}\right) \circ F\left( c_{l,m}\otimes n\right) \circ
\varphi _{2}\left( l,m\otimes n\right) \circ \left( F\left( l\right) \otimes
\varphi _{2}\left( m,n\right) \right)  \\
&&\overset{(\ast \ast )}{=}\left[
\begin{array}{c}
\varphi _{2}\left( n,m\otimes l\right) \circ \left( F\left( n\right) \otimes
\varphi _{2}\left( m,l\right) \right) \circ a_{F\left( n\right) ,F\left(
m\right) ,F\left( l\right) }\circ \left( c_{T}^{m,n}\otimes F\left( l\right)
\right) \circ a_{F\left( m\right) ,F\left( n\right) ,F\left( l\right) }^{-1}
\\
\circ \left( F\left( m\right) \otimes c_{T}^{l,n}\right) \circ a_{F\left(
m\right) ,F\left( l\right) ,F\left( n\right) }\circ \left(
c_{T}^{l,m}\otimes F\left( n\right) \right) \circ a_{F\left( l\right)
,F\left( m\right) ,F\left( n\right) }^{-1}%
\end{array}%
\right]
\end{eqnarray*}%
}

where in (*) we used in the order (\ref{form:cTdef}), the naturality of $%
\varphi _{2},$ the monoidality of $F,$ (\ref{form:cTdef}), the naturality of
$\varphi _{2},$ the monoidality of $F,$ (\ref{form:cTdef}) and the
naturality of $\varphi _{2},$ while in (**) we used in the order the
monoidality of $F$, the naturality of $\varphi _{2}$, (\ref{form:cTdef}),
the monoidality of $F,$ repeated three times.

Since $\varphi _{2}\left( n,m\otimes l\right) \circ \left( F\left( n\right)
\otimes \varphi _{2}\left( m,l\right) \right) $ is an isomorphism, from the
computation above we deduce {\small
\begin{eqnarray*}
&&\left( F\left( n\right) \otimes c_{T}^{l,m}\right) \circ a_{F\left(
n\right) ,F\left( l\right) ,F\left( m\right) }\circ \left(
c_{T}^{l,n}\otimes F\left( m\right) \right) \circ a_{F\left( l\right)
,F\left( n\right) ,F\left( m\right) }^{-1}\circ \left( F\left( l\right)
\otimes c_{T}^{m,n}\right)  \\
&=&\left[
\begin{array}{c}
a_{F\left( n\right) ,F\left( m\right) ,F\left( l\right) }\circ \left(
c_{T}^{m,n}\otimes F\left( l\right) \right) \circ a_{F\left( m\right)
,F\left( n\right) ,F\left( l\right) }^{-1}\circ \left( F\left( m\right)
\otimes c_{T}^{l,n}\right)  \\
\circ a_{F\left( m\right) ,F\left( l\right) ,F\left( n\right) }\circ \left(
c_{T}^{l,m}\otimes F\left( n\right) \right) \circ a_{F\left( l\right)
,F\left( m\right) ,F\left( n\right) }^{-1}%
\end{array}%
\right] .
\end{eqnarray*}%
}

This is (\ref{form:ct1}) with all the constraints. Since $c_{-,-}$ is a
braiding, we have%
\begin{equation}
\left( c_{l,n}\otimes m\right) \circ \left( l\otimes c_{m,n}\right)
=c_{l+m,n}.  \label{form:braid2}
\end{equation}%
We compute%
\begin{gather*}
\varphi _{2}\left( n,l\otimes m\right) \circ \left( F\left( n\right) \otimes
\varphi _{2}\left( l,m\right) \right) \circ a_{F\left( n\right) ,F\left(
l\right) ,F\left( m\right) }\circ \left( c_{T}^{l,n}\otimes F\left( m\right)
\right) \circ a_{F\left( l\right) ,F\left( n\right) ,F\left( m\right)
}^{-1}\circ \left( F\left( l\right) \otimes c_{T}^{m,n}\right) \\
\overset{(\bullet )}{=}F\left( c_{l,n}\otimes m\right) \circ F\left(
l\otimes c_{m,n}\right) \circ \varphi _{2}\left( l,m\otimes n\right) \circ
\left( F\left( l\right) \otimes \varphi _{2}\left( m,n\right) \right) \\
\overset{(\ref{form:braid2})}{=}F\left( c_{l+m,n}\right) \circ \varphi
_{2}\left( l,m\otimes n\right) \circ \left( F\left( l\right) \otimes \varphi
_{2}\left( m,n\right) \right) \\
\overset{(\bullet \bullet )}{=}\varphi _{2}\left( n,l\otimes m\right) \circ
c_{T}^{l+m,n}\circ \left( \varphi _{2}\left( l,m\right) \otimes F\left(
n\right) \right) \circ a_{F\left( l\right) ,F\left( m\right) ,F\left(
n\right) }^{-1}.
\end{gather*}

where in $(\bullet )$ we used monoidality of $F$, (\ref{form:cTdef}),
naturality of $\varphi _{2}$, repeated twice, while in $(\bullet \bullet )$
we used monoidality of $F$ and (\ref{form:cTdef}). Since $\varphi _{2}\left(
n,l\otimes m\right) $ is an isomorphism, we obtain%
\begin{eqnarray*}
&&\left( F\left( n\right) \otimes \varphi _{2}\left( l,m\right) \right)
\circ a_{F\left( n\right) ,F\left( l\right) ,F\left( m\right) }\circ \left(
c_{T}^{l,n}\otimes F\left( m\right) \right) \circ a_{F\left( l\right)
,F\left( n\right) ,F\left( m\right) }^{-1}\circ \left( F\left( l\right)
\otimes c_{T}^{m,n}\right) \\
&=&c_{T}^{l+m,n}\circ \left( \varphi _{2}\left( l,m\right) \otimes F\left(
n\right) \right) \circ a_{F\left( l\right) ,F\left( m\right) ,F\left(
n\right) }^{-1}.
\end{eqnarray*}%
If $l\neq 0$ and $m\neq 0$ this formula is (\ref{form:ct2}) with all the
constraints.

Equation (\ref{form:ct3}) follows analogously. Since $c_{-,-}$ is a
braiding, we have%
\begin{equation}
c_{0,n}\circ l_{n}^{-1}=r_{n}^{-1}.  \label{form:braid3}
\end{equation}%
We get%
\begin{eqnarray*}
&&\varphi _{2}\left( n,0\right) \circ c_{T}^{0,n}\circ \left( \varphi
_{0}\otimes F\left( n\right) \right) \circ l_{F\left( n\right) }^{-1}\overset%
{(\ref{form:cTdef})}{=}F\left( c_{0,n}\right) \circ \varphi _{2}\left(
0,n\right) \circ \left( \varphi _{0}\otimes F\left( n\right) \right) \circ
l_{F\left( n\right) }^{-1} \\
&=&F\left( c_{0,n}\right) \circ F\left( l_{n}^{-1}\right) \overset{(\ref%
{form:braid3})}{=}F\left( r_{n}^{-1}\right) =\varphi _{2}\left( n,0\right)
\circ \left( F\left( n\right) \otimes \varphi _{0}\right) \circ r_{F\left(
n\right) }^{-1}
\end{eqnarray*}%
so that%
\begin{equation*}
c_{T}^{0,n}\circ \left( \varphi _{0}\otimes F\left( n\right) \right) \circ
l_{F\left( n\right) }^{-1}=\left( F\left( n\right) \otimes \varphi
_{0}\right) \circ r_{F\left( n\right) }^{-1}.
\end{equation*}%
This is (\ref{form:ct4}) with all the constraints. Equation (\ref{form:ct5})
follows analogously.

We now deal with uniqueness. Suppose there exists another $c_{T^{\prime
}}^{a,b}:V^{\otimes a}\otimes V^{\otimes b}\rightarrow V^{\otimes b}\otimes
V^{\otimes a}$ that fulfills the analogue of the equalities that $%
c_{T}^{a,b}\ $does for all $a,b\in\mathbb{N}$. Since $c_{T^{\prime }}^{a,b}$ fulfills the analogue of (\ref{form:ct2})
we have%
\begin{equation*}
\left( c_{T^{\prime }}^{l,n}\otimes V^{\otimes m}\right) \circ \left(
V^{\otimes l}\otimes c_{T^{\prime }}^{m,n}\right) =c_{T^{\prime }}^{l+m,n},%
\text{ for all }l,m,n\in\mathbb{N},l\neq 0,m\neq 0.
\end{equation*}%
For $l=1$ we get%
\begin{equation*}
\left( c_{T^{\prime }}^{1,n}\otimes V^{\otimes m}\right) \circ \left(
V\otimes c_{T^{\prime }}^{m,n}\right) =c_{T^{\prime }}^{1+m,n}\text{, for all
}l,m,n\in\mathbb{N},m\neq 0\text{.}
\end{equation*}%
Hence, an induction process tells that $c_{T^{\prime }}^{a,b}$ is uniquely
determined by $c_{T}^{1,n}$ and $c_{T}^{0,n}$ for $n\in\mathbb{N}$. Since $c_{T}^{a,b}$ fulfills the same equalities, in order to prove $%
c_{T^{\prime }}^{a,b}=c_{T}^{a,b}$ it suffices to check that this is true
for $a=0,1$. Analogously, using the analogue of (\ref{form:ct3}) we can
further reduce to the case $a,b=0,1.$ The equality in these cases follows by
(\ref{form:ct4}), (\ref{form:ct5}), (\ref{form:ct6}) and their analogous.
\end{proof}

\section{Braided Adjunctions}

\begin{proposition}
\label{pro:TbrStrict}Let $\left( \mathcal{M},\otimes ,\mathbf{1}\right) $ be
a monoidal category. Consider the category $\mathrm{BrAlg}_{\mathcal{M}}$ of
braided algebras in $\mathcal{M}$ and their morphisms. Assume that $\mathcal{%
M}$ has denumerable coproducts and that the tensor functors preserve such
coproducts for every object $M$ in $\mathcal{M}$. Denote by $\Omega _{%
\mathrm{Br}}:\mathrm{BrAlg}_{\mathcal{M}}\rightarrow \mathrm{Br}_{\mathcal{M}%
},H:\mathrm{Br}_{\mathcal{M}}\rightarrow \mathcal{M}$ and $H_{\mathrm{Alg}}:%
\mathrm{BrAlg}_{\mathcal{M}}\rightarrow \mathrm{Alg}_{\mathcal{M}}$ the
obvious forgetful functors. Then the functor $\Omega _{\mathrm{Br}}:\mathrm{%
BrAlg}_{\mathcal{M}}\rightarrow \mathrm{Br}_{\mathcal{M}}$ has a left
adjoint
\begin{equation*}
T_{\mathrm{Br}}:\mathrm{Br}_{\mathcal{M}}\rightarrow \mathrm{BrAlg}_{%
\mathcal{M}}.
\end{equation*}%
Given $\left( V,c\right) \in \mathrm{Br}_{\mathcal{M}}$ one has that $T_{%
\mathrm{Br}}\left( V,c\right) =\left( TV,c_{T}\right) $ where $%
c_{T}:=c_{\Omega TV}$ is uniquely determined by
\begin{equation}
c_{T}\circ \left( \alpha _{m}V\otimes \alpha _{n}V\right) =\left( \alpha
_{n}V\otimes \alpha _{m}V\right) \circ c_{T}^{m,n}  \label{form:defcT}
\end{equation}%
and $c_{T}^{m,n}$ are the morphisms of Proposition \ref{pro:CT}. For $f\ $a
morphism in $\mathrm{Br}_{\mathcal{M}}$, one has $T_{\mathrm{Br}}\left(
f\right) :=T\left( f\right) .$

The unit $\eta _{\mathrm{Br}}$ and the counit $\epsilon _{\mathrm{Br}}$ are
uniquely determined by the following equations
\begin{equation}
H\eta _{\mathrm{Br}}=\eta H,\qquad H_{\mathrm{Alg}}\epsilon _{\mathrm{Br}%
}=\epsilon H_{\mathrm{Alg}},  \label{form:TbrStrict}
\end{equation}%
where $\eta $ and $\epsilon $ denote the unit and counit of the adjunction $%
\left( T,\Omega \right) $ of Remark \ref{cl: AlgMon}. Moreover the following
diagrams commute.%
\begin{equation}
\xymatrixrowsep{15pt} \xymatrixcolsep{25pt}\xymatrix{\mathrm{BrAlg}_{\mathcal{M}}
\ar[r]^{H_\mathrm{Alg}}\ar[d]_{\Omega_{\mathrm{Br}}}&\mathrm{Alg}_{\mathcal{M}}\ar[d]^{\Omega}\\
\mathrm{Br}_{\mathcal{M}} \ar[r]^H&\mathcal{M}}
\qquad
\xymatrixrowsep{15pt} \xymatrixcolsep{25pt}\xymatrix{\mathrm{BrAlg}_{\mathcal{M}}
\ar[r]^{H_\mathrm{Alg}}&\mathrm{Alg}_{\mathcal{M}}\\
\mathrm{Br}_{\mathcal{M}} \ar[u]^{T_{\mathrm{Br}}}\ar[r]^H&\mathcal{M}\ar[u]_T}.  \label{form:HtildeTbrOmegaBr}
\end{equation}
\end{proposition}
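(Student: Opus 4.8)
The plan is to reduce everything to the morphisms $c_{T}^{m,n}$ of Proposition \ref{pro:CT} and the explicit model $\Omega TV=\oplus _{n\in \mathbb{N}}V^{\otimes n}$ of Remark \ref{cl: AlgMon}. Since both tensor functors preserve denumerable coproducts, $\Omega TV\otimes \Omega TV$ is the coproduct of the $V^{\otimes m}\otimes V^{\otimes n}$ with structure maps $\alpha _{m}V\otimes \alpha _{n}V$; hence there is a unique morphism $c_{T}=c_{\Omega TV}:\Omega TV\otimes \Omega TV\rightarrow \Omega TV\otimes \Omega TV$ satisfying \eqref{form:defcT}, and its inverse is assembled in the same way from the $\left( c_{T}^{n,m}\right) ^{-1}$, where invertibility of each $c_{T}^{m,n}$ is inherited, through the functor of Lemma \ref{lem:Kassel}, from that of the braiding $c_{m,n}$ of the Braid Category. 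There then remain three things to check: that $\left( TV,c_{T}\right) $ is a braided algebra, that $T\left( f\right) $ is a morphism of braided algebras when $f$ is a morphism of braided objects, and that this produces a left adjoint of $\Omega _{\mathrm{Br}}$ with the asserted unit and counit.

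The braided algebra axioms for $\left( TV,c_{T}\right) $ are verified componentwise: precomposing the quantum Yang--Baxter equation for $c_{T}$ with $\alpha _{l}V\otimes \alpha _{m}V\otimes \alpha _{n}V$ and using \eqref{form:defcT} turns it into \eqref{form:ct1}; precomposing \eqref{Br2} and \eqref{Br3} with $\alpha _{l}V\otimes \alpha _{m}V\otimes \alpha _{n}V$ and using \eqref{form:TVm} turns them into \eqref{form:ct2}, \eqref{form:ct3} when the relevant indices are nonzero and into \eqref{form:ct4}, \eqref{form:ct5} otherwise; and \eqref{Br4} for $u_{\Omega TV}=\alpha _{0}V$ (see \eqref{form:TVu}) becomes \eqref{form:ct4}, \eqref{form:ct5}. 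For a morphism $f:\left( V,c_{V}\right) \rightarrow \left( W,c_{W}\right) $ of braided objects I would then prove by induction on $m+n$ that the $c_{T}^{m,n}$ are natural in $\left( V,c\right) $, i.e. $\left( f^{\otimes n}\otimes f^{\otimes m}\right) \circ c_{T,V}^{m,n}=c_{T,W}^{m,n}\circ \left( f^{\otimes m}\otimes f^{\otimes n}\right) $, the base cases coming from \eqref{form:ct4}, \eqref{form:ct5}, \eqref{form:ct6} together with the hypothesis $c_{W}\left( f\otimes f\right) =\left( f\otimes f\right) c_{V}$, and the inductive step from \eqref{form:ct2}, \eqref{form:ct3}. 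Combined with \eqref{form:Tf} and \eqref{form:defcT} this yields $c_{\Omega TW}\circ \left( \Omega Tf\otimes \Omega Tf\right) =\left( \Omega Tf\otimes \Omega Tf\right) \circ c_{\Omega TV}$, so $T_{\mathrm{Br}}\left( f\right) :=T\left( f\right) $ is a morphism of braided algebras; functoriality of $T_{\mathrm{Br}}$ and commutativity of the two squares in \eqref{form:HtildeTbrOmegaBr} ($H_{\mathrm{Alg}}T_{\mathrm{Br}}=TH$ and $\Omega H_{\mathrm{Alg}}=H\Omega _{\mathrm{Br}}$) then hold by construction.

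Finally, the adjunction is obtained by exhibiting a universal arrow. By \eqref{form:defcT} and \eqref{form:ct6} one has $c_{T}\circ \left( \alpha _{1}V\otimes \alpha _{1}V\right) =\left( \alpha _{1}V\otimes \alpha _{1}V\right) \circ c$, so $\eta V=\alpha _{1}V$ (see \eqref{form:etaeps}) lifts to a morphism $\eta _{\mathrm{Br}}\left( V,c\right) :\left( V,c\right) \rightarrow \Omega _{\mathrm{Br}}T_{\mathrm{Br}}\left( V,c\right) $ in $\mathrm{Br}_{\mathcal{M}}$. Given a braided algebra $\left( A,m_{A},u_{A},c_{A}\right) $ and a morphism $g:\left( V,c\right) \rightarrow \left( A,c_{A}\right) $ in $\mathrm{Br}_{\mathcal{M}}$, let $\widetilde{g}:TV\rightarrow A$ be the morphism in $\mathrm{Alg}_{\mathcal{M}}$ adjoint to $g$ under $\left( T,\Omega \right) $, so that $\Omega \widetilde{g}\circ \alpha _{n}V=m_{A}^{n-1}\circ g^{\otimes n}$ by \eqref{form:Tf} and \eqref{form:etaeps}. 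Precomposing the desired identity $c_{A}\left( \widetilde{g}\otimes \widetilde{g}\right) =\left( \widetilde{g}\otimes \widetilde{g}\right) c_{T}$ with $\alpha _{m}V\otimes \alpha _{n}V$ and using the naturality of the $c_{T}^{m,n}$ just established reduces the claim that $\widetilde{g}$ is a morphism of braided objects to the identity
\begin{equation*}
c_{A}\circ \left( m_{A}^{m-1}\otimes m_{A}^{n-1}\right) =\left( m_{A}^{n-1}\otimes m_{A}^{m-1}\right) \circ c_{T,A}^{m,n}\qquad \text{for all }m,n\in \mathbb{N}\text{,}
\end{equation*}
where $c_{T,A}^{m,n}$ are the morphisms of Proposition \ref{pro:CT} for $\left( A,c_{A}\right) $. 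This intertwining identity in an arbitrary braided algebra is the technical heart of the argument, and I expect it to be the main obstacle; I would prove it by a double induction with base case $m=n=1$ given by \eqref{form:ct6}, using \eqref{Br2} and \eqref{Br3} to move the iterated multiplications through $c_{A}$ together with the recursions \eqref{form:ct2}, \eqref{form:ct3} and the degenerate cases \eqref{Br4}, \eqref{form:ct4}, \eqref{form:ct5}. Granting it, $\widetilde{g}$ lifts to a morphism $\widehat{g}$ in $\mathrm{BrAlg}_{\mathcal{M}}$ with $\Omega _{\mathrm{Br}}\widehat{g}\circ \eta _{\mathrm{Br}}\left( V,c\right) =g$, and $\widehat{g}$ is unique because $H_{\mathrm{Alg}}$ is faithful. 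Hence $\Omega _{\mathrm{Br}}$ has a left adjoint $T_{\mathrm{Br}}$ of the asserted form; the same identity, applied with $g$ an identity, shows that each counit component $\epsilon \left( A\right) $ lifts to $\mathrm{BrAlg}_{\mathcal{M}}$, and since $H$ and $H_{\mathrm{Alg}}$ are faithful the unit and counit are the unique ones characterized by \eqref{form:TbrStrict}.
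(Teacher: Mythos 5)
Your proposal is correct and follows essentially the same route as the paper: the componentwise construction of $c_{T}$ from the $c_{T}^{m,n}$, the inductive naturality of the $c_{T}^{m,n}$ in braided morphisms, and above all the intertwining identity $c_{A}\circ \left( m_{A}^{m-1}\otimes m_{A}^{n-1}\right) =\left( m_{A}^{n-1}\otimes m_{A}^{m-1}\right) \circ c_{T,A}^{m,n}$, proved by the same double induction, are exactly the paper's key steps (the paper states this identity as the compatibility of the counit $\epsilon \left( A,m_{A},u_{A}\right) $ with the braidings, which is your identity in the special case $g=\mathrm{Id}$). The only difference is packaging — you exhibit $\eta _{\mathrm{Br}}$ as a universal arrow, while the paper lifts both $\eta $ and $\epsilon $ and checks the triangle identities via faithfulness of $H$ and $H_{\mathrm{Alg}}$ — and your explicit remark on the invertibility of $c_{T}$ is a small point the paper leaves implicit.
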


\begin{proof}
Let $\left( V,c\right) \in \mathrm{Br}_{\mathcal{M}}.$ By Proposition \ref%
{pro:CT} we can consider, for $m,n\in
%TCIMACRO{\U{2115} }%
%BeginExpansion
\mathbb{N}
%EndExpansion
$, the morphisms $c_{T}^{m,n}:V^{\otimes m}\otimes V^{\otimes n}\rightarrow
V^{\otimes n}\otimes V^{\otimes m}.$ By Remark \ref{cl: AlgMon}, we can
consider the tensor algebra $TV\in \mathrm{Alg}_{\mathcal{M}}.$ Let us
define a braiding $c_{T}$ on $T=\Omega TV$ using $c_{T}^{m,n}$. Let $\alpha
_{n}V:V^{\otimes n}\rightarrow T$ be the canonical morphism. Since the
tensor functors preserves denumerable coproducts, there is a unique morphism
$c_{T}:T\otimes T\rightarrow T\otimes T$ such that (\ref{form:defcT}).

Let us check that $\left( T,m_{T},u_{T},c_{T}\right) $ is a braided algebra.
We know that $\left( T,m_{T},u_{T}\right) =TV$ is an algebra. We compute%
\begin{eqnarray*}
&&\left( T\otimes c_{T}\right) \circ \left( c_{T}\otimes T\right) \circ
\left( T\otimes c_{T}\right) \circ \left( \alpha _{l}V\otimes \alpha
_{m}V\otimes \alpha _{n}V\right) \\
&&\overset{(\ref{form:defcT})}{=}\left( \alpha _{n}V\otimes \alpha
_{m}V\otimes \alpha _{l}V\right) \circ \left( V^{\otimes n}\otimes
c_{T}^{l,m}\right) \circ \left( c_{T}^{l,n}\otimes V^{\otimes m}\right)
\circ \left( V^{\otimes l}\otimes c_{T}^{m,n}\right) \\
&&\overset{(\ref{form:ct1})}{=}\left( \alpha _{n}V\otimes \alpha
_{m}V\otimes \alpha _{l}V\right) \circ \left( c_{T}^{m,n}\otimes V^{\otimes
l}\right) \circ \left( V^{\otimes m}\otimes c_{T}^{l,n}\right) \circ \left(
c_{T}^{l,m}\otimes V^{\otimes n}\right) \\
&&\overset{(\ref{form:defcT})}{=}\left( c_{T}\otimes T\right) \circ \left(
T\otimes c_{T}\right) \circ \left( c_{T}\otimes T\right) \circ \left( \alpha
_{l}V\otimes \alpha _{m}V\otimes \alpha _{n}V\right) .
\end{eqnarray*}%
By arbitrariness of $l,m$ and $n$ we obtain that $c_{T}$ is a braiding i.e.
that $\left( T,c_{T}\right) $ is a braided object. We have%
\begin{eqnarray*}
&&(T\otimes m_{T})\circ (c_{T}\otimes T)\circ (T\otimes c_{T})\circ \left(
\alpha _{l}V\otimes \alpha _{m}V\otimes \alpha V_{n}\right) \\
&&\overset{(\ref{form:defcT})}{=}(T\otimes m_{T})\circ \left( \alpha
_{n}V\otimes \alpha _{l}V\otimes \alpha _{m}V\right) \circ \left(
c_{T}^{l,n}\otimes V^{\otimes m}\right) \circ \left( V^{\otimes l}\otimes
c_{T}^{m,n}\right) \\
&&\overset{(\ref{form:TVm})}{=}\left( \alpha _{n}V\otimes \alpha
_{l+m}V\right) \circ \left( c_{T}^{l,n}\otimes V^{\otimes m}\right) \circ
\left( V^{\otimes l}\otimes c_{T}^{m,n}\right) \\
&&\overset{(\ref{form:ct2})}{=}\left( \alpha _{n}V\otimes \alpha
_{l+m}V\right) \circ c_{T}^{l+m,n}\overset{(\ref{form:defcT})}{=}c_{T}\circ
\left( \alpha _{l+m}V\otimes \alpha _{n}V\right) \\
&&\overset{(\ref{form:TVm})}{=}c_{T}\circ (m_{T}\otimes T)\circ \left(
\alpha _{l}V\otimes \alpha _{m}V\otimes \alpha _{n}V\right) .
\end{eqnarray*}%
By arbitrariness of $l,m$ and $n$ we obtain that $(T\otimes m_{T})\circ
(c_{T}\otimes T)\circ (T\otimes c_{T})=c_{T}\circ (m_{T}\otimes T).$
Similarly, using (\ref{form:ct3}), one gets $(m_{T}\otimes T)\circ (T\otimes
c_{T})\circ (c_{T}\otimes T)=c_{T}\circ (T\otimes m_{T}).$

We have%
\begin{eqnarray*}
c_{T}\circ (u_{T}\otimes T)\circ l_{T}^{-1}\circ \alpha V_{n} &=&c_{T}\circ
(u_{T}\otimes \alpha V_{n})\circ l_{V^{\otimes n}}^{-1}\overset{(\ref%
{form:TVu})}{=}c_{T}\circ (\alpha V_{0}\otimes \alpha V_{n})\circ
l_{V^{\otimes n}}^{-1} \\
&\overset{(\ref{form:defcT})}{=}&(\alpha V_{n}\otimes \alpha V_{0})\circ
c_{T}^{0,n}\circ l_{V^{\otimes n}}^{-1}\overset{(\ref{form:ct4})}{=}\left(
\alpha V_{n}\otimes u_{T}\right) \circ r_{V^{\otimes n}}^{-1} \\
&=&\left( T\otimes u_{T}\right) \circ \left( \alpha V_{n}\otimes \mathbf{1}%
\right) \circ r_{V^{\otimes n}}^{-1}=\left( T\otimes u_{T}\right) \circ
r_{T}^{-1}\circ \alpha V_{n}.
\end{eqnarray*}%
By arbitrariness of $n$ we obtain that $c_{T}\circ (u_{T}\otimes T)\circ
l_{T}^{-1}=\left( T\otimes u_{T}\right) \circ r_{T}^{-1}.$ Similarly, using (%
\ref{form:ct5}), one gets $c_{T}\circ (T\otimes u_{T})\circ
r_{T}^{-1}=\left( u_{T}\otimes T\right) \circ l_{T}^{-1}.$ We have so proved
that $\left( T,m_{T},u_{T},c_{T}\right) $ is a braided algebra in $\mathcal{M%
}$. Define $T_{\mathrm{Br}}\left( V,c\right) $ to be this braided algebra in
$\mathcal{M}$.

Let $f:\left( V,c\right) \rightarrow \left( V^{\prime },c^{\prime }\right) $
be a morphism of braided objects. In particular $f:V\rightarrow V^{\prime }$
is a morphism in $\mathcal{M}$ so that we can consider the algebra
homomorphism $T\left( f\right) :\left( T,m_{T},u_{T}\right) \rightarrow
\left( T^{\prime },m_{T^{\prime }},u_{T^{\prime }}\right) $. Let us check by
induction on $m\in
%TCIMACRO{\U{2115} }%
%BeginExpansion
\mathbb{N}
%EndExpansion
$ that
\begin{equation}
\left( f^{\otimes n}\otimes f^{\otimes m}\right) \circ
c_{T}^{m,n}=c_{T^{\prime }}^{m,n}\circ \left( f^{\otimes m}\otimes
f^{\otimes n}\right) .  \label{form:fcT}
\end{equation}%
For $m=0$ and $n\in
%TCIMACRO{\U{2115} }%
%BeginExpansion
\mathbb{N}
%EndExpansion
$ we have%
\begin{eqnarray*}
\left( f^{\otimes n}\otimes f^{\otimes m}\right) \circ c_{T}^{m,n} &=&\left(
f^{\otimes n}\otimes f^{\otimes 0}\right) \circ c_{T}^{0,n}\overset{(\ref%
{form:ct4})}{=}\left( f^{\otimes n}\otimes \mathbf{1}\right) \circ
r_{V^{\otimes n}}^{-1}\circ l_{V^{\otimes n}}=r_{\left( V^{\prime }\right)
^{\otimes n}}^{-1}\circ f^{\otimes n}\circ l_{V^{\otimes n}} \\
&=&r_{\left( V^{\prime }\right) ^{\otimes n}}^{-1}\circ l_{\left( V^{\prime
}\right) ^{\otimes n}}\circ \left( \mathbf{1}\otimes f^{\otimes n}\right)
\overset{(\ref{form:ct4})}{=}c_{T^{\prime }}^{0,n}\circ \left( f^{\otimes
0}\otimes f^{\otimes n}\right) =c_{T^{\prime }}^{m,n}\circ \left( f^{\otimes
m}\otimes f^{\otimes n}\right) .
\end{eqnarray*}%
For $m=1.$ For $n=0,$ it follows in a similar way. For $n=1,$we have%
\begin{eqnarray*}
\left( f^{\otimes n}\otimes f^{\otimes m}\right) \circ c_{T}^{m,n} &=&\left(
f\otimes f\right) \circ c_{T}^{1,1}\overset{(\ref{form:ct6})}{=}\left(
f\otimes f\right) \circ c \\
&=&c^{\prime }\circ \left( f\otimes f\right) \overset{(\ref{form:ct6})}{=}%
c_{T^{\prime }}^{1,1}\circ \left( f\otimes f\right) =c_{T^{\prime
}}^{m,n}\circ \left( f^{\otimes m}\otimes f^{\otimes n}\right) .
\end{eqnarray*}

Assume that the formula holds for $n\geq 1$and let us check it for $n+1.$
Using (\ref{form:ct3}), (\ref{form:ct6}), the fact that $f$ is braided, (\ref%
{form:ct6}) and (\ref{form:ct3}) we get $\left( f^{\otimes n+1}\otimes
f\right) \circ c_{T}^{1,n+1}=c_{T^{\prime }}^{1,n+1}\circ \left( f\otimes
f^{\otimes n+1}\right) $. We have so proved that the statement holds for $%
m=1 $ and $n\in
%TCIMACRO{\U{2115} }%
%BeginExpansion
\mathbb{N}
%EndExpansion
\,.$

For $m\in
%TCIMACRO{\U{2115} }%
%BeginExpansion
\mathbb{N}
%EndExpansion
$ and $n=0$ the formula holds in analogy to the first case we considered
above.

Assume that the equation holds for $m\geq 1$ and $n\geq 1.$ Then the formula
holds for $\left( m+1,n\right) $ by means of (\ref{form:ct2}), induction
hypothesis and (\ref{form:ct2}). Thus the formula is proved for all $m,n\in
%TCIMACRO{\U{2115} }%
%BeginExpansion
\mathbb{N}
%EndExpansion
$.

Now, using in this order (\ref{form:defcT}), (\ref{form:Tf}), (\ref{form:fcT}%
), (\ref{form:defcT}) and (\ref{form:Tf}) we get

$\left( \Omega T\left( f\right) \otimes \Omega T\left( f\right) \right)
\circ c_{T}\circ \left( \alpha _{m}V\otimes \alpha _{n}V\right) =c_{T}\circ
\left( \Omega T\left( f\right) \otimes \Omega T\left( f\right) \right) \circ
\left( \alpha _{m}V\otimes \alpha _{n}V\right) $. By arbitrariness of $m,n$
we obtain that $\left( \Omega T\left( f\right) \otimes \Omega T\left(
f\right) \right) \circ c_{T}=c_{T}\circ \left( \Omega T\left( f\right)
\otimes \Omega T\left( f\right) \right) $ so that $T\left( f\right) $ yields
a morphism of braided algebras in $\mathcal{M}$ that we denote by $T_{%
\mathrm{Br}}\left( f\right) $. Thus we have defined a functor $T_{\mathrm{Br}%
}:\mathrm{Br}_{\mathcal{M}}\rightarrow \mathrm{BrAlg}_{\mathcal{M}}$. It is
clear, from the construction that diagrams \eqref{form:HtildeTbrOmegaBr} commute.
Let us check that $\left( T_{\mathrm{Br}},\Omega _{\mathrm{Br}}\right) $ is
an adjunction. For $\left( V,c\right) \in \mathrm{Br}_{\mathcal{M}},$%
\begin{equation*}
c_{T}\circ \left( \eta V\otimes \eta V\right) \overset{(\ref{form:etaeps})}{=%
}c_{T}\circ \left( \alpha _{1}V\otimes \alpha _{1}V\right) \overset{(\ref%
{form:defcT})}{=}\left( \alpha _{1}V\otimes \alpha _{1}V\right) \circ
c_{T}^{1,1}\overset{(\ref{form:ct6}),(\ref{form:etaeps})}{=}\left( \eta
V\otimes \eta V\right) \circ c.
\end{equation*}%
Thus $\eta V:V\rightarrow \Omega TV$ defines a morphism $\eta _{\mathrm{Br}%
}\left( V,c\right) :\left( V,c\right) \rightarrow \left( \Omega _{\mathrm{Br}%
}\circ T_{\mathrm{Br}}\right) \left( V,c\right) $ such that $H\eta _{\mathrm{%
Br}}\left( V,c\right) =\eta V.$ Since $\eta V$ is natural in $V$ we get that
$\eta _{\mathrm{Br}}\left( V,c\right) $ is natural in $\left( V,c\right) $
so that we get a natural transformation $\eta _{\mathrm{Br}}:\mathrm{Id}_{%
\mathrm{Br}_{\mathcal{M}}}\rightarrow \Omega _{\mathrm{Br}}\circ T_{\mathrm{%
Br}}.$ Let $\left( A,m_{A},u_{A},c_{A}\right) \in \mathrm{BrAlg}_{\mathcal{M}%
}$. Then $\epsilon \left( A,m_{A},u_{A}\right) :T\Omega \left(
A,m_{A},u_{A}\right) \rightarrow \left( A,m_{A},u_{A}\right) $ is an algebra
map. Let us check it commutes with braidings:%
\begin{eqnarray*}
&&\left( \Omega \epsilon \left( A,m_{A},u_{A}\right) \otimes \Omega \epsilon
\left( A,m_{A},u_{A}\right) \right) \circ c_{\Omega T\Omega \left(
A,m_{A},u_{A}\right) }\circ \left( \alpha _{m}A\otimes \alpha _{n}A\right) \\
&=&\left( \Omega \epsilon \left( A,m_{A},u_{A}\right) \otimes \Omega
\epsilon \left( A,m_{A},u_{A}\right) \right) \circ c_{\Omega TA}\circ \left(
\alpha _{m}A\otimes \alpha _{n}A\right) \\
&\overset{(\ref{form:defcT})}{=}&\left( \Omega \epsilon \left(
A,m_{A},u_{A}\right) \otimes \Omega \epsilon \left( A,m_{A},u_{A}\right)
\right) \circ \left( \alpha _{n}A\otimes \alpha _{m}A\right) \circ c_{\Omega
TA}^{m,n} \\
&\overset{(\ref{form:etaeps})}{=}&\left( m_{A}^{n-1}\otimes
m_{A}^{m-1}\right) \circ c_{\Omega TA}^{m,n}\overset{(\ast )}{=}c_{A}\circ
\left( m_{A}^{m-1}\otimes m_{A}^{n-1}\right) \\
&\overset{(\ref{form:etaeps})}{=}&c_{A}\circ \left( \Omega \epsilon \left(
A,m_{A},u_{A}\right) \otimes \Omega \epsilon \left( A,m_{A},u_{A}\right)
\right) \circ \left( \alpha A_{m}\otimes \alpha A_{n}\right) .
\end{eqnarray*}%
Let us prove (*) by induction. For $m=0$ and $n\in
%TCIMACRO{\U{2115} }%
%BeginExpansion
\mathbb{N}
%EndExpansion
$ we have%
\begin{eqnarray*}
\left( m_{A}^{n-1}\otimes m_{A}^{m-1}\right) \circ c_{\Omega TA}^{m,n}
&=&\left( m_{A}^{n-1}\otimes m_{A}^{-1}\right) \circ c_{\Omega TA}^{0,n}%
\overset{(\ref{form:ct4})}{=}\left( m_{A}^{n-1}\otimes u_{A}\right) \circ
r_{A^{\otimes n}}^{-1}\circ l_{A^{\otimes n}} \\
&=&\left( A\otimes u_{A}\right) \circ \left( m_{A}^{n-1}\otimes \mathbf{1}%
\right) \circ r_{A^{\otimes n}}^{-1}\circ l_{A^{\otimes n}}=\left( A\otimes
u_{A}\right) \circ r_{A}^{-1}\circ m_{A}^{n-1}\circ l_{A^{\otimes n}} \\
&=&\left( A\otimes u_{A}\right) \circ r_{A}^{-1}\circ l_{A}\circ \left(
\mathbf{1}\otimes m_{A}^{n-1}\right) \overset{(\ref{Br4})}{=}c_{A}\circ
\left( u_{A}\otimes A\right) \circ \left( \mathbf{1}\otimes
m_{A}^{n-1}\right) \\
&=&c_{A}\circ \left( u_{A}\otimes m_{A}^{n-1}\right) =c_{A}\circ \left(
m_{A}^{-1}\otimes m_{A}^{n-1}\right) =c_{A}\circ \left( m_{A}^{m-1}\otimes
m_{A}^{n-1}\right) .
\end{eqnarray*}%
For $n=0$ and $m\in
%TCIMACRO{\U{2115} }%
%BeginExpansion
\mathbb{N}
%EndExpansion
$, the proof is similar. In particular we get the case $m=1\ $and $n=0$. For
$m=n=1,$we have%
\begin{equation*}
\left( m_{A}^{n-1}\otimes m_{A}^{m-1}\right) \circ c_{\Omega
TA}^{m,n}=\left( m_{A}^{0}\otimes m_{A}^{0}\right) \circ c_{\Omega
TA}^{1,1}=c_{\Omega TA}^{1,1}\overset{(\ref{form:ct6})}{=}c_{A}=c_{A}\circ
\left( m_{A}^{m-1}\otimes m_{A}^{n-1}\right) .
\end{equation*}%
For $m=1,$ assume that the formula holds for $n\geq 1$ and let us check it
for $n+1.$ We have%
\begin{gather*}
\left( m_{A}^{n}\otimes m_{A}^{m-1}\right) \circ c_{\Omega
TA}^{m,n+1}=\left( m_{A}^{n}\otimes A\right) \circ c_{\Omega TA}^{1,n+1}%
\overset{(\ref{form:ct3})}{=}\left( m_{A}^{n}\otimes A\right) \circ \left(
A^{\otimes n}\otimes c_{\Omega TA}^{1,1}\right) \circ \left( c_{\Omega
TA}^{1,n}\otimes A^{\otimes 1}\right) \\
\overset{(\ref{form:ct6})}{=}\left( m_{A}\otimes A\right) \circ \left(
m_{A}^{n-1}\otimes A\otimes A\right) \circ \left( A^{\otimes n}\otimes
c_{A}\right) \circ \left( c_{\Omega TA}^{1,n}\otimes A\right) \\
=\left( m_{A}\otimes A\right) \circ \left( A\otimes c_{A}\right) \circ
\left( m_{A}^{n-1}\otimes m_{A}^{0}\otimes A\right) \circ \left( c_{\Omega
TA}^{1,n}\otimes A\right) \\
=\left( m_{A}\otimes A\right) \circ \left( A\otimes c_{A}\right) \circ
\left( c_{A}\otimes A\right) \circ \left( m_{A}^{0}\otimes
m_{A}^{n-1}\otimes A\right) \\
\overset{(\ref{Br3})}{=}c_{A}\circ \left( A\otimes m_{A}\right) \circ \left(
m_{A}^{0}\otimes m_{A}^{n-1}\otimes A\right) =c_{A}\circ \left(
m_{A}^{m-1}\otimes m_{A}^{n}\right) .
\end{gather*}

We have so proved that the statement holds for $m=1$ and $n\in
%TCIMACRO{\U{2115} }%
%BeginExpansion
\mathbb{N}
%EndExpansion
\,.$

Assume that the equation holds for $m\geq 1$ and $n\geq 1$ and let us prove
it for $\left( m+1,n\right) $. We have%
\begin{gather*}
\left( m_{A}^{n-1}\otimes m_{A}^{m}\right) \circ c_{\Omega TA}^{m+1,n}%
\overset{(\ref{form:ct2})}{=}\left( m_{A}^{n-1}\otimes m_{A}^{m}\right)
\circ \left( c_{\Omega TA}^{m,n}\otimes A^{\otimes 1}\right) \circ \left(
A^{\otimes m}\otimes c_{\Omega TA}^{1,n}\right) \\
=\left( A\otimes m_{A}\right) \circ \left( m_{A}^{n-1}\otimes
m_{A}^{m-1}\otimes A\right) \circ \left( c_{\Omega TA}^{m,n}\otimes A\right)
\circ \left( A^{\otimes m}\otimes c_{\Omega TA}^{1,n}\right) \\
=\left( A\otimes m_{A}\right) \circ \left( c_{A}\otimes A\right) \circ
\left( m_{A}^{m-1}\otimes m_{A}^{n-1}\otimes A\right) \circ \left(
A^{\otimes m}\otimes c_{\Omega TA}^{1,n}\right) \\
=\left( A\otimes m_{A}\right) \circ \left( c_{A}\otimes A\right) \circ
\left( A\otimes c_{A}\right) \circ \left( m_{A}^{m-1}\otimes A\otimes
m_{A}^{n-1}\right) \\
\overset{(\ref{Br2})}{=}c_{A}\circ \left( m_{A}\otimes A\right) \circ \left(
m_{A}^{m-1}\otimes A\otimes m_{A}^{n-1}\right) =c_{A}\circ \left(
m_{A}^{m}\otimes m_{A}^{n-1}\right) .
\end{gather*}%
We have so proved that (*) holds. Hence $\epsilon \left(
A,m_{A},u_{A}\right) :T\Omega \left( A,m_{A},u_{A}\right) \rightarrow \left(
A,m_{A},u_{A}\right) $ induces a morphism $\epsilon _{\mathrm{Br}}\left(
A,m_{A},u_{A},c_{A}\right) :T_{\mathrm{Br}}\Omega _{\mathrm{Br}}\left(
A,m_{A},u_{A},c_{A}\right) \rightarrow \left( A,m_{A},u_{A},c_{A}\right) $
such that%
\begin{equation*}
H_{\mathrm{Alg}}\left( \epsilon _{\mathrm{Br}}\left(
A,m_{A},u_{A},c_{A}\right) \right) =\epsilon \left( A,m_{A},u_{A}\right) .
\end{equation*}%
The morphism $\epsilon _{\mathrm{Br}}\left( A,m_{A},u_{A},c_{A}\right) $ is
natural as $\epsilon \left( A,m_{A},u_{A}\right) $ is natural. We have%
\begin{eqnarray*}
&&H_{\mathrm{Alg}}\left( \epsilon _{\mathrm{Br}}T_{\mathrm{Br}}\circ T_{%
\mathrm{Br}}\eta _{\mathrm{Br}}\right) =\epsilon H_{\mathrm{Alg}}T_{\mathrm{%
Br}}\circ H_{\mathrm{Alg}}T_{\mathrm{Br}}\eta _{\mathrm{Br}} \\
&=&\epsilon TH\circ TH\eta _{\mathrm{Br}}=\epsilon TH\circ T\eta H=\mathrm{Id%
}_{TH}=H_{\mathrm{Alg}}\left( \mathrm{Id}_{T}\right) , \\
&&H\left( \Omega _{\mathrm{Br}}\epsilon _{\mathrm{Br}}\circ \eta _{\mathrm{Br%
}}\Omega _{\mathrm{Br}}\right) =H\Omega _{\mathrm{Br}}\epsilon _{\mathrm{Br}%
}\circ H\eta _{\mathrm{Br}}\Omega _{\mathrm{Br}} \\
&=&\Omega H_{\mathrm{Alg}}\epsilon _{\mathrm{Br}}\circ \eta H\Omega _{%
\mathrm{Br}}=\Omega \epsilon H_{\mathrm{Alg}}\circ \eta \Omega H_{\mathrm{Alg%
}}=\mathrm{Id}_{H_{\mathrm{Alg}}}=H_{\mathrm{Alg}}\left( \mathrm{Id}_{%
\mathcal{M}}\right) .
\end{eqnarray*}%
Since both $H_{\mathrm{Alg}}$ and $H$ are faithful, we get that $\left( T_{%
\mathrm{Br}},\Omega _{\mathrm{Br}}\right) $ is an adjunction with unit $\eta
_{\mathrm{Br}}$ and counit $\epsilon _{\mathrm{Br}}$.
\end{proof}

\begin{definition}
\label{def:prim}Let $\mathcal{M}$ be a preadditive monoidal category with
equalizers. Assume that the tensor functors are additive. Let $\mathbb{C}%
:=\left( C,\Delta _{C},\varepsilon _{C},u_{C}\right) $ be a coalgebra $%
\left( C,\Delta _{C},\varepsilon _{C}\right) $ endowed with a coalgebra
morphism $u_{C}:\mathbf{1}\rightarrow C$. In this setting we always
implicitly assume that we can choose a specific equalizer%
\begin{equation*}
 \xymatrixcolsep{1cm}
\xymatrix{
   P\left( \mathbb{C}\right) \ar[r]^{\xi \mathbb{C}} & C \ar@<.5ex>[rr]^{\Delta _{C}} \ar@<-.5ex>[rr]_{\left( C\otimes u_{C}\right) r_{C}^{-1}+\left( u_{C}\otimes C\right)
l_{C}^{-1}}&&C\otimes C  }
\end{equation*}%
We will use the same symbol when $\mathbb{C}$ comes out to be enriched with
an extra structure such us when $\mathbb{C}$ will denote a bialgebra or a
braided bialgebra.
\end{definition}

Next result should be compared with \cite[Lemma 6.2]{GV}. Note that, in our
case, the braiding of the primitive elements has not order two, in general.
Also our proof of the existence of such a braiding follows different lines.

\begin{lemma}
\label{lem:primitive}Let $\mathcal{M}$ a preadditive monoidal category with
equalizers. Assume that the tensor functors are additive and preserve
equalizers. For any $\mathbb{A}:=(A,m_{A},u_{A},\Delta _{A},\varepsilon
_{A},c_{A})\in \mathrm{BrBialg}_{\mathcal{M}},$ there is a unique morphism $%
c_{P\left( \mathbb{A}\right) }:P\left( \mathbb{A}\right) \otimes P\left(
\mathbb{A}\right) \rightarrow P\left( \mathbb{A}\right) \otimes P\left(
\mathbb{A}\right) $ such that
\begin{equation}
\left( \xi \mathbb{A}\otimes \xi \mathbb{A}\right) \circ c_{P\left( \mathbb{A%
}\right) }=c_{A}\circ \left( \xi \mathbb{A}\otimes \xi \mathbb{A}\right) .
\label{form:xibraided}
\end{equation}%
We have that $\left( P\left( \mathbb{A}\right) ,c_{P\left( \mathbb{A}\right)
}\right) \in \mathrm{Br}_{\mathcal{M}}$ and that $\xi \mathbb{A}:P\left(
\mathbb{A}\right) \rightarrow A$ is a morphism of braided objects that will
be denoted by $\xi \mathbb{A}:\left( P\left( \mathbb{A}\right) ,c_{P\left(
\mathbb{A}\right) }\right) \rightarrow \left( A,c_{A}\right) $. For any
morphism $f:\mathbb{A\rightarrow A}^{\prime }$ in $\mathrm{BrBialg}_{%
\mathcal{M}},$ there is a unique morphism $P\left( f\right) :P\left( \mathbb{%
A}\right) \rightarrow P\left( \mathbb{A}^{\prime }\right) $ such that
\begin{equation}
\xi \mathbb{A}^{\prime }\circ P\left( f\right) =f\circ \xi \mathbb{A}.
\label{form:xiP}
\end{equation}%
The morphism $P\left( f\right) $ is indeed a morphism of braided objects.
This way we get a functor%
\begin{equation*}
P_{\mathrm{Br}}:\mathrm{BrBialg}_{\mathcal{M}}\rightarrow \mathrm{Br}_{%
\mathcal{M}}:\mathbb{A}\mapsto \left( P\left( \mathbb{A}\right) ,c_{P\left(
\mathbb{A}\right) }\right) ,f\mapsto P\left( f\right) .
\end{equation*}%
Moreover%
\begin{equation}
\xi \mathbb{A}^{\prime }\circ P_{\mathrm{Br}}\left( f\right) =\Omega _{%
\mathrm{Br}}\mho _{\mathrm{Br}}\left( f\right) \circ \xi \mathbb{A}.
\label{form:NatXi}
\end{equation}%
for every morphism $f:\mathbb{A\rightarrow A}^{\prime }$ in $\mathrm{BrBialg}%
_{\mathcal{M}}$ i.e. $\xi \mathbb{A}:P_{\mathrm{Br}}\mathbb{A}\rightarrow
\Omega _{\mathrm{Br}}\mho _{\mathrm{Br}}\mathbb{A}$ is natural in $\mathbb{A}
$.
\end{lemma}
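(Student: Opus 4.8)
The plan is to deduce everything from the universal property of the equalizer defining $P\left( \mathbb{A}\right) $ in Definition \ref{def:prim}, using that the tensor functors are additive and preserve equalizers. Write $\xi :=\xi \mathbb{A}$, $P:=P\left( \mathbb{A}\right) $ and $\rho _{A}:=\left( A\otimes u_{A}\right) r_{A}^{-1}+\left( u_{A}\otimes A\right) l_{A}^{-1}$, so that $\xi $ is the equalizer of $\Delta _{A}$ and $\rho _{A}$. First I would record a factorization criterion: applying $\left( -\right) \otimes A$ and $A\otimes \left( -\right) $ to this equalizer and using additivity to split $\rho _{A}$, both $\xi \otimes A$ and $A\otimes \xi $ are equalizers, hence $\xi \otimes \xi =\left( \xi \otimes A\right) \circ \left( P\otimes \xi \right) $ is a monomorphism, and a morphism $g:X\rightarrow A\otimes A$ factors (necessarily uniquely) through $\xi \otimes \xi $ if and only if $\left( \Delta _{A}\otimes A\right) \circ g=\left( \rho _{A}\otimes A\right) \circ g$ and $\left( A\otimes \Delta _{A}\right) \circ g=\left( A\otimes \rho _{A}\right) \circ g$. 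The same bookkeeping shows $\xi \otimes \xi \otimes \xi $ is a monomorphism.

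To construct $c_{P}:=c_{P\left( \mathbb{A}\right) }$ I would apply this criterion to $g:=c_{A}\circ \left( \xi \otimes \xi \right) $. For the second condition, rewrite $\left( A\otimes \Delta _{A}\right) c_{A}$ by \eqref{Br6} as $\left( c_{A}\otimes A\right) \left( A\otimes c_{A}\right) \left( \Delta _{A}\otimes A\right) $, replace $\Delta _{A}\circ \xi $ by $\rho _{A}\circ \xi $ (primitivity of $\xi $), and then use \eqref{Br4} and the naturality of $l,r$ to absorb the two occurrences of $u_{A}$; what remains is $\left( A\otimes \rho _{A}\right) \circ c_{A}\circ \left( \xi \otimes \xi \right) $. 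The first condition follows symmetrically from \eqref{Br5}. Hence there is a unique $c_{P}$ satisfying \eqref{form:xibraided}, and $\xi $ is a morphism of braided objects by construction. Since \eqref{Br4}, \eqref{Br5}, \eqref{Br6} hold for $c_{A}^{-1}$ as well (each follows from the corresponding relation for $c_{A}$ by composing with $c_{A}^{-1}$), the same construction applied to $c_{A}^{-1}$ yields $d:P\otimes P\rightarrow P\otimes P$ with $\left( \xi \otimes \xi \right) \circ d=c_{A}^{-1}\circ \left( \xi \otimes \xi \right) $; cancelling the monomorphism $\xi \otimes \xi $ gives $d=c_{P}^{-1}$. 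Finally, the quantum Yang--Baxter equation \eqref{ec: braided equation} for $c_{P}$ is obtained by postcomposing both sides of the desired identity with the monomorphism $\xi \otimes \xi \otimes \xi $ and using \eqref{form:xibraided} repeatedly to reduce it to \eqref{ec: braided equation} for $c_{A}$. Thus $\left( P,c_{P}\right) \in \mathrm{Br}_{\mathcal{M}}$.

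For functoriality, given $f:\mathbb{A}\rightarrow \mathbb{A}^{\prime }$ in $\mathrm{BrBialg}_{\mathcal{M}}$, the facts that $f$ is a coalgebra morphism and $f\circ u_{A}=u_{A^{\prime }}$ give $\Delta _{A^{\prime }}\circ \left( f\circ \xi \right) =\rho _{A^{\prime }}\circ \left( f\circ \xi \right) $, so $f\circ \xi $ factors uniquely through $\xi \mathbb{A}^{\prime }$, which defines $P\left( f\right) $ via \eqref{form:xiP}; uniqueness of this factorization forces $P\left( \mathrm{Id}\right) =\mathrm{Id}$ and $P\left( g\circ f\right) =P\left( g\right) \circ P\left( f\right) $, so $P_{\mathrm{Br}}$ is a functor. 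That $P\left( f\right) $ is braided follows by comparing $\left( \xi \mathbb{A}^{\prime }\otimes \xi \mathbb{A}^{\prime }\right) \circ c_{P\left( \mathbb{A}^{\prime }\right) }\circ \left( P\left( f\right) \otimes P\left( f\right) \right) $ with $\left( \xi \mathbb{A}^{\prime }\otimes \xi \mathbb{A}^{\prime }\right) \circ \left( P\left( f\right) \otimes P\left( f\right) \right) \circ c_{P\left( \mathbb{A}\right) }$, using \eqref{form:xibraided}, \eqref{form:xiP} and the braidedness of $f$, and then cancelling the monomorphism $\xi \mathbb{A}^{\prime }\otimes \xi \mathbb{A}^{\prime }$. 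Lastly, \eqref{form:NatXi} is exactly \eqref{form:xiP}, since the composite forgetful functor $\Omega _{\mathrm{Br}}\mho _{\mathrm{Br}}$ sends $f$ to its underlying morphism in $\mathcal{M}$.

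The only genuine work is the verification in the second paragraph that $c_{A}\circ \left( \xi \otimes \xi \right) $ is ``primitive in each tensor slot''. This is a careful diagram chase which really uses the compatibility of $c_{A}$ both with the comultiplication (axioms \eqref{Br5}, \eqref{Br6}) and with the unit (axiom \eqref{Br4}), together with additivity of the tensor functors to deal with the two-term definition of the primitives and naturality of the unit constraints to keep track of the unit object; once this is in place, everything else is a formal consequence of the monomorphism property of $\xi \otimes \xi $ and $\xi \otimes \xi \otimes \xi $.
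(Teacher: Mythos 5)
Your proposal is correct and follows essentially the same route as the paper: both rest on the fact that the tensor functors preserve the defining equalizer, verify that $c_{A}\circ \left( \xi \mathbb{A}\otimes \xi \mathbb{A}\right) $ is ``primitive in each slot'' via \eqref{Br5}, \eqref{Br6}, \eqref{Br4} and naturality of the unit constraints, obtain the inverse from $c_{A}^{-1}$, and check the Yang--Baxter equation and functoriality by cancelling the monomorphisms $\xi \mathbb{A}\otimes \xi \mathbb{A}$ and $\xi \mathbb{A}\otimes \xi \mathbb{A}\otimes \xi \mathbb{A}$. The only difference is organizational: the paper factors in two explicit stages through intermediate morphisms $c_{P,A}$ and $c_{A,P}$, whereas you package the same argument as a single factorization criterion through $\xi \mathbb{A}\otimes \xi \mathbb{A}$, which is an equivalent (and slightly tidier) formulation.
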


\begin{proof}
Note that, using Definition \ref{def:prim}, we have%
\begin{equation*}
\left( P\left( \mathbb{A}\right) ,\xi \mathbb{A}\right) :=\left( P\left(
A,\Delta _{A},\varepsilon _{A},u_{A}\right) ,\xi \left( A,\Delta
_{A},\varepsilon _{A},u_{A}\right) \right) .
\end{equation*}

For sake of shortness we just write $P$ instead of $P\left( \mathbb{A}%
\right) .$ Let us check that the braiding of $A$ induces a braiding on $P$.
To this aim, first consider the following diagram.
\begin{equation*}
  \xymatrixcolsep{2cm}
\xymatrix{
   P\otimes A \ar[r]^{\xi \mathbb{A}\otimes A} & A\otimes A\ar[d]^{c_A} \ar@<.5ex>[rr]^{\Delta _{A}\otimes A} \ar@<-.5ex>[rr]_{[\left( A\otimes u_{A}\right) r_{A}^{-1}+\left( u_{A}\otimes A\right)
l_{A}^{-1}]\otimes A}&&A\otimes A\otimes A\ar[d]^{\left( c_{A}\otimes A\right) \left(A\otimes c_{A}\right)} \\
 A\otimes P \ar[r]^{A\otimes \xi\mathbb{A}} & A\otimes A \ar@<.5ex>[rr]^{A\otimes \Delta _{A}} \ar@<-.5ex>[rr]_{A\otimes[\left( A\otimes u_{A}\right) r_{A}^{-1}+\left( u_{A}\otimes A\right)
l_{A}^{-1}]}&&A\otimes A\otimes A}
\end{equation*}
Note that $(A,\Delta _{A},\varepsilon _{A},c_{A})$ is a braided coalgebra
whence we have that (\ref{Br6}) holds. Moreover, using the equalities $%
r_{A}^{-1}\otimes A=A\otimes l_{A}^{-1}$ and $l_{A}^{-1}\otimes
A=l_{A\otimes A}^{-1},$ $(\ref{Br4}),$ the naturality of $l^{-1},$ the
equalities $A\otimes r_{A}^{-1}=r_{A\otimes A}^{-1}$ and $l_{A\otimes
A}^{-1}=l_{A}^{-1}\otimes A,$ the naturality of $r^{-1},$ $(\ref{Br4}),$ the
equalities $r_{A\otimes A}^{-1}=A\otimes r_{A}^{-1}$ and $r_{A}^{-1}\otimes
A=A\otimes l_{A}^{-1},$ we get
\begin{eqnarray*}
&&\left( c_{A}\otimes A\right) \left( A\otimes c_{A}\right) \left\{ \left[
\left( A\otimes u_{A}\right) r_{A}^{-1}+\left( u_{A}\otimes A\right)
l_{A}^{-1}\right] \otimes A\right\} \\
&=&\left\{ A\otimes \left[ \left( A\otimes u_{A}\right) r_{A}^{-1}+\left(
u_{A}\otimes A\right) l_{A}^{-1}\right] \right\} c_{A}.
\end{eqnarray*}%
Hence the diagram above serially commutes. Since the tensor product
preserves equalizers, the bottom fork of the diagram is an equalizer so that
there is a unique morphism $c_{P,A}:P\otimes A\rightarrow A\otimes P$ such
that
\begin{equation}
\left( A\otimes \xi \mathbb{A}\right) \circ c_{P,A}=c_{A}\circ \left( \xi
\mathbb{A}\otimes A\right) .  \label{form:cPA}
\end{equation}%
Similarly there is a unique morphism $c_{A,P}:A\otimes P\rightarrow P\otimes
A$ such that
\begin{equation}
\left( \xi \mathbb{A}\otimes A\right) \circ c_{A,P}=c_{A}\circ \left(
A\otimes \xi \mathbb{A}\right) .  \label{form:cAP}
\end{equation}%
Consider the following diagram
\begin{equation*}
  \xymatrixcolsep{2cm}
\xymatrix{
   P\otimes P \ar[r]^{P\otimes \xi \mathbb{A}} & P\otimes A\ar[d]^{c_{P,A}}\\
 P\otimes P \ar[r]^{\xi\mathbb{A}\otimes P} & A\otimes P \ar@<.5ex>[rr]^{\Delta _{A}\otimes P} \ar@<-.5ex>[rr]_{[\left( A\otimes u_{A}\right) r_{A}^{-1}+\left( u_{A}\otimes A\right)
l_{A}^{-1}]\otimes P}&&A\otimes A\otimes P}
\end{equation*}
Using (\ref{form:cPA}), (\ref{form:cAP}), the equalizer defining $\xi
\mathbb{A},$ (\ref{form:cAP}), (\ref{form:cPA}) we get
\begin{eqnarray*}
&&\left( A\otimes A\otimes \xi \mathbb{A}\right) \circ \left( \Delta
_{A}\otimes P\right) \circ c_{P,A}\circ \left( P\otimes \xi \mathbb{A}\right)
\\
&=&\left( A\otimes A\otimes \xi \mathbb{A}\right) \circ \left\{ \left[
\left( A\otimes u_{A}\right) r_{A}^{-1}+\left( u_{A}\otimes A\right)
l_{A}^{-1}\right] \otimes P\right\} \circ c_{P,A}\circ \left( P\otimes \xi
\mathbb{A}\right)
\end{eqnarray*}%
so that
\begin{equation*}
\left( \Delta _{A}\otimes P\right) \circ c_{P,A}\circ \left( P\otimes \xi
\mathbb{A}\right) =\left\{ \left[ \left( A\otimes u_{A}\right)
r_{A}^{-1}+\left( u_{A}\otimes A\right) l_{A}^{-1}\right] \otimes P\right\}
\circ c_{P,A}\circ \left( P\otimes \xi \mathbb{A}\right) .
\end{equation*}%
Hence there is a unique morphism $c_{P}:P\otimes P\rightarrow P\otimes P$
such that
\begin{equation}
\left( \xi \mathbb{A}\otimes P\right) \circ c_{P}=c_{P,A}\circ \left(
P\otimes \xi \mathbb{A}\right) .  \label{form:cP}
\end{equation}%
Using (\ref{form:cP}) and (\ref{form:cPA}) one gets $\left( \xi \mathbb{A}%
\otimes \xi \mathbb{A}\right) c_{P}=c_{A}\left( \xi \mathbb{A}\otimes \xi
\mathbb{A}\right) $ so that (\ref{form:xibraided}) holds. Note that, since $%
\xi \mathbb{A}\otimes \xi \mathbb{A}$ is a monomorphism, the morphism $c_{P}$
is uniquely determined by (\ref{form:xibraided}).

Since $(A,m_{A},u_{A},\Delta _{A},\varepsilon _{A},c_{A})\in \mathrm{BrBialg}%
_{\mathcal{M}}$ we have that (\ref{Br6}), (\ref{Br5}) and (\ref{Br4})
holds.If we write these equalities with respect to $c_{A}^{-1}$, we get%
\begin{gather*}
\left( A\otimes c_{A}^{-1}\right) \left( c_{A}^{-1}\otimes A\right) \left(
A\otimes \Delta _{A}\right) =\left( \Delta _{A}\otimes A\right) c_{A}^{-1},
\\
\left( c_{A}^{-1}\otimes A\right) \left( A\otimes c_{A}^{-1}\right) \left(
\Delta _{A}\otimes A\right) =\left( A\otimes \Delta _{A}\right) c_{A}^{-1},
\\
(u_{A}\otimes A)l_{A}^{-1}=c_{A}^{-1}\left( A\otimes u_{A}\right)
r_{A}^{-1},\qquad (A\otimes u_{A})r_{A}^{-1}=c_{A}^{-1}\left( u_{A}\otimes
A\right) l_{A}^{-1}.
\end{gather*}%
Thus $c_{A}^{-1}$ fulfills the same equalities that we used for $c_{A}$ in
the computations above. Hence, the same argument entails that there is a
morphism $c_{P}^{\prime }:P\otimes P\rightarrow P\otimes P$ such that
\begin{equation*}
\left( \xi \mathbb{A}\otimes \xi \mathbb{A}\right) c_{P}^{\prime
}=c_{A}^{-1}\left( \xi \mathbb{A}\otimes \xi \mathbb{A}\right) .
\end{equation*}%
Thus $\left( \xi \mathbb{A}\otimes \xi \mathbb{A}\right) c_{P}^{\prime
}c_{P}=c_{A}^{-1}\left( \xi \mathbb{A}\otimes \xi \mathbb{A}\right)
c_{P}=c_{A}^{-1}c_{A}\left( \xi \mathbb{A}\otimes \xi \mathbb{A}\right)
=\left( \xi \mathbb{A}\otimes \xi \mathbb{A}\right) $ and hence $%
c_{P}^{\prime }c_{P}=\mathrm{Id}_{P\otimes P}.$ Similarly $%
c_{P}c_{P}^{\prime }=\mathrm{Id}_{P\otimes P}$ so that $c_{P}$ is
invertible. Moreover using (\ref{form:xibraided}) repeatedly and the fact
that $c_{A}$ is a braiding, one checks that%
\begin{equation*}
\left( \xi \mathbb{A}\otimes \xi \mathbb{A}\otimes \xi \mathbb{A}\right)
\left( c_{P}\otimes P\right) \left( P\otimes c_{P}\right) \left(
c_{P}\otimes P\right) =\left( \xi \mathbb{A}\otimes \xi \mathbb{A}\otimes
\xi \mathbb{A}\right) \left( P\otimes c_{P}\right) \left( c_{P}\otimes
P\right) \left( P\otimes c_{P}\right)
\end{equation*}%
so that $\left( c_{P}\otimes P\right) \left( P\otimes c_{P}\right) \left(
c_{P}\otimes P\right) =\left( P\otimes c_{P}\right) \left( c_{P}\otimes
P\right) \left( P\otimes c_{P}\right) $ which means that $c_{P}$ is a
braiding. Note that (\ref{form:xibraided}) means that $\xi \mathbb{A}%
:P\rightarrow A$ is a morphism of braided objects that will be denoted by $%
\xi \mathbb{A}:\left( P,c_{P}\right) \rightarrow \left( A,c_{A}\right) $.
Let $f:\mathbb{A\rightarrow A}^{\prime }$ be a morphism in $\mathrm{BrBialg}%
_{\mathcal{M}}$ and consider the following diagram
\begin{equation*}
 \xymatrixcolsep{1.7cm}
\xymatrix{
   P\left( \mathbb{A}\right) \ar[r]^{\xi \mathbb{A}} & A \ar[d]^f \ar@<.5ex>[rr]^{\Delta _{A}} \ar@<-.5ex>[rr]_{\left( A\otimes u_{A}\right) r_{A}^{-1}+\left( u_{A}\otimes A\right)
l_{A}^{-1}}&&A\otimes A\ar[d]^{f\otimes f}  \\
P\left( \mathbb{A}^{\prime }\right) \ar[r]^{\xi \mathbb{A}^{\prime }} & A^{\prime } \ar@<.5ex>[rr]^{\Delta _{A^{\prime }}} \ar@<-.5ex>[rr]_{\left( A^{\prime }\otimes u_{A^{\prime }}\right) r_{A^{\prime }}^{-1}+\left( u_{A^{\prime }}\otimes A^{\prime }\right)
l_{A^{\prime }}^{-1}}&&A^{\prime }\otimes A^{\prime }  }
\end{equation*}
Using $\left( f\otimes f\right) \left( A\otimes u_{A}\right)
r_{A}^{-1}=\left( A^{\prime }\otimes u_{A^{\prime }}\right) r_{A^{\prime
}}^{-1}f$ and $\left( f\otimes f\right) \left( u_{A}\otimes A\right)
l_{A}^{-1}=\left( u_{A^{\prime }}\otimes A^{\prime }\right) l_{A^{\prime
}}^{-1}f$ we get that the diagram above serially commutes. Therefore there
is a unique morphism $P\left( f\right) :P\left( \mathbb{A}\right)
\rightarrow P\left( \mathbb{A}^{\prime }\right) $ such that (\ref{form:xiP})
holds. Using (\ref{form:xiP}), (\ref{form:xibraided}), the fact that $f$ is
braided, (\ref{form:xiP}) and (\ref{form:xibraided}) we arrive at $\left(
\xi \mathbb{A}^{\prime }\otimes \xi \mathbb{A}^{\prime }\right) \left(
P\left( f\right) \otimes P\left( f\right) \right) c_{P\left( \mathbb{A}%
\right) }=\left( \xi \mathbb{A}^{\prime }\otimes \xi \mathbb{A}^{\prime
}\right) c_{P\left( \mathbb{A}^{\prime }\right) }\left( P\left( f\right)
\otimes P\left( f\right) \right) $ so that $\left( P\left( f\right) \otimes
P\left( f\right) \right) c_{P}=c_{P^{\prime }}\left( P\left( f\right)
\otimes P\left( f\right) \right) $ which means that $P\left( f\right)
:P\left( \mathbb{A}\right) \rightarrow P\left( \mathbb{A}^{\prime }\right) $
is a morphism of braided objects. This way we get a functor%
\begin{equation*}
P_{\mathrm{Br}}:\mathrm{BrBialg}_{\mathcal{M}}\rightarrow \mathrm{Br}_{%
\mathcal{M}}:\mathbb{A}\mapsto \left( P\left( \mathbb{A}\right) ,c_{P\left(
\mathbb{A}\right) }\right) ,f\mapsto P\left( f\right) .
\end{equation*}%
By the foregoing we have (\ref{form:NatXi}) holds.
\end{proof}

We now investigate some properties of $T_{\mathrm{Br}}$.

\begin{lemma}
\label{lem:TbarStrict}Let $\mathcal{M}$ a preadditive monoidal category with
equalizers and denumerable coproducts. Assume that the tensor functors are
additive and preserve equalizers and denumerable coproducts. By Proposition %
\ref{pro:TbrStrict}, the forgetful functor $\Omega _{\mathrm{Br}}:\mathrm{%
BrAlg}_{\mathcal{M}}\rightarrow \mathrm{Br}_{\mathcal{M}}$ has a left
adjoint $T_{\mathrm{Br}}:\mathrm{Br}_{\mathcal{M}}\rightarrow \mathrm{BrAlg}%
_{\mathcal{M}}.$ For all $\mathbb{B}\in \mathrm{BrBialg}_{\mathcal{M}},$
since $T_{\mathrm{Br}}\left( V,c\right) \in \mathrm{BrAlg}_{\mathcal{M}}$ we
can write it in the form $T_{\mathrm{Br}}\left( V,c\right)
=(A,m_{A},u_{A},c_{A}).$ Regard $A\otimes A$ as an algebra in $\mathcal{M}$
via $m_{A\otimes A}:=\left( m_{A}\otimes m_{A}\right) \left( A\otimes
c_{A}\otimes A\right) $ and $u_{A\otimes A}:=\left( u_{A}\otimes
u_{A}\right) \Delta _{\mathbf{1}}.$ For every $n\in \mathbb{N},$ denote by $%
\alpha _{n}V:V^{\otimes n}\rightarrow \Omega TV$ the canonical injection.
Then there are unique algebra morphisms $\Delta _{A}:A\rightarrow A\otimes A$
and $\varepsilon _{A}:A\rightarrow \mathbf{1}$ such that
\begin{eqnarray}
\Delta _{A}\circ \alpha _{1}V &=&\delta _{V}^{l}+\delta _{V}^{r},
\label{form:TBrdelta} \\
\varepsilon _{A}\circ \alpha _{1}V &=&0,  \label{form:TBreps}
\end{eqnarray}%
where $\delta _{V}^{l}:=\left( u_{A}\otimes \alpha _{1}V\right) \circ
l_{V}^{-1}$ and $\delta _{V}^{r}:=\left( \alpha _{1}V\otimes u_{A}\right)
\circ r_{V}^{-1}.$ Moreover%
\begin{equation}
\varepsilon _{A}\circ \alpha _{n}V=\delta _{n,0}\mathrm{Id}_{\mathbf{1}},%
\text{ for every }n\in \mathbb{N}\text{.}  \label{form:TBrepsGEN}
\end{equation}%
The datum $\left( A,m_{A},u_{A},\Delta _{A},\varepsilon _{A},c_{A}\right) $
is a braided bialgebra. Moreover $T_{\mathrm{Br}}:\mathrm{Br}_{\mathcal{M}%
}\rightarrow \mathrm{BrAlg}_{\mathcal{M}}$ induces the functor%
\begin{eqnarray*}
\overline{T}_{\mathrm{Br}} &:&\mathrm{Br}_{\mathcal{M}}\rightarrow \mathrm{%
BrBialg}_{\mathcal{M}} \\
\left( V,c\right) &\mapsto &\left( T=T_{\mathrm{Br}}\left( V,c\right)
,\Delta _{T},\varepsilon _{T}\right) =(A,m_{A},u_{A},\Delta _{A},\varepsilon
_{A},c_{A}) \\
f &\mapsto &T_{\mathrm{Br}}\left( f\right) .
\end{eqnarray*}%
so that the following diagram commutes.
\begin{equation}
\xymatrixrowsep{12pt}
\xymatrixcolsep{0.7cm}
\xymatrix{\mathrm{BrBialg}_{\mathcal{M}} \ar[rr]^{\mho _{\mathrm{Br}}}&& \mathrm{BrAlg}_{\mathcal{M}} \\
&\mathrm{Br}_{\mathcal{M}}\ar[ul]^{\overline{T}_{\mathrm{Br}}}\ar[ru]_{ T_{\mathrm{Br}} }}
\label{form:OmegRibTbarBr}
\end{equation}
\end{lemma}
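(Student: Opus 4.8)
The plan is to obtain $\Delta_A$ and $\varepsilon_A$ from the freeness of the tensor algebra and then to verify each braided bialgebra axiom by reducing it to an identity on the algebra generators $\alpha_1V$.

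\textbf{Construction of $\Delta_A$ and $\varepsilon_A$.} First I would observe that $A\otimes A$ is an algebra in $\mathcal{M}$ by Proposition~\ref{pro:AotB}(1), applied with $A_1=A_2=A$ and $c_{2,1}:=c_A$: indeed, for $(A,m_A,u_A,c_A)=T_{\mathrm{Br}}(V,c)$ the conditions \eqref{c1}, \eqref{c2}, \eqref{c3} are precisely the braided-algebra relations \eqref{Br2}, \eqref{Br3}, \eqref{Br4}, and the resulting multiplication is the $m_{A\otimes A}$ of the statement; likewise $\mathbf 1$ is an algebra with its trivial structure. Since $A=TV$ is the free algebra on $V$ (Remark~\ref{cl: AlgMon}), the $\mathcal{M}$-morphisms $\delta_V^l+\delta_V^r\colon V\to A\otimes A$ and $0\colon V\to\mathbf 1$ extend uniquely to algebra morphisms $\Delta_A\colon A\to A\otimes A$ and $\varepsilon_A\colon A\to\mathbf 1$ with $\Delta_A\circ\alpha_1V=\delta_V^l+\delta_V^r$ and $\varepsilon_A\circ\alpha_1V=0$, which establishes \eqref{form:TBrdelta} and \eqref{form:TBreps} (uniqueness being automatic). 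For \eqref{form:TBrepsGEN} I would use $\alpha_nV=m_T^{\,n-1}\circ(\alpha_1V)^{\otimes n}$ (immediate from \eqref{form:TVm} and \eqref{form:TVu}) together with multiplicativity and unitality of $\varepsilon_A$: one has $\varepsilon_A\circ\alpha_0V=\varepsilon_A\circ u_T=u_{\mathbf 1}=\mathrm{Id}_{\mathbf 1}$, while for $n\ge 1$, $\varepsilon_A\circ\alpha_nV=m_{\mathbf 1}^{\,n-1}\circ(\varepsilon_A\circ\alpha_1V)^{\otimes n}=0$ because additivity of the tensor functors makes a tensor factor equal to $0$ annihilate the product.

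\textbf{The braided bialgebra axioms.} Relation \eqref{Br1} is automatic: it is exactly the statement that $\Delta_A$ is an algebra morphism $A\to(A\otimes A,m_{A\otimes A})$, which holds by construction. Relations \eqref{Br8} and \eqref{Br10} follow from $\Delta_A$, $\varepsilon_A$ being algebra morphisms evaluated at $u_T=\alpha_0V$ (using \eqref{form:TBrepsGEN} and the formula for $u_{A\otimes A}$), while \eqref{Br9} is the multiplicativity of $\varepsilon_A$, part of its definition. It remains to show that $(A,\Delta_A,\varepsilon_A,c_A)$ is a braided coalgebra, i.e.\ coassociativity and counitality of $\Delta_A$ together with \eqref{Br5}, \eqref{Br6}, \eqref{Br7}. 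For each of these I would argue by reduction to generators: both sides are morphisms out of $A$ or out of an iterated tensor product of copies of $A$, the latter equipped with a braided algebra structure built from $c_A$ via Lemma~\ref{Lem:AotAotA} and Proposition~\ref{pro:AotB}; one checks that the competing composites are algebra morphisms for these structures (invoking Proposition~\ref{pro:AotB}(3), whose braiding-compatibility hypothesis \eqref{gotf} is trivially met whenever a factor $\mathbf 1$ appears, and supplied by \eqref{Br2}--\eqref{Br4} and the Yang--Baxter equation for $c_A$ otherwise); and since an iterated tensor power of $TV$ is generated as an algebra by the images of $\alpha_1V$ under the unit-inclusions, the identity needs only be verified after precomposition with an appropriate insertion of $\alpha_1V$'s and units, where it collapses — using \eqref{form:TBrdelta}, \eqref{form:TBreps}, \eqref{form:TBrepsGEN}, the definitions of $\delta_V^l,\delta_V^r$, the constraints, \eqref{Br2}--\eqref{Br4} and, for \eqref{Br5}, \eqref{Br6}, the formula \eqref{form:defcT} relating $c_A$ to the $c_T^{m,n}$ of Proposition~\ref{pro:CT} — to a bookkeeping identity among associativity and unit constraints. (For instance, counitality $l_A\circ(\varepsilon_A\otimes A)\circ\Delta_A=\mathrm{Id}_A$ reduces, on $\alpha_1V$, to $l_A\circ(\mathrm{Id}_{\mathbf 1}\otimes\alpha_1V)\circ l_V^{-1}=\alpha_1V$ by naturality of $l$.) I expect this paragraph to be the main obstacle: the delicate point is to pin down, for coassociativity and for \eqref{Br5}, \eqref{Br6}, the correct iterated braided-algebra structure on $A^{\otimes 3}$ — keeping straight which of $c_A$, $c_{1,2}$, $c_{2,1}$, $c_{A\otimes A}$ occurs where — and then to push the resulting, lengthy, constraint computation through.

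\textbf{Functoriality and the diagram.} For $f$ a morphism in $\mathrm{Br}_{\mathcal{M}}$ I would check that the braided algebra morphism $T_{\mathrm{Br}}(f)$ is moreover a coalgebra morphism. Both $\Delta_{A'}\circ T_{\mathrm{Br}}(f)$ and $(T_{\mathrm{Br}}(f)\otimes T_{\mathrm{Br}}(f))\circ\Delta_A$ are algebra morphisms $A\to A'\otimes A'$ — the latter because $T_{\mathrm{Br}}(f)$ is a braided algebra morphism, so its tensor square is an algebra morphism for the braided tensor squares by Proposition~\ref{pro:AotB}(3) — and they agree on $\alpha_1V$ by \eqref{form:Tf} and the description of $\Delta$ on generators, hence coincide; similarly $\varepsilon_{A'}\circ T_{\mathrm{Br}}(f)=\varepsilon_A$. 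Therefore $\overline{T}_{\mathrm{Br}}$ is a well-defined functor, and the diagram \eqref{form:OmegRibTbarBr} commutes by construction, since $\mho_{\mathrm{Br}}$ merely forgets the comultiplication and counit.
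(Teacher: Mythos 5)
Your construction of $\Delta _{A}$ and $\varepsilon _{A}$ via the plain adjunction $\left( T,\Omega \right) $ of Remark \ref{cl: AlgMon} yields them only as \emph{algebra} morphisms, and this is where the essential difficulty of the lemma is lost rather than solved. The paper instead first checks, by a lengthy but finite computation with \eqref{Br4} and the unit constraints, that $\delta _{V}^{l}+\delta _{V}^{r}:V\rightarrow A\otimes A$ is a morphism of \emph{braided objects} $\left( V,c\right) \rightarrow \left( A\otimes A,c_{A\otimes A}\right) $, and then defines $\Delta _{T}$ through the braided adjunction $\left( T_{\mathrm{Br}},\Omega _{\mathrm{Br}}\right) $ of Proposition \ref{pro:TbrStrict}. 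That extra step buys exactly the identity $\left( \Delta _{A}\otimes \Delta _{A}\right) c_{A}=c_{A\otimes A}\left( \Delta _{A}\otimes \Delta _{A}\right) $, from which \eqref{Br5} and \eqref{Br6} follow at once by composing with $A\otimes A\otimes r_{A}\left( A\otimes \varepsilon _{A}\right) $ (resp.\ its left-handed analogue) and using counitality --- no generators argument on $A\otimes A$ is needed for these two axioms. Your proposal never produces this compatibility, and your suggested substitute --- verifying \eqref{Br5}, \eqref{Br6} on algebra generators of $A\otimes A$ --- does not go through as stated: a reduction to generators requires both composites to be algebra morphisms out of $\left( A\otimes A,m_{A\otimes A}\right) $, and the left-hand side $\left( \Delta _{A}\otimes A\right) c_{A}$ is such a morphism only if one already knows that $c_{A}$ is an algebra endomorphism of the braided square and that $\Delta _{A}$ satisfies the compatibility \eqref{gotf} --- neither of which is available at that point (the latter is essentially what one is trying to prove). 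This is the missing idea, not a routine bookkeeping obstacle.

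A second, related inaccuracy: the hypothesis \eqref{gotf} of Proposition \ref{pro:AotB}-3) for the pair $\left( \mathrm{Id}_{A},\varepsilon _{A}\right) $ is \emph{not} ``trivially met whenever a factor $\mathbf{1}$ appears''; with $c_{2,1}^{\prime }=r_{A}^{-1}l_{A}$ it reads $\left( A\otimes \varepsilon _{A}\right) c_{A}=r_{A}^{-1}l_{A}\left( \varepsilon _{A}\otimes A\right) $, which is precisely \eqref{Br7} and must be proved beforehand. The paper does this by a direct computation on the coproduct components $\alpha _{m}V\otimes \alpha _{n}V$ using \eqref{form:defcT}, \eqref{form:TBrepsGEN} and \eqref{form:ct4}, and only then invokes Proposition \ref{pro:AotB}-3) to make $T\otimes \varepsilon _{T}$ (and, after \eqref{Br5}--\eqref{Br6}, also $T\otimes \Delta _{T}$ and $\Delta _{T}\otimes T$) a braided algebra morphism, so that counitality and coassociativity can be checked on the single generator $\alpha _{1}V$ via the adjunction isomorphism. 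The logical order is therefore: braidedness of $\delta _{V}^{l}+\delta _{V}^{r}$ $\Rightarrow $ $\Delta _{T}\in \mathrm{BrAlg}_{\mathcal{M}}$; \eqref{Br7} by components; then \eqref{Br5}, \eqref{Br6}; then coassociativity and counitality by generators. Your outline inverts or omits several of these dependencies, so while the construction of $\varepsilon _{A}$, the identities \eqref{form:TBrepsGEN}, \eqref{Br1}, \eqref{Br8}--\eqref{Br10}, and the functoriality paragraph are fine, the core of the proof --- the braided coalgebra axioms --- is not established.
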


\begin{proof}
By Proposition \ref{pro:TbrStrict}, the unit $\eta _{\mathrm{Br}}$ and the
counit $\epsilon _{\mathrm{Br}}$ of the adjunction $\left( T_{\mathrm{Br}%
},\Omega _{\mathrm{Br}}\right) $ are uniquely determined by (\ref%
{form:TbrStrict}). Moreover the diagrams (\ref{form:HtildeTbrOmegaBr})
commute. Given $\left( V,c\right) \in \mathrm{Br}_{\mathcal{M}}$, then $T_{%
\mathrm{Br}}\left( V,c\right) $ becomes an object in $\mathrm{BrBialg}_{%
\mathcal{M}}$ as follows. For all $\mathbb{B}\in \mathrm{BrBialg}_{\mathcal{M%
}},$ consider the canonical isomorphism%
\begin{equation*}
\Phi \left( \left( V,c\right) ,\mathbb{B}\right) :\mathrm{BrAlg}_{\mathcal{M}%
}\left( T_{\mathrm{Br}}\left( V,c\right) ,\mathbb{B}\right) \rightarrow
\mathrm{Br}_{\mathcal{M}}\left( \left( V,c\right) ,\Omega _{\mathrm{Br}%
}\left( \mathbb{B}\right) \right) :f\mapsto \Omega _{\mathrm{Br}}\left(
f\right) \circ \eta _{\mathrm{Br}}\left( V,c\right) .
\end{equation*}%
Since $T_{\mathrm{Br}}\left( V,c\right) \in \mathrm{BrAlg}_{\mathcal{M}}$ we
can write it in the form $T_{\mathrm{Br}}\left( V,c\right)
=(A,m_{A},u_{A},c_{A}).$ By Lemma \ref{Lem:AotAotA}, $(B,m_{B},u_{B},c_{B})%
\in \mathrm{BrAlg}_{\mathcal{M}}$ where $B:=A\otimes A,m_{B}:=\left(
m_{A}\otimes m_{A}\right) \left( A\otimes c_{A}\otimes A\right)
,u_{B}:=\left( u_{A}\otimes u_{A}\right) \Delta _{\mathbf{1}}$ and $%
c_{B}:=\left( A\otimes c_{A}\otimes A\right) \left( c_{A}\otimes
c_{A}\right) \left( A\otimes c_{A}\otimes A\right) .$

In particular we have the morphisms%
\begin{eqnarray*}
\Phi \left( \left( V,c\right) ,(B,m_{B},u_{B},c_{B})\right) &:&\mathrm{BrAlg}%
_{\mathcal{M}}\left( T_{\mathrm{Br}}\left( V,c\right)
,(B,m_{B},u_{B},c_{B})\right) \rightarrow \mathrm{Br}_{\mathcal{M}}\left(
\left( V,c\right) ,(B,c_{B})\right) , \\
\Phi \left( \left( V,c\right) ,\left( \mathbf{1,}m_{\mathbf{1}},u_{\mathbf{1}%
},c_{\mathbf{1}}\right) \right) &:&\mathrm{BrAlg}_{\mathcal{M}}\left( T_{%
\mathrm{Br}}\left( V,c\right) ,\mathbb{B}\right) \rightarrow \mathrm{Br}_{%
\mathcal{M}}\left( \left( V,c\right) ,\left( \mathbf{1,}c_{\mathbf{1}%
}\right) \right) .
\end{eqnarray*}%
where $c_{\mathbf{1}}=\mathrm{Id}_{\mathbf{1\otimes 1}}.$ Note that
\begin{equation}
H\eta _{\mathrm{Br}}\left( V,c\right) \overset{(\ref{form:TbrStrict})}{=}%
\eta H\left( V,c\right) =\eta V=\alpha _{1}V.  \label{form:etaBralpha}
\end{equation}

Let us check that $\delta _{V}^{l}$ is a morphism of braided objects. Using
in the given order the definitions of $c_{B}$ and $\delta _{V}^{l},$ (\ref%
{Br4}) twice, the equality $l_{A}^{-1}\otimes A=l_{A\otimes A}^{-1},$ the
naturality of $l^{-1},$ the equality $l_{A\otimes A}^{-1}=l_{A}^{-1}\otimes
A,$ the equality $r_{A}\otimes A=A\otimes l_{A},$ the naturality of $l^{-1},$
(\ref{Br4}), the equality $\mathbf{1}\otimes l_{A}=r_{\mathbf{1}}\otimes A$,
the equality $r_{\mathbf{1}}=l_{\mathbf{1}}$, the naturality of $r$, the
equality $r_{V}\otimes V=V\otimes l_{V}$, the naturality of $l^{-1},$ the
equality (\ref{form:etaBralpha}) twice, the equality $l_{A\otimes \mathbf{1}%
}^{-1}=l_{A}^{-1}\otimes \mathbf{1,}$ the fact that $\eta _{\mathrm{Br}%
}\left( V,c\right) $ is braided, the equality (\ref{form:etaBralpha}) twice,
the equality $r_{A}^{-1}\otimes A=A\otimes l_{A}^{-1},$ the naturality of $%
l^{-1}$ and the definition of $\delta _{V}^{l},$ we arrive at $c_{B}\left(
\delta _{V}^{l}\otimes \delta _{V}^{l}\right) =\left( \delta _{V}^{l}\otimes
\delta _{V}^{l}\right) c.$ Analogously one gets the equalities $c_{B}\left(
\delta _{V}^{r}\otimes \delta _{V}^{r}\right) =\left( \delta _{V}^{r}\otimes
\delta _{V}^{r}\right) c,$ $c_{B}\left( \delta _{V}^{l}\otimes \delta
_{V}^{r}\right) =\left( \delta _{V}^{r}\otimes \delta _{V}^{l}\right) c$ and
$c_{B}\left( \delta _{V}^{r}\otimes \delta _{V}^{l}\right) =\left( \delta
_{V}^{r}\otimes \delta ^{l}\right) c$. By means of these equalities one
easily gets $c_{B}\left[ \left( \delta _{V}^{l}+\delta _{V}^{r}\right)
\otimes \left( \delta _{V}^{l}+\delta _{V}^{r}\right) \right] =\left[ \left(
\delta _{V}^{l}+\delta _{V}^{r}\right) \otimes \left( \delta _{V}^{l}+\delta
_{V}^{r}\right) \right] c.$ Thus $\delta _{V}^{l}+\delta
_{V}^{r}:V\rightarrow B$ defines a morphism $\widehat{\delta _{V}^{l}+\delta
_{V}^{r}}:\left( V,c\right) \rightarrow \left( B,c_{B}\right) $ in $\mathrm{%
Br}_{\mathcal{M}}$ such that $H\left( \widehat{\delta _{V}^{l}+\delta
_{V}^{r}}\right) =\delta _{V}^{l}+\delta _{V}^{r}$. Hence we can set
\begin{eqnarray*}
\Delta _{T} &:&=\Phi \left( \left( V,c\right) ,(B,m_{B},u_{B},c_{B})\right)
^{-1}\left( \widehat{\delta _{V}^{l}+\delta _{V}^{r}}\right) \in \mathrm{%
BrAlg}_{\mathcal{M}}\left( T_{\mathrm{Br}}\left( V,c\right)
,(B,m_{B},u_{B},c_{B})\right) , \\
\varepsilon _{T} &:&=\Phi \left( \left( V,c\right) ,\left( \mathbf{1,}m_{%
\mathbf{1}},u_{\mathbf{1}},c_{\mathbf{1}}\right) \right) ^{-1}\left(
0\right) \in \mathrm{BrAlg}_{\mathcal{M}}\left( T_{\mathrm{Br}}\left(
V,c\right) ,\left( \mathbf{1,}m_{\mathbf{1}},u_{\mathbf{1}},c_{\mathbf{1}%
}\right) \right) .
\end{eqnarray*}%
Moreover we set%
\begin{equation*}
\Delta _{A}:=H\Omega _{\mathrm{Br}}\Delta _{T}\qquad \text{and }\qquad
\varepsilon _{A}:=H\Omega _{\mathrm{Br}}\varepsilon _{T}.
\end{equation*}%
We have$\ref{form:TbrStrict}$
\begin{gather*}
\Delta _{A}\circ \alpha _{1}V\overset{(\ref{form:etaBralpha})}{=}H\Omega _{%
\mathrm{Br}}\Delta _{T}\circ H\eta _{\mathrm{Br}}\left( V,c\right) =H\left(
\Omega _{\mathrm{Br}}\Delta _{T}\circ \eta _{\mathrm{Br}}\left( V,c\right)
\right) \\
=H\left( \Phi \left( \left( V,c\right) ,(B,m_{B},u_{B},c_{B})\right) \left[
\Delta _{T}\right] \right) =H\left( \widehat{\delta _{V}^{l}+\delta _{V}^{r}}%
\right) =\delta _{V}^{l}+\delta _{V}^{r}, \\
\varepsilon _{A}\circ \alpha _{1}V\overset{(\ref{form:etaBralpha})}{=}%
H\Omega _{\mathrm{Br}}\varepsilon _{T}\circ H\eta _{\mathrm{Br}}\left(
V,c\right) =H\left( \Omega _{\mathrm{Br}}\varepsilon _{T}\circ \eta _{%
\mathrm{Br}}\left( V,c\right) \right) \\
=H\left( \Phi \left( \left( V,c\right) ,\left( \mathbf{1,}m_{\mathbf{1}},u_{%
\mathbf{1}},c_{\mathbf{1}}\right) \right) \left[ \varepsilon _{T}\right]
\right) =H\left( 0\right) =0
\end{gather*}%
so that we get (\ref{form:TBrdelta}) and (\ref{form:TBreps}). Note that,
since the tensor algebra functor is a left adjoint of the forgetful functor
and $\alpha _{1}V=\eta V,$ the unit of the adjunction, we have that the
algebra morphisms $\Delta _{A}$ and $\varepsilon _{A}$ are uniquely
determined by (\ref{form:TBrdelta}) and (\ref{form:TBreps}).

For every $n>0,$ we have%
\begin{equation*}
\varepsilon _{A}\circ \alpha _{n}V\overset{(\ref{form:TVm})}{=}\varepsilon
_{A}\circ m_{\Omega TV}\circ \left( \alpha _{n-1}V\otimes \alpha
_{1}V\right) =\left( \varepsilon _{A}\otimes \varepsilon _{A}\right) \circ
\left( \alpha _{n-1}V\otimes \alpha _{1}V\right) =0
\end{equation*}%
where we used that $\varepsilon _{A}$ is an algebra morphism. Moreover $%
\varepsilon _{A}\circ \alpha _{0}V=\varepsilon _{A}\circ u_{A}=\mathrm{Id}_{%
\mathbf{1}}.$ Thus we get that (\ref{form:TBrepsGEN}) holds. Using (\ref%
{form:defcT}), (\ref{form:TBrepsGEN}), the naturality of $r$, the equality $%
r_{V^{\otimes n}}c_{A}^{0,n}=l_{V^{\otimes n}}$ (which holds since
construction $c_{A}^{0,n}$ fulfills (\ref{form:ct4})), the naturality of $l$
and (\ref{form:TBrepsGEN}), we get
\begin{equation*}
r_{A}\left( A\otimes \varepsilon _{A}\right) c_{A}\left( \alpha _{m}V\otimes
\alpha _{n}V\right) =l_{A}\left( \varepsilon _{A}\otimes A\right) \left(
\alpha _{m}V\otimes \alpha _{n}V\right)
\end{equation*}%
Since this holds true for every $m,n,$ we obtain that $r_{A}\left( A\otimes
\varepsilon _{A}\right) c_{A}=l_{A}\left( \varepsilon _{A}\otimes A\right) $%
. Analogously one gets $l_{A}\left( \varepsilon _{A}\otimes A\right)
c_{A}=r_{A}\left( A\otimes \varepsilon _{A}\right) $ so that (\ref{Br7}) is
proved. Note that $\mathrm{Id}_{A}:A\rightarrow A$ and $\varepsilon
_{A}:A\rightarrow \mathbf{1}$ are morphisms in $\mathrm{BrAlg}_{\mathcal{M}%
}. $ Moreover $(A_{1},A_{2},c_{2,1})$ and $(A_{2},A_{1},c_{1,2})$
(respectively $(A_{1}^{\prime },A_{2}^{\prime },c_{2,1}^{\prime })$ and $%
(A_{2}^{\prime },A_{1}^{\prime },c_{1,2}^{\prime })$) fulfil the
requirements of Proposition \ref{pro:AotB}-2) for $A_{2}=A_{1}=A$ and $%
c_{i,j}=c_{A},i,j\in \left\{ 1,2\right\} $ (resp. for $A_{1}^{\prime }=A,$ $%
A_{2}^{\prime }=\mathbf{1}$, $c_{2,1}^{\prime
}=r_{A}^{-1}l_{A},c_{1,2}^{\prime }=l_{A}^{-1}r_{A},$ $c_{2,2}^{\prime }=%
\mathrm{Id}_{\mathbf{1}\otimes \mathbf{1}},c_{1,1}^{\prime }=c_{A}$).
Moreover, by the foregoing, we have
\begin{equation*}
\left( A\otimes \varepsilon _{A}\right) c_{2,1}=c_{2,1}^{\prime }\left(
\varepsilon _{A}\otimes A\right) ,\qquad \left( \varepsilon _{A}\otimes
A\right) c_{1,2}=c_{1,2}^{\prime }\left( A\otimes \varepsilon _{A}\right) .
\end{equation*}%
Thus, by Proposition \ref{pro:AotB}-3) applied to $f_{1}:=\mathrm{Id}_{A}$
and $f_{2}:=\varepsilon _{A}$, we can conclude that $T\otimes \varepsilon
_{T}$ is a morphism in $\mathrm{BrAlg}_{\mathcal{M}}.$ Thus we have that $%
\left( T\otimes \varepsilon _{T}\right) \Delta _{T}$ is a morphism in $%
\mathrm{BrAlg}_{\mathcal{M}}$. One also checks that $r_{A}:A\otimes \mathbf{1%
}\rightarrow A$ is a morphism in $\mathrm{BrAlg}_{\mathcal{M}}$. Thus we can
denote by $r_{T}$ the morphism $r_{A}$ regarded as a morphism in $\mathrm{%
BrAlg}_{\mathcal{M}}.$ In other words $H\Omega _{\mathrm{Br}}r_{T}=r_{A}.$
Thus we have that $r_{T}\left( T\otimes \varepsilon _{T}\right) \Delta
_{T}:T\rightarrow T$ is a morphism in $\mathrm{BrAlg}_{\mathcal{M}}$. We
have to check that $r_{T}\left( T\otimes \varepsilon _{T}\right) \Delta _{T}=%
\mathrm{Id}_{T}.$ Since the two sides are in $\mathrm{BrAlg}_{\mathcal{M}%
}\left( T_{\mathrm{Br}}\left( V,c\right) ,T_{\mathrm{Br}}\left( V,c\right)
\right) $ we have to prove that%
\begin{equation*}
\Phi \left( \left( V,c\right) ,T_{\mathrm{Br}}\left( V,c\right) \right) %
\left[ r_{T}\left( T\otimes \varepsilon _{T}\right) \Delta _{T}\right] =\Phi
\left( \left( V,c\right) ,T_{\mathrm{Br}}\left( V,c\right) \right) \left[
\mathrm{Id}_{T}\right]
\end{equation*}%
or equivalently%
\begin{equation}
H\Phi \left( \left( V,c\right) ,T_{\mathrm{Br}}\left( V,c\right) \right) %
\left[ r_{T}\left( T\otimes \varepsilon _{T}\right) \Delta _{T}\right]
=H\Phi \left( \left( V,c\right) ,T_{\mathrm{Br}}\left( V,c\right) \right) %
\left[ \mathrm{Id}_{T}\right] .  \label{form:epsdelta}
\end{equation}%
Note that for any braided algebra morphism $\xi :T\rightarrow U$, we have
\begin{gather*}
H\Phi \left( \left( V,c\right) ,U\right) \left( \xi \Delta _{T}\right)
=H\left\{ \Omega _{\mathrm{Br}}\left( \xi \Delta _{T}\right) \eta _{\mathrm{%
Br}}\left( V,c\right) \right\} =H\Omega _{\mathrm{Br}}\left( \xi \right)
H\left\{ \Omega _{\mathrm{Br}}\left( \Delta _{T}\right) \eta _{\mathrm{Br}%
}\left( V,c\right) \right\} \\
=H\Omega _{\mathrm{Br}}\left( \xi \right) H\left[ \widehat{\delta
_{V}^{l}+\delta _{V}^{r}}\right] =H\Omega _{\mathrm{Br}}\left( \xi \right)
\left( \delta _{V}^{l}+\delta _{V}^{r}\right) .
\end{gather*}%
so that%
\begin{equation}
H\Phi \left( \left( V,c\right) ,U\right) \left( \xi \Delta _{T}\right)
=H\Omega _{\mathrm{Br}}\left( \xi \right) \left( \delta _{V}^{l}+\delta
_{V}^{r}\right) \text{, for any }\xi :T\rightarrow U\text{ in }\mathrm{BrAlg}%
_{\mathcal{M}}\text{.}  \label{form:xidelta}
\end{equation}%
The left-hand side of (\ref{form:epsdelta}) is%
\begin{eqnarray*}
&&H\Phi \left( \left( V,c\right) ,T_{\mathrm{Br}}\left( V,c\right) \right) %
\left[ r_{T}\left( T\otimes \varepsilon _{T}\right) \Delta _{T}\right]
\overset{(\ref{form:xidelta})}{=}H\Omega _{\mathrm{Br}}\left[ r_{T}\left(
T\otimes \varepsilon _{T}\right) \right] \left( \delta _{V}^{l}+\delta
_{V}^{r}\right) \\
&=&r_{A}\left( A\otimes \varepsilon _{A}\right) \left( \delta
_{V}^{l}+\delta _{V}^{r}\right) =r_{A}\left( A\otimes \varepsilon
_{A}\right) \left( u_{A}\otimes \alpha _{1}V\right) l_{V}^{-1}+r_{A}\left(
A\otimes \varepsilon _{A}\right) \left( \alpha _{1}V\otimes u_{A}\right)
r_{V}^{-1} \\
&\overset{(\ref{form:TBreps})}{=}&r_{A}\left( \alpha _{1}V\otimes \mathbf{1}%
\right) r_{V}^{-1}=\left( \alpha _{1}V\right) r_{V}r_{V}^{-1}=\alpha _{1}V%
\overset{(\ref{form:etaBralpha})}{=}H\eta _{\mathrm{Br}}\left( V,c\right) \\
&=&H\Phi \left( \left( V,c\right) ,T_{\mathrm{Br}}\left( V,c\right) \right) %
\left[ \mathrm{Id}_{T}\right]
\end{eqnarray*}%
so that $r_{T}\left( T\otimes \varepsilon _{T}\right) \Delta _{T}=\mathrm{Id}%
_{T}.$ Similarly one proves that $l_{T}\left( \varepsilon _{T}\otimes
T\right) \Delta _{T}=\mathrm{Id}_{T}.$ By construction $\Delta
_{A}$ is a morphism of braided algebras so that $\left( \Delta _{A}\otimes
\Delta _{A}\right) c_{A}=c_{A\otimes A}\left( \Delta _{A}\otimes \Delta
_{A}\right) $ i.e.%
\begin{equation*}
\left( \Delta _{A}\otimes \Delta _{A}\right) c_{A}=\left( A\otimes
c_{A}\otimes A\right) \left( c_{A}\otimes c_{A}\right) \left( A\otimes
c_{A}\otimes A\right) \left( \Delta _{A}\otimes \Delta _{A}\right) .
\end{equation*}%
If we apply $\left( A\otimes A\otimes r_{A}\left( A\otimes \varepsilon
_{A}\right) \right) $ we get%
\begin{equation*}
\left( \Delta _{A}\otimes r_{A}\left( A\otimes \varepsilon _{A}\right)
\Delta _{A}\right) c_{A}=\left( A\otimes A\otimes r_{A}\left( A\otimes
\varepsilon _{A}\right) \right) \left( A\otimes c_{A}\otimes A\right) \left(
c_{A}\otimes c_{A}\right) \left( A\otimes c_{A}\otimes A\right) \left(
\Delta _{A}\otimes \Delta _{A}\right) .
\end{equation*}%
The left hand side is $\left( \Delta _{A}\otimes A\right) c_{A}$. Using
equality $A\otimes r_{A}=r_{A\otimes A},$ the naturality of $r$, the
equality $r_{A\otimes A}=A\otimes r_{A},$ equality (\ref{Br7})$,$ equality $%
A\otimes l_{A}=r_{A}\otimes A,$ equality (\ref{Br7})$,$ equality $A\otimes
l_{A}=r_{A}\otimes A$ and equality $r_{A}\left( A\otimes \varepsilon
_{A}\right) \Delta _{A}=\mathrm{Id}_{A},$ we see that the right hand side is
$\left( A\otimes c_{A}\right) \left( c_{A}\otimes A\right) \left( A\otimes
\Delta _{A}\right) .$ Hence we get (\ref{Br5}). Analogously one gets (\ref%
{Br6}).

Note that $\mathrm{Id}_{A}:A\rightarrow A$ and $\Delta _{A}:A\rightarrow
A\otimes A$ are morphisms in $\mathrm{BrAlg}_{\mathcal{M}}.$ Moreover $%
(A_{1},A_{2},c_{2,1})$ and $(A_{2},A_{1},c_{1,2})$ (respectively $%
(A_{1}^{\prime },A_{2}^{\prime },c_{2,1}^{\prime })$ and $(A_{2}^{\prime
},A_{1}^{\prime },c_{1,2}^{\prime })$) fulfil the requirements of
Proposition \ref{pro:AotB}-2) for $A_{2}=A_{1}=A$ and $c_{i,j}=c_{A},i,j\in
\left\{ 1,2\right\} $ (resp. for $A_{1}^{\prime }=A,$ $A_{2}^{\prime
}=A\otimes A$ and $c_{2,1}^{\prime }=\left( c_{A}\otimes A\right) \left(
A\otimes c_{A}\right) ,c_{1,2}^{\prime }=\left( A\otimes c_{A}\right) \left(
c_{A}\otimes A\right) ,c_{1,1}^{\prime }=c_{A},c_{2,2}^{\prime }=c_{A\otimes
A}$ see Lemma \ref{Lem:AotAotA}). Moreover, by the foregoing, we have
\begin{equation*}
\left( A\otimes \Delta _{A}\right) c_{2,1}=c_{2,1}^{\prime }\left( \Delta
_{A}\otimes A\right) \qquad \text{and}\qquad \left( \Delta _{A}\otimes
A\right) c_{1,2}=c_{1,1}^{\prime }\left( A\otimes \Delta _{A}\right) .
\end{equation*}%
Thus, by Proposition \ref{pro:AotB}-3) applied to $f_{1}:=\mathrm{Id}_{A}$
and $f_{2}:=\Delta _{A}$, we can conclude that $T\otimes \Delta _{T}$ is a
morphism in $\mathrm{BrAlg}_{\mathcal{M}}.$ Similarly $\Delta _{T}\otimes T$
is a morphism in $\mathrm{BrAlg}_{\mathcal{M}}.$ We have to check that $%
\left( T\otimes \Delta _{T}\right) \Delta _{T}=\left( \Delta _{T}\otimes
T\right) \Delta _{T}.$ Equivalently we will prove that%
\begin{equation}
H\Phi \left( \left( V,c\right) ,T\otimes T\otimes T\right) \left[ \left(
T\otimes \Delta _{T}\right) \Delta _{T}\right] =H\Phi \left( \left(
V,c\right) ,T\otimes T\otimes T\right) \left[ \left( \Delta _{T}\otimes
T\right) \Delta _{T}\right] .  \label{form:coass}
\end{equation}

If we apply (\ref{form:xidelta}) for $\xi =T\otimes \Delta _{T}$, the
left-hand side of (\ref{form:coass}) becomes%
\begin{eqnarray*}
&&H\Phi \left( \left( V,c\right) ,T\otimes T\otimes T\right) \left[ \left(
T\otimes \Delta _{T}\right) \Delta _{T}\right] \overset{(\ref{form:xidelta})}%
{=}H\Omega _{\mathrm{Br}}\left[ \left( T\otimes \Delta _{T}\right) \right]
\left( \delta _{V}^{l}+\delta _{V}^{r}\right) \\
&=&\left( A\otimes \Delta _{A}\right) \left( \delta _{V}^{l}+\delta
_{V}^{r}\right) =\left( A\otimes \Delta _{A}\right) \left( u_{A}\otimes
\alpha _{1}V\right) l_{V}^{-1}+\left( A\otimes \Delta _{A}\right) \left(
\alpha _{1}V\otimes u_{A}\right) r_{V}^{-1} \\
&\overset{(\ref{form:TBrdelta})}{=}&\left( u_{A}\otimes \left( \delta
_{V}^{l}+\delta _{V}^{r}\right) \right) l_{V}^{-1}+\left( \alpha
_{1}V\otimes \left( u_{A}\otimes u_{A}\right) \Delta _{\mathbf{1}}\right)
r_{V}^{-1} \\
&=&\left[
\begin{array}{c}
\left( u_{A}\otimes u_{A}\otimes \alpha _{1}V\right) \left( \mathbf{1}%
\otimes l_{V}^{-1}\right) l_{V}^{-1}+\left( u_{A}\otimes \alpha _{1}V\otimes
u_{A}\right) \left( \mathbf{1}\otimes r_{V}^{-1}\right) l_{V}^{-1}+ \\
+\left( \alpha _{1}V\otimes u_{A}\otimes u_{A}\right) \left( V\otimes \Delta
_{\mathbf{1}}\right) r_{V}^{-1}%
\end{array}%
\right] .
\end{eqnarray*}%
If we apply (\ref{form:xidelta}) for $\xi =\Delta _{T}\otimes T$, the
right-hand side of (\ref{form:coass}) becomes%
\begin{eqnarray*}
&&H\Phi \left( \left( V,c\right) ,T\otimes T\otimes T\right) \left[ \left(
\Delta _{T}\otimes T\right) \Delta _{T}\right] \overset{(\ref{form:xidelta})}%
{=}H\Omega _{\mathrm{Br}}\left[ \left( \Delta _{T}\otimes T\right) \right]
\left( \delta _{V}^{l}+\delta _{V}^{r}\right) \\
&=&\left( \Delta _{A}\otimes A\right) \left( \delta _{V}^{l}+\delta
_{V}^{r}\right) =\left( \Delta _{A}\otimes A\right) \left( u_{A}\otimes
\alpha _{1}V\right) l_{V}^{-1}+\left( \Delta _{A}\otimes A\right) \left(
\alpha _{1}V\otimes u_{A}\right) r_{V}^{-1} \\
&\overset{(\ref{form:TBrdelta})}{=}&\left( \left( u_{A}\otimes u_{A}\right)
\Delta _{\mathbf{1}}\otimes \alpha _{1}V\right) l_{V}^{-1}+\left( \left(
\delta _{V}^{l}+\delta _{V}^{r}\right) \otimes u_{A}\right) r_{V}^{-1} \\
&=&\left[
\begin{array}{c}
\left( u_{A}\otimes u_{A}\otimes \alpha _{1}V\right) \left( \mathbf{1}%
\otimes l_{V}^{-1}\right) l_{V}^{-1}+\left( u_{A}\otimes \alpha _{1}V\otimes
u_{A}\right) \left( \mathbf{1}\otimes r_{V}^{-1}\right) l_{V}^{-1}+ \\
+\left( \alpha _{1}V\otimes u_{A}\otimes u_{A}\right) \left( V\otimes \Delta
_{\mathbf{1}}\right) r_{V}^{-1}%
\end{array}%
\right] .
\end{eqnarray*}%
where the last equality depends on the definitions $\delta _{V}^{l}$ and $%
\delta _{V}^{r},$ and on the relations $\Delta _{\mathbf{1}}\otimes V=r_{%
\mathbf{1}}^{-1}\otimes V=\mathbf{1}\otimes l_{V}^{-1}$, $\left(
l_{V}^{-1}\otimes \mathbf{1}\right) r_{V}^{-1}=r_{\mathbf{1}\otimes
V}^{-1}l_{V}^{-1}=\left( \mathbf{1}\otimes r_{V}^{-1}\right) l_{V}^{-1}$ and
$r_{V}^{-1}\otimes \mathbf{1}=V\otimes \Delta _{\mathbf{1}}.$

We have so proved that $\left( T\otimes \Delta _{T}\right) \Delta
_{T}=\left( \Delta _{T}\otimes T\right) \Delta _{T}.$ Thus $\left(
A,m_{A},u_{A},\Delta _{A},\varepsilon _{A},c_{A}\right) $ is a braided
bialgebra.

Let $f:\left( V,c\right) \rightarrow \left( V^{\prime },c^{\prime }\right) $
be a morphism in $\mathrm{Br}_{\mathcal{M}}.$ Let us prove that $T_{\mathrm{%
Br}}\left( f\right) $ is a morphism of braided bialgebras. We know that $T_{%
\mathrm{Br}}\left( f\right) $ is a morphism in $\mathrm{BrAlg}_{\mathcal{M}%
}. $ We have to check that $T\left( f\right) =H\Omega _{\mathrm{Br}}T_{%
\mathrm{Br}}\left( f\right) $ is a morphism of coalgebras i.e. that
\begin{eqnarray*}
\left( T\left( f\right) \otimes T\left( f\right) \right) \circ \Delta
_{T\left( V\right) } &=&\Delta _{T\left( V^{\prime }\right) }\circ T\left(
f\right) , \\
\varepsilon _{T\left( V^{\prime }\right) }\circ T\left( f\right)
&=&\varepsilon _{T\left( V\right) }.
\end{eqnarray*}

Take $A:=T\left( V\right) $ and $A^{\prime }:=T\left( V^{\prime }\right) .$
Note that $T\left( Hf\right) :T\left( V\right) \rightarrow T\left( V^{\prime
}\right) $ and $T\left( Hf\right) :T\left( V\right) \rightarrow T\left(
V^{\prime }\right) $ are morphisms in $\mathrm{BrAlg}_{\mathcal{M}}.$
Moreover $(A_{1},A_{2},c_{2,1})$ and $(A_{2},A_{1},c_{1,2})$ (respectively $%
(A_{1}^{\prime },A_{2}^{\prime },c_{2,1}^{\prime })$ and $(A_{2}^{\prime
},A_{1}^{\prime },c_{1,2}^{\prime })$) fulfil the requirements of
Proposition \ref{pro:AotB}-2) for $A_{2}=A_{1}=A$ and $c_{i,j}=c_{A},i,j\in
\left\{ 1,2\right\} $ (resp. for $A_{2}^{\prime }=A_{1}^{\prime }=A^{\prime
} $ and $c_{i,j}^{\prime }=c_{A^{\prime }},i,j\in \left\{ 1,2\right\} ,$ see
Proposition \ref{pro:AotB}). Moreover, since $T\left( Hf\right) =H\Omega _{%
\mathrm{Br}}T_{\mathrm{Br}}\left( f\right) $ and $\Omega _{\mathrm{Br}}T_{%
\mathrm{Br}}\left( f\right) $ is a morphism of braided objects, we have
\begin{eqnarray*}
\left( T\left( Hf\right) \otimes T\left( Hf\right) \right) c_{2,1}
&=&c_{2,1}^{\prime }\left( T\left( Hf\right) \otimes T\left( Hf\right)
\right) , \\
\left( T\left( Hf\right) \otimes T\left( Hf\right) \right) c_{1,2}
&=&c_{1,2}\left( T\left( f\right) \otimes T\left( Hf\right) \right) .
\end{eqnarray*}%
Thus we can conclude that $T_{\mathrm{Br}}\left( f\right) \otimes T_{\mathrm{%
Br}}\left( f\right) $ is a morphism in $\mathrm{BrAlg}_{\mathcal{M}}.$ We
have to check that%
\begin{equation*}
\left( T_{\mathrm{Br}}\left( f\right) \otimes T_{\mathrm{Br}}\left( f\right)
\right) \circ \Delta _{T}=\Delta _{T^{\prime }}\circ T_{\mathrm{Br}}\left(
f\right)
\end{equation*}%
as morphisms in $\mathrm{BrAlg}_{\mathcal{M}}.$ Equivalently we will check
that%
\begin{equation*}
H\Phi \left( \left( V,c\right) ,T\otimes T\right) \left[ \left( T_{\mathrm{Br%
}}\left( f\right) \otimes T_{\mathrm{Br}}\left( f\right) \right) \circ
\Delta _{T}\right] =H\Phi \left( \left( V,c\right) ,T\otimes T\right) \left[
\Delta _{T^{\prime }}\circ T_{\mathrm{Br}}\left( f\right) \right] .
\end{equation*}%
The left hand-side is%
\begin{eqnarray*}
&&H\Phi \left( \left( V,c\right) ,T\otimes T\right) \left[ \left( T_{\mathrm{%
Br}}\left( f\right) \otimes T_{\mathrm{Br}}\left( f\right) \right) \circ
\Delta _{T}\right] \overset{(\ref{form:xidelta})}{=}H\Omega _{\mathrm{Br}}%
\left[ \left( T_{\mathrm{Br}}\left( f\right) \otimes T_{\mathrm{Br}}\left(
f\right) \right) \right] \left( \delta _{V}^{l}+\delta _{V}^{r}\right) \\
&=&\left( \Omega T\left( Hf\right) \otimes \Omega T\left( Hf\right) \right)
\left( u_{A}\otimes \alpha _{1}V\right) l_{V}^{-1}+\left( \Omega T\left(
Hf\right) \otimes \Omega T\left( Hf\right) \right) \left( \alpha
_{1}V\otimes u_{A}\right) r_{V}^{-1} \\
&\overset{(\ref{form:Tf})}{=}&\left( u_{A^{\prime }}\otimes \alpha
_{1}V^{\prime }\right) \left( \mathbf{1}\otimes Hf\right) l_{V}^{-1}+\left(
\alpha _{1}V^{\prime }\otimes u_{A^{\prime }}\right) \left( Hf\otimes
\mathbf{1}\right) r_{V}^{-1} \\
&=&\left( u_{A^{\prime }}\otimes \alpha _{1}V^{\prime }\right) l_{V^{\prime
}}^{-1}Hf+\left( \alpha _{1}V^{\prime }\otimes u_{A^{\prime }}\right)
r_{V^{\prime }}^{-1}Hf=\left( \delta _{V^{\prime }}^{l}+\delta _{V^{\prime
}}^{r}\right) Hf.
\end{eqnarray*}%
The right hand-side is%
\begin{eqnarray*}
&&H\Phi \left( \left( V,c\right) ,T\otimes T\right) \left[ \Delta
_{T^{\prime }}\circ T_{\mathrm{Br}}\left( f\right) \right] =H\left\{ \Omega
_{\mathrm{Br}}\left[ \Delta _{T^{\prime }}\circ T_{\mathrm{Br}}\left(
f\right) \right] \eta _{\mathrm{Br}}\left( V,c\right) \right\} \\
&=&H\left\{ \Omega _{\mathrm{Br}}\left[ \Delta _{T^{\prime }}\right]
\right\} H\left\{ \Omega _{\mathrm{Br}}T_{\mathrm{Br}}\left( f\right) \eta _{%
\mathrm{Br}}\left( V,c\right) \right\} =H\left\{ \Omega _{\mathrm{Br}}\left[
\Delta _{T^{\prime }}\right] \right\} H\left\{ \eta _{\mathrm{Br}}\left(
V^{\prime },c^{\prime }\right) \right\} Hf \\
&=&\left( \delta _{V^{\prime }}^{l}+\delta _{V^{\prime }}^{r}\right) Hf.
\end{eqnarray*}%
Hence the two sides coincide. We have proved that $T_{\mathrm{Br}}\left(
f\right) $ is comultiplicative. Let us check it is counitary i.e. that $%
\varepsilon _{T^{\prime }}\circ T_{\mathrm{Br}}\left( f\right) =\varepsilon
_{T}$ holds in $\mathrm{BrAlg}_{\mathcal{M}}$. Equivalently we have to prove
that $\Phi \left( \left( V,c\right) ,\mathbf{1}\right) \left[ \varepsilon
_{T^{\prime }}\circ T_{\mathrm{Br}}\left( f\right) \right] =\Phi \left(
\left( V,c\right) ,\mathbf{1}\right) \left[ \varepsilon _{T}\right] .$ We
have%
\begin{eqnarray*}
\Phi \left( \left( V,c\right) ,\mathbf{1}\right) \left[ \varepsilon
_{T^{\prime }}\circ T_{\mathrm{Br}}\left( f\right) \right] &=&\Omega _{%
\mathrm{Br}}\left[ \varepsilon _{T^{\prime }}\circ T_{\mathrm{Br}}\left(
f\right) \right] \eta _{\mathrm{Br}}\left( V,c\right) \\
&=&\Omega _{\mathrm{Br}}\left[ \varepsilon _{T^{\prime }}\right] \left\{
\Omega _{\mathrm{Br}}T_{\mathrm{Br}}\left( f\right) \eta _{\mathrm{Br}%
}\left( V,c\right) \right\} \\
&=&\Omega _{\mathrm{Br}}\left[ \varepsilon _{T^{\prime }}\right] \eta _{%
\mathrm{Br}}\left( V^{\prime },c^{\prime }\right) f=0= \\
&=&\Omega _{\mathrm{Br}}\left[ \varepsilon _{T}\right] \eta _{\mathrm{Br}%
}\left( V,c\right) =\Phi \left( \left( V,c\right) ,\mathbf{1}\right) \left[
\varepsilon _{T}\right] .
\end{eqnarray*}

By construction we have that diagram (\ref{form:OmegRibTbarBr}) commutes.
\end{proof}

Next aim is to check that the functor $P_{\mathrm{Br}}:\mathrm{BrBialg}_{%
\mathcal{M}}\rightarrow \mathrm{Br}_{\mathcal{M}}$ of Lemma \ref%
{lem:primitive} is a left adjoint of $\overline{T}_{\mathrm{Br}}$.

\begin{theorem}
\label{teo:TbarStrict}Take the hypotheses and notations of Lemma \ref%
{lem:primitive} i.e let $\mathcal{M}$ be a preadditive monoidal category
with equalizers and assume that the tensor functors are additive and
preserve equalizers. Assume also that the monoidal category $\mathcal{M}$
has denumerable coproducts and that the tensor functors preserve such
coproducts. Then%
\begin{equation*}
\left( \overline{T}_{\mathrm{Br}}:\mathrm{Br}_{\mathcal{M}}\rightarrow
\mathrm{BrBialg}_{\mathcal{M}},P_{\mathrm{Br}}:\mathrm{BrBialg}_{\mathcal{M}%
}\rightarrow \mathrm{Br}_{\mathcal{M}}\right)
\end{equation*}%
is an adjunction. The unit $\overline{\eta }_{\mathrm{Br}}$ and the counit $%
\overline{\epsilon }_{\mathrm{Br}}$ are uniquely determined by the following
equalities%
\begin{eqnarray}
\xi \overline{T}_{\mathrm{Br}}\circ \overline{\eta }_{\mathrm{Br}} &=&\eta _{%
\mathrm{Br}},  \label{form:BarEta} \\
\epsilon _{\mathrm{Br}}\mho _{\mathrm{Br}}\circ T_{\mathrm{Br}}\xi &=&\mho _{%
\mathrm{Br}}\overline{\epsilon }_{\mathrm{Br}},  \label{form:BarEps}
\end{eqnarray}%
where $\left( V,c\right) \in \mathrm{Br}_{\mathcal{M}},\mathbb{B}\in \mathrm{%
BrBialg}_{\mathcal{M}}$ while $\eta _{\mathrm{Br}}$ and $\epsilon _{\mathrm{%
Br}}$ denote the unit and counit of the adjunction $\left( T_{\mathrm{Br}%
},\Omega _{\mathrm{Br}}\right) $ respectively. Moreover $\mho _{\mathrm{Br}}:%
\mathrm{BrBialg}_{\mathcal{M}}\rightarrow \mathrm{BrAlg}_{\mathcal{M}}$
denotes the forgetful functor.
\end{theorem}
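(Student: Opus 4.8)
The plan is to construct a unit $\overline{\eta}_{\mathrm{Br}}:\mathrm{Id}_{\mathrm{Br}_{\mathcal{M}}}\rightarrow P_{\mathrm{Br}}\overline{T}_{\mathrm{Br}}$ and a counit $\overline{\epsilon}_{\mathrm{Br}}:\overline{T}_{\mathrm{Br}}P_{\mathrm{Br}}\rightarrow\mathrm{Id}_{\mathrm{BrBialg}_{\mathcal{M}}}$ obeying \eqref{form:BarEta}, \eqref{form:BarEps} and the two triangle identities; since $\xi$ is componentwise a monomorphism (Lemma \ref{lem:primitive}) and $\mho_{\mathrm{Br}}$ is faithful, those two equalities will automatically pin down $\overline{\eta}_{\mathrm{Br}}$ and $\overline{\epsilon}_{\mathrm{Br}}$ uniquely. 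I will use Proposition \ref{pro:TbrStrict}, Lemma \ref{lem:TbarStrict} and Lemma \ref{lem:primitive} freely, writing $\overline{T}_{\mathrm{Br}}(V,c)=(A,m_{A},u_{A},\Delta_{A},\varepsilon_{A},c_{A})$ for the braided bialgebra of Lemma \ref{lem:TbarStrict}.

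\emph{The unit.} Fix $(V,c)\in\mathrm{Br}_{\mathcal{M}}$. By \eqref{form:TBrdelta} and naturality of the constraints one has $\Delta_{A}\circ\alpha_{1}V=\bigl[(A\otimes u_{A})r_{A}^{-1}+(u_{A}\otimes A)l_{A}^{-1}\bigr]\circ\alpha_{1}V$, so $\alpha_{1}V=H\eta_{\mathrm{Br}}(V,c)$ equalizes the pair of arrows defining $P\bigl(A,\Delta_{A},\varepsilon_{A},u_{A}\bigr)=P_{\mathrm{Br}}\overline{T}_{\mathrm{Br}}(V,c)$ in Definition \ref{def:prim}; hence $\alpha_{1}V$ factors uniquely through $\xi\overline{T}_{\mathrm{Br}}(V,c)$, yielding a morphism $\overline{\eta}_{\mathrm{Br}}(V,c)$ in $\mathcal{M}$ with $\xi\overline{T}_{\mathrm{Br}}(V,c)\circ\overline{\eta}_{\mathrm{Br}}(V,c)=\eta_{\mathrm{Br}}(V,c)$, which is \eqref{form:BarEta}. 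To see $\overline{\eta}_{\mathrm{Br}}(V,c)$ lies in $\mathrm{Br}_{\mathcal{M}}$ I would precompose the desired identity $c_{P_{\mathrm{Br}}\overline{T}_{\mathrm{Br}}(V,c)}\circ(\overline{\eta}_{\mathrm{Br}}(V,c)\otimes\overline{\eta}_{\mathrm{Br}}(V,c))=(\overline{\eta}_{\mathrm{Br}}(V,c)\otimes\overline{\eta}_{\mathrm{Br}}(V,c))\circ c$ with the monomorphism $\xi\otimes\xi$ and use \eqref{form:xibraided} together with the fact that $\eta_{\mathrm{Br}}(V,c)=\alpha_{1}V$ is braided; naturality of $\overline{\eta}_{\mathrm{Br}}$ then follows from naturality of $\eta_{\mathrm{Br}}$ and from $\xi$ being a monomorphism.

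\emph{The counit.} Fix $\mathbb{B}\in\mathrm{BrBialg}_{\mathcal{M}}$ and set $B=\mho_{\mathrm{Br}}\mathbb{B}$. By Lemma \ref{lem:primitive}, $\xi\mathbb{B}:P_{\mathrm{Br}}\mathbb{B}\rightarrow\Omega_{\mathrm{Br}}B$ lies in $\mathrm{Br}_{\mathcal{M}}$, so it has an adjunct $\widehat{\xi\mathbb{B}}:=\epsilon_{\mathrm{Br}}B\circ T_{\mathrm{Br}}(\xi\mathbb{B}):T_{\mathrm{Br}}P_{\mathrm{Br}}\mathbb{B}\rightarrow B$ in $\mathrm{BrAlg}_{\mathcal{M}}$, characterised by $\Omega_{\mathrm{Br}}(\widehat{\xi\mathbb{B}})\circ\eta_{\mathrm{Br}}(P_{\mathrm{Br}}\mathbb{B})=\xi\mathbb{B}$. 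The key step is to show $\widehat{\xi\mathbb{B}}$ is in fact a morphism of braided bialgebras $\overline{T}_{\mathrm{Br}}P_{\mathrm{Br}}\mathbb{B}\rightarrow\mathbb{B}$; granting this, $\widehat{\xi\mathbb{B}}$ lifts to the sought $\overline{\epsilon}_{\mathrm{Br}}\mathbb{B}$ with $\mho_{\mathrm{Br}}(\overline{\epsilon}_{\mathrm{Br}}\mathbb{B})=\widehat{\xi\mathbb{B}}$, i.e.\ \eqref{form:BarEps}, and $\overline{\epsilon}_{\mathrm{Br}}$ is natural since $\mho_{\mathrm{Br}}$ is faithful and $\epsilon_{\mathrm{Br}},\xi$ are natural. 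For comultiplicativity: $B\otimes B$ is a braided algebra by Proposition \ref{pro:AotB} and $\Delta_{B}$ a morphism in $\mathrm{BrAlg}_{\mathcal{M}}$ by the braided bialgebra axioms \eqref{Br1}, \eqref{Br8}, \eqref{Br5}, \eqref{Br6}; since $\widehat{\xi\mathbb{B}}$, hence (Proposition \ref{pro:AotB}-3) $\widehat{\xi\mathbb{B}}\otimes\widehat{\xi\mathbb{B}}$, and the comultiplication $\Delta_{T}$ of $\overline{T}_{\mathrm{Br}}P_{\mathrm{Br}}\mathbb{B}$ are all in $\mathrm{BrAlg}_{\mathcal{M}}$, both $(\widehat{\xi\mathbb{B}}\otimes\widehat{\xi\mathbb{B}})\circ\Delta_{T}$ and $\Delta_{B}\circ\widehat{\xi\mathbb{B}}$ are braided algebra morphisms $T_{\mathrm{Br}}P_{\mathrm{Br}}\mathbb{B}\rightarrow B\otimes B$, and since $(T_{\mathrm{Br}},\Omega_{\mathrm{Br}})$ identifies such morphisms with their composite with $\eta_{\mathrm{Br}}$ they coincide once they agree after $H\Omega_{\mathrm{Br}}(-)\circ\alpha_{1}P_{\mathrm{Br}}\mathbb{B}$; using \eqref{form:TBrdelta}, $\Omega_{\mathrm{Br}}(\widehat{\xi\mathbb{B}})\circ\eta_{\mathrm{Br}}(P_{\mathrm{Br}}\mathbb{B})=\xi\mathbb{B}$ and the unitality of $\widehat{\xi\mathbb{B}}$, this boils down to $\Delta_{B}\circ\xi\mathbb{B}=(\xi\mathbb{B}\otimes u_{B})r^{-1}+(u_{B}\otimes\xi\mathbb{B})l^{-1}$, which is just the equalizer of Definition \ref{def:prim} rewritten by naturality of $r^{-1},l^{-1}$. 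Counitarity is entirely analogous, using \eqref{form:TBreps} and the identity $\varepsilon_{B}\circ\xi\mathbb{B}=0$, which follows from the same equalizer because $u_{B}$ is a split monomorphism with retraction $\varepsilon_{B}$.

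\emph{Triangle identities and conclusion.} For $P_{\mathrm{Br}}(\overline{\epsilon}_{\mathrm{Br}})\circ\overline{\eta}_{\mathrm{Br}}P_{\mathrm{Br}}=\mathrm{Id}$ I would precompose with the monomorphism $\xi\mathbb{B}$ and chase, via naturality \eqref{form:NatXi} of $\xi$, then \eqref{form:BarEta}, \eqref{form:BarEps}, naturality of $\eta_{\mathrm{Br}}$ and the triangle identity of $(T_{\mathrm{Br}},\Omega_{\mathrm{Br}})$, that the composite equals $\xi\mathbb{B}$. For $\overline{\epsilon}_{\mathrm{Br}}\overline{T}_{\mathrm{Br}}\circ\overline{T}_{\mathrm{Br}}\overline{\eta}_{\mathrm{Br}}=\mathrm{Id}$ I would apply the faithful $\mho_{\mathrm{Br}}$ and use $\mho_{\mathrm{Br}}\overline{T}_{\mathrm{Br}}=T_{\mathrm{Br}}$ (diagram \eqref{form:OmegRibTbarBr}), \eqref{form:BarEps}, \eqref{form:BarEta} and again the triangle identity of $(T_{\mathrm{Br}},\Omega_{\mathrm{Br}})$ to reduce to $\epsilon_{\mathrm{Br}}T_{\mathrm{Br}}(V,c)\circ T_{\mathrm{Br}}(\eta_{\mathrm{Br}}(V,c))=\mathrm{Id}$. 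This yields the adjunction $(\overline{T}_{\mathrm{Br}},P_{\mathrm{Br}})$ with unit $\overline{\eta}_{\mathrm{Br}}$ and counit $\overline{\epsilon}_{\mathrm{Br}}$, together with the uniqueness. The main obstacle is the middle paragraph: proving that the adjunct $\widehat{\xi\mathbb{B}}$ of $\xi\mathbb{B}$ is comultiplicative and counitary, which is exactly where the primitivity of $\xi\mathbb{B}$, the braided bialgebra structure transported to $B\otimes B$, and the fact that braided algebra maps out of $T_{\mathrm{Br}}$ are determined on the degree-one part $\alpha_{1}$ all have to be brought together.
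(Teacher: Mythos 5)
Your proposal is correct and follows essentially the same route as the paper: the unit is obtained by factoring $\alpha_{1}V=H\eta_{\mathrm{Br}}(V,c)$ through the equalizer defining $P_{\mathrm{Br}}\overline{T}_{\mathrm{Br}}(V,c)$ via \eqref{form:TBrdelta}, the counit is the adjunct $\epsilon_{\mathrm{Br}}\mho_{\mathrm{Br}}\mathbb{B}\circ T_{\mathrm{Br}}\xi\mathbb{B}$ of $\xi\mathbb{B}$ shown to be comultiplicative and counitary by reducing, through the tensor-algebra adjunction, to the equalizer relation and to $\varepsilon_{B}\circ\xi\mathbb{B}=0$ (which the paper likewise extracts by applying $m_{\mathbf{1}}\circ(\varepsilon_{B}\otimes\varepsilon_{B})$ to that relation), and the triangle identities are verified exactly as you describe using the monomorphism $\xi$ and the faithfulness of $\mho_{\mathrm{Br}}$. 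The only cosmetic difference is that the paper reduces the comultiplicativity check using the plain adjunction $(T,\Omega)$ rather than $(T_{\mathrm{Br}},\Omega_{\mathrm{Br}})$, which makes your appeal to Proposition \ref{pro:AotB}-3 for $\widehat{\xi\mathbb{B}}\otimes\widehat{\xi\mathbb{B}}$ unnecessary but not incorrect.
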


\begin{proof}
Let $\left( V,c\right) \in \mathrm{Br}_{\mathcal{M}}$. Let $\mathbb{A}:=%
\overline{T}_{\mathrm{Br}}\left( V,c\right) .$ Write $\mathbb{A}%
:=(A,m_{A},u_{A},\Delta _{A},\varepsilon _{A},c_{A}).$ Consider the equalizer%
\begin{equation*}
\xymatrixcolsep{1.5cm}
\xymatrix{
   P\left( \mathbb{A}\right) \ar[r]^{\xi \mathbb{A}} & A \ar@<.5ex>[rr]^{\Delta _{A}} \ar@<-.5ex>[rr]_{\left( A\otimes u_{A}\right) r_{A}^{-1}+\left( u_{A}\otimes A\right)
l_{A}^{-1}}&&A\otimes A }
\end{equation*}%
Note that the codomain of $\eta _{\mathrm{Br}}\left( V,c\right) $ is $\Omega
_{\mathrm{Br}}T_{\mathrm{Br}}\left( V,c\right) =\left( A,c_{A}\right) $ so
that it makes sense to check if $H\eta _{\mathrm{Br}}\left( V,c\right)
:V\rightarrow A$ is equalized by the pair $\left( \Delta _{A},\left(
A\otimes u_{A}\right) r_{A}^{-1}+\left( u_{A}\otimes A\right)
l_{A}^{-1}\right) .$ We have%
\begin{eqnarray*}
&&\left[ \left( A\otimes u_{A}\right) \circ r_{A}^{-1}+\left( u_{A}\otimes
A\right) \circ l_{A}^{-1}\right] \circ H\eta _{\mathrm{Br}}\left( V,c\right)
\\
&\overset{(\ref{form:etaBralpha})}{=}&\left[ \left( A\otimes u_{A}\right)
\circ r_{A}^{-1}+\left( u_{A}\otimes A\right) \circ l_{A}^{-1}\right] \circ
\alpha _{1}V \\
&=&\left( \alpha _{1}V\otimes u_{A}\right) \circ r_{V}^{-1}+\left(
u_{A}\otimes \alpha _{1}V\right) \circ l_{V}^{-1} \\
&=&\delta _{V}^{r}+\delta _{V}^{l}\overset{(\ref{form:TBrdelta})}{=}\Delta
_{A}\circ \alpha _{1}V\overset{(\ref{form:etaBralpha})}{=}\Delta _{A}\circ
H\eta _{\mathrm{Br}}\left( V,c\right) .
\end{eqnarray*}

By the universal property of equalizers, there is a unique morphism $%
\overline{\eta }_{\mathrm{Br}}\left( V,c\right) :V\rightarrow P\left(
\mathbb{A}\right) $ such that%
\begin{equation}
\xi \mathbb{A}\circ \overline{\eta }_{\mathrm{Br}}\left( V,c\right) =H\eta _{%
\mathrm{Br}}\left( V,c\right) .  \label{form:BarEta0}
\end{equation}%
We have%
\begin{eqnarray*}
&&\left( \xi \mathbb{A\otimes }\xi \mathbb{A}\right) \circ c_{P\left(
\mathbb{A}\right) }\circ \left( \overline{\eta }_{\mathrm{Br}}\left(
V,c\right) \otimes \overline{\eta }_{\mathrm{Br}}\left( V,c\right) \right)
\overset{(\ref{form:xibraided})}{=}c_{A}\circ \left( \xi \mathbb{A\otimes }%
\xi \mathbb{A}\right) \circ \left( \overline{\eta }_{\mathrm{Br}}\left(
V,c\right) \otimes \overline{\eta }_{\mathrm{Br}}\left( V,c\right) \right) \\
&=&c_{A}\circ \left( H\eta _{\mathrm{Br}}\left( V,c\right) \mathbb{\otimes }%
H\eta _{\mathrm{Br}}\left( V,c\right) \right) =\left( H\eta _{\mathrm{Br}%
}\left( V,c\right) \mathbb{\otimes }H\eta _{\mathrm{Br}}\left( V,c\right)
\right) \circ c \\
&=&\left( \xi \mathbb{A\otimes }\xi \mathbb{A}\right) \circ \left( \overline{%
\eta }_{\mathrm{Br}}\left( V,c\right) \otimes \overline{\eta }_{\mathrm{Br}%
}\left( V,c\right) \right) \circ c
\end{eqnarray*}%
and hence%
\begin{equation*}
c_{P\left( \mathbb{A}\right) }\circ \left( \overline{\eta }_{\mathrm{Br}%
}A\otimes \overline{\eta }_{\mathrm{Br}}A\right) =\left( \overline{\eta }_{%
\mathrm{Br}}A\otimes \overline{\eta }_{\mathrm{Br}}A\right) \circ c.
\end{equation*}%
Hence $\overline{\eta }_{\mathrm{Br}}\left( V,c\right) $ induces a morphism
of braided objects that we denote with the same symbol, namely $\overline{%
\eta }_{\mathrm{Br}}\left( V,c\right) :\left( V,c\right) \rightarrow \left(
P\left( \mathbb{A}\right) ,c_{P\left( \mathbb{A}\right) }\right) $.

Let us check that $\overline{\eta }_{\mathrm{Br}}\left( V,c\right) $ is
natural in $\left( V,c\right) .$ Let $f:\left( V,c\right) \rightarrow \left(
V^{\prime },c^{\prime }\right) $ be a morphism in $\mathrm{Br}_{\mathcal{M}}$%
. Then%
\begin{eqnarray*}
&&\xi \overline{T}_{\mathrm{Br}}\left( V^{\prime },c^{\prime }\right) \circ
P_{\mathrm{Br}}\overline{T}_{\mathrm{Br}}\left( f\right) \circ \overline{%
\eta }_{\mathrm{Br}}\left( V,c\right) \\
&\overset{(\ref{form:NatXi})}{=}&\Omega _{\mathrm{Br}}\mho _{\mathrm{Br}}%
\overline{T}_{\mathrm{Br}}\left( f\right) \circ \xi \overline{T}_{\mathrm{Br}%
}\left( V,c\right) \circ \overline{\eta }_{\mathrm{Br}}\left( V,c\right) \\
&\overset{(\ref{form:BarEta0})}{=}&\Omega _{\mathrm{Br}}T_{\mathrm{Br}%
}\left( f\right) \circ \eta _{\mathrm{Br}}\left( V,c\right) \\
&=&\eta _{\mathrm{Br}}\left( V^{\prime },c^{\prime }\right) \circ f\overset{(%
\ref{form:BarEta0})}{=}\xi \overline{T}_{\mathrm{Br}}\left( V^{\prime
},c^{\prime }\right) \circ \overline{\eta }_{\mathrm{Br}}\left( V^{\prime
},c^{\prime }\right) \circ f
\end{eqnarray*}%
and hence $P_{\mathrm{Br}}\overline{T}_{\mathrm{Br}}\left( f\right) \circ
\overline{\eta }_{\mathrm{Br}}\left( V,c\right) =\overline{\eta }_{\mathrm{Br%
}}\left( V^{\prime },c^{\prime }\right) \circ f$ which means that $\overline{%
\eta }_{\mathrm{Br}}\left( V,c\right) $ is natural in $\left( V,c\right) $.
A similar argument holds for $\xi \mathbb{A}$ so that we have proved (\ref%
{form:BarEta}).

The morphism $\overline{\eta }_{\mathrm{Br}}\left( V,c\right) $ will play
the role of the unit of the adjunction $\left( \overline{T}_{\mathrm{Br}},P_{%
\mathrm{Br}}\right) $. Let $\mathbb{B}:=(B,m_{B},u_{B},\Delta
_{B},\varepsilon _{B},c_{B})\in \mathrm{BrBialg}_{\mathcal{M}}$ and consider
the canonical isomorphism%
\begin{eqnarray*}
\Phi \left( P_{\mathrm{Br}}\left( \mathbb{B}\right) ,\mho _{\mathrm{Br}%
}\left( \mathbb{B}\right) \right) &:&\mathrm{BrAlg}_{\mathcal{M}}\left( T_{%
\mathrm{Br}}P_{\mathrm{Br}}\left( \mathbb{B}\right) ,\mho _{\mathrm{Br}%
}\left( \mathbb{B}\right) \right) \rightarrow \mathrm{Br}_{\mathcal{M}%
}\left( P_{\mathrm{Br}}\left( \mathbb{B}\right) ,\Omega _{\mathrm{Br}}\mho _{%
\mathrm{Br}}\left( \mathbb{B}\right) \right) \\
f &\mapsto &\Omega _{\mathrm{Br}}\left( f\right) \circ \eta _{\mathrm{Br}}P_{%
\mathrm{Br}}\left( \mathbb{B}\right) .
\end{eqnarray*}%
Define the morphism $\zeta \mathbb{B}:=\Phi \left( P_{\mathrm{Br}}\left(
\mathbb{B}\right) ,\mho _{\mathrm{Br}}\left( \mathbb{B}\right) \right)
^{-1}\left( \xi \mathbb{B}\right) $. This means that%
\begin{equation}
\Omega _{\mathrm{Br}}\left( \zeta \mathbb{B}\right) \circ \eta _{\mathrm{Br}%
}P_{\mathrm{Br}}\left( \mathbb{B}\right) =\xi \mathbb{B}.  \label{form:zetaB}
\end{equation}%
Set $\zeta B:=H\Omega _{\mathrm{Br}}\left( \zeta \mathbb{B}\right) =\Omega
H_{\mathrm{Alg}}\left( \zeta \mathbb{B}\right) :TP\left( \mathbb{B}\right)
\rightarrow B.$ Note that%
\begin{eqnarray*}
&&\zeta B\circ \eta P\left( \mathbb{B}\right) \overset{(\ref{form:etaBralpha}%
)}{=}H\Omega _{\mathrm{Br}}\left( \zeta \mathbb{B}\right) \circ H\eta _{%
\mathrm{Br}}P_{\mathrm{Br}}\left( \mathbb{B}\right) \\
&=&H\left[ \Omega _{\mathrm{Br}}\left( \zeta \mathbb{B}\right) \circ \eta _{%
\mathrm{Br}}P_{\mathrm{Br}}\left( \mathbb{B}\right) \right] \overset{(\ref%
{form:zetaB})}{=}H\xi \mathbb{B}
\end{eqnarray*}%
so that%
\begin{equation}
\zeta B\circ \eta P\left( \mathbb{B}\right) =H\xi \mathbb{B}\text{.}
\label{form:zetaB2}
\end{equation}

We will check that
\begin{equation}
\Delta _{B}\circ \zeta B=\left( \zeta B\otimes \zeta B\right) \circ \Delta
_{TP\left( \mathbb{B}\right) }.  \label{form:zetaBcomult}
\end{equation}%
The morphisms above are in particular algebra maps. Since $\left( T,\Omega
\right) $ is an adjunction, the equality above holds if%
\begin{equation*}
\Delta _{B}\circ \zeta B\circ \eta P\left( \mathbb{B}\right) =\left( \zeta
B\otimes \zeta B\right) \circ \Delta _{TP\left( \mathbb{B}\right) }\circ
\eta P\left( \mathbb{B}\right) .
\end{equation*}%
The first term is%
\begin{eqnarray*}
&&\Delta _{B}\circ \zeta B\circ \eta P\left( \mathbb{B}\right) \overset{(\ref%
{form:zetaB2})}{=}\Delta _{B}\circ H\xi \mathbb{B} \\
&=&\left( \left( B\otimes u_{B}\right) r_{B}^{-1}+\left( u_{B}\otimes
B\right) l_{B}^{-1}\right) \circ H\xi \mathbb{B}.
\end{eqnarray*}%
On the other hand, using, in the given order, (\ref{form:etaeps}), (\ref%
{form:TBrdelta}), the definitions of $\delta _{P\left( \mathbb{B}\right)
}^{l}$ and $\delta _{P\left( \mathbb{B}\right) }^{r}$, (\ref{form:etaeps}), (%
\ref{form:zetaB2}) and the naturality of the unit constraints, we obtain
that the second term is $\left( \left( B\otimes u_{B}\right)
r_{B}^{-1}+\left( u_{B}\otimes B\right) l_{B}^{-1}\right) \circ H\xi \mathbb{%
B}.$ Thus (\ref{form:zetaBcomult}) holds true. Now we will prove that $%
\varepsilon _{B}\circ \zeta B=\varepsilon _{\Omega TP\left( \mathbb{B}%
\right) }.$ Since $\left( T,\Omega \right) $ is an adjunction, the equality
above holds if%
\begin{equation*}
\varepsilon _{B}\circ \zeta B\circ \eta P\left( \mathbb{B}\right)
=\varepsilon _{\Omega TP\left( \mathbb{B}\right) }\circ \eta P\left( \mathbb{%
B}\right) .
\end{equation*}

We have%
\begin{equation*}
\varepsilon _{B}\circ \zeta B\circ \eta P\left( \mathbb{B}\right) \overset{(%
\ref{form:zetaB2})}{=}\varepsilon _{B}\circ H\xi \mathbb{B}\overset{(\ast )}{%
=}0\overset{(\ref{form:TBreps}),(\ref{form:etaeps})}{=}\varepsilon _{\Omega
TP\left( \mathbb{B}\right) }\circ \eta P\left( \mathbb{B}\right) .
\end{equation*}%
In order to prove (*)\ we proceed as follows. Consider the equalizer
\begin{equation*}
\xymatrixcolsep{1.5cm}
\xymatrix{
   P\left( \mathbb{B}\right) \ar[r]^{\xi \mathbb{B}} & B \ar@<.5ex>[rr]^{\Delta _{B}} \ar@<-.5ex>[rr]_{\left( B\otimes u_{B}\right) r_{B}^{-1}+\left( u_{B}\otimes B\right)
l_{B}^{-1}}&&B\otimes B }
\end{equation*}%
By applying $m_{\mathbf{1}}\circ \left( \varepsilon _{B}\otimes \varepsilon
_{B}\right) $ we get%
\begin{equation*}
m_{\mathbf{1}}\circ \left( \varepsilon _{B}\otimes \varepsilon _{B}\right)
\circ \Delta _{B}\circ \xi \mathbb{B}=m_{\mathbf{1}}\circ \left( \varepsilon
_{B}\otimes \varepsilon _{B}\right) \circ \left[ \left( B\otimes
u_{B}\right) r_{B}^{-1}+\left( u_{B}\otimes B\right) l_{B}^{-1}\right] \circ
\xi \mathbb{B}.
\end{equation*}%
The left hand-side is%
\begin{eqnarray*}
&&m_{\mathbf{1}}\circ \left( \varepsilon _{B}\otimes \varepsilon _{B}\right)
\circ \Delta _{B}\circ \xi \mathbb{B}=m_{\mathbf{1}}\circ \left( \varepsilon
_{B}\otimes \mathbf{1}\right) \circ \left( B\otimes \varepsilon _{B}\right)
\circ \Delta _{B}\circ \xi \mathbb{B} \\
&=&m_{\mathbf{1}}\circ \left( \varepsilon _{B}\otimes \mathbf{1}\right)
\circ r_{B}^{-1}\circ \xi \mathbb{B}=m_{\mathbf{1}}\circ r_{\mathbf{1}%
}^{-1}\circ \varepsilon _{B}\circ \xi \mathbb{B}=\varepsilon _{B}\circ \xi
\mathbb{B}
\end{eqnarray*}%
The right hand-side is%
\begin{eqnarray*}
&&m_{\mathbf{1}}\circ \left( \varepsilon _{B}\otimes \varepsilon _{B}\right)
\circ \left[ \left( B\otimes u_{B}\right) r_{B}^{-1}+\left( u_{B}\otimes
B\right) l_{B}^{-1}\right] \circ \xi \mathbb{B} \\
&=&m_{\mathbf{1}}\circ \left[ \left( \varepsilon _{B}\otimes \mathbf{1}%
\right) r_{B}^{-1}+\left( \mathbf{1}\otimes \varepsilon _{B}\right)
l_{B}^{-1}\right] \circ \xi \mathbb{B} \\
&=&m_{\mathbf{1}}\circ \left[ r_{\mathbf{1}}^{-1}\varepsilon _{B}+l_{\mathbf{%
1}}^{-1}\varepsilon _{B}\right] \circ \xi \mathbb{B}=2\varepsilon _{B}\circ
\xi \mathbb{B}
\end{eqnarray*}%
Hence we get $\varepsilon _{B}\circ \xi \mathbb{B}=2\varepsilon _{B}\circ
\xi \mathbb{B}$ and hence $\varepsilon _{B}\circ \xi \mathbb{B}=0$ as
required. Thus (*) is proved. Summing up, we have proved that $\zeta
B:TP\left( \mathbb{B}\right) \rightarrow B$ is a coalgebra morphism. Since $%
\zeta B:=H\Omega _{\mathrm{Br}}\left( \zeta \mathbb{B}\right) $, we also
know it is a morphism of algebras and braided objects so that there is a
unique morphism $\overline{\epsilon }_{\mathrm{Br}}\mathbb{B}:\overline{T}%
P\left( \mathbb{B}\right) \rightarrow \mathbb{B}$ in $\mathrm{BrBialg}_{%
\mathcal{M}}$ such that%
\begin{equation*}
\mho _{\mathrm{Br}}\left( \overline{\epsilon }_{\mathrm{Br}}\mathbb{B}%
\right) =\zeta \mathbb{B}.
\end{equation*}%
By definition of $\zeta \mathbb{B},$ we have%
\begin{equation*}
\Omega _{\mathrm{Br}}\mho _{\mathrm{Br}}\left( \overline{\epsilon }_{\mathrm{%
Br}}\mathbb{B}\right) \circ \eta _{\mathrm{Br}}P_{\mathrm{Br}}\left( \mathbb{%
B}\right) =\Omega _{\mathrm{Br}}\zeta \mathbb{B}\circ \eta _{\mathrm{Br}}P_{%
\mathrm{Br}}\left( \mathbb{B}\right) \overset{(\ref{form:zetaB})}{=}\xi
\mathbb{B}
\end{equation*}

Observe that $\overline{\epsilon }_{\mathrm{Br}}\mathbb{B}$ is uniquely
determined by the last equality. Note also that%
\begin{equation}
\epsilon _{\mathrm{Br}}\mho _{\mathrm{Br}}\mathbb{B}\circ T_{\mathrm{Br}}\xi
\mathbb{B}=\Phi \left( P_{\mathrm{Br}}\left( \mathbb{B}\right) ,\mho _{%
\mathrm{Br}}\mathbb{B}\right) ^{-1}\left( \xi \mathbb{B}\right) =\zeta
\mathbb{B}=\mho _{\mathrm{Br}}\left( \overline{\epsilon }_{\mathrm{Br}}%
\mathbb{B}\right) .  \label{form:BarEps0}
\end{equation}
Let us check that $\overline{\epsilon }_{\mathrm{Br}}\mathbb{B}$ is natural
in $\mathbb{B}$. Let $f:\mathbb{B}\rightarrow \mathbb{B}^{\prime }$ be a
morphism in $\mathrm{BrBialg}_{\mathcal{M}}$. Then%
\begin{eqnarray*}
&&\mho _{\mathrm{Br}}\left[ \overline{\epsilon }_{\mathrm{Br}}\mathbb{B}%
^{\prime }\circ \overline{T}_{\mathrm{Br}}P_{\mathrm{Br}}\left( f\right) %
\right] \overset{(\ref{form:BarEps0})}{=}\epsilon _{\mathrm{Br}}\mho _{%
\mathrm{Br}}\mathbb{B}^{\prime }\circ T_{\mathrm{Br}}\xi \mathbb{B}^{\prime
}\circ T_{\mathrm{Br}}P_{\mathrm{Br}}\left( f\right) \\
&\overset{(\ref{form:NatXi})}{=}&\epsilon _{\mathrm{Br}}\mho _{\mathrm{Br}}%
\mathbb{B}^{\prime }\circ T_{\mathrm{Br}}\Omega _{\mathrm{Br}}\mho _{\mathrm{%
Br}}\left( f\right) \circ T_{\mathrm{Br}}\xi \mathbb{B} \\
&=&\mho _{\mathrm{Br}}\left( f\right) \circ \epsilon _{\mathrm{Br}}\mho _{%
\mathrm{Br}}\mathbb{B}\circ T_{\mathrm{Br}}\xi \mathbb{B}\overset{(\ref%
{form:BarEps0})}{=}\mho _{\mathrm{Br}}\left[ f\circ \overline{\epsilon }_{%
\mathrm{Br}}\mathbb{B}\right]
\end{eqnarray*}%
Since $\mho _{\mathrm{Br}}$ is faithful, we obtain $\overline{\epsilon }_{%
\mathrm{Br}}\mathbb{B}^{\prime }\circ \overline{T}_{\mathrm{Br}}P_{\mathrm{Br%
}}\left( f\right) =f\circ \overline{\epsilon }_{\mathrm{Br}}\mathbb{B}$ so
that $\overline{\epsilon }_{\mathrm{Br}}\mathbb{B}$ is natural in $\mathbb{B}
$. Thus (\ref{form:BarEps0}) implies that (\ref{form:BarEps}) holds.

Let us check that $\left( \overline{T}_{\mathrm{Br}},P_{\mathrm{Br}}\right) $
is an adjunction with unit $\overline{\eta }_{\mathrm{Br}}$ and counit $%
\overline{\epsilon }_{\mathrm{Br}}$. We compute%
\begin{eqnarray*}
&&\xi \mathbb{B}\circ P_{\mathrm{Br}}\overline{\epsilon }_{\mathrm{Br}}%
\mathbb{B}\circ \overline{\eta }_{\mathrm{Br}}P_{\mathrm{Br}}\mathbb{B}%
\overset{(\ref{form:NatXi})}{=}\Omega _{\mathrm{Br}}\mho _{\mathrm{Br}}%
\overline{\epsilon }_{\mathrm{Br}}\mathbb{B}\circ \xi \overline{T}_{\mathrm{%
Br}}P_{\mathrm{Br}}\mathbb{B}\circ \overline{\eta }_{\mathrm{Br}}P_{\mathrm{%
Br}}\mathbb{B} \\
&\overset{(\ref{form:BarEta})}{=}&\Omega _{\mathrm{Br}}\mho _{\mathrm{Br}}%
\overline{\epsilon }_{\mathrm{Br}}\mathbb{B}\circ \eta _{\mathrm{Br}}P_{%
\mathrm{Br}}\mathbb{B}\overset{(\ref{form:BarEps})}{=}\Omega _{\mathrm{Br}%
}\epsilon _{\mathrm{Br}}\mho _{\mathrm{Br}}\mathbb{B}\circ \Omega _{\mathrm{%
Br}}T_{\mathrm{Br}}\xi \mathbb{B}\circ \eta _{\mathrm{Br}}P_{\mathrm{Br}}%
\mathbb{B} \\
&=&\Omega _{\mathrm{Br}}\epsilon _{\mathrm{Br}}\mho _{\mathrm{Br}}\mathbb{B}%
\circ \eta _{\mathrm{Br}}\Omega _{\mathrm{Br}}\mho _{\mathrm{Br}}\mathbb{B}%
\circ \xi \mathbb{B}=\xi \mathbb{B}
\end{eqnarray*}%
Since $\xi \mathbb{B}$ is a monomorphism, we get $P_{\mathrm{Br}}\overline{%
\epsilon }_{\mathrm{Br}}\mathbb{B}\circ \overline{\eta }_{\mathrm{Br}}P_{%
\mathrm{Br}}\mathbb{B}=\mathrm{Id}_{P_{\mathrm{Br}}\mathbb{B}}.$ We have%
\begin{eqnarray*}
&&\mho _{\mathrm{Br}}\left[ \overline{\epsilon }_{\mathrm{Br}}\overline{T}_{%
\mathrm{Br}}\circ \overline{T}_{\mathrm{Br}}\overline{\eta }_{\mathrm{Br}}%
\right] \overset{(\ref{form:BarEps})}{=}\epsilon _{\mathrm{Br}}\mho _{%
\mathrm{Br}}\overline{T}_{\mathrm{Br}}\circ T_{\mathrm{Br}}\xi \overline{T}_{%
\mathrm{Br}}\circ T_{\mathrm{Br}}\overline{\eta }_{\mathrm{Br}} \\
&&\overset{(\ref{form:BarEta})}{=}\epsilon _{\mathrm{Br}}T_{\mathrm{Br}%
}\circ T_{\mathrm{Br}}\eta _{\mathrm{Br}}=\mathrm{Id}_{T_{\mathrm{Br}}}=\mho
_{\mathrm{Br}}\left[ \mathrm{Id}_{\overline{T}_{\mathrm{Br}}}\right] .
\end{eqnarray*}%
Since $\mho _{\mathrm{Br}}$ is faithful, we get $\overline{\epsilon }_{%
\mathrm{Br}}\overline{T}_{\mathrm{Br}}\circ \overline{T}_{\mathrm{Br}}%
\overline{\eta }_{\mathrm{Br}}=\mathrm{Id}_{\overline{T}_{\mathrm{Br}}}.$
\end{proof}

\begin{proposition}
\label{pro:PrimFunct}Let $\mathcal{M}$ and $\mathcal{M}^{\prime }$ be
preadditive monoidal categories with equalizers. Assume that the tensor
functors are additive and preserve equalizers in both categories. Let $%
\left( F,\phi _{0},\phi _{2}\right) :\mathcal{M}\rightarrow \mathcal{M}%
^{\prime }$ be a monoidal functor which preserves equalizers. Then the
following diagram commutes, where $\mathrm{BrBialg}F$ and $\mathrm{Br}F$ are
the functors of Proposition \ref{pro:BrBialg}.
\begin{equation}
\xymatrixrowsep{15pt} \xymatrixcolsep{35pt}\xymatrix{\mathrm{BrBialg}_{\mathcal{M}}
\ar[r]^{\mathrm{BrBialg}F}\ar[d]_{P_{\mathrm{Br}}}&\mathrm{BrBialg}_{\mathcal{M}^{\prime }}\ar[d]^{{P^\prime_{\mathrm{Br}}}}\\
\mathrm{Br}_{\mathcal{M}} \ar[r]^{\mathrm{Br}F}&\mathrm{Br}_{\mathcal{M}^{\prime }}}  \label{diag:BrF-PBr}
\end{equation}%
Moreover we have
\begin{equation}
\xi ^{\prime }\left( \mathrm{BrBialg}F\right) =\left( \mathrm{Br}F\right)
\xi .  \label{form:comdat3}
\end{equation}
\end{proposition}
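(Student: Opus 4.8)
The plan is to trace the effect of $F$ on the defining equalizer of Definition \ref{def:prim} and then to invoke the uniqueness clauses of Lemma \ref{lem:primitive}. Fix $\mathbb{A}=(A,m_{A},u_{A},\Delta _{A},\varepsilon _{A},c_{A})\in \mathrm{BrBialg}_{\mathcal{M}}$ and write $F\mathbb{A}:=(\mathrm{BrBialg}F)(\mathbb{A})=(FA,m_{FA},u_{FA},\Delta _{FA},\varepsilon _{FA},c_{FA})$ as in Proposition \ref{pro:BrBialg}. Recall from Definition \ref{def:prim} that $\xi \mathbb{A}:P(\mathbb{A})\rightarrow A$ is the chosen equalizer of $\Delta _{A}$ and $\delta _{A}:=(A\otimes u_{A})r_{A}^{-1}+(u_{A}\otimes A)l_{A}^{-1}$, while $\xi (F\mathbb{A}):P(F\mathbb{A})\rightarrow FA$ is the chosen equalizer of $\Delta _{FA}$ and $\delta _{FA}:=(FA\otimes u_{FA})r_{FA}^{-1}+(u_{FA}\otimes FA)l_{FA}^{-1}$. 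The first thing I would check is the pair of identities
\[
\phi _{2}^{-1}(A,A)\circ F\Delta _{A}=\Delta _{FA},\qquad \phi _{2}^{-1}(A,A)\circ F\delta _{A}=\delta _{FA}.
\]
The first is the very definition of $\Delta _{FA}$ in Proposition \ref{pro:BrBialg}. The second is obtained summand by summand from the naturality of $\phi _{2}$ and the unit-constraint compatibilities of $(F,\phi _{0},\phi _{2})$ — the same manipulations already carried out in the proof of Proposition \ref{pro:BrBialg} to verify $\Delta u=(u\otimes u)\Delta _{\mathbf{1}}$ — together with the additivity of $F$, which is what permits splitting $F\delta _{A}$ into its two summands.

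Granting this, the heart of the argument is short. Since $F$ preserves equalizers, $F\xi \mathbb{A}:FP(\mathbb{A})\rightarrow FA$ is an equalizer of $F\Delta _{A}$ and $F\delta _{A}$; postcomposing both of these with the isomorphism $\phi _{2}^{-1}(A,A)$ leaves the equalizer unchanged, so $F\xi \mathbb{A}$ is an equalizer of $\Delta _{FA}$ and $\delta _{FA}$, i.e.\ of the very pair defining $P(F\mathbb{A})$. We may therefore take $P(F\mathbb{A})=FP(\mathbb{A})$ and $\xi (F\mathbb{A})=F\xi \mathbb{A}$ as the chosen equalizer; in particular $\xi (F\mathbb{A})=(\mathrm{Br}F)(\xi \mathbb{A})$ as underlying morphisms, which is already \eqref{form:comdat3}. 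It then remains to match the braidings. By Lemma \ref{lem:primitive}, $c_{P(F\mathbb{A})}$ is the unique morphism with $(\xi (F\mathbb{A})\otimes \xi (F\mathbb{A}))\circ c_{P(F\mathbb{A})}=c_{FA}\circ (\xi (F\mathbb{A})\otimes \xi (F\mathbb{A}))$, and I would check that the braiding $c_{FP(\mathbb{A})}:=\phi _{2}^{-1}(P(\mathbb{A}),P(\mathbb{A}))\circ Fc_{P(\mathbb{A})}\circ \phi _{2}(P(\mathbb{A}),P(\mathbb{A}))$ of $(\mathrm{Br}F)P_{\mathrm{Br}}\mathbb{A}$ enjoys the same property, using the naturality of $\phi _{2}$ applied to $\xi \mathbb{A}$, the defining relation \eqref{form:xibraided} for $c_{P(\mathbb{A})}$, and the definition $c_{FA}=\phi _{2}^{-1}(A,A)\circ Fc_{A}\circ \phi _{2}(A,A)$. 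Uniqueness then forces $c_{P(F\mathbb{A})}=c_{FP(\mathbb{A})}$, so $P_{\mathrm{Br}}^{\prime }(\mathrm{BrBialg}F)(\mathbb{A})=(\mathrm{Br}F)P_{\mathrm{Br}}(\mathbb{A})$.

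For morphisms, given $f:\mathbb{A}\rightarrow \mathbb{A}^{\prime }$ in $\mathrm{BrBialg}_{\mathcal{M}}$ I would apply $F$ to \eqref{form:xiP}, namely $\xi \mathbb{A}^{\prime }\circ P(f)=f\circ \xi \mathbb{A}$, obtaining $\xi (F\mathbb{A}^{\prime })\circ FP(f)=(Ff)\circ \xi (F\mathbb{A})$; since $\xi (F\mathbb{A}^{\prime })$ is a monomorphism and $P(Ff)$ is characterised by exactly this equation, we get $FP(f)=P(Ff)$, so $(\mathrm{Br}F)\circ P_{\mathrm{Br}}$ and $P_{\mathrm{Br}}^{\prime }\circ (\mathrm{BrBialg}F)$ agree on morphisms as well. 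This establishes that \eqref{diag:BrF-PBr} commutes, and then \eqref{form:comdat3} follows at once: its two sides are natural transformations between the same pair of functors — the sources coincide by \eqref{diag:BrF-PBr}, the targets because $\Omega _{\mathrm{Br}}^{\prime }(\mathrm{BrAlg}F)=(\mathrm{Br}F)\Omega _{\mathrm{Br}}$ and $\mho _{\mathrm{Br}}^{\prime }(\mathrm{BrBialg}F)=(\mathrm{BrAlg}F)\mho _{\mathrm{Br}}$ by Proposition \ref{pro:BrBialg} — and componentwise they agree by the identity $\xi (F\mathbb{A})=F\xi \mathbb{A}$ obtained above.

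The main obstacle is the first paragraph, i.e.\ checking $\phi _{2}^{-1}(A,A)\circ F\delta _{A}=\delta _{FA}$: this is the only place where the monoidal-functor axioms and the additivity of $F$ genuinely enter, and one must also be mindful that ``the diagram commutes'' is to be read with the compatible choice $P(F\mathbb{A})=FP(\mathbb{A})$ of equalizers that $F$ being equalizer-preserving makes available. Everything else is a formal consequence of the universal property of equalizers and of the uniqueness statements in Lemma \ref{lem:primitive}.
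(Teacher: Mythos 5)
Your proposal is correct and follows essentially the same route as the paper: identify $F$ applied to the defining equalizer with the equalizer defining $P'(F\mathbb{A})$ (via $\phi_2$, the unit-constraint axioms and additivity of $F$), match the induced braidings by the uniqueness clause of \eqref{form:xibraided} using that $\xi\otimes\xi$ is monic, and handle morphisms by applying $F$ to \eqref{form:xiP}. Your explicit remark about the compatible choice of equalizers $P(F\mathbb{A})=FP(\mathbb{A})$ is exactly the implicit convention the paper uses in writing $\mathrm{Equ}_{\mathcal{M}'}(\ldots)=F(\mathrm{Equ}_{\mathcal{M}}(\ldots))$.
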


\begin{proof}
By Lemma \ref{lem:primitive}, for any $\mathbb{A}:=(A,m_{A},u_{A},\Delta
_{A},\varepsilon _{A},c_{A})\in \mathrm{BrBialg}_{\mathcal{M}}$ we have that
$P_{\mathrm{Br}}\mathbb{A}=\left( P\left( \mathbb{A}\right) ,c_{P\left(
\mathbb{A}\right) }\right) $ where $P\left( \mathbb{A}\right) $ is the
equalizer%
\begin{equation*}
\xymatrixcolsep{1.5cm}
\xymatrix{
   P\left( \mathbb{A}\right) \ar[r]^{\xi \mathbb{A}} & A \ar@<.5ex>[rr]^{\Delta _{A}} \ar@<-.5ex>[rr]_{\left( A\otimes u_{A}\right) r_{A}^{-1}+\left( u_{A}\otimes A\right)
l_{A}^{-1}}&&A\otimes A }
\end{equation*}
and $c_{P\left( \mathbb{A}\right) }$ is defined by (\ref{form:xibraided}).
We have%
\begin{eqnarray*}
&&\left( P_{\mathrm{Br}}^{\prime }\circ \mathrm{BrBialg}F\right) \left(
\mathbb{A}\right) =P_{\mathrm{Br}}^{\prime }\left( \left( \mathrm{BrBialg}%
F\right) \left( \mathbb{A}\right) \right) \\
&=&\left( P^{\prime }\left( \left( \mathrm{BrBialg}F\right) \left( \mathbb{A}%
\right) \right) ,c_{P^{\prime }\left( \left( \mathrm{BrBialg}F\right) \left(
\mathbb{A}\right) \right) }\right)
\end{eqnarray*}%
where%
\begin{eqnarray*}
&&\left( P^{\prime }\left( \left( \mathrm{BrBialg}F\right) \left( \mathbb{A}%
\right) \right) ,\xi ^{\prime }\left( \mathrm{BrBialg}F\right) \left(
\mathbb{A}\right) \right) \\
&=&\left( P^{\prime }\left( \left( FA,m_{FA},u_{FA},\Delta _{FA},\varepsilon
_{FA},c_{FA}\right) \right) ,\xi ^{\prime }\left( FA,m_{FA},u_{FA},\Delta
_{FA},\varepsilon _{FA},c_{FA}\right) \right) \\
&=&\mathrm{Equ}_{\mathcal{M}^{\prime }}\left( \Delta _{FA},\left( FA\otimes
u_{FA}\right) r_{FA}^{-1}+\left( u_{FA}\otimes FA\right) l_{FA}^{-1}\right)
\\
&=&\mathrm{Equ}_{\mathcal{M}^{\prime }}\left( \phi _{2}\left( A,A\right)
\Delta _{FA},\phi _{2}\left( A,A\right) \left( FA\otimes u_{FA}\right)
r_{FA}^{-1}+\phi _{2}\left( A,A\right) \left( u_{FA}\otimes FA\right)
l_{FA}^{-1}\right) \\
&=&\mathrm{Equ}_{\mathcal{M}^{\prime }}\left( F\Delta _{A},\phi _{2}\left(
A,A\right) \left( FA\otimes Fu_{A}\right) \left( FA\otimes \phi _{0}\right)
r_{FA}^{-1}+\phi _{2}\left( A,A\right) \left( Fu_{A}\otimes FA\right) \left(
\phi _{0}\otimes FA\right) l_{FA}^{-1}\right) \\
&=&\mathrm{Equ}_{\mathcal{M}^{\prime }}\left( F\Delta _{A},F\left( A\otimes
u_{A}\right) \phi _{2}\left( A,\mathbf{1}\right) \left( FA\otimes \phi
_{0}\right) r_{FA}^{-1}+F\left( u_{A}\otimes A\right) \phi _{2}\left(
\mathbf{1},A\right) \left( \phi _{0}\otimes FA\right) l_{FA}^{-1}\right) \\
&=&\mathrm{Equ}_{\mathcal{M}^{\prime }}\left( F\Delta _{A},F\left( A\otimes
u_{A}\right) F\left( r_{A}^{-1}\right) +F\left( u_{A}\otimes A\right)
F\left( l_{A}^{-1}\right) \right) \\
&=&F\left( \mathrm{Equ}_{\mathcal{M}}\left( \Delta _{A},\left( A\otimes
u_{A}\right) r_{A}^{-1}+\left( u_{A}\otimes A\right) l_{A}^{-1}\right)
\right) \\
&=&\left( \left( F\circ P\right) \left( A,m_{A},u_{A},\Delta
_{A},\varepsilon _{A},c_{A}\right) ,F\xi \left( A,m_{A},u_{A},\Delta
_{A},\varepsilon _{A},c_{A}\right) \right) \\
&=&\left( FP\left( \mathbb{A}\right) ,F\xi \mathbb{A}\right)
\end{eqnarray*}%
and $c_{P\left( \left( \mathrm{BrBialg}F\right) \left( \mathbb{A}\right)
\right) }$ fulfills%
\begin{eqnarray*}
&&\left( \xi ^{\prime }\left( \mathrm{BrBialg}F\right) \left( \mathbb{A}%
\right) \otimes \xi ^{\prime }\left( \mathrm{BrBialg}F\right) \left( \mathbb{%
A}\right) \right) \circ c_{P^{\prime }\left( \left( \mathrm{BrBialg}F\right)
\left( \mathbb{A}\right) \right) } \\
&\overset{(\ref{form:xibraided})}{=}&c_{FA}\circ \left( \xi ^{\prime }\left(
\mathrm{BrBialg}F\right) \left( \mathbb{A}\right) \otimes \xi ^{\prime
}\left( \mathrm{BrBialg}F\right) \left( \mathbb{A}\right) \right) \\
&=&c_{FA}\circ \left( F\xi \mathbb{A}\otimes F\xi \mathbb{A}\right) =\phi
_{2}^{-1}\left( A,A\right) \circ Fc_{A}\circ \phi _{2}\left( A,A\right)
\circ \left( F\xi \mathbb{A}\otimes F\xi \mathbb{A}\right) \\
&=&\phi _{2}^{-1}\left( A,A\right) \circ Fc_{A}\circ F\left( \xi \mathbb{A}%
\otimes \xi \mathbb{A}\right) \circ \phi _{2}\left( P\left( \mathbb{A}%
\right) ,P\left( \mathbb{A}\right) \right) \\
&\overset{(\ref{form:xibraided})}{=}&\phi _{2}^{-1}\left( A,A\right) \circ
F\left( \xi \mathbb{A}\otimes \xi \mathbb{A}\right) \circ Fc_{P\left(
\mathbb{A}\right) }\circ \phi _{2}\left( P\left( \mathbb{A}\right) ,P\left(
\mathbb{A}\right) \right) \\
&=&\left( F\xi \mathbb{A}\otimes F\xi \mathbb{A}\right) \circ \phi
_{2}^{-1}\left( P\left( \mathbb{A}\right) ,P\left( \mathbb{A}\right) \right)
\circ Fc_{P\left( \mathbb{A}\right) }\circ \phi _{2}\left( P\left( \mathbb{A}%
\right) ,P\left( \mathbb{A}\right) \right) \\
&=&\left( \xi ^{\prime }\left( \mathrm{BrBialg}F\right) \left( \mathbb{A}%
\right) \otimes \xi ^{\prime }\left( \mathrm{BrBialg}F\right) \left( \mathbb{%
A}\right) \right) \circ \phi _{2}^{-1}\left( P\left( \mathbb{A}\right)
,P\left( \mathbb{A}\right) \right) \circ Fc_{P\left( \mathbb{A}\right)
}\circ \phi _{2}\left( P\left( \mathbb{A}\right) ,P\left( \mathbb{A}\right)
\right)
\end{eqnarray*}%
so that%
\begin{equation*}
c_{P^{\prime }\left( \left( \mathrm{BrBialg}F\right) \left( \mathbb{A}%
\right) \right) }=\phi _{2}^{-1}\left( P\left( \mathbb{A}\right) ,P\left(
\mathbb{A}\right) \right) \circ Fc_{P\left( \mathbb{A}\right) }\circ \phi
_{2}\left( P\left( \mathbb{A}\right) ,P\left( \mathbb{A}\right) \right) .
\end{equation*}%
Summing up we get%
\begin{eqnarray*}
&&\left( P_{\mathrm{Br}}^{\prime }\circ \mathrm{BrBialg}F\right) \left(
\mathbb{A}\right) =\left( P^{\prime }\left( \left( \mathrm{BrBialg}F\right)
\left( \mathbb{A}\right) \right) ,c_{P^{\prime }\left( \left( \mathrm{BrBialg%
}F\right) \left( \mathbb{A}\right) \right) }\right) \\
&=&\left( FP\left( \mathbb{A}\right) ,\phi _{2}^{-1}\left( P\left( \mathbb{A}%
\right) ,P\left( \mathbb{A}\right) \right) \circ Fc_{P\left( \mathbb{A}%
\right) }\circ \phi _{2}\left( P\left( \mathbb{A}\right) ,P\left( \mathbb{A}%
\right) \right) \right) \\
&=&\left( \mathrm{Br}F\right) \left( P\left( \mathbb{A}\right) ,c_{P\left(
\mathbb{A}\right) }\right) =\left( \mathrm{Br}F\circ P_{\mathrm{Br}}\right)
\left( \mathbb{A}\right) .
\end{eqnarray*}

We have so proved that $\left( P_{\mathrm{Br}}^{\prime }\circ \mathrm{BrBialg%
}F\right) \left( \mathbb{A}\right) =\left( \mathrm{Br}F\circ P_{\mathrm{Br}%
}\right) \left( \mathbb{A}\right) .$ Let us check that the two functors
coincide also on morphisms. Let $f:\mathbb{A}\rightarrow \mathbb{A}^{\prime
} $ be a morphism in $\mathrm{BrBialg}_{\mathcal{M}}.$ In view of Lemma \ref%
{lem:primitive}, $P\left( f\right) $ makes the diagram
\begin{equation*}
 \xymatrixcolsep{1.5cm}
\xymatrix{
   P\left( \mathbb{A}\right)\ar[d]^{P(f)} \ar[r]^{\xi \mathbb{A}} & A \ar[d]^f \ar@<.5ex>[rr]^{\Delta _{A}} \ar@<-.5ex>[rr]_{\left( A\otimes u_{A}\right) r_{A}^{-1}+\left( u_{A}\otimes A\right)
l_{A}^{-1}}&&A\otimes A\ar[d]^{f\otimes f}  \\
P\left( \mathbb{A}^{\prime }\right) \ar[r]^{\xi \mathbb{A}^{\prime }} & A^{\prime } \ar@<.5ex>[rr]^{\Delta _{A^{\prime }}} \ar@<-.5ex>[rr]_{\left( A^{\prime }\otimes u_{A^{\prime }}\right) r_{A^{\prime }}^{-1}+\left( u_{A^{\prime }}\otimes A^{\prime }\right)
l_{A^{\prime }}^{-1}}&&A^{\prime }\otimes A^{\prime }  }
\end{equation*}
commutative. If we apply $F$ we get the commutative diagram
\begin{equation*}
 \xymatrixcolsep{1.5cm}
\xymatrix{
   FP\left( \mathbb{A}\right)\ar[d]^{FP(f)} \ar[r]^{F\xi \mathbb{A}} & FA \ar[d]^{Ff} \ar@<.5ex>[rr]^{F\Delta _{A}} \ar@<-.5ex>[rr]_{F\left( A\otimes u_{A}\right) Fr_{A}^{-1}+F\left( u_{A}\otimes A\right)
Fl_{A}^{-1}}&&F(A\otimes A)\ar[d]^{F(f\otimes f)}  \\
FP\left( \mathbb{A}^{\prime }\right) \ar[r]^{F\xi \mathbb{A}^{\prime }} & FA^{\prime } \ar@<.5ex>[rr]^{F\Delta _{A^{\prime }}} \ar@<-.5ex>[rr]_{F\left( A^{\prime }\otimes u_{A^{\prime }}\right) Fr_{A^{\prime }}^{-1}+F\left( u_{A^{\prime }}\otimes A^{\prime }\right)
Fl_{A^{\prime }}^{-1}}&&F(A^{\prime }\otimes A^{\prime })  }
\end{equation*}
Composing as above with $\phi _{2}\left( A^{\prime },A^{\prime }\right) $ we
get the commutative diagram whose rows are equalizers
\begin{equation*}
 \xymatrixcolsep{1.5cm}
\xymatrix{
   FP\left( \mathbb{A}\right)\ar[d]^{FP(f)} \ar[r]^{F\xi \mathbb{A}} & FA \ar[d]^{Ff} \ar@<.5ex>[rr]^{\Delta _{F A}} \ar@<-.5ex>[rr]_{\left( F A\otimes u_{FA}\right) r_{FA}^{-1}+\left( u_{F A}\otimes FA\right)
l_{FA}^{-1}}&&F A\otimes FA\ar[d]^{F f\otimes Ff}  \\
FP\left( \mathbb{A}^{\prime }\right) \ar[r]^{F\xi \mathbb{A}^{\prime }} & FA^{\prime } \ar@<.5ex>[rr]^{\Delta _{F A^{\prime }}} \ar@<-.5ex>[rr]_{\left( F A^{\prime }\otimes u_{FA^{\prime }}\right)r_{FA^{\prime }}^{-1}+\left( u_{F A^{\prime }}\otimes FA^{\prime }\right)
l_{FA^{\prime }}^{-1}}&&F A^{\prime }\otimes FA^{\prime }  }
\end{equation*}
Hence
\begin{equation*}
\left( P_{\mathrm{Br}}^{\prime }\circ \mathrm{BrBialg}F\right) \left(
f\right) =P^{\prime }\left( F\left( f\right) \right) =F\left( P\left(
f\right) \right) =\left( F\circ P\right) \left( f\right) .
\end{equation*}%
In conclusion, the diagram in the statement commutes. Moreover (\ref%
{form:comdat3}) holds.
\end{proof}

\section{Braided Categories}

\begin{claim}
\label{def braiding} A \emph{braided monoidal category} $(\mathcal{M}%
,\otimes ,\mathbf{1},a,l,r,c)$ is a monoidal category $(\mathcal{M},\otimes ,%
\mathbf{1})$ equipped with a \emph{braiding} $c$, that is an isomorphism $%
c_{U,V}:U\otimes V\rightarrow V\otimes U$, natural in $U,V\in \mathcal{M}$,
satisfying, for all $U,V,W\in \mathcal{M}$,
\begin{eqnarray*}
a_{V,W,U}\circ c_{U,V\otimes W}\circ a_{U,V,W} &=&(V\otimes c_{U,W})\circ
a_{V,U,W}\circ (c_{U,V}\otimes W), \\
a_{W,U,V}^{-1}\circ c_{U\otimes V,W}\circ a_{U,V,W}^{-1} &=&(c_{U,W}\otimes
V)\circ a_{U,W,V}^{-1}\circ (U\otimes c_{V,W}).
\end{eqnarray*}%
From now on we will omit the associativity and unity constraints unless
needed to clarify the context.

A braided monoidal category is called \emph{symmetric} if we further have $%
c_{V,U}\circ c_{U,V}=\mathrm{Id}_{U\otimes V}$ for every $U,V\in \mathcal{M}$%
.

A \emph{(symmetric) braided monoidal functor} is a monoidal functor $F:%
\mathcal{M}\rightarrow \mathcal{M}^{\prime }$ such that
\begin{equation*}
F\left( c_{U,V}\right) \circ \phi _{2}(U,V)=\phi _{2}(V,U)\circ c_{F\left(
U\right) ,F\left( V\right) }^{\prime }.
\end{equation*}%
More details on these topics can be found in \cite[Chapter XIII]{Kassel}%
.\medskip
\end{claim}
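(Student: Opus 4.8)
The final block is a \emph{definition}: it records the standard notions of braided monoidal category, of symmetric braided monoidal category, and of (symmetric) braided monoidal functor. As such it carries no proof obligation, and there is strictly nothing to establish. What one may legitimately verify is that the notions so introduced are internally consistent and behave as the later sections silently assume, and below I outline the checks I would carry out were a justification expected.

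First I would confirm that the two hexagon axioms are complementary rather than redundant: rewriting the first identity in terms of $c^{-1}$ and comparing with the second shows that each hexagon is the mirror image of the other under the substitution $c\mapsto c^{-1}$, so that the invertibility built into the braiding $c_{U,V}$ is exactly what renders the pair symmetric in form. I would then check compatibility with the unit constraints, namely that $c_{U,\mathbf{1}}=l_{U}^{-1}\circ r_{U}$ (and dually for $c_{\mathbf{1},U}$) follows from the hexagons together with the triangle axiom. This is the coherence fact that legitimizes the announced convention of omitting the associativity and unit constraints from now on, and it is the only point where a small verification is genuinely useful.

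For the functorial notion I would verify that (symmetric) braided monoidal functors are closed under composition and contain the identity monoidal functor. Given monoidal functors $F$ and $F^{\prime}$ satisfying the displayed compatibility $F(c_{U,V})\circ\phi_{2}(U,V)=\phi_{2}(V,U)\circ c^{\prime}_{F(U),F(V)}$, a direct diagram chase using the composite monoidal structure $\phi_{2}^{F^{\prime}F}$ recorded in Proposition \ref{pro:BrBialg} shows that the same identity holds for $F^{\prime}\circ F$. This is the sole genuine computation, and it is routine; the substantive content is merely that the definition has been chosen so that the braiding transports correctly along monoidal functors, which is precisely the mechanism exploited in the construction of $c_{FV}$ in Proposition \ref{pro:BrBialg}. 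Accordingly, there is no real obstacle here: the only thing to guard against is a type error in the direction of the arrows, and once those are matched the remaining equalities are immediate from naturality of $\phi_{2}$ and $c$.
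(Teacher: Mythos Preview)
Your assessment is correct: the block is purely definitional and the paper provides no proof for it, so there is nothing to compare. Your remarks about consistency checks (hexagon duality under $c\mapsto c^{-1}$, unit compatibility, closure of braided monoidal functors under composition) are reasonable side observations but go beyond what the paper itself does here, which is simply to state the definitions and refer to \cite[Chapter XIII]{Kassel}.
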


\begin{remark}
Given a braided monoidal category $(\mathcal{M},{\otimes },\mathbf{1},c)$
the category $\mathrm{Alg}_{\mathcal{M}}$ becomes monoidal where, for every $%
A,B\in \mathrm{Alg}_{\mathcal{M}}$ the multiplication and unit of $A\otimes
B $ are given by%
\begin{eqnarray*}
m_{A\otimes B} &:&=\left( m_{A}\otimes m_{B}\right) \circ \left( A\otimes
c_{B,A}\otimes B\right) :\left( A\otimes B\right) \otimes \left( A\otimes
B\right) \rightarrow A\otimes B, \\
u_{A\otimes B} &:&=\left( u_{A}\otimes u_{B}\right) \circ l_{\mathbf{1}%
}^{-1}:\mathbf{1}\rightarrow A\otimes B.
\end{eqnarray*}%
Moreover the forgetful functor $\mathrm{Alg}_{\mathcal{M}}\rightarrow
\mathcal{M}$ is a strict monoidal functor, cf. \cite[page 60]{Joyal-Street}.
\end{remark}

\begin{definition}
\label{cl: brdBialg} A \emph{bialgebra} in a braided monoidal category $(%
\mathcal{M},{\otimes },\mathbf{1},c)$ is a coalgebra $(B,\Delta ,\varepsilon
)$ in the monoidal category $\mathrm{Alg}_{\mathcal{M}}$. Equivalently a
bialgebra is a quintuple $\left( A,m,u,\Delta ,\varepsilon \right) $ where $%
\left( A,m,u\right) $ is an algebra in $\mathcal{M}$, $\left( A,\Delta
,\varepsilon \right) $ is a coalgebra in $\mathcal{M}$ such that $\Delta $
and $\varepsilon $ are morphisms of algebras where $A\otimes A$ is an
algebra as in the previous remark. Denote by $\mathrm{Bialg}_{\mathcal{M}}$
the category of bialgebras in $\mathcal{M}$ and their morphisms, defined in
the expected way.
\end{definition}

\begin{proposition}
\label{coro:BrBialg}Let $\mathcal{M}$ be a braided monoidal category.
Consider the obvious functors
\begin{equation*}
J:\mathcal{M}\rightarrow \mathrm{Br}_{\mathcal{M}},\quad J_{\mathrm{Alg}}:%
\mathrm{Alg}_{\mathcal{M}}\rightarrow \mathrm{BrAlg}_{\mathcal{M}}\quad
\text{and}\quad J_{\mathrm{Bialg}}:\mathrm{Bialg}_{\mathcal{M}}\rightarrow
\mathrm{BrBialg}_{\mathcal{M}}
\end{equation*}
which act as the identity on morphisms and defined on objects by
\begin{eqnarray*}
JV &=&\left( V,c_{V,V}\right) , \\
J_{\mathrm{Alg}}\left( A,m_{A},u_{A}\right) &=&\left(
A,m_{A},u_{A},c_{A,A}\right) , \\
J_{\mathrm{Bialg}}\left( B,m_{B},u_{B},\Delta _{B},\varepsilon _{B}\right)
&=&\left( B,m_{B},u_{B},\Delta _{B},\varepsilon _{B},c_{B,B}\right) .
\end{eqnarray*}

Then $J,$ $J_{\mathrm{Alg}}$ and $J_{\mathrm{Bialg}}$ are full, faithful and
conservative. Moreover the following diagram commutes.%
\begin{equation}
\xymatrixrowsep{15pt} \xymatrixcolsep{35pt}
\xymatrix{\mathrm{Bialg}_{\mathcal{M}}\ar[r]^{J_\mathrm{Bialg}}\ar[d]_{\mho}
&\mathrm{BrBialg}_{\mathcal{M}}\ar[d]^{\mho_{\mathrm{Br}}}\\
\mathrm{Alg}_{\mathcal{M}} \ar[r]^{J_\mathrm{Alg}}&\mathrm{BrAlg}_{\mathcal{M}}}
\qquad
\xymatrixrowsep{15pt} \xymatrixcolsep{35pt}
\xymatrix{\mathrm{Alg}_{\mathcal{M}}\ar[r]^{J_\mathrm{Alg}}\ar[d]_{\Omega}
&\mathrm{BrAlg}_{\mathcal{M}}\ar[d]^{\Omega_{\mathrm{Br}}}\\
\mathcal{M} \ar[r]^{J}&\mathrm{Br}_{\mathcal{M}}}
\label{diag:JAlg-Bialg}
\end{equation}
\end{proposition}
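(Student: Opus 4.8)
The plan is to establish, for each of the three functors in turn, that it is well defined on objects and on morphisms, that it is full and faithful, and that it is conservative, and finally to check commutativity of the two squares. Everything will rest on two properties of the braiding $c$: its naturality, and the hexagon and triangle axioms, which express $c_{X\otimes Y,Z}$ and $c_{X,Y\otimes Z}$ in terms of the $c_{-,-}$ on the factors and identify $c_{\mathbf 1,-}$ and $c_{-,\mathbf 1}$ with unit constraints.

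First I would treat $J$. For $V\in\mathcal M$ the morphism $c_{V,V}$ is an isomorphism, and the hexagon axioms force it to satisfy the braid relation \eqref{ec: braided equation}; hence $JV=(V,c_{V,V})\in\mathrm{Br}_{\mathcal M}$, and by construction $H\circ J=\mathrm{Id}_{\mathcal M}$. Naturality of $c$ gives $c_{W,W}\circ(f\otimes f)=(f\otimes f)\circ c_{V,V}$ for \emph{every} morphism $f\colon V\to W$ in $\mathcal M$, so such an $f$ is automatically a morphism of braided objects and $J$ is a functor. For $(A,m_A,u_A)\in\mathrm{Alg}_{\mathcal M}$ I would obtain \eqref{Br2} and \eqref{Br3} by applying naturality of $c$ to $m_A$ in the first, respectively second, tensor slot and then rewriting $c_{A\otimes A,A}$, respectively $c_{A,A\otimes A}$, via the relevant hexagon; and \eqref{Br4} from naturality of $c$ applied to $u_A\colon\mathbf 1\to A$ together with the identification of $c_{\mathbf 1,A}$ and $c_{A,\mathbf 1}$ with unit constraints. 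Thus $J_{\mathrm{Alg}}(A,m_A,u_A)=(A,m_A,u_A,c_{A,A})\in\mathrm{BrAlg}_{\mathcal M}$; any algebra morphism is again a morphism of braided objects by naturality, so $J_{\mathrm{Alg}}$ is a functor, with $H_{\mathrm{Alg}}\circ J_{\mathrm{Alg}}=\mathrm{Id}$ and $\Omega_{\mathrm{Br}}\circ J_{\mathrm{Alg}}=J\circ\Omega$.

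For $J_{\mathrm{Bialg}}$ let $\mathbb B=(B,m_B,u_B,\Delta_B,\varepsilon_B)\in\mathrm{Bialg}_{\mathcal M}$. Then $(B,m_B,u_B,c_{B,B})$ is a braided algebra by the previous step; the dual computation (naturality of $c$ applied to $\Delta_B$ and to $\varepsilon_B$, plus the hexagons and the unit-constraint identities) shows $(B,\Delta_B,\varepsilon_B,c_{B,B})$ is a braided coalgebra, i.e. \eqref{Br5}, \eqref{Br6}, \eqref{Br7}; and the remaining relations \eqref{Br1}, \eqref{Br8}, \eqref{Br9}, \eqref{Br10} are, after unwinding, precisely the content of Definition \ref{cl: brdBialg}, namely that $\Delta_B$ and $\varepsilon_B$ are algebra maps for the braided-tensor-product algebra structure $m_{B\otimes B}=(m_B\otimes m_B)(B\otimes c_{B,B}\otimes B)$, $u_{B\otimes B}=(u_B\otimes u_B)l_{\mathbf 1}^{-1}$ on $B\otimes B$ (using $\Delta_{\mathbf 1}=l_{\mathbf 1}^{-1}=r_{\mathbf 1}^{-1}$). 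So $J_{\mathrm{Bialg}}\mathbb B\in\mathrm{BrBialg}_{\mathcal M}$, and a bialgebra morphism, being a morphism of algebras and of coalgebras, is a morphism of braided bialgebras by naturality of $c$; hence $J_{\mathrm{Bialg}}$ is a functor.

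It remains to see the three functors are full, faithful and conservative, and that the squares commute. A morphism $JV\to JW$ in $\mathrm{Br}_{\mathcal M}$ is by definition a morphism $f\colon V\to W$ in $\mathcal M$ satisfying $c_{W,W}(f\otimes f)=(f\otimes f)c_{V,V}$, a condition automatic by naturality; thus $\mathrm{Br}_{\mathcal M}(JV,JW)=\mathcal M(V,W)$ and $J$ is full and faithful. The same reasoning, using that a morphism of braided (bi)algebras is just a morphism of (bi)algebras compatible with the braidings, shows $J_{\mathrm{Alg}}$ and $J_{\mathrm{Bialg}}$ are full and faithful. A full and faithful functor reflects isomorphisms: if $Jf$ (resp. $J_{\mathrm{Alg}}f$, $J_{\mathrm{Bialg}}f$) is invertible, fullness yields a preimage $g$ of its inverse and faithfulness forces $gf=\mathrm{Id}$ and $fg=\mathrm{Id}$; hence all three are conservative. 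Finally the two squares in \eqref{diag:JAlg-Bialg} are the identity on morphisms, and on objects one computes $\mho_{\mathrm{Br}}J_{\mathrm{Bialg}}(B,m_B,u_B,\Delta_B,\varepsilon_B)=(B,m_B,u_B,c_{B,B})=J_{\mathrm{Alg}}\mho(B,m_B,u_B,\Delta_B,\varepsilon_B)$ and $\Omega_{\mathrm{Br}}J_{\mathrm{Alg}}(A,m_A,u_A)=(A,c_{A,A})=J\Omega(A,m_A,u_A)$, so both commute. The only real labour is the bookkeeping with the associativity and unit constraints in passing from the hexagon and triangle axioms to the normalized identities \eqref{Br2}--\eqref{Br7}; as in the proof of Lemma \ref{lem:Kassel} one may avoid it by first transporting the statement along the monoidal equivalence $\mathcal M^{\mathrm{str}}\to\mathcal M$ to a strict monoidal category, where those identities are evident.
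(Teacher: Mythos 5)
Your proposal is correct and follows essentially the same route as the paper: well-definedness of the three functors via naturality of the braiding (together with the hexagon and unit axioms, which the paper leaves implicit), fullness and faithfulness from the observation that the hom-sets coincide, conservativity and commutativity of the squares as immediate consequences. The only additions are details the paper declares "clear" — notably the standard argument that a full and faithful functor reflects isomorphisms — so there is nothing to flag.
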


\begin{proof}
It is clear that $\left( V,c_{V,V}\right) $ is an object in $\mathrm{Br}_{%
\mathcal{M}}.$ Moreover any morphism in $\mathcal{M}$ becomes a morphism in $%
\mathrm{Br}_{\mathcal{M}}$ with respect to the braiding of $\mathcal{M}$.
Thus the functor $J$ is well-defined. Let us check that $J$ is full and
faithful. For $V,V^{\prime }\in \mathcal{M},$%
\begin{equation*}
\mathrm{Br}_{\mathcal{M}}\left( JV,JV^{\prime }\right) =\mathrm{Br}_{%
\mathcal{M}}\left( \left( V,c_{V,V}\right) ,\left( V^{\prime },c_{V^{\prime
},V^{\prime }}\right) \right) =\mathcal{M}\left( V,V^{\prime }\right) .
\end{equation*}

Using the naturality of the braiding in $\mathcal{M}$, one proves that $%
\left( A,m_{A},u_{A},c_{A,A}\right) $ is a braided algebra in $\mathcal{M}$
for every algebra $\left( A,m_{A},u_{A}\right) $ in $\mathcal{M}.$ Moreover
any morphism of algebras becomes a morphism of braided algebras with respect
to the braiding of $\mathcal{M}$. Thus the functor $J_{\mathrm{Alg}}$ is
well-defined. Let us check that $J_{\mathrm{Alg}}$ is full and faithful. For
$\left( A,m_{A},u_{A}\right) $ and $\left( A^{\prime },m_{A^{\prime
}},u_{A^{\prime }}\right) $ objects in $\mathrm{Alg}_{\mathcal{M}}$,%
\begin{eqnarray*}
&&\mathrm{BrAlg}_{\mathcal{M}}\left( J_{\mathrm{Alg}}\left(
A,m_{A},u_{A}\right) ,J_{\mathrm{Alg}}\left( A^{\prime },m_{A^{\prime
}},u_{A^{\prime }}\right) \right) \\
&=&\mathrm{BrAlg}_{\mathcal{M}}\left( \left( A,m_{A},u_{A},c_{A,A}\right)
,\left( A^{\prime },m_{A^{\prime }},u_{A^{\prime }},c_{A^{\prime },A^{\prime
}}\right) \right) \\
&=&\mathrm{Alg}_{\mathcal{M}}\left( \left( A,m_{A},u_{A}\right) ,\left(
A^{\prime },m_{A^{\prime }},u_{A^{\prime }}\right) \right) .
\end{eqnarray*}

By Definition \ref{cl: brdBialg}, a bialgebra in $\mathcal{M}$ is a
quintuple $\left( B,m_{B},u_{B},\Delta _{B},\varepsilon _{B}\right) $ where $%
\left( B,m_{B},u_{B}\right) $ is an algebra and $\left( B,\Delta
_{B},\varepsilon _{B}\right) $ a coalgebra in $\mathcal{M}$ such that $%
\Delta _{B}$ and $\varepsilon _{B}$ are morphisms of algebras where $%
B\otimes B$ is an algebra via the braiding of $\mathcal{M}$. Using the
naturality of the braiding in $\mathcal{M}$, one proves that $\left(
B,m_{B},u_{B},\Delta _{B},\varepsilon _{B},c_{B,B}\right) $ is indeed a
braided bialgebra in $\mathcal{M}$. Furthermore any morphism of bialgebras $%
f $ is indeed a morphism of braided bialgebras. Thus the functor $J_{\mathrm{%
Bialg}}$ is well-defined. Let us check that $J_{\mathrm{Bialg}}$ is full and
faithful. For $\left( B,m_{B},u_{B},\Delta _{B},\varepsilon _{B}\right) $
and $\left( B^{\prime },m_{B^{\prime }},u_{B^{\prime }},\Delta _{B^{\prime
}},\varepsilon _{B^{\prime }}\right) $ objects in $\mathrm{Bialg}_{\mathcal{M%
}}$,%
\begin{eqnarray*}
&&\mathrm{BrBialg}_{\mathcal{M}}\left( J_{\mathrm{Bialg}}\left(
B,m_{B},u_{B},\Delta _{B},\varepsilon _{B}\right) ,J_{\mathrm{Bialg}}\left(
B^{\prime },m_{B^{\prime }},u_{B^{\prime }},\Delta _{B^{\prime
}},\varepsilon _{B^{\prime }}\right) \right) \\
&=&\mathrm{BrBialg}_{\mathcal{M}}\left( \left( B,m_{B},u_{B},\Delta
_{B},\varepsilon _{B},c_{B,B}\right) ,\left( B^{\prime },m_{B^{\prime
}},u_{B^{\prime }},\Delta _{B^{\prime }},\varepsilon _{B^{\prime
}},c_{B^{\prime },B^{\prime }}\right) \right) \\
&=&\mathrm{Bialg}_{\mathcal{M}}\left( \left( B,m_{B},u_{B},\Delta
_{B},\varepsilon _{B}\right) ,\left( B^{\prime },m_{B^{\prime
}},u_{B^{\prime }},\Delta _{B^{\prime }},\varepsilon _{B^{\prime }}\right)
\right) .
\end{eqnarray*}%
The commutativity of the diagrams and the fact that $J,$ $J_{\mathrm{Alg}}$
and $J_{\mathrm{Bialg}}$ are conservative are clear.
\end{proof}

\begin{proposition}
\label{pro:JAlg-J}Take the hypotheses and notations of Proposition \ref%
{pro:TbrStrict} and assume that $\mathcal{M}$ is braided. Then we have a
commutative diagram%
\begin{equation}
\xymatrixrowsep{15pt} \xymatrixcolsep{35pt}
\xymatrix{\mathrm{Alg}_{\mathcal{M}}\ar[r]^{J_\mathrm{Alg}}\ar[d]_{T}
&\mathrm{BrAlg}_{\mathcal{M}}\ar[d]^{T_{\mathrm{Br}}}\\
\mathcal{M} \ar[r]^{J}&\mathrm{Br}_{\mathcal{M}}}
\label{form:JT}
\end{equation}%
where $J$ and $J_{\mathrm{Alg}}$ are as in Proposition \ref{coro:BrBialg}.
\end{proposition}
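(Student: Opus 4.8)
The plan is to prove the equality of functors $J_{\mathrm{Alg}}\circ T=T_{\mathrm{Br}}\circ J\colon\mathcal{M}\to\mathrm{BrAlg}_{\mathcal{M}}$ asserted by the commutativity of diagram \eqref{form:JT}. On morphisms there is nothing to do: if $f\colon V\to W$ is a morphism in $\mathcal{M}$, then $J_{\mathrm{Alg}}(Tf)=Tf=T_{\mathrm{Br}}(f)=T_{\mathrm{Br}}(Jf)$, since $J$ and $J_{\mathrm{Alg}}$ act as the identity on morphisms by Proposition \ref{coro:BrBialg} and $T_{\mathrm{Br}}(f):=T(f)$ by Proposition \ref{pro:TbrStrict}. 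On objects, fix $V\in\mathcal{M}$ and set $(V,c):=JV$, so that $c=c_{V,V}$ is the braiding of $\mathcal{M}$. By Proposition \ref{pro:TbrStrict} we have $T_{\mathrm{Br}}(JV)=(TV,c_T)$, where $c_T=c_{\Omega TV}$ is the braiding of $\Omega TV$ determined by \eqref{form:defcT} and the $c_T^{m,n}$ are the morphisms of Proposition \ref{pro:CT} attached to $(V,c_{V,V})$; by Proposition \ref{coro:BrBialg}, $J_{\mathrm{Alg}}(TV)=(TV,m_{TV},u_{TV},c_{\Omega TV,\Omega TV})$, the last entry being the braiding of $\mathcal{M}$ evaluated at $(\Omega TV,\Omega TV)$. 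The two braided algebras have the same underlying algebra $TV$, so it remains to prove $c_T=c_{\Omega TV,\Omega TV}$.

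Since the tensor functors preserve the coproduct $\Omega TV=\oplus_{n\in\mathbb{N}}V^{\otimes n}$, the family $\{\alpha_mV\otimes\alpha_nV\}_{m,n\in\mathbb{N}}$ of canonical morphisms is jointly epimorphic out of $\Omega TV\otimes\Omega TV$; hence it suffices to check that $c_{\Omega TV,\Omega TV}$ satisfies the defining relation \eqref{form:defcT} of $c_T$. By naturality of the braiding of $\mathcal{M}$ applied to $\alpha_mV\colon V^{\otimes m}\to\Omega TV$ and $\alpha_nV\colon V^{\otimes n}\to\Omega TV$, one has $c_{\Omega TV,\Omega TV}\circ(\alpha_mV\otimes\alpha_nV)=(\alpha_nV\otimes\alpha_mV)\circ c_{V^{\otimes m},V^{\otimes n}}$. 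Comparing with \eqref{form:defcT}, the proof reduces to showing $c_{V^{\otimes m},V^{\otimes n}}=c_T^{m,n}$ for all $m,n\in\mathbb{N}$.

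For this I would invoke the uniqueness part of Proposition \ref{pro:CT}: it is enough to verify that the family $\bigl(c_{V^{\otimes m},V^{\otimes n}}\bigr)_{m,n}$ of braiding components of $\mathcal{M}$ (on the iteratively defined objects $V^{\otimes n}$) satisfies the characterizing equations \eqref{form:ct1}--\eqref{form:ct6}. Equation \eqref{form:ct6} is immediate: $c_{V^{\otimes 1},V^{\otimes 1}}=c_{V,V}=c$. Equations \eqref{form:ct2} and \eqref{form:ct3} are exactly the two hexagon axioms for $c$ (see \ref{def braiding}), applied with the factors $V^{\otimes l}$, $V^{\otimes m}$, $V^{\otimes n}$ and read modulo the canonical associativity isomorphisms identifying $V^{\otimes l}\otimes V^{\otimes m}$ with $V^{\otimes(l+m)}$. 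Equation \eqref{form:ct1} is the braid (quantum Yang--Baxter) relation on $V^{\otimes l}\otimes V^{\otimes m}\otimes V^{\otimes n}$, which follows formally from naturality of $c$ together with the hexagon axioms. Finally \eqref{form:ct4} and \eqref{form:ct5} are the standard identities $c_{\mathbf{1},X}\circ l_X^{-1}=r_X^{-1}$ and $c_{X,\mathbf{1}}\circ r_X^{-1}=l_X^{-1}$ for $X=V^{\otimes n}$, consequences of the hexagon and triangle axioms. Uniqueness in Proposition \ref{pro:CT} then gives $c_T^{m,n}=c_{V^{\otimes m},V^{\otimes n}}$, hence $c_T=c_{\Omega TV,\Omega TV}$ and $T_{\mathrm{Br}}(JV)=J_{\mathrm{Alg}}(TV)$, completing the proof.

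The only delicate point is the bookkeeping of the associativity and unit constraints when matching the ``constraint-free'' equations \eqref{form:ct1}--\eqref{form:ct5} of Proposition \ref{pro:CT} with the literal hexagon, triangle and naturality axioms of the braiding. As in the proof of Lemma \ref{lem:Kassel}, this is most cleanly handled by transporting the whole computation to the strict monoidal category $\mathcal{M}^{\mathrm{str}}$ along the monoidal equivalence $F'\colon\mathcal{M}^{\mathrm{str}}\to\mathcal{M}$ of \cite[Theorem XI.5.3, page 291]{Kassel} (which becomes a braided monoidal equivalence for the transported braiding), where all constraints are identities and the verification of \eqref{form:ct1}--\eqref{form:ct6} collapses to the bare axioms of a braiding in a strict braided monoidal category.
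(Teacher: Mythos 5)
Your proposal is correct and follows essentially the same route as the paper: reduce to the identity $c_{T}=c_{\Omega TV,\Omega TV}$ via the jointly epimorphic family $\alpha_{m}V\otimes\alpha_{n}V$ and naturality of the braiding, then reduce further to $c_{V^{\otimes m},V^{\otimes n}}=c_{T}^{m,n}$ and conclude by checking the characterizing equations of Proposition \ref{pro:CT} (hexagons, unit identities, and the case $m,n\in\{0,1\}$) together with the uniqueness statement there. The only cosmetic difference is that you also verify \eqref{form:ct1}, which the uniqueness argument of Proposition \ref{pro:CT} does not actually require.
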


\begin{proof}
Set $\left( T,m_{T},u_{T}\right) :=TV.$ We have%
\begin{eqnarray*}
\left( J_{\mathrm{Alg}}\circ T\right) \left( V\right) &=&J_{\mathrm{Alg}%
}\left( T,m_{T},u_{T}\right) =\left( T,m_{T},u_{T},c_{T,T}\right) , \\
\left( T_{\mathrm{Br}}\circ J\right) \left( V\right) &=&T_{\mathrm{Br}%
}\left( V,c_{V,V}\right) =\left( T,m_{T},u_{T},c_{T}\right) ,
\end{eqnarray*}

where $c_{T}$ is uniquely determined by (\ref{form:defcT}). Let us check
that $c_{T}=c_{T,T}.$ Since $\mathcal{M}$ is braided and $\alpha
_{n}V:V^{\otimes n}\rightarrow T$ is a morphism in $\mathcal{M}$, we have
that $c_{T,T}\circ \left( \alpha _{m}V\otimes \alpha _{n}V\right) =\left(
\alpha _{n}V\otimes \alpha _{m}V\right) \circ c_{V^{\otimes m},V^{\otimes
n}} $. Since the tensor functors preserves denumerable coproducts, $c_{T,T}$
is uniquely determined by this equality. Thus it will coincide with $c_{T}$
once proved that $c_{V^{\otimes m},V^{\otimes n}}=c_{T}^{m,n}$ for every $%
m,n\in
%TCIMACRO{\U{2115} }%
%BeginExpansion
\mathbb{N}
%EndExpansion
$. In view of Proposition \ref{pro:CT}, it suffices to check that $%
c_{V^{\otimes m},V^{\otimes n}}$ fulfills the analogues of the equalities in
that statement (which are fulfilled, by construction, by $c_{T}^{m,n}$). For
objects $X,Y,Z\in \mathcal{M},$
\begin{eqnarray*}
\left( c_{X,Z}\otimes Y\right) \circ \left( X\otimes c_{Y,Z}\right)
&=&c_{X\otimes Y,Z}\text{,\qquad }\left( Y\otimes c_{X,Z}\right) \circ
\left( c_{X,Y}\otimes Z\right) =c_{X,Y\otimes Z}, \\
c_{\mathbf{1},Z}\circ l_{Z}^{-1} &=&r_{Z}^{-1},\text{\qquad }c_{Z,\mathbf{1}%
}\circ r_{Z}^{-1}=l_{Z}^{-1}
\end{eqnarray*}%
If we take $X=V^{\otimes l},Y=V^{\otimes m}$ and $Z=V^{\otimes n}$ we get%
\begin{eqnarray*}
\left( c_{V^{\otimes l},V^{\otimes n}}\otimes V^{\otimes m}\right) \circ
\left( V^{\otimes l}\otimes c_{V^{\otimes m},V^{\otimes n}}\right)
&=&c_{V^{\otimes l}\otimes V^{\otimes m},V^{\otimes n}}=c_{V^{\otimes \left(
l+m\right) },V^{\otimes n}}\text{,} \\
\left( V^{\otimes m}\otimes c_{V^{\otimes l},V^{\otimes n}}\right) \circ
\left( c_{V^{\otimes l},V^{\otimes m}}\otimes V^{\otimes n}\right)
&=&c_{V^{\otimes l},V^{\otimes m}\otimes V^{\otimes n}}=c_{V^{\otimes
l},V^{\otimes \left( m+n\right) }}, \\
c_{\mathbf{1},V^{\otimes n}}\circ l_{V^{\otimes n}}^{-1} &=&r_{V^{\otimes
n}}^{-1},\text{\qquad }c_{V^{\otimes n},\mathbf{1}}\circ r_{V^{\otimes
n}}^{-1}=l_{V^{\otimes n}}^{-1}
\end{eqnarray*}

Hence, $c_{V^{\otimes m},V^{\otimes n}}$ fulfills equalities of the same
form of the ones defining $c_{T}^{m,n}$. Hence, in order to check that $%
c_{V^{\otimes m},V^{\otimes n}}=c_{T}^{m,n}$ we have only to prove that it
holds for $m,n\in \left\{ 0,1\right\} .$ But this is true. Summing up we
have proved that $c_{T,T}=c_{T}$ and hence $\left( J_{\mathrm{Alg}}\circ
T\right) \left( V\right) =\left( T_{\mathrm{Br}}\circ J\right) \left(
V\right) .$ Moreover, for every morphism $f$ in $\mathcal{M}$ we have $%
\left( J_{\mathrm{Alg}}\circ T\right) \left( f\right) =T\left( f\right)
=\left( T_{\mathrm{Br}}\circ J\right) \left( f\right) .$ Hence $J_{\mathrm{%
Alg}}\circ T=T_{\mathrm{Br}}\circ J$.
\end{proof}

\begin{claim}
\label{cl: Bialg} Let $\mathcal{M}$ be a preadditive braided monoidal
category with equalizers. Assume that the tensor functors are additive and
preserve equalizers. Let $\mathrm{Bialg}_{\mathcal{M}}$ be the category of
bialgebras in $\mathcal{M}$ and $\Theta :\mathrm{Bialg}_{\mathcal{M}%
}\rightarrow \mathcal{M}$ be the forgetful functor. Define the functor%
\begin{equation*}
P:=H\circ P_{\mathrm{Br}}\circ J_{\mathrm{Bialg}}:\mathrm{Bialg}_{\mathcal{M}%
}\rightarrow \mathcal{M}
\end{equation*}%
For any $\mathbb{B}:=\left( B,m_{B},u_{B},\Delta _{B},\varepsilon
_{B}\right) \in \mathrm{Bialg}_{\mathcal{M}}$ we have%
\begin{eqnarray*}
P\left( \mathbb{B}\right) &=&\left( H\circ P_{\mathrm{Br}}\circ J_{\mathrm{%
Bialg}}\right) \left( \mathbb{B}\right) =\left( H\circ P_{\mathrm{Br}%
}\right) \left( J_{\mathrm{Bialg}}\left( \mathbb{B}\right) \right) \\
&=&H\left( P\left( J_{\mathrm{Bialg}}\left( \mathbb{B}\right) \right)
,c_{P\left( J_{\mathrm{Bialg}}\left( \mathbb{B}\right) \right) }\right)
=P\left( J_{\mathrm{Bialg}}\left( \mathbb{B}\right) \right) \\
&=&P\left( B,m_{B},u_{B},\Delta _{B},\varepsilon _{B},c_{B,B}\right)
=P\left( B,\Delta _{B},\varepsilon _{B},u_{B}\right)
\end{eqnarray*}%
The canonical inclusion $\xi P\left( B,\Delta _{B},\varepsilon
_{B},u_{B}\right) :P\left( B,\Delta _{B},\varepsilon _{B},u_{B}\right)
\rightarrow B$ will be denoted by $\xi \mathbb{B}$. Thus we have the
equalizer%
\begin{equation*}
\xymatrixcolsep{1.5cm}
\xymatrix{P\left( \mathbb{B}\right) \ar[r]^{\xi \mathbb{B}} & B \ar@<.5ex>[rr]^{\Delta _{B}} \ar@<-.5ex>[rr]_{\left( B\otimes u_{B}\right) r_{B}^{-1}+\left( u_{B}\otimes B\right)
l_{B}^{-1}}&&B\otimes B }
\end{equation*}
\end{claim}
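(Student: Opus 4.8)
The claim is essentially bookkeeping: it records that the composite $P=H\circ P_{\mathrm{Br}}\circ J_{\mathrm{Bialg}}$ is a well-defined functor and spells out, on objects, that it returns the primitive part of a bialgebra in the sense of Definition \ref{def:prim}. The plan is to unwind the three functors in turn and to transport the hypotheses of Lemma \ref{lem:primitive} along $J_{\mathrm{Bialg}}$.

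First I would check well-definedness. Since $\mathcal{M}$ is a braided monoidal category, $\mathrm{Bialg}_{\mathcal{M}}$ is defined (Definition \ref{cl: brdBialg}), the forgetful functor $\Theta $ is obviously a functor, and $J_{\mathrm{Bialg}}:\mathrm{Bialg}_{\mathcal{M}}\rightarrow \mathrm{BrBialg}_{\mathcal{M}}$ is the functor of Proposition \ref{coro:BrBialg}. Moreover $\mathcal{M}$ is preadditive, has equalizers, and its tensor functors are additive and preserve equalizers; these are precisely the hypotheses of Lemma \ref{lem:primitive}, which therefore supplies $P_{\mathrm{Br}}:\mathrm{BrBialg}_{\mathcal{M}}\rightarrow \mathrm{Br}_{\mathcal{M}}$. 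Finally $H:\mathrm{Br}_{\mathcal{M}}\rightarrow \mathcal{M}$ is the forgetful functor of Proposition \ref{pro:TbrStrict}. Hence $P$ is a composite of functors, so a functor $\mathrm{Bialg}_{\mathcal{M}}\rightarrow \mathcal{M}$.

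Next I would justify the displayed chain of equalities by substituting definitions one at a time. For $\mathbb{B}=(B,m_{B},u_{B},\Delta _{B},\varepsilon _{B})\in \mathrm{Bialg}_{\mathcal{M}}$, Proposition \ref{coro:BrBialg} gives $J_{\mathrm{Bialg}}(\mathbb{B})=(B,m_{B},u_{B},\Delta _{B},\varepsilon _{B},c_{B,B})$. Writing $\mathbb{A}:=J_{\mathrm{Bialg}}(\mathbb{B})$, Lemma \ref{lem:primitive} gives $P_{\mathrm{Br}}(\mathbb{A})=(P(\mathbb{A}),c_{P(\mathbb{A})})$, where by the convention fixed right after Definition \ref{def:prim} and recalled at the start of the proof of Lemma \ref{lem:primitive} one has $(P(\mathbb{A}),\xi \mathbb{A})=(P(B,\Delta _{B},\varepsilon _{B},u_{B}),\xi (B,\Delta _{B},\varepsilon _{B},u_{B}))$. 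Applying $H$ discards the braiding $c_{P(\mathbb{A})}$ and leaves the object $P(\mathbb{A})=P(B,\Delta _{B},\varepsilon _{B},u_{B})$. This is exactly the asserted string of equalities, and setting $\xi \mathbb{B}:=\xi (B,\Delta _{B},\varepsilon _{B},u_{B})$ matches the notation introduced in the claim.

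Finally, the displayed equalizer diagram is literally the one defining $P(B,\Delta _{B},\varepsilon _{B},u_{B})$ in Definition \ref{def:prim}, once one observes that $(B,\Delta _{B},\varepsilon _{B},u_{B})$ is an admissible input there, i.e. that $u_{B}:\mathbf{1}\rightarrow B$ is a coalgebra morphism. This holds because $\mathbb{B}$ is a bialgebra, so $\Delta _{B}u_{B}=(u_{B}\otimes u_{B})\Delta _{\mathbf{1}}$ and $\varepsilon _{B}u_{B}=\mathrm{Id}_{\mathbf{1}}$. There is no substantial obstacle: the whole content is the transfer of the hypotheses of Lemma \ref{lem:primitive} to the braided setting and the bookkeeping of which underlying datum is fed into the primitive-elements equalizer; the only point worth pausing on is the coalgebra-morphism status of $u_{B}$ just noted.
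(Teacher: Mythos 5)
Your proposal is correct and matches the paper's treatment: the claim carries no separate proof in the paper because the displayed chain of equalities is itself just the unwinding of $J_{\mathrm{Bialg}}$, $P_{\mathrm{Br}}$ and $H$ on objects, which is exactly what you do. Your added remark that $u_{B}$ is a coalgebra morphism (so that $(B,\Delta _{B},\varepsilon _{B},u_{B})$ is an admissible input to Definition \ref{def:prim}) is a sensible extra check that the paper leaves implicit.
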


\begin{proposition}
\label{pro:J}Let $\mathcal{M}$ be a preadditive braided monoidal category
with equalizers. Assume that the tensor functors are additive and preserve
equalizers. Then we have a commutative diagram
\begin{equation}
\xymatrixrowsep{15pt} \xymatrixcolsep{35pt}
\xymatrix{\mathrm{Bialg}_{\mathcal{M}}\ar[r]^{J_\mathrm{Bialg}}\ar[d]_{P}
&\mathrm{BrBialg}_{\mathcal{M}}\ar[d]^{P_{\mathrm{Br}}}\\
{\mathcal{M}} \ar[r]^{J}&\mathrm{Br}_{\mathcal{M}}}
\label{form:JPbar}
\end{equation}%
where the horizontal arrows are the functors of Proposition \ref{coro:BrBialg}%
. Furthermore
\begin{equation}
\xi J_{\mathrm{Bialg}}=J\xi .  \label{form:xij}
\end{equation}
\end{proposition}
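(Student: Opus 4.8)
The plan is to reduce everything to the definition of $P$ given in \ref{cl: Bialg}, together with the naturality of the braiding of $\mathcal{M}$. Note first that, by that very definition, $P=H\circ P_{\mathrm{Br}}\circ J_{\mathrm{Bialg}}$, and for $\mathbb{B}=(B,m_B,u_B,\Delta_B,\varepsilon_B)\in\mathrm{Bialg}_{\mathcal{M}}$ the morphism $\xi\mathbb{B}$ is \emph{defined} to be the underlying $\mathcal{M}$-morphism of $\xi J_{\mathrm{Bialg}}\mathbb{B}$. Hence the objects and morphisms of $\mathcal{M}$ underlying $P(\mathbb{B})$ and $P_{\mathrm{Br}}(J_{\mathrm{Bialg}}\mathbb{B})$ coincide automatically, and the only substantive point to establish for the commutativity of \eqref{form:JPbar} on objects is that the braiding $c_{P(\mathbb{B}),P(\mathbb{B})}$ attached to $P(\mathbb{B})$ by $J$ agrees with the braiding $c_{P(J_{\mathrm{Bialg}}\mathbb{B})}$ produced in Lemma \ref{lem:primitive}.

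To do this I would argue as follows. Since $\mathcal{M}$ is braided, $J_{\mathrm{Bialg}}\mathbb{B}=(B,m_B,u_B,\Delta_B,\varepsilon_B,c_{B,B})$, so that \eqref{form:xibraided}, written for $\mathbb{A}:=J_{\mathrm{Bialg}}\mathbb{B}$, reads $(\xi\mathbb{B}\otimes\xi\mathbb{B})\circ c_{P(J_{\mathrm{Bialg}}\mathbb{B})}=c_{B,B}\circ(\xi\mathbb{B}\otimes\xi\mathbb{B})$. On the other hand, naturality of the braiding $c$ of $\mathcal{M}$ applied to $\xi\mathbb{B}\colon P(\mathbb{B})\to B$ gives $c_{B,B}\circ(\xi\mathbb{B}\otimes\xi\mathbb{B})=(\xi\mathbb{B}\otimes\xi\mathbb{B})\circ c_{P(\mathbb{B}),P(\mathbb{B})}$. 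Thus $c_{P(\mathbb{B}),P(\mathbb{B})}$ satisfies the equation which, by Lemma \ref{lem:primitive}, characterises $c_{P(J_{\mathrm{Bialg}}\mathbb{B})}$ uniquely, and therefore $c_{P(\mathbb{B}),P(\mathbb{B})}=c_{P(J_{\mathrm{Bialg}}\mathbb{B})}$, i.e. $JP(\mathbb{B})=P_{\mathrm{Br}}(J_{\mathrm{Bialg}}\mathbb{B})$.

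On morphisms there is nothing to prove: given $f\colon\mathbb{B}\to\mathbb{B}'$ in $\mathrm{Bialg}_{\mathcal{M}}$, Lemma \ref{lem:primitive} applied to $J_{\mathrm{Bialg}}f$ characterises $P_{\mathrm{Br}}(J_{\mathrm{Bialg}}f)$ as the unique morphism with $\xi\mathbb{B}'\circ P_{\mathrm{Br}}(J_{\mathrm{Bialg}}f)=f\circ\xi\mathbb{B}$ (cf. \eqref{form:xiP}); since $P(f)$ satisfies the same equation and $J$ acts as the identity on morphisms, $J(P(f))=P(f)=P_{\mathrm{Br}}(J_{\mathrm{Bialg}}f)$, so \eqref{form:JPbar} commutes. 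For \eqref{form:xij} I would first note that, by Proposition \ref{coro:BrBialg}, $\Omega_{\mathrm{Br}}\mho_{\mathrm{Br}}\circ J_{\mathrm{Bialg}}=J\circ(\Omega\circ\mho)=J\circ\Theta$, so $\xi J_{\mathrm{Bialg}}$ and $J\xi$ are natural transformations $JP\Rightarrow J\Theta$ with the same source and target; their components agree because, by \ref{cl: Bialg} and Lemma \ref{lem:primitive}, the underlying $\mathcal{M}$-morphism of $\xi J_{\mathrm{Bialg}}(\mathbb{B})$ is $\xi\mathbb{B}$, which is also the underlying morphism of $J(\xi\mathbb{B})$, and since $H$ is faithful this forces $\xi J_{\mathrm{Bialg}}=J\xi$.

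The whole argument is essentially bookkeeping; the single point that is not purely formal — and the only place the hypotheses on $\mathcal{M}$ are used — is the identification of the two braidings, which relies on $\xi\mathbb{B}\otimes\xi\mathbb{B}$ being a monomorphism so that \eqref{form:xibraided} pins down $c_{P(J_{\mathrm{Bialg}}\mathbb{B})}$ uniquely. This is exactly where ``the tensor functors preserve equalizers'' enters, just as in the proof of Lemma \ref{lem:primitive}, where $\xi\mathbb{A}\otimes\xi\mathbb{A}$ is seen to be monic by writing it as a composite of two equalizers.
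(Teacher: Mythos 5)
Your proposal is correct and follows essentially the same route as the paper: both identify the two braidings on $P(\mathbb{B})$ by checking they satisfy the defining relation \eqref{form:xibraided} (via naturality of the braiding of $\mathcal{M}$ applied to $\xi\mathbb{B}$) and then cancel the monomorphism $\xi\mathbb{B}\otimes\xi\mathbb{B}$, handle morphisms through the full faithfulness of $J$ together with the equalizer characterisation of $P(f)$, and deduce \eqref{form:xij} from the coincidence of underlying $\mathcal{M}$-morphisms plus faithfulness of $H$. No gaps.
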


\begin{proof}
Let $\mathbb{B}:=\left( B,m_{B},u_{B},\Delta _{B},\varepsilon _{B}\right)
\in \mathrm{Bialg}_{\mathcal{M}}$. Then
\begin{equation*}
J_{\mathrm{Bialg}}\mathbb{B}=\left( B,m_{B},u_{B},\Delta _{B},\varepsilon
_{B},c_{B,B}\right) ,
\end{equation*}%
where $c_{B,B}$ is the braiding of $B$ in $\mathcal{M}$. Looking at (\ref%
{cl: Bialg}) and Lemma (\ref{lem:primitive}) we have $P_{\mathrm{Br}}J_{%
\mathrm{Bialg}}\mathbb{B}=\left( P\mathbb{B},c_{P}\right) $ where $\left(
\xi \mathbb{B}\otimes \xi \mathbb{B}\right) c_{P}=c_{B,B}\left( \xi \mathbb{B%
}\otimes \xi \mathbb{B}\right) $ and $\xi \mathbb{B}:P\left( \mathbb{B}%
\right) \rightarrow B$ is the morphism of definition of the equalizer. Since
$\xi \mathbb{B}$ belongs to $\mathcal{M},$ it is compatible with the
braiding so that $\left( \xi \mathbb{B}\otimes \xi \mathbb{B}\right)
c_{P,P}=c_{B,B}\left( \xi \mathbb{B}\otimes \xi \mathbb{B}\right) .$ Since
the tensor functors preserve equalizers we have that $\xi \mathbb{B}\otimes
\xi \mathbb{B}$ is a monomorphism and hence $c_{P}=c_{P,P}.$ Thus $P_{%
\mathrm{Br}}J_{\mathrm{Bialg}}\mathbb{B}=\left( P\mathbb{B},c_{P}\right)
=\left( P\mathbb{B},c_{P,P}\right) =JP\mathbb{B}$. Let $f:\mathbb{B}%
\rightarrow \mathbb{B}^{\prime }$ be a morphism in $\mathrm{Bialg}_{\mathcal{%
M}}.$ Then $P_{\mathrm{Br}}J_{\mathrm{Bialg}}f\in \mathrm{Br}_{\mathcal{M}%
}\left( P_{\mathrm{Br}}J_{\mathrm{Bialg}}\mathbb{B},P_{\mathrm{Br}}J_{%
\mathrm{Bialg}}\mathbb{B}^{\prime }\right) =\mathrm{Br}_{\mathcal{M}}\left(
JP\mathbb{B},JP\mathbb{B}^{\prime }\right) .$ By Proposition \ref%
{coro:BrBialg}, $J$ is full and faithful so that there is a unique morphism $%
g:P\mathbb{B}\rightarrow P\mathbb{B}^{\prime }$ such that $Jg=P_{\mathrm{Br}%
}J_{\mathrm{Bialg}}f.$ By definition of $P,$ we have $Pf=HP_{\mathrm{Br}}J_{%
\mathrm{Bialg}}f=HJg=g$ so that we get $JPf=Jg=P_{\mathrm{Br}}J_{\mathrm{%
Bialg}}f.$ This implies that $P_{\mathrm{Br}}\circ J_{\mathrm{Bialg}}=J\circ
P$.

Note that $\xi J_{\mathrm{Bialg}}\mathbb{B}$ goes from $P_{\mathrm{Br}}J_{%
\mathrm{Bialg}}\mathbb{B}$ to$\mathbb{\ }\Omega _{\mathrm{Br}}\mho _{\mathrm{%
Br}}J_{\mathrm{Bialg}}\mathbb{B}$. Now the first object is $JP\mathbb{B}.$
The second object is $\Omega _{\mathrm{Br}}\mho _{\mathrm{Br}}J_{\mathrm{%
Bialg}}\mathbb{B}=\Omega _{\mathrm{Br}}J_{\mathrm{Alg}}\mho \mathbb{B}%
=J\Omega \mho \mathbb{B}.$ Thus $\xi J_{\mathrm{Bialg}}\mathbb{B}\in \mathrm{%
Br}_{\mathcal{M}}\left( JP\mathbb{B},J\Omega \mho \mathbb{B}\right) .$ Using
again that $J$ is full and faithful, we get $\xi J_{\mathrm{Bialg}}\mathbb{B%
}=J\alpha $ for a unique morphism $\alpha .$ If we compose both sides of
this equality by $H$ we get $\alpha =H\xi J_{\mathrm{Bialg}}\mathbb{B}.$ By %
\ref{cl: Bialg} and Lemma \ref{lem:primitive}, we have that $H\xi J_{\mathrm{%
Bialg}}\mathbb{B}=H\xi \mathbb{B}=\xi \mathbb{B}.$ Thus $\alpha =\xi \mathbb{%
B}$ and hence we get $\xi J_{\mathrm{Bialg}}=J\xi .$
\end{proof}

\begin{claim}
\label{claim:Tbar}Let $\mathcal{M}$ be a preadditive braided monoidal
category with equalizers. Assume that the tensor functors are additive and
preserve equalizers. Assume further that $\mathcal{M}$ has denumerable
coproducts and that the tensor functors preserve such coproducts. By Remark %
\ref{cl: AlgMon}, the forgetful functor $\Omega :\mathrm{Alg}_{\mathcal{M}%
}\rightarrow \mathcal{M}$ has a left adjoint $T:\mathcal{M}\rightarrow
\mathrm{Alg}_{\mathcal{M}}$. Let us check that $T\left( V\right) $ becomes
an object in $\mathrm{Bialg}_{\mathcal{M}}$. Let $V\in \mathcal{M}$ and
consider
\begin{equation*}
\left( \overline{T}_{\mathrm{Br}}\circ J\right) \left( V\right) =\overline{T}%
_{\mathrm{Br}}\left( V,c_{V,V}\right) .
\end{equation*}%
Denote this braided bialgebra by $\left( A,m_{A},u_{A},\Delta
_{A},\varepsilon _{A},c_{A}\right) .$ Note that%
\begin{gather*}
T\left( V\right) =THJV\overset{(\ref{form:HtildeTbrOmegaBr})}{=}H_{\mathrm{%
Alg}}T_{\mathrm{Br}}J\left( V\right) \\
\overset{(\ref{form:OmegRibTbarBr})}{=}H_{\mathrm{Alg}}\mho _{\mathrm{Br}}%
\overline{T}_{\mathrm{Br}}J\left( V\right) =\left( A,m_{A},u_{A}\right) .
\end{gather*}%
Let us compute the braiding $c_{A}$. We have%
\begin{eqnarray*}
\left( A,m_{A},u_{A},c_{A}\right) &=&\mho _{\mathrm{Br}}\left(
A,m_{A},u_{A},\Delta _{A},\varepsilon _{A},c_{A}\right) \\
&=&\mho _{\mathrm{Br}}\overline{T}_{\mathrm{Br}}J\left( V\right) \overset{(%
\ref{form:OmegRibTbarBr})}{=}T_{\mathrm{Br}}J\left( V\right) \overset{(\ref%
{form:JT})}{=}J_{\mathrm{Alg}}T\left( V\right) \\
&=&\left( A,m_{A},u_{A},c_{A,A}\right)
\end{eqnarray*}%
so that $c_{A}=c_{A,A}.$ Since $\left( A,m_{A},u_{A},\Delta _{A},\varepsilon
_{A},c_{A,A}\right) =\left( A,m_{A},u_{A},\Delta _{A},\varepsilon
_{A},c_{A}\right) \ $which is a braided bialgebra, it is clear that $\left(
A,m_{A},u_{A},\Delta _{A},\varepsilon _{A}\right) $ is a bialgebra in $%
\mathcal{M}$ that will be denoted by $\overline{T}\left( V\right) .$ By
construction we have%
\begin{equation*}
\left( \overline{T}_{\mathrm{Br}}\circ J\right) \left( V\right) =J_{\mathrm{%
Bialg}}\left( \overline{T}\left( V\right) \right) .
\end{equation*}%
Let $f:V\rightarrow W$ be a morphism in $\mathcal{M}$. Then $$\left(
\overline{T}_{\mathrm{Br}}\circ J\right) \left( f\right) \in \mathrm{BrBialg}%
_{\mathcal{M}}\left( J_{\mathrm{Bialg}}\left( \overline{T}\left( V\right)
\right) ,J_{\mathrm{Bialg}}\left( \overline{T}\left( W\right) \right)
\right) .$$ By Proposition \ref{coro:BrBialg}, we have that $J_{\mathrm{Bialg}%
}$ is full and faithful so that there is a unique morphism $\overline{T}%
\left( f\right) \in \mathrm{Bialg}_{\mathcal{M}}\left( \overline{T}\left(
V\right) ,\overline{T}\left( W\right) \right) $ such that $\left( \overline{T%
}_{\mathrm{Br}}\circ J\right) \left( f\right) =J_{\mathrm{Bialg}}\left(
\overline{T}\left( f\right) \right) .$ In this way we have defined a functor%
\begin{equation*}
\overline{T}:\mathcal{M}\rightarrow \mathrm{Bialg}_{\mathcal{M}}
\end{equation*}%
such that $\overline{T}_{\mathrm{Br}}\circ J=J_{\mathrm{Bialg}}\circ
\overline{T}.$ Thus we get the commutative diagram%
\begin{equation}
\xymatrixrowsep{15pt} \xymatrixcolsep{35pt}
\xymatrix{\mathrm{Bialg}_{\mathcal{M}}\ar[r]^{J_\mathrm{Bialg}}
&\mathrm{BrBialg}_{\mathcal{M}}\\
\mathcal{M}\ar[u]^{\overline{T}} \ar[r]^{J}&\mathrm{Br}_{\mathcal{M}}\ar[u]_{\overline{T}_{\mathrm{Br}}}}
\label{form:JTbar}
\end{equation}%
Note that, by construction we have that (\ref{form:TBrdelta}) and (\ref%
{form:TBrepsGEN}) hold. We compute
\begin{gather*}
\mho \overline{T}=H_{\mathrm{Alg}}J_{\mathrm{Alg}}\mho \overline{T}\overset{(%
\ref{diag:JAlg-Bialg})}{=}H_{\mathrm{Alg}}\mho _{\mathrm{Br}}J_{\mathrm{Bialg%
}}\overline{T}\overset{(\ref{form:JTbar})}{=}H_{\mathrm{Alg}}\mho _{\mathrm{%
Br}}\overline{T}_{\mathrm{Br}}J \\
\overset{(\ref{form:OmegRibTbarBr})}{=}H_{\mathrm{Alg}}T_{\mathrm{Br}}J%
\overset{(\ref{form:HtildeTbrOmegaBr})}{=}THJ=T
\end{gather*}%
so that the following diagram commutes.
\begin{equation}
\xymatrixrowsep{12pt}
\xymatrixcolsep{0.7cm}
\xymatrix{\mathrm{Bialg}_{\mathcal{M}} \ar[rr]^{\mho }&& \mathrm{Alg}_{\mathcal{M}} \\
&\mathcal{M}\ar[ul]^{\overline{T}}\ar[ru]_{ T }}
\label{form:OmegRibTbar}
\end{equation}
\end{claim}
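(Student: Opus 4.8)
The plan is to obtain the functor $\overline{T}$ and the two commuting triangles purely by bookkeeping with commutative diagrams already at our disposal, using that $J$, $J_{\mathrm{Alg}}$, $J_{\mathrm{Bialg}}$ are full and faithful (Proposition \ref{coro:BrBialg}) and that $H\circ J=\mathrm{Id}_{\mathcal{M}}$, $H_{\mathrm{Alg}}\circ J_{\mathrm{Alg}}=\mathrm{Id}_{\mathrm{Alg}_{\mathcal{M}}}$. First I would fix $V\in\mathcal{M}$ and set $\left(A,m_{A},u_{A},\Delta_{A},\varepsilon_{A},c_{A}\right):=\left(\overline{T}_{\mathrm{Br}}\circ J\right)\left(V\right)=\overline{T}_{\mathrm{Br}}\left(V,c_{V,V}\right)$. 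Applying $\mho_{\mathrm{Br}}$ and using \eqref{form:OmegRibTbarBr} gives $\mho_{\mathrm{Br}}\overline{T}_{\mathrm{Br}}J\left(V\right)=T_{\mathrm{Br}}J\left(V\right)$, which by \eqref{form:JT} of Proposition \ref{pro:JAlg-J} equals $J_{\mathrm{Alg}}T\left(V\right)=\left(A,m_{A},u_{A},c_{A,A}\right)$; applying moreover $H_{\mathrm{Alg}}$ and using \eqref{form:HtildeTbrOmegaBr} identifies the underlying algebra of $A$ with $T\left(V\right)$. Hence $c_{A}=c_{A,A}$ is the ambient braiding of $\mathcal{M}$.

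Next I would observe that, $c_{A}$ being the canonical $c_{A,A}$, the braided bialgebra axioms for $\left(A,m_{A},u_{A},\Delta_{A},\varepsilon_{A},c_{A,A}\right)$ say precisely that $\Delta_{A}$ and $\varepsilon_{A}$ are algebra maps for the braided tensor-product algebra structure $\left(m_{A}\otimes m_{A}\right)\left(A\otimes c_{A,A}\otimes A\right)$ on $A\otimes A$; by Definition \ref{cl: brdBialg} this is exactly the statement that $\left(A,m_{A},u_{A},\Delta_{A},\varepsilon_{A}\right)$ is a bialgebra in $\mathcal{M}$, which I call $\overline{T}\left(V\right)$, so that $J_{\mathrm{Bialg}}\overline{T}\left(V\right)=\left(\overline{T}_{\mathrm{Br}}\circ J\right)\left(V\right)$ by construction. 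For a morphism $f:V\to W$ in $\mathcal{M}$, the arrow $\left(\overline{T}_{\mathrm{Br}}\circ J\right)\left(f\right)$ lies in $\mathrm{BrBialg}_{\mathcal{M}}\left(J_{\mathrm{Bialg}}\overline{T}\left(V\right),J_{\mathrm{Bialg}}\overline{T}\left(W\right)\right)$, so fullness and faithfulness of $J_{\mathrm{Bialg}}$ produce a unique $\overline{T}\left(f\right)$ with $J_{\mathrm{Bialg}}\overline{T}\left(f\right)=\left(\overline{T}_{\mathrm{Br}}\circ J\right)\left(f\right)$; faithfulness also forces $\overline{T}$ to respect identities and composition. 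This simultaneously defines the functor $\overline{T}:\mathcal{M}\to\mathrm{Bialg}_{\mathcal{M}}$ and establishes the commutativity of \eqref{form:JTbar}, namely $\overline{T}_{\mathrm{Br}}\circ J=J_{\mathrm{Bialg}}\circ\overline{T}$.

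Finally, for \eqref{form:OmegRibTbar} I would chase $\mho\overline{T}=H_{\mathrm{Alg}}J_{\mathrm{Alg}}\mho\overline{T}=H_{\mathrm{Alg}}\mho_{\mathrm{Br}}J_{\mathrm{Bialg}}\overline{T}=H_{\mathrm{Alg}}\mho_{\mathrm{Br}}\overline{T}_{\mathrm{Br}}J=H_{\mathrm{Alg}}T_{\mathrm{Br}}J=THJ=T$, using successively $H_{\mathrm{Alg}}J_{\mathrm{Alg}}=\mathrm{Id}$, the commutativity of \eqref{diag:JAlg-Bialg}, of \eqref{form:JTbar}, of \eqref{form:OmegRibTbarBr} and of \eqref{form:HtildeTbrOmegaBr}, together with $HJ=\mathrm{Id}$. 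The only point that is not diagram bookkeeping is the identification used in the second paragraph of a braided bialgebra whose braiding is $c_{A,A}$ with a bialgebra in the braided category $\mathcal{M}$; this is essentially definitional — one checks that \eqref{Br1} and the unit and counit compatibilities, when $c=c_{A,A}$, unwind to the conditions of Definition \ref{cl: brdBialg} — so I do not expect it to be a genuine obstacle.
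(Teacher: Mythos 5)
Your proposal is correct and follows essentially the same route as the paper: identify $\mho_{\mathrm{Br}}\overline{T}_{\mathrm{Br}}J(V)=T_{\mathrm{Br}}J(V)=J_{\mathrm{Alg}}T(V)$ via \eqref{form:OmegRibTbarBr} and \eqref{form:JT} to conclude $c_{A}=c_{A,A}$, observe that a braided bialgebra whose braiding is the ambient one is precisely a bialgebra in the sense of Definition \ref{cl: brdBialg}, and then use fullness and faithfulness of $J_{\mathrm{Bialg}}$ to define $\overline{T}$ on morphisms and deduce \eqref{form:JTbar} and \eqref{form:OmegRibTbar} by the same diagram chase. Your extra remark that the identification of such a braided bialgebra with a genuine bialgebra needs a (routine) unwinding of \eqref{Br1} and the (co)unit compatibilities is a fair gloss on the paper's ``it is clear that''.
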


Let us check $\overline{T}$ is a left adjoint of the functor $P:\mathrm{Bialg%
}_{\mathcal{M}}\rightarrow \mathcal{M}$.

\begin{theorem}
\label{teo:Tbar}Let $\mathcal{M}$ be a preadditive braided monoidal category
with equalizers. Assume that the tensor functors are additive and preserve
equalizers. Assume further that $\mathcal{M}$ has denumerable coproducts and
that the tensor functors preserve such coproducts. Then we can consider the
morphisms $\overline{\eta }_{\mathrm{Br}}$ and $\overline{\epsilon }_{%
\mathrm{Br}}$ of Theorem \ref{teo:TbarStrict} and the functor $\overline{T}$
of (\ref{claim:Tbar}).

1) There are unique natural transformations $\overline{\eta }:\mathrm{Id}_{%
\mathcal{M}}\rightarrow P\overline{T}$ and $\overline{\epsilon }\mathbb{B}:%
\overline{T}P\rightarrow \mathrm{Id}_{\mathrm{Bialg}_{\mathcal{M}}}$ such
that%
\begin{eqnarray}
\overline{\eta }_{\mathrm{Br}}J &=&J\overline{\eta },
\label{form:etabarbrVSetabar} \\
\overline{\epsilon }_{\mathrm{Br}}J_{\mathrm{Bialg}} &=&J_{\mathrm{Bialg}}%
\overline{\epsilon }.  \label{form:epsbarbrVSepsbar}
\end{eqnarray}

2)\ The pair $\left( \overline{T},P\right) $ is an adjunction with unit $%
\overline{\eta }$ and counit $\overline{\epsilon }.$

3) The unit $\overline{\eta }$ and the counit $\overline{\epsilon }$ are
uniquely determined by the following equalities%
\begin{equation}
\xi \overline{T}\circ \overline{\eta }=\eta ,  \label{form:etabarVSeta}
\end{equation}%
\begin{equation}
\epsilon \mho \circ T\xi =\mho \overline{\epsilon },
\label{form:epsbarVSeps}
\end{equation}%
where $\eta $ and $\epsilon $ denote the unit and counit of the adjunction $%
\left( T,\Omega \right) $ respectively.
\end{theorem}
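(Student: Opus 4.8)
The plan is to obtain $\left( \overline{T},P\right) $ by \emph{restricting} the adjunction $\left( \overline{T}_{\mathrm{Br}},P_{\mathrm{Br}}\right) $ of Theorem \ref{teo:TbarStrict} along the full, faithful and conservative functors $J:\mathcal{M}\rightarrow \mathrm{Br}_{\mathcal{M}}$ and $J_{\mathrm{Bialg}}:\mathrm{Bialg}_{\mathcal{M}}\rightarrow \mathrm{BrBialg}_{\mathcal{M}}$ of Proposition \ref{coro:BrBialg}. The tools that make this work are the functor identities $P_{\mathrm{Br}}\circ J_{\mathrm{Bialg}}=J\circ P$ and $\xi J_{\mathrm{Bialg}}=J\xi $ (Proposition \ref{pro:J}), $\overline{T}_{\mathrm{Br}}\circ J=J_{\mathrm{Bialg}}\circ \overline{T}$ \eqref{form:JTbar}, $T_{\mathrm{Br}}\circ J=J_{\mathrm{Alg}}\circ T$ \eqref{form:JT}, the commutative squares \eqref{diag:JAlg-Bialg}, \eqref{form:HtildeTbrOmegaBr} and \eqref{form:OmegRibTbar}, and the relations $H\eta _{\mathrm{Br}}=\eta H$, $H_{\mathrm{Alg}}\epsilon _{\mathrm{Br}}=\epsilon H_{\mathrm{Alg}}$ from \eqref{form:TbrStrict}. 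The first thing I would record, using the latter together with $HJ=\mathrm{Id}_{\mathcal{M}}$ and $H_{\mathrm{Alg}}J_{\mathrm{Alg}}=\mathrm{Id}_{\mathrm{Alg}_{\mathcal{M}}}$ and the faithfulness of $H,H_{\mathrm{Alg}}$, are the auxiliary identities $\eta _{\mathrm{Br}}J=J\eta $ (in $\mathrm{Br}_{\mathcal{M}}$) and $\epsilon _{\mathrm{Br}}J_{\mathrm{Alg}}=J_{\mathrm{Alg}}\epsilon $ (in $\mathrm{BrAlg}_{\mathcal{M}}$): after transporting domains and codomains through the diagrams above, both sides of each identity become morphisms in the image of a full faithful functor and agree after applying the corresponding conservative forgetful functor.

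For part 1), since $P_{\mathrm{Br}}\overline{T}_{\mathrm{Br}}J=P_{\mathrm{Br}}J_{\mathrm{Bialg}}\overline{T}=JP\overline{T}$, the morphism $\overline{\eta }_{\mathrm{Br}}\left( JV\right) $ lies in $\mathrm{Br}_{\mathcal{M}}\left( JV,JP\overline{T}V\right) $; fullness and faithfulness of $J$ produce a unique $\overline{\eta }V:V\rightarrow P\overline{T}V$ with $J\overline{\eta }V=\overline{\eta }_{\mathrm{Br}}JV$, i.e. \eqref{form:etabarbrVSetabar}, and naturality of $\overline{\eta }$ is transported from that of $\overline{\eta }_{\mathrm{Br}}$ by faithfulness of $J$. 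Dually, $\overline{T}_{\mathrm{Br}}P_{\mathrm{Br}}J_{\mathrm{Bialg}}=\overline{T}_{\mathrm{Br}}JP=J_{\mathrm{Bialg}}\overline{T}P$, so $\overline{\epsilon }_{\mathrm{Br}}\left( J_{\mathrm{Bialg}}\mathbb{B}\right) \in \mathrm{BrBialg}_{\mathcal{M}}\left( J_{\mathrm{Bialg}}\overline{T}P\mathbb{B},J_{\mathrm{Bialg}}\mathbb{B}\right) $, and fullness and faithfulness of $J_{\mathrm{Bialg}}$ yield a unique $\overline{\epsilon }\mathbb{B}:\overline{T}P\mathbb{B}\rightarrow \mathbb{B}$ with $J_{\mathrm{Bialg}}\overline{\epsilon }\mathbb{B}=\overline{\epsilon }_{\mathrm{Br}}J_{\mathrm{Bialg}}\mathbb{B}$, i.e. \eqref{form:epsbarbrVSepsbar}, again with naturality transported along $J_{\mathrm{Bialg}}$.

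For part 3), applying $J$ to $\xi \overline{T}V\circ \overline{\eta }V$ and using $J\xi \overline{T}V=\xi J_{\mathrm{Bialg}}\overline{T}V=\xi \overline{T}_{\mathrm{Br}}JV$ (by $\xi J_{\mathrm{Bialg}}=J\xi $ and \eqref{form:JTbar}), \eqref{form:etabarbrVSetabar}, \eqref{form:BarEta} and the auxiliary identity $\eta _{\mathrm{Br}}J=J\eta $ gives $J\left( \xi \overline{T}V\circ \overline{\eta }V\right) =J\eta V$, hence \eqref{form:etabarVSeta} by faithfulness of $J$; this determines $\overline{\eta }V$ uniquely because $\xi \overline{T}V$ is an equalizer, so a monomorphism. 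Similarly, applying $J_{\mathrm{Alg}}$ to $\mho \overline{\epsilon }\mathbb{B}$ and to $\epsilon \mho \mathbb{B}\circ T\xi \mathbb{B}$, and using $J_{\mathrm{Alg}}\circ \mho =\mho _{\mathrm{Br}}\circ J_{\mathrm{Bialg}}$ \eqref{diag:JAlg-Bialg}, $J_{\mathrm{Alg}}T\xi \mathbb{B}=T_{\mathrm{Br}}J\xi \mathbb{B}=T_{\mathrm{Br}}\xi J_{\mathrm{Bialg}}\mathbb{B}$ (by \eqref{form:JT} and $\xi J_{\mathrm{Bialg}}=J\xi $), the auxiliary identity $\epsilon _{\mathrm{Br}}J_{\mathrm{Alg}}=J_{\mathrm{Alg}}\epsilon $, \eqref{form:epsbarbrVSepsbar} and \eqref{form:BarEps}, one gets $J_{\mathrm{Alg}}\left( \mho \overline{\epsilon }\mathbb{B}\right) =J_{\mathrm{Alg}}\left( \epsilon \mho \mathbb{B}\circ T\xi \mathbb{B}\right) $, hence \eqref{form:epsbarVSeps} by faithfulness of $J_{\mathrm{Alg}}$; uniqueness of $\overline{\epsilon }$ follows since $\mho $ is faithful.

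Finally, for part 2) it remains to check the two triangle identities for $\left( \overline{T},P\right) $, which follow from those of $\left( \overline{T}_{\mathrm{Br}},P_{\mathrm{Br}}\right) $. Applying $J$ to $P\overline{\epsilon }\mathbb{B}\circ \overline{\eta }P\mathbb{B}$ and using \eqref{form:JPbar}, \eqref{form:epsbarbrVSepsbar} and \eqref{form:etabarbrVSetabar} rewrites the left-hand side as the value at $J_{\mathrm{Bialg}}\mathbb{B}$ of $P_{\mathrm{Br}}\overline{\epsilon }_{\mathrm{Br}}\circ \overline{\eta }_{\mathrm{Br}}P_{\mathrm{Br}}=\mathrm{Id}_{P_{\mathrm{Br}}}$, i.e. $J\left( \mathrm{Id}_{P\mathbb{B}}\right) $, so faithfulness of $J$ gives the first identity; applying $J_{\mathrm{Bialg}}$ to $\overline{\epsilon }\overline{T}V\circ \overline{T}\overline{\eta }V$ and using \eqref{form:JTbar}, \eqref{form:epsbarbrVSepsbar}, \eqref{form:etabarbrVSetabar} reduces it to $\overline{\epsilon }_{\mathrm{Br}}\overline{T}_{\mathrm{Br}}\circ \overline{T}_{\mathrm{Br}}\overline{\eta }_{\mathrm{Br}}=\mathrm{Id}_{\overline{T}_{\mathrm{Br}}}$, whence the second by faithfulness of $J_{\mathrm{Bialg}}$. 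I expect the only real difficulty to be the bookkeeping: one has to make sure that the chains of functor identities compose coherently and that the auxiliary relations $\eta _{\mathrm{Br}}J=J\eta $ and $\epsilon _{\mathrm{Br}}J_{\mathrm{Alg}}=J_{\mathrm{Alg}}\epsilon $ are legitimately available, so that all the source/target matchings used above are correct; no genuinely new computation is needed beyond Theorem \ref{teo:TbarStrict} and the cited propositions.
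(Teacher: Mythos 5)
Your proposal is correct and follows essentially the same route as the paper: both obtain $\left(\overline{T},P\right)$ by restricting the adjunction $\left(\overline{T}_{\mathrm{Br}},P_{\mathrm{Br}}\right)$ along the full and faithful functors $J$ and $J_{\mathrm{Bialg}}$, transport the triangle identities by faithfulness, and verify (\ref{form:etabarVSeta}) and (\ref{form:epsbarVSeps}) from (\ref{form:BarEta}) and (\ref{form:BarEps}). The only cosmetic difference is that for part 3) you route the computation through the auxiliary identities $\eta_{\mathrm{Br}}J=J\eta$ and $\epsilon_{\mathrm{Br}}J_{\mathrm{Alg}}=J_{\mathrm{Alg}}\epsilon$ (which do hold, by the argument you sketch), whereas the paper descends directly via the forgetful functors $H$ and $H_{\mathrm{Alg}}$ using $H\eta_{\mathrm{Br}}=\eta H$ and $H_{\mathrm{Alg}}\epsilon_{\mathrm{Br}}=\epsilon H_{\mathrm{Alg}}$; the substance is the same.
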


\begin{proof}
1) Let $V\in \mathcal{M}$ and $\mathbb{B}\in \mathrm{Bialg}_{\mathcal{M}}$.
Since $P_{\mathrm{Br}}\overline{T}_{\mathrm{Br}}JV\overset{(\ref{form:JTbar})%
}{=}P_{\mathrm{Br}}J_{\mathrm{Bialg}}\overline{T}V\overset{(\ref{form:JPbar})%
}{=}JP\overline{T}V$, we have that $\overline{\eta }_{\mathrm{Br}}JV\in
\mathrm{Br}_{\mathcal{M}}\left( JV,P_{\mathrm{Br}}\overline{T}_{\mathrm{Br}%
}JV\right) =\mathrm{Br}_{\mathcal{M}}\left( JV,JP\overline{T}V\right) .$
Since $\overline{T}_{\mathrm{Br}}P_{\mathrm{Br}}J_{\mathrm{Bialg}}\mathbb{B}%
\overset{(\ref{form:JPbar})}{=}\overline{T}_{\mathrm{Br}}JP\mathbb{B}\overset%
{(\ref{form:JTbar})}{=}J_{\mathrm{Bialg}}\overline{T}P\mathbb{B},$ we have
that
\begin{equation*}
\overline{\epsilon }_{\mathrm{Br}}J_{\mathrm{Bialg}}\mathbb{B}\in \mathrm{%
BrBialg}_{\mathcal{M}}\left( \overline{T}_{\mathrm{Br}}P_{\mathrm{Br}}J_{%
\mathrm{Bialg}}\mathbb{B},J_{\mathrm{Bialg}}\mathbb{B}\right) = \mathrm{%
BrBialg}_{\mathcal{M}}\left( J_{\mathrm{Bialg}}\overline{T}P\mathbb{B},J_{%
\mathrm{Bialg}}\mathbb{B}\right) .
\end{equation*}
Now, by Proposition \ref{coro:BrBialg}, both $J$ and $J_{\mathrm{Bialg}}$
are full and faithful. Thus there are unique morphisms $\overline{\eta }%
V:V\rightarrow P\overline{T}V$ and $\overline{\epsilon }\mathbb{B}:\overline{%
T}P\mathbb{B}\rightarrow \mathbb{B}$ such that $\overline{\eta }_{\mathrm{Br}%
}JV=J\overline{\eta }V$ and $\overline{\epsilon }_{\mathrm{Br}}J_{\mathrm{%
Bialg}}\mathbb{B}=J_{\mathrm{Bialg}}\overline{\epsilon }\mathbb{B}.$ Note
that $\overline{\eta }V=HJ\overline{\eta }V=H\overline{\eta }_{\mathrm{Br}}JV
$ so that $\overline{\eta }V$ is natural in $V.$ Let us check that $%
\overline{\epsilon }\mathbb{B} $ is natural in $\mathbb{B}$. Given a
morphism $f:\mathbb{B}\rightarrow \mathbb{B}^{\prime }$ we have
\begin{eqnarray*}
J_{\mathrm{Bialg}}\left( \overline{\epsilon }\mathbb{B}^{\prime }\circ
\overline{T}Pf\right) &=&J_{\mathrm{Bialg}}\overline{\epsilon }\mathbb{B}%
^{\prime }\circ J_{\mathrm{Bialg}}\overline{T}Pf\overset{(\ref{form:JTbar})}{%
=}\overline{\epsilon }_{\mathrm{Br}}J_{\mathrm{Bialg}}\mathbb{B}^{\prime
}\circ \overline{T}_{\mathrm{Br}}JPf \\
&&\overset{(\ref{form:JPbar})}{=}\overline{\epsilon }_{\mathrm{Br}}J_{%
\mathrm{Bialg}}\mathbb{B}^{\prime }\circ \overline{T}_{\mathrm{Br}}P_{%
\mathrm{Br}}J_{\mathrm{Bialg}}f \\
&=&J_{\mathrm{Bialg}}f\circ \overline{\epsilon }_{\mathrm{Br}}J_{\mathrm{%
Bialg}}\mathbb{B}^{\prime }=J_{\mathrm{Bialg}}f\circ J_{\mathrm{Bialg}}%
\overline{\epsilon }\mathbb{B}^{\prime }=J_{\mathrm{Bialg}}\left( f\circ
\overline{\epsilon }\mathbb{B}\right) .
\end{eqnarray*}%
Since $J_{\mathrm{Bialg}}$ is faithful, we get $\overline{\epsilon }\mathbb{B%
}^{\prime }\circ \overline{T}Pf=f\circ \overline{\epsilon }\mathbb{B}$ so
that $\overline{\epsilon }\mathbb{B}$ is natural in $\mathbb{B}$.

2) We compute%
\begin{gather*}
J\left( P\overline{\epsilon }\circ \overline{\eta }P\right) \overset{(\ref%
{form:JPbar})}{=}P_{\mathrm{Br}}J_{\mathrm{Bialg}}\overline{\epsilon }\circ J%
\overline{\eta }P\overset{(\ref{form:etabarbrVSetabar}),(\ref%
{form:epsbarbrVSepsbar})}{=}P_{\mathrm{Br}}\overline{\epsilon }_{\mathrm{Br}%
}J_{\mathrm{Bialg}}\circ \overline{\eta }_{\mathrm{Br}}JP \\
\overset{(\ref{form:JPbar})}{=}P_{\mathrm{Br}}\overline{\epsilon }_{\mathrm{%
Br}}J_{\mathrm{Bialg}}\circ \overline{\eta }_{\mathrm{Br}}P_{\mathrm{Br}}J_{%
\mathrm{Bialg}}=P_{\mathrm{Br}}J_{\mathrm{Bialg}}\overset{(\ref{form:JPbar})}%
{=}JP.
\end{gather*}%
Since $J$ is faithful, we obtain $P\overline{\epsilon }\circ \overline{\eta }%
P=P.$ We also have%
\begin{gather*}
J_{\mathrm{Bialg}}\left( \overline{\epsilon }\overline{T}\circ \overline{T}%
\overline{\eta }\right) \overset{(\ref{form:JTbar})}{=}J_{\mathrm{Bialg}}%
\overline{\epsilon }\overline{T}\circ \overline{T}_{\mathrm{Br}}J\overline{%
\eta }\overset{(\ref{form:etabarbrVSetabar}),(\ref{form:epsbarbrVSepsbar})}{=%
}\overline{\epsilon }_{\mathrm{Br}}J_{\mathrm{Bialg}}\overline{T}\circ
\overline{T}_{\mathrm{Br}}\overline{\eta }_{\mathrm{Br}}J \\
\overset{(\ref{form:JTbar})}{=}\overline{\epsilon }_{\mathrm{Br}}\overline{T}%
_{\mathrm{Br}}J\circ \overline{T}_{\mathrm{Br}}\overline{\eta }_{\mathrm{Br}%
}J=\overline{T}_{\mathrm{Br}}J\overset{(\ref{form:JTbar})}{=}J_{\mathrm{Bialg%
}}\overline{T}.
\end{gather*}%
Since $J_{\mathrm{Bialg}}$ is faithful, we get $\overline{\epsilon }%
\overline{T}\circ \overline{T}\overline{\eta }=\overline{T}$. We have so
proved that $\left( \overline{T},P\right) $ is an adjunction with unit $%
\overline{\eta }$ and counit $\overline{\epsilon }.$

3) We have%
\begin{gather*}
\xi \overline{T}\circ \overline{\eta }=HJ\left( \xi \overline{T}\circ
\overline{\eta }\right) =H\left( J\xi \overline{T}\circ J\overline{\eta }%
\right) \overset{(\ref{form:xij}),(\ref{form:etabarbrVSetabar})}{=}H\left(
\xi J_{\mathrm{Bialg}}\overline{T}\circ \overline{\eta }_{\mathrm{Br}%
}J\right) \\
\overset{(\ref{form:JTbar})}{=}H\left( \xi \overline{T}_{\mathrm{Br}}J\circ
\overline{\eta }_{\mathrm{Br}}J\right) \overset{(\ref{form:BarEta})}{=}H\eta
_{\mathrm{Br}}J\overset{(\ref{form:TbrStrict})}{=}\eta HJ=\eta
\end{gather*}%
and%
\begin{gather*}
\mho \overline{\epsilon }=H_{\mathrm{Alg}}\mho _{\mathrm{Br}}J_{\mathrm{Bialg%
}}\overline{\epsilon }\overset{(\ref{form:epsbarbrVSepsbar})}{=}H_{\mathrm{%
Alg}}\mho _{\mathrm{Br}}\overline{\epsilon }_{\mathrm{Br}}J_{\mathrm{Bialg}}%
\overset{(\ref{form:BarEps})}{=}H_{\mathrm{Alg}}\left( \epsilon _{\mathrm{Br}%
}\mho _{\mathrm{Br}}\circ T_{\mathrm{Br}}\xi \right) J_{\mathrm{Bialg}} \\
=H_{\mathrm{Alg}}\epsilon _{\mathrm{Br}}\mho _{\mathrm{Br}}J_{\mathrm{Bialg}%
}\circ H_{\mathrm{Alg}}T_{\mathrm{Br}}\xi J_{\mathrm{Bialg}}\overset{(\ref%
{form:TbrStrict}),(\ref{form:HtildeTbrOmegaBr})}{=}\epsilon H_{\mathrm{Alg}%
}\mho _{\mathrm{Br}}J_{\mathrm{Bialg}}\circ TH\xi J_{\mathrm{Bialg}} \\
\overset{(\ref{form:xij})}{=}\epsilon \mho \circ THJ\xi =\epsilon \mho \circ
T\xi .
\end{gather*}%
Since $\xi \overline{T}$ is a monomorphism and $\mho $ is faithful, we get
that $\overline{\eta }$ and $\overline{\epsilon }$ are uniquely determined
by (\ref{form:etabarVSeta}) and (\ref{form:epsbarVSeps}) respectively.
\end{proof}

\begin{proposition}
\label{pro:BialgF}Let $\mathcal{M}$ and $\mathcal{M}^{\prime }$ be braided
monoidal categories. Let $\left( F,\phi _{0},\phi _{2}\right) :\mathcal{M}%
\rightarrow \mathcal{M}^{\prime }$ be a braided monoidal functor. Then $F$
induces a functor $\mathrm{Bialg}F:\mathrm{Bialg}_{\mathcal{M}}\rightarrow
\mathrm{Bialg}_{\mathcal{M}^{\prime }}$ which acts as $F$ on morphisms and
which is defined, on objects, by%
\begin{equation*}
\mathrm{Bialg}F\left( B,m_{B},u_{B},\Delta _{B},\varepsilon _{B}\right)
:=\left( FB,m_{FB},u_{FB},\Delta _{FB},\varepsilon _{FB}\right)
\end{equation*}

where%
\begin{eqnarray*}
m_{FB} &:&=Fm_{B}\circ \phi _{2}\left( B,B\right) :FB\otimes FB\rightarrow
FB, \\
u_{FB} &:&=Fu_{B}\circ \phi _{0}:\mathbf{1}\rightarrow FB, \\
\Delta _{FB} &:&=\phi _{2}^{-1}\left( B,B\right) \circ F\Delta
_{B}:FB\rightarrow FB\otimes FB, \\
\varepsilon _{FB} &:&=\phi _{0}^{-1}\circ F\varepsilon _{B}:FB\rightarrow
\mathbf{1},
\end{eqnarray*}%
and the following diagrams commute.%
\begin{equation*}
\xymatrixrowsep{15pt} \xymatrixcolsep{35pt}
\xymatrix{\mathcal{M}\ar[r]^{F}\ar[d]_{J}
&\mathcal{M}^\prime \ar[d]^{J^\prime}\\
{\mathrm{Br}_\mathcal{M}} \ar[r]^{\mathrm{Br}F}&\mathrm{Br}_{\mathcal{M}^\prime}}
\qquad
\xymatrixrowsep{15pt} \xymatrixcolsep{35pt}
\xymatrix{\mathrm{Bialg}_\mathcal{M}\ar[r]^{\mathrm{Bialg}F}\ar[d]_{J_\mathrm{Bialg}}
&\mathrm{Bialg}_{\mathcal{M}^\prime} \ar[d]^{J^\prime_\mathrm{Bialg}}\\
{\mathrm{BrBialg}_\mathcal{M}} \ar[r]^{\mathrm{BrBialg}F}&\mathrm{BrBialg}_{\mathcal{M}^\prime}}
\qquad
\xymatrixrowsep{15pt} \xymatrixcolsep{35pt}
\xymatrix{\mathrm{Bialg}_\mathcal{M}\ar[r]^{\mathrm{Bialg}F}\ar[d]_{\mho}
&\mathrm{Bialg}_{\mathcal{M}^\prime} \ar[d]^{\mho^\prime}\\
{\mathrm{Alg}_\mathcal{M}} \ar[r]^{\mathrm{Alg}F}&\mathrm{Alg}_{\mathcal{M}^\prime}}
\end{equation*}

\begin{enumerate}
\item[1)] $\mathrm{Bialg}F$ is an equivalence (resp. category isomorphism or
conservative) whenever $F$ is.

\item[2)] If $F$ preserves equalizers, the following diagram commutes
\begin{equation*}
\xymatrixrowsep{15pt} \xymatrixcolsep{35pt}
\xymatrix{\mathrm{Bialg}_\mathcal{M}\ar[r]^{\mathrm{Bialg}F}\ar[d]_{P}
&\mathrm{Bialg}_{\mathcal{M}^\prime} \ar[d]^{P^\prime}\\
\mathcal{M}\ar[r]^{F}&\mathcal{M}^\prime}
\end{equation*}%
and
\begin{equation}
\xi \left( \mathrm{Bialg}F\right) =F\xi  \label{form:xiBialgF}
\end{equation}
\end{enumerate}
\end{proposition}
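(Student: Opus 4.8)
The plan is to deduce everything from the braided setting, using the functor $\mathrm{BrBialg}F$ of Proposition~\ref{pro:BrBialg} together with the fact (Proposition~\ref{coro:BrBialg}) that $J$, $J_{\mathrm{Alg}}$ and $J_{\mathrm{Bialg}}$ are full, faithful and conservative. The single place where braidedness of $F$ (rather than mere monoidality) enters is this: for $V\in\mathcal{M}$ the braiding induced by $F$ on $FV$, namely $c_{FV}=\phi_2^{-1}(V,V)\circ Fc_{V,V}\circ\phi_2(V,V)$ in the notation of Proposition~\ref{pro:BrBialg}, coincides with the ambient braiding $c'_{FV,FV}$, since the defining identity of a braided monoidal functor (see~\ref{def braiding}) gives $Fc_{V,V}\circ\phi_2(V,V)=\phi_2(V,V)\circ c'_{FV,FV}$; the same computation gives $c_{FB}=c'_{FB,FB}$ for every $B$. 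I regard this as the crux; everything else is bookkeeping paralleling Propositions~\ref{pro:BrBialg}, \ref{pro:PrimFunct} and~\ref{pro:J}.

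First I would establish well-definedness of $\mathrm{Bialg}F$. Given $\mathbb{B}=(B,m_B,u_B,\Delta_B,\varepsilon_B)\in\mathrm{Bialg}_{\mathcal{M}}$, Proposition~\ref{pro:BrBialg} applied to $J_{\mathrm{Bialg}}\mathbb{B}=(B,m_B,u_B,\Delta_B,\varepsilon_B,c_{B,B})$ gives that $(FB,m_{FB},u_{FB},\Delta_{FB},\varepsilon_{FB},c_{FB})$ is a braided bialgebra in $\mathcal{M}^{\prime}$, with $c_{FB}=c'_{FB,FB}$ by the remark above. A braided bialgebra whose braiding is the ambient one is precisely a bialgebra: \eqref{Br2}--\eqref{Br7} hold automatically by naturality of $c'$, while \eqref{Br1}, \eqref{Br8}, \eqref{Br9}, \eqref{Br10} are exactly the requirement that $\Delta_{FB}$ and $\varepsilon_{FB}$ be algebra maps for the monoidal structure of $\mathrm{Alg}_{\mathcal{M}^{\prime}}$ (Definition~\ref{cl: brdBialg}). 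Hence $(FB,m_{FB},u_{FB},\Delta_{FB},\varepsilon_{FB})\in\mathrm{Bialg}_{\mathcal{M}^{\prime}}$ and $J'_{\mathrm{Bialg}}\mathrm{Bialg}F\,\mathbb{B}=\mathrm{BrBialg}F\,J_{\mathrm{Bialg}}\mathbb{B}$; a bialgebra morphism $f$ maps under $\mathrm{BrBialg}F$ to the braided bialgebra morphism $Ff$, hence to a bialgebra morphism since $J'_{\mathrm{Bialg}}$ is full and faithful, and functoriality is immediate. The three displayed diagrams then follow: the first is $\mathrm{Br}F\,J=J'F$, which is the braiding coincidence together with the evident equality of underlying objects and morphisms; the second is the identity $J'_{\mathrm{Bialg}}\mathrm{Bialg}F=\mathrm{BrBialg}F\,J_{\mathrm{Bialg}}$ just obtained; the third holds because $\mathrm{Bialg}F$ and $\mathrm{Alg}F$ carry the same underlying algebra data.

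For item~1) I would reproduce the scheme of the proof of Proposition~\ref{pro:BrBialg}: check $\mathrm{Bialg}(F^{\prime}F)=\mathrm{Bialg}F^{\prime}\circ\mathrm{Bialg}F$ and $\mathrm{Bialg}(\mathrm{Id}_{\mathcal{M}})=\mathrm{Id}_{\mathrm{Bialg}_{\mathcal{M}}}$, define for a morphism $\xi$ of braided monoidal functors the natural transformation $\mathrm{Bialg}\xi$ by $\Omega\mho(\mathrm{Bialg}\xi\,\mathbb{B})=\xi B$ (legitimate since $\xi B$ is a bialgebra map, exactly as for $\mathrm{BrBialg}\xi$ there), verify the usual compatibilities, and conclude the assertions on equivalences and isomorphisms as in that proof. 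Conservativity follows since $\mho^{\prime}\circ\mathrm{Bialg}F=\mathrm{Alg}F\circ\mho$ is conservative whenever $F$ is: $\mathrm{Alg}F$ is conservative by Proposition~\ref{pro:BrBialg}, and $\mho$ is conservative because the algebra inverse of a bialgebra isomorphism is automatically a coalgebra map.

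Finally item~2); assume $F$ preserves equalizers (and that $\mathcal{M},\mathcal{M}^{\prime}$ satisfy the hypotheses of~\ref{cl: Bialg}, so that $P$ and $P^{\prime}$ are defined). Using $P=H\circ P_{\mathrm{Br}}\circ J_{\mathrm{Bialg}}$ from~\ref{cl: Bialg}, the second diagram above, Proposition~\ref{pro:PrimFunct} in the form $P^{\prime}_{\mathrm{Br}}\circ\mathrm{BrBialg}F=\mathrm{Br}F\circ P_{\mathrm{Br}}$, and $H^{\prime}\circ\mathrm{Br}F=F\circ H$ from Proposition~\ref{pro:BrBialg}, I would compute
\[
P^{\prime}\circ\mathrm{Bialg}F=H^{\prime}\,P^{\prime}_{\mathrm{Br}}\,J^{\prime}_{\mathrm{Bialg}}\,\mathrm{Bialg}F=H^{\prime}\,P^{\prime}_{\mathrm{Br}}\,\mathrm{BrBialg}F\,J_{\mathrm{Bialg}}=H^{\prime}\,\mathrm{Br}F\,P_{\mathrm{Br}}\,J_{\mathrm{Bialg}}=F\,H\,P_{\mathrm{Br}}\,J_{\mathrm{Bialg}}=F\circ P,
\]
which gives the commutative square. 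For \eqref{form:xiBialgF}, recalling from~\ref{cl: Bialg} and Proposition~\ref{pro:J} that $\xi\mathbb{B}=H\,\xi J_{\mathrm{Bialg}}\mathbb{B}$ and that $HJ=\mathrm{Id}_{\mathcal{M}}$, and using \eqref{form:comdat3}, \eqref{form:xij} and again $H^{\prime}\circ\mathrm{Br}F=F\circ H$, I would compute
\[
\xi^{\prime}\,\mathrm{Bialg}F\,\mathbb{B}=H^{\prime}\,\xi^{\prime}\,J^{\prime}_{\mathrm{Bialg}}\,\mathrm{Bialg}F\,\mathbb{B}=H^{\prime}\,\xi^{\prime}\,\mathrm{BrBialg}F\,J_{\mathrm{Bialg}}\mathbb{B}=H^{\prime}(\mathrm{Br}F)(\xi J_{\mathrm{Bialg}}\mathbb{B})=H^{\prime}(\mathrm{Br}F)(J\xi\mathbb{B})=F\,H\,J\,\xi\mathbb{B}=F\xi\mathbb{B}.
\]
The hard part is nothing deep: it is carrying over the naturality and functoriality bookkeeping of Proposition~\ref{pro:BrBialg} to the (easier) bialgebra setting, the braiding coincidence $c_{FB}=c'_{FB,FB}$ being the only genuinely new ingredient.
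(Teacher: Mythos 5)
Your construction of $\mathrm{Bialg}F$, the three commuting squares, and all of part 2) match the paper's own proof essentially step for step: the crux in both is that braidedness of $F$ gives $c_{FB}=c'_{FB,FB}$, hence $J'_{\mathrm{Bialg}}\circ\mathrm{Bialg}F=\mathrm{BrBialg}F\circ J_{\mathrm{Bialg}}$, a braided bialgebra whose braiding is the ambient one being precisely a bialgebra; and your computations of $P'\circ\mathrm{Bialg}F=F\circ P$ and of $\xi\left(\mathrm{Bialg}F\right)=F\xi$ via \eqref{diag:BrF-PBr}, \eqref{form:JPbar}, \eqref{form:comdat3} and \eqref{form:xij} are exactly the ones in the paper (yours is, if anything, a little more careful about inserting $H$ and $H'$). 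Where you genuinely diverge is part 1). For the equivalence claim the paper does not transport a quasi-inverse: it shows $\mathrm{Bialg}F$ is full and faithful from the hom-set identifications given by $J_{\mathrm{Bialg}}$, $J'_{\mathrm{Bialg}}$ and the equivalence $\mathrm{BrBialg}F$, and then proves essential surjectivity by lifting a bialgebra of $\mathcal{M}'$ to a braided bialgebra $\left(B,m_B,u_B,\Delta_B,\varepsilon_B,c_B\right)$ of $\mathcal{M}$ and showing $c_B=c_{B,B}$ via naturality of the braiding and faithfulness of $F$ — that is the one nontrivial computation there, and your plan does not need it. Instead you propose to redo the functoriality bookkeeping ($\mathrm{Bialg}$ of composites, of $\mathrm{Id}_{\mathcal{M}}$, and of monoidal natural transformations) and to transport a quasi-inverse $G$ of $F$; for $\mathrm{Bialg}G$ to be defined you need the additional, true but unstated, fact that a monoidal quasi-inverse of a braided monoidal equivalence is again a braided monoidal functor. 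So your route buys uniformity with the proof of Proposition \ref{pro:BrBialg} at the price of this extra lemma, which you should state and verify (the paper itself only needs the analogous fact in the strict-isomorphism case, where it likewise invokes $\mathrm{Bialg}G$ implicitly). Your conservativity argument, via conservativity of $\mho$ (the algebra inverse of a bialgebra isomorphism is automatically a coalgebra map) and of $\mathrm{Alg}F$, is a harmless variant of the paper's, which instead uses conservativity of $J'_{\mathrm{Bialg}}\circ\mathrm{Bialg}F=\mathrm{BrBialg}F\circ J_{\mathrm{Bialg}}$; both are correct.
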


\begin{proof}
Let us check that the first diagram commutes.%
\begin{eqnarray*}
\left( \mathrm{Br}F\circ J\right) \left( M\right) &=&\mathrm{Br}F\left(
M,c_{M,M}\right) =\left( FM,\phi _{2}^{-1}\left( M,M\right) \circ
Fc_{M,M}\circ \phi _{2}\left( M,M\right) \right) \\
\overset{(\ast )}{=}\left( FM,c_{FM,FM}\right) &=&\left( J^{\prime }\circ
F\right) \left( M\right) ,
\end{eqnarray*}%
where in (*) we used that $F$ is braided. The functors $\mathrm{Br}F\circ J$
and $J^{\prime }\circ F$ trivially coincide on morphisms. We have%
\begin{eqnarray*}
&&\left( \mathrm{BrBialg}F\circ J_{\mathrm{Bialg}}\right) \left(
B,m_{B},u_{B},\Delta _{B},\varepsilon _{B}\right) \\&=&\mathrm{BrBialg}F\left(
B,m_{B},u_{B},\Delta _{B},\varepsilon _{B},c_{B,B}\right) \\
&=&\left( FB,m_{FB},u_{FB},\Delta _{FB},\varepsilon _{FB},\phi
_{2}^{-1}\left( B,B\right) \circ Fc_{B,B}\circ \phi _{2}\left( B,B\right)
\right) \\
&\overset{(\ast )}{=}&\left( FB,m_{FB},u_{FB},\Delta _{FB},\varepsilon
_{FB},c_{FB,FB}\right)
\end{eqnarray*}%
Now, since $$\left( FB,m_{FB},u_{FB},\Delta _{FB},\varepsilon
_{FB},c_{FB,FB}\right) =\left( \mathrm{BrBialg}F\circ J_{\mathrm{Bialg}%
}\right) \left( B,m_{B},u_{B},\Delta _{B},\varepsilon _{B}\right) \in
\mathrm{BrBialg}_{\mathcal{M}^{\prime }}$$ and $c_{FB,FB}$ is the braiding of
$FB$ in $\mathcal{M}^{\prime }$ we conclude that $\left(
FB,m_{FB},u_{FB},\Delta _{FB},\varepsilon _{FB}\right) \in \mathrm{Bialg}_{%
\mathcal{M}^{\prime }}$. Moreover for every morphism $f:\left(
B,m_{B},u_{B},\Delta _{B},\varepsilon _{B}\right) \rightarrow \left(
B^{\prime },m_{B^{\prime }},u_{B^{\prime }},\Delta _{B^{\prime
}},\varepsilon _{B^{\prime }}\right) $ in $\mathrm{Bialg}_{\mathcal{M}},$ we
have $\left( \mathrm{BrBialg}F\circ J_{\mathrm{Bialg}}\right) \left(
f\right) =\left( \mathrm{BrBialg}F\right) \left( f\right) =Ff$ so that $Ff$
is a morphism with domain $\left( FB,m_{FB},u_{FB},\Delta _{FB},\varepsilon
_{FB},c_{FB,FB}\right) $ and codomain $\left( FB^{\prime },m_{FB^{\prime
}},u_{FB^{\prime }},\Delta _{FB^{\prime }},\varepsilon _{FB^{\prime
}},c_{FB^{\prime },FB^{\prime }}\right) .$ Thus $Ff$ is a morphism in $%
\mathrm{Bialg}_{\mathcal{M}^{\prime }}$. Hence $\mathrm{Bialg}F$ is a
well-defined functor. Note also that 
\begin{gather*}
\left( J_{\mathrm{Bialg}}^{\prime }\circ \mathrm{Bialg}F\right) \left(
B,m_{B},u_{B},\Delta _{B},\varepsilon _{B}\right) =J_{\mathrm{Bialg}%
}^{\prime }\left( FB,m_{FB},u_{FB},\Delta _{FB},\varepsilon _{FB}\right)\\        
=\left( FB,m_{FB},u_{FB},\Delta _{FB},\varepsilon _{FB},c_{FB,FB}\right)
\end{gather*}%
so that the functors $J_{\mathrm{Bialg}}^{\prime }\circ \mathrm{Bialg}F$
and $\mathrm{BrBialg}F\circ J_{\mathrm{Bialg}}$ coincide on objects. They
trivially coincide on morphisms too so that the second diagram commutes. The
third diagram commutes by definition of $\mathrm{Bialg}F$ and $\mathrm{Alg}F$%
.

1) Assume that $F$ preserves equalizers. By Proposition \ref{pro:PrimFunct},
we have
\begin{eqnarray*}
P^{\prime }\left( \mathrm{Bialg}F\right) &=&H^{\prime }P_{\mathrm{Br}%
}^{\prime }J_{\mathrm{Bialg}}^{\prime }\left( \mathrm{Bialg}F\right)
=H^{\prime }P_{\mathrm{Br}}^{\prime }\left( \mathrm{BrBialg}F\right) J_{%
\mathrm{Bialg}}\overset{(\ref{diag:BrF-PBr})}{=}H^{\prime }\left( \mathrm{Br}%
F\right) P_{\mathrm{Br}}J_{\mathrm{Bialg}} \\
\overset{(\ref{form:JPbar})}{=}FHJP &=&FP.
\end{eqnarray*}%
and%
\begin{equation*}
\xi \left( \mathrm{Bialg}F\right) =\xi ^{\prime }J_{\mathrm{Bialg}}^{\prime
}\left( \mathrm{Bialg}F\right) =\xi ^{\prime }\left( \mathrm{BrBialg}%
F\right) J_{\mathrm{Bialg}}\overset{(\ref{form:comdat3})}{=}\left( \mathrm{Br%
}F\right) \xi J_{\mathrm{Bialg}}=\left( \mathrm{Br}F\right) \xi =F\xi .
\end{equation*}

2) Assume that $F$ is an equivalence. By Proposition \ref{coro:BrBialg}, $J_{%
\mathrm{Bialg}}$ and $J_{\mathrm{Bialg}}^{\prime }$ are both full and
faithful. By Proposition \ref{pro:BrBialg}, $\mathrm{BrBialg}F$ is a
category equivalence. Given $X$ and $Y$ objects in $\mathrm{Bialg}_{\mathcal{%
M}}\,$we have
\begin{eqnarray*}
&&\mathrm{Bialg}_{\mathcal{M}^{\prime }}\left( \left( \mathrm{Bialg}F\right)
X,\left( \mathrm{Bialg}F\right) Y\right) \cong \mathrm{BrBialg}_{\mathcal{M}%
^{\prime }}\left( \left( J_{\mathrm{Bialg}}^{\prime }\circ \mathrm{Bialg}%
F\right) X,\left( J_{\mathrm{Bialg}}^{\prime }\circ \mathrm{Bialg}F\right)
Y\right) \\
&=&\mathrm{BrBialg}_{\mathcal{M}^{\prime }}\left( \left( \mathrm{BrBialg}%
F\circ J_{\mathrm{Bialg}}\right) X,\left( \mathrm{BrBialg}F\circ J_{\mathrm{%
Bialg}}\right) Y\right) \cong \mathrm{Bialg}_{\mathcal{M}}\left( X,Y\right) .
\end{eqnarray*}%
The composition of these maps is the map assigning $\left( \mathrm{Bialg}%
F\right) \left( f\right) $ to a morphism $f$ so that $\mathrm{Bialg}F$ is
full and faithful. In order to prove it is an equivalence, it remains to
check that it is essentially surjective i.e. that each object $\left(
B^{\prime },m_{B^{\prime }},u_{B^{\prime }},\Delta _{B^{\prime
}},\varepsilon _{B^{\prime }}\right) \in \mathrm{Bialg}_{\mathcal{M}^{\prime
}}$ is isomorphic to $\left( \mathrm{Bialg}F\right) \left(
B,m_{B},u_{B},\Delta _{B},\varepsilon _{B}\right) $ for some object $\left(
B,m_{B},u_{B},\Delta _{B},\varepsilon _{B}\right) $ in $\mathrm{Bialg}_{%
\mathcal{M}}$.

Let $\left( B^{\prime },m_{B^{\prime }},u_{B^{\prime }},\Delta _{B^{\prime
}},\varepsilon _{B^{\prime }}\right) \in \mathrm{Bialg}_{\mathcal{M}^{\prime
}}$. Then
\begin{equation*}
\left( B^{\prime },m_{B^{\prime }},u_{B^{\prime }},\Delta _{B^{\prime
}},\varepsilon _{B^{\prime }},c_{B^{\prime },B^{\prime }}\right) =J_{\mathrm{%
Bialg}}^{\prime }\left( B^{\prime },m_{B^{\prime }},u_{B^{\prime }},\Delta
_{B^{\prime }},\varepsilon _{B^{\prime }}\right) \in \mathrm{BrBialg}_{%
\mathcal{M}^{\prime }}
\end{equation*}%
Since $\mathrm{BrBialg}F$ is essentially surjective, there exists $\left(
B,m_{B},u_{B},\Delta _{B},\varepsilon _{B},c_{B}\right) \in \mathrm{BrBialg}%
_{\mathcal{M}}$ and an isomorphism
\begin{equation*}
f:\left( B^{\prime },m_{B^{\prime }},u_{B^{\prime }},\Delta _{B^{\prime
}},\varepsilon _{B^{\prime }},c_{B^{\prime },B^{\prime }}\right) \rightarrow
\left( \mathrm{BrBialg}F\right) \left( B,m_{B},u_{B},\Delta _{B},\varepsilon
_{B},c_{B}\right)
\end{equation*}%
in $\mathrm{BrBialg}_{\mathcal{M}^{\prime }}$. Since
\begin{equation*}
\left( \mathrm{BrBialg}F\right) \left( B,m_{B},u_{B},\Delta _{B},\varepsilon
_{B},c_{B}\right) =\left( FB,m_{FB},u_{FB},\Delta _{FB},\varepsilon
_{FB},\phi _{2}^{-1}\left( B,B\right) \circ Fc_{B}\circ \phi _{2}\left(
B,B\right) \right) ,
\end{equation*}
we have
\begin{equation*}
\phi _{2}^{-1}\left( B,B\right) \circ Fc_{B}\circ \phi _{2}\left( B,B\right)
\circ \left( f\otimes f\right) =\left( f\otimes f\right) \circ c_{B^{\prime
},B^{\prime }}.
\end{equation*}%
Since $f$ is ,in particular, a morphism in $\mathcal{M}^{\prime }$, by the
naturality of the braiding, we get%
\begin{eqnarray*}
Fc_{B} &=&\phi _{2}\left( B,B\right) \circ \left( f\otimes f\right) \circ
c_{B^{\prime },B^{\prime }}\circ \left( f^{-1}\otimes f^{-1}\right) \circ
\phi _{2}^{-1}\left( B,B\right) \\
&=&\phi _{2}\left( B,B\right) \circ c_{FB,FB}^{\prime }\circ \left( f\otimes
f\right) \circ \left( f^{-1}\otimes f^{-1}\right) \circ \phi _{2}^{-1}\left(
B,B\right) \\
&=&\phi _{2}\left( B,B\right) \circ c_{FB,FB}^{\prime }\circ \phi
_{2}^{-1}\left( B,B\right) =Fc_{B,B}.
\end{eqnarray*}%
Since $F$ is faithful, we obtain $c_{B}=c_{B,B}$. Thus $\left(
B,m_{B},u_{B},\Delta _{B},\varepsilon _{B},c_{B}\right) \in \mathrm{BrBialg}%
_{\mathcal{M}}$ means that $\left( B,m_{B},u_{B},\Delta _{B},\varepsilon
_{B}\right) \in \mathrm{Bialg}_{\mathcal{M}}$ so that%
\begin{gather*}
\left( \mathrm{BrBialg}F\right) \left( B,m_{B},u_{B},\Delta _{B},\varepsilon
_{B},c_{B}\right) =\left( \mathrm{BrBialg}F\right) \left(
B,m_{B},u_{B},\Delta _{B},\varepsilon _{B},c_{B,B}\right) \\
=\left( \mathrm{BrBialg}F\right) J_{\mathrm{Bialg}}\left(
B,m_{B},u_{B},\Delta _{B},\varepsilon _{B}\right) =\left( J_{\mathrm{Bialg}%
}^{\prime }\circ \mathrm{Bialg}F\right) \left( B,m_{B},u_{B},\Delta
_{B},\varepsilon _{B}\right) .
\end{gather*}%
Thus $f\in \mathrm{BrBialg}_{\mathcal{M}^{\prime }}\left( J_{\mathrm{Bialg}%
}^{\prime }\left( B^{\prime },m_{B^{\prime }},u_{B^{\prime }},\Delta
_{B^{\prime }},\varepsilon _{B^{\prime }}\right) ,\left( J_{\mathrm{Bialg}%
}^{\prime }\circ \mathrm{Bialg}F\right) \left( B,m_{B},u_{B},\Delta
_{B},\varepsilon _{B}\right) \right) .$ Since $J_{\mathrm{Bialg}}^{\prime }$
is full, there is a morphism $g:\left( B^{\prime },m_{B^{\prime
}},u_{B^{\prime }},\Delta _{B^{\prime }},\varepsilon _{B^{\prime }}\right)
\rightarrow \left( \mathrm{Bialg}F\right) \left( B,m_{B},u_{B},\Delta
_{B},\varepsilon _{B}\right) $ in such $\mathrm{Bialg}_{\mathcal{M}^{\prime
}}$ that $f=J_{\mathrm{Bialg}}^{\prime }\left( g\right) .$ Since $J_{\mathrm{%
Bialg}}^{\prime }$ is full and faithful, we get that $g$ is an isomorphism
too. Therefore $\mathrm{Bialg}F$ is essentially surjective and hence an
equivalence.

Assume that $F$ is a category isomorphism. By Proposition \ref{pro:BrBialg},
$\mathrm{BrBialg}F$ is a category isomorphism. Indeed the inverse is, by
construction $\mathrm{BrBialg}G$ where $G$ is the inverse of $F$. We have
\begin{equation*}
J_{\mathrm{Bialg}}^{\prime }\circ \mathrm{Bialg}F\circ \mathrm{Bialg}G=%
\mathrm{BrBialg}F\circ J_{\mathrm{Bialg}}\circ \mathrm{Bialg}G=\mathrm{%
BrBialg}F\circ \mathrm{BrBialg}G\circ J_{\mathrm{Bialg}}^{\prime }=J_{%
\mathrm{Bialg}}^{\prime }
\end{equation*}%
and hence $\mathrm{Bialg}F\circ \mathrm{Bialg}G=\mathrm{Id}_{\mathrm{Bialg}_{%
\mathcal{M}^{\prime }}}$ (as $J_{\mathrm{Bialg}}^{\prime }$ is faithful and
trivially injective on objects). Similarly $\mathrm{Bialg}G\circ \mathrm{%
Bialg}F=\mathrm{Id}_{\mathrm{Bialg}_{\mathcal{M}}}$. Hence $\mathrm{Bialg}F$
is a category isomorphism.

If $F$ is conservative, then, by Proposition \ref{pro:BrBialg} and
Proposition \ref{coro:BrBialg}, we have that $\mathrm{BrBialg}F\circ J_{%
\mathrm{Bialg}}$ is conservative. Hence $J_{\mathrm{Bialg}}^{\prime }\circ
\mathrm{Bialg}F$ is conservative. From this we get that $\mathrm{Bialg}F$ is
conservative.
\end{proof}

\end{document}